\pgfplotsset{compat=1.16}
\theoremstyle{plain}
\newtheorem{Theorem}{Theorem}[subsection]
\newtheorem*{Theorem*}{Theorem}
\newtheorem*{TheoremA*}{Theorem~A}
\newtheorem*{TheoremB*}{Theorem~B}
\newtheorem{Proposition}[Theorem]{Proposition}
\newtheorem{Lemma}[Theorem]{Lemma}
\newtheorem{Corollary}[Theorem]{Corollary}
\theoremstyle{definition}
\newtheorem{Remark}[Theorem]{Remark}
\newtheorem{Definition}[Theorem]{Definition}
\newtheorem{Example}[Theorem]{Example}
\newtheorem{Terminology}[Theorem]{Terminology}
\newtheorem{Warning}[Theorem]{Warning}
\newtheorem{Notation}[Theorem]{Notation}
\newtheorem*{Acknowledgement*}{Acknowledgement}
\newtheorem*{Outline*}{Outline of the paper}
\DeclareMathOperator{\Mat}{Mat}
\DeclareMathOperator{\Span}{Span}
\DeclareMathOperator{\Hom}{Hom}
\DeclareMathOperator{\Aut}{Aut}
\DeclareMathOperator{\Empty}{{\_\!\_\,}}
\DeclareMathOperator*{\Lim}{lim}
\DeclareMathOperator*{\Colim}{colim}
\DeclareMathOperator{\intHom}{\underline{\Hom}}
\DeclareMathOperator{\Ass}{asc}
\newcommand{\bbC}{\mathbb{C}}
\newcommand{\bbN}{\mathbb{N}}
\newcommand{\bbR}{\mathbb{R}}
\newcommand{\bbT}{\mathbb{T}}
\newcommand{\bbZ}{\mathbb{Z}}
\newcommand{\bbK}{\mathbb{K}}
\newcommand{\calA}{\mathcal{A}}
\newcommand{\calC}{\mathcal{C}}
\newcommand{\calD}{\mathcal{D}}
\newcommand{\calI}{\mathcal{I}}
\newcommand{\calK}{\mathcal{K}}
\newcommand{\calU}{\mathcal{U}}
\newcommand*{\longhookrightarrow}{\ensuremath{\lhook\joinrel\relbar\joinrel\rightarrow}}
\newcommand{\op}{{\mathrm{op}}}
\newcommand{\id}{\mathrm{id}}
\newcommand{\Id}{\mathrm{1}}
\newcommand{\pr}{\mathrm{pr}}
\newcommand{\Eval}{\mathrm{ev}}
\newcommand{\Set}{{\mathcal{S}\mathrm{et}}}
\newcommand{\Vect}{{\mathcal{V}\mathrm{ec}}}
\newcommand{\Mfld}{{\mathcal{M}\mathrm{fld}}}
\newcommand{\Alg}{{\mathcal{A}\mathrm{lg}}}
\newcommand{\Grpd}{{\mathcal{G}\mathrm{rpd}}}
\newcommand{\Mod}{{\mathcal{M}\mathrm{od}}}
\newcommand{\Morita}{{\mathcal{M}\mathrm{rt}}}
\newcommand{\AlgMrt}{{\Alg\Morita}}
\newcommand{\GrpdMrt}{{\Grpd\Morita}}
\newcommand{\R}{\mathbb{R}}
\newcommand{\N}{\mathbb{N}}
\newcommand{\im}{\text{im}}
\DeclareMathOperator{\Supp}{supp}
\DeclareMathOperator{\coker}{coker}
\newcommand{\textdef}[1]{\textbf{#1}}
\newcommand{\Born}{{\mathcal{B}\mathrm{orn}}}
\newcommand{\sBorn}{{\mathrm{s}\mathcal{B}\mathrm{orn}}}
\newcommand{\SBorn}{{\sBorn}}
\newcommand{\cBorn}{{\mathrm{c}\mathcal{B}\mathrm{orn}}} 
\newcommand{\CBorn}{{\cBorn}} 
\DeclareMathOperator{\intBorn}{\underline{\Born}}
\DeclareMathOperator{\intsBorn}{\underline{\sBorn}}
\DeclareMathOperator{\intcBorn}{\underline{\cBorn}}
\newcommand{\lcTVS}{\mathrm{lcTVS}}
\newcommand{\Disk}{{\mathcal{D}\mathrm{isk}}} 
\newcommand{\GrpdBar}{{\mathrm{bar}}}
\DeclareMathOperator{\Sep}{Sep} 
\DeclareMathOperator{\Comp}{Comp} 
\DeclareMathOperator{\Conv}{Conv}
\DeclareMathOperator{\Botimes}{\otimes^\mathrm{b}}
\DeclareMathOperator{\Sotimes}{\otimes^\mathrm{s}}
\DeclareMathOperator{\Cotimes}{\hat{\otimes}}
\DeclareMathOperator{\Asc}{asc}
\DeclareMathOperator{\Unitl}{lun}
\DeclareMathOperator{\Unitr}{run}
\DeclareMathOperator{\Ev}{Ev}
\DeclareMathOperator{\Coev}{Coev}
\DeclareMathOperator{\vN}{vN} 
\DeclareMathOperator{\Ext}{Ext}
\newcommand{\di}{\mathrm{d}}
\newcommand{\RHom}{\R \Hom}
\renewcommand{\phi}{\varphi}
\renewcommand{\epsilon}{\varepsilon}
\begin{document}

\title[Functoriality of bornological groupoid convolution]{Functoriality of bornological\\ groupoid convolution}

\author[D.~Aretz]{David Aretz}
\address{Max-Planck-Institut f\"ur Mathematik, Vivatsgasse 7, 53111 Bonn, Germany}
\email{aretz@mpim-bonn.mpg.de}

\author[C.~Blohmann]{Christian Blohmann}
\address{Max-Planck-Institut f\"ur Mathematik, Vivatsgasse 7, 53111 Bonn, Germany}
\email{blohmann@mpim-bonn.mpg.de}

\subjclass[2020]{46A08, 22A22, 16D90, 46L87}


\keywords{Lie groupoid, groupoid bibundles, groupoid convolution, bornological space, Morita equivalence, noncommutative geometry}

\begin{abstract}
We show that the complete bornological convolution algebras of Lie groupoids and convolution bimodules of groupoid bibundles define a monoidal functor from the 2-category of differentiable stacks to the Morita 2-category of complete bornological algebras. The convolution algebras are generally non-unital, but are shown to possess one-sided approximate units such that the multiplication operators Mackey converge in the functional bornology of endomorphisms. This implies that the convolution algebras are self-induced and the convolution modules are smooth in the sense of R.~Meyer. We also show that Lie groupoid actions that are submersive, proper, and transitive have projective convolution modules. This implies that all convolution algebras are quasi-unital. We provide a long list of examples and applications, such as to bornological noncommutative tori, which are Hopf monoids in the Morita category.
\end{abstract}
\maketitle


\tableofcontents

\section{Introduction}

It is often suggested that the convolution algebra of a Lie groupoid encodes the noncommutative geometry of its generally singular space of orbits \cite{Connes1994}. The orbit space is described as the differentiable stack presented by the groupoid. Convolution should then be a functor from the 2-category of differentiable stacks to the Morita 2-category of algebras, bimodules, and bilinear maps. Two issues arise:

1.~Functoriality requires functional analytic completion. Mapping a Lie groupoid $G = (G_1 \rightrightarrows G_0)$ to the space of compactly supported smooth functions $A := C_\mathrm{c}^\infty(G_1)$ with the convolution product on the \emph{algebraic} tensor product is not functorial. $A$ is generally non-unital and may fail to represent the identity bimodule in the Morita category, $A \otimes_A M \not\cong M$ for an $A$-$B$ bimodule $M$. This issue is fixed partially by $C^*$-completion. The work of many authors \cite{Renault1980, MuhlyRenaultWillams1987, Mrcun:1999, Landsman2000, Landsman:2001, CrainicMoerdijk:2001} has yielded a 2-functor, however only from a \emph{subcategory} of differentiable stacks to the 2-category of $C^*$-algebras, Hilbert bimodules, and intertwiners \cite[Theorem~3.1.3]{Nuiten:2013}. The 1-morphisms of the subcategory are required to be given by groupoid bibundles that are proper and submersive on their codomain. This excludes such basic morphisms as the points of a stack, which are given by the inclusion of gauge groupoids over the orbits, and the stacky group structures of irrational torus foliations \cite{Blohmann2008}.

2.~The $C^*$-completion of the convolution algebra forgets the smooth structure of the groupoid and, therefore, the differentiable structure of the stack. For example, given a smooth manifold $X$, the $C^*$-completion of $C_\mathrm{c}^\infty(X)$ is the algebra $C_0(X)$ of continuous functions vanishing at $\infty$. While the derivations of $C_\mathrm{c}^\infty(X)$ are the vector fields of $X$, $C_0(X)$ has no derivations other than $0$. More generally, since derivations correspond to degree 1 Hochschild cocycles, the loss of the differentiable structure manifests itself in the vanishing of Hochschild cohomology of most $C^*$-algebras (when they are nuclear or do not admit a bounded trace) \cite{Connes:1978,ChristensenSinclair:1989}.

We will solve both issues by bornological completion, which makes convolution functorial and monoidal, yet does not change the underlying vector spaces of smooth functions. Our main result is:

\begin{TheoremA*}[Theorem~\ref{thm:ConvFunc}]
The bornological completion of the convolution algebras of Lie groupoids and of the convolution bimodules of right principal bibundles, together with additional functoriality data, defines a symmetric monoidal $2$-functor 
\begin{equation*}
    \GrpdMrt \longrightarrow \AlgMrt
\end{equation*}
from the geometric Morita $2$-category of Lie groupoids to the Morita $2$-category of non-unital complete bornological algebras. In particular, Morita equivalent Lie groupoids are mapped to Morita equivalent bornological algebras. 
\end{TheoremA*}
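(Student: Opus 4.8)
The plan is to build the pseudofunctor one categorical level at a time—objects, $1$-morphisms, then $2$-morphisms—and only afterwards to verify that it respects composition, units, and the symmetric monoidal structure coherently, isolating the non-unitality issues so they can be dispatched with the approximate-unit machinery of the paper.

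First I would fix the assignment on objects: a Lie groupoid $G = (G_1 \rrarr G_0)$ is sent to the bornological completion $A_G$ of the convolution algebra $C_\mathrm{c}^\infty(G_1)$, working with half-densities (or a chosen Haar system) so that the product is intrinsic, and checking that convolution extends to a bounded associative bilinear map on the completion, so that $A_G$ is a complete bornological algebra. On $1$-morphisms a right principal bibundle $P$ from $G$ to $H$ is sent to the completion $E_P$ of $C_\mathrm{c}^\infty(P)$ with its left $A_G$- and right $A_H$-convolution actions, which I would verify are bounded and make $E_P$ a complete bornological bimodule. On $2$-morphisms a bibundle isomorphism induces, by pushforward of functions, a bounded bimodule isomorphism, and this assignment is plainly functorial in the bibundle isomorphisms.

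The geometric heart of the argument is the compositor. For composable bibundles $P \colon G \to H$ and $Q \colon H \to K$, whose bicategorical composite is the quotient fiber product $P \times_H Q$, I would produce a natural isomorphism of complete bornological bimodules
\[
  E_{P \times_H Q} \;\cong\; E_P \Cotimes_{A_H} E_Q,
\]
whose forward map is fiberwise convolution over $H$, $C_\mathrm{c}^\infty(P) \otimes C_\mathrm{c}^\infty(Q) \to C_\mathrm{c}^\infty(P \times_H Q)$, which by construction factors through the balanced completed tensor product. Right principality of the $H$-actions—freeness and properness together with submersivity of the moment maps—is exactly what makes integration over the $H$-orbit a well-defined bounded operation and forces the induced map to be a bornological isomorphism; controlling supports and bornological boundedness under the quotient is the step I expect to demand the most technical care.

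The main obstacle, however, is the unit. In $\AlgMrt$ the identity $1$-morphism on $A_G$ is $A_G$ regarded as a bimodule over itself, and because $A_G$ is non-unital one has no a priori isomorphism $A_G \Cotimes_{A_G} E_P \cong E_P$, so the unitors of the pseudofunctor are not automatic. I would resolve this by invoking the approximate-unit results of the paper: the convolution algebras carry one-sided approximate units whose multiplication operators Mackey converge in the functional bornology of endomorphisms, whence $A_G$ is self-induced and every convolution bimodule is smooth in Meyer's sense, so that the canonical multiplication maps $A_G \Cotimes_{A_G} E_P \to E_P$ and $E_P \Cotimes_{A_H} A_H \to E_P$ are isomorphisms, giving the invertible left and right unitors. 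Finally, for the symmetric monoidal structure I would establish $A_{G \times H} \cong A_G \Cotimes A_H$—the convolution algebra of a product is the completed bornological tensor product, using that $C_\mathrm{c}^\infty$ is monoidal for $\Cotimes$—and then check the pentagon, triangle, and hexagon axioms for the associator, unitors, and the braiding induced by the flip of factors. These coherences are formal once the compositor and unitors are in hand, reducing to the associativity of iterated fiber-product convolution.
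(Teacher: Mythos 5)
Your architecture matches the paper's: objects to completed convolution algebras, bibundles to completed convolution bimodules, the compositor given by fiberwise convolution over $H$, the unit problem resolved by strong approximate units forcing self-inducedness and smoothness, and monoidality from $C_\mathrm{c}^\infty(X)\Cotimes C_\mathrm{c}^\infty(Y)\cong C_\mathrm{c}^\infty(X\times Y)$. Two points, however, are asserted where the real work lies.

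First, and most seriously, you claim that right principality ``forces the induced map to be a bornological isomorphism'' $E_P\Cotimes_{A_H}E_Q\to E_{P\circ_H Q}$, and you locate the difficulty in support control and boundedness. That is not where the difficulty is. Surjectivity (in fact, being a strong epimorphism) follows rather formally: the restriction to the fiber product is a split epimorphism via a tubular neighborhood, and integration along the orbits is a regular epimorphism because the action map is a surjective submersion. The genuine obstacle is \emph{injectivity}: you must show that the balanced tensor product over $A(H)$ does not identify less than the geometric quotient does, i.e.\ that any $f\in C_\mathrm{c}^\infty(P\times Q)$ vanishing on $P\times_{H_0}Q$ already lies in the image of the difference map $\rho\Cotimes\id-\id\Cotimes\lambda$. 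The paper's proof of this (Lemma~\ref{lem:exactnessKernel}) is an explicit two-step construction of a preimage using auxiliary cutoff functions $e_1$ on $P$ and $e_2$ on $H_1$ normalized by fiber integration, an extension off the fiber product using transversality of the two fiber-product submanifolds, and then a four-lemma argument on the diagram of cokernels. Nothing in right principality delivers this ``for free''; without an argument of this kind the compositor is only a strong epimorphism and the functoriality constraint of the $2$-functor is not established.

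Second, the inference from ``approximate units exist'' to ``every convolution bimodule is smooth'' is not purely formal: with a strong approximate unit one gets that $M\Cotimes_A A\to M$ is injective, but smoothness additionally requires the action $M(P)\Cotimes A(H)\to M(P)$ to be a \emph{strong epimorphism}, which is again a geometric input (split epimorphism of the restriction to $P\times_{H_0}H_1$ plus regular epimorphism of fiber integration along the action, the latter using that the action map of any groupoid bundle is a surjective submersion). You should make that step explicit rather than folding it into the citation of the approximate-unit result.
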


Since $\GrpdMrt$ is equivalent to the 2-category of differentiable stacks \cite[Theorem~2.18]{Blohmann2008}, the complete bornological convolution algebra of a Lie groupoid can be viewed as the smooth noncommutative geometry of the stack presented by the groupoid. As an immediate corollary, any additional algebraic structure on differentiable stacks given by a finitary multi-sorted Lawvere theory \cite{AdamekRosickyVitale:Algebraic} is translated into the corresponding structure on the convolution algebras. In particular, a differentiable group stack, presented by a stacky Lie group \cite{Blohmann2008}, is mapped to a Hopf monoid in $\AlgMrt$ (Section~\ref{sec:HopfMonoid}). This endows for example bornological noncommutative tori and their categories of modules with additional structure that does not exist in the $C^*$-algebraic setting. From a larger perspective, we posit that the symmetric monoidal functor of Theorem~A is a conceptual basis for the noncommutative geometry of stacks that can be used to establish connections to a variety of recent developments in mathematics, such as six functor formalisms or condensed mathematics.

The proof of Theorem~A is quite involved. By the way, we prove the following properties of the convolution algebra and convolution bimodules that are of independent interest: 

\begin{TheoremB*}[Theorem~\ref{thm:ApproxUnitExists}, Proposition~\ref{prop:ConvModProjective}, Corollary~\ref{cor:ConvMoritaProjective}, Corollary~\ref{cor:quasiunitality}]
The complete bornological convolution algebra of a Lie groupoid admits a strong left/right approximate unit and is quasi-unital. The complete bornological convolution module of a biprincipal groupoid bibundle is left and right projective.
\end{TheoremB*}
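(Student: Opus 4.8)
The plan is to establish the four assertions in the order strong approximate units, projectivity of biprincipal convolution modules, and finally quasi-unitality together with self-inducedness, since each feeds the next. For the approximate unit I would work relative to a fixed Haar system and a tubular neighborhood of the unit manifold $G_0 \hookrightarrow G_1$ along the source fibers. Choosing a bump profile $\rho \ge 0$ with fiber-integral one, I would take $u_n \in C_c^\infty(G_1)$ supported in a shrinking $1/n$-neighborhood of the units and normalized against the Haar system so that its integral over each relevant fiber is one. The basic identity is, schematically,
\begin{equation*}
  (u_n * f)(g) - f(g) = \int u_n(h)\,\bigl[f(h^{-1}g) - f(g)\bigr]\, d\lambda(h),
\end{equation*}
with $h$ ranging over the fiber near the unit $1_{r(g)}$; since $f$ is smooth and the $u_n$ concentrate at the units, a first-order Taylor estimate already yields pointwise control of order $1/n$.

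The real content is upgrading this pointwise bound to Mackey convergence of the multiplication operators $L_{u_n}\colon f \mapsto u_n * f$ to $\id$ inside the functional bornology $\intEnd(A)$. Fixing a bounded set $B \subseteq A$---uniformly supported in some compact $K$ and bounded in every seminorm---I would show that the supports of $u_n * f - f$ remain in a fixed enlargement of $K$ while each $C^k$-seminorm decays like $1/n$ uniformly over $f \in B$. This requires differentiating under the integral and commuting derivatives past the Haar system, so that a single null sequence and a single bounded set control all seminorms at once. I expect this simultaneous-in-all-derivatives, uniform-over-$B$ estimate to be the main obstacle: the endomorphism bornology demands uniformity over bounded sets rather than mere pointwise convergence, and the nonlocality of convolution ties the support bookkeeping to the derivative bookkeeping. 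The left version is symmetric, giving a strong two-sided approximate unit.

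For projectivity, let $P$ be a biprincipal $G$-$H$-bibundle, so that $P \to P/H \cong G_0$ is a principal $H$-bundle. I would prove that the convolution module $C_c^\infty(P)$ is right projective by a Serre--Swan-type argument: pick a locally finite cover of $G_0$ trivializing $P$, with a subordinate smooth partition of unity, identify each $C_c^\infty(P|_U)$ with a free module $C_c^\infty(U) \projotimes C_c^\infty(H)$ via the local trivializations, and patch by the partition of unity to realize $C_c^\infty(P)$ as a bornological direct summand of a free module. The subtle point, as before, is that convolution is nonlocal, so one must check that the patching maps are bounded and intertwine the convolution action rather than a pointwise one; the approximate units of the first step are exactly what render the local-to-global comparison bounded. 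Left projectivity is the mirror argument using the principal $G$-action, and the Morita statement follows at once since biprincipality makes the bimodule projective on both sides.

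Finally, quasi-unitality I would deduce by applying the projectivity result to the identity bibundle of $G$, namely $G_1$ with its left and right multiplication actions, which is biprincipal; its convolution bimodule is $A$ itself, so the projectivity just proved says precisely that $A$ is projective as a module over itself, which is quasi-unitality. Independently, the Mackey convergence $L_{u_n} \to \id$ shows that the multiplication $A \projotimes A \to A$ splits bornologically on bounded sets, whence $A$ is self-induced, $A \otimes_A A \cong A$, and every convolution module $M$ is smooth, $A \otimes_A M \cong M$, in the sense of Meyer---the structural input for the functoriality statement of Theorem~A.
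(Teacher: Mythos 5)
Your overall strategy for the approximate unit---Dirac sequences concentrated at the unit bisection, with convergence measured in the endomorphism bornology---is the same as the paper's, but the construction as you state it cannot be carried out when $G_0$ is non-compact: a single compactly supported $u_n$ cannot have source-fiber integral equal to one over \emph{every} fiber, and normalizing only over ``each relevant fiber'' would make the sequence depend on the bounded set $B$, which is not permitted for an approximate unit. The actual content of Theorem~\ref{thm:ApproxUnitExists} is the resolution of exactly this tension: the supports of the $e_n$ must simultaneously \emph{grow} along the base and \emph{shrink} along the fibers, which the paper achieves by setting $e_n = \sum_{i=1}^n e_{n,i}$ over a locally finite cover of the base with a subordinate partition of unity (Lemma~\ref{lem:fiberdirac}), and by proving Mackey convergence of $r_{e_n}$ not through direct $C^k$-seminorm estimates uniform over $B$, but by factoring $a \mapsto a * e_n$ through a chain of bounded linear maps and invoking Mackey convergence of partial identities in a countable coproduct (Lemmas~\ref{lem:InfSeqConverge} and \ref{lem:convergenceFiniteDiagonals}), together with a tubular neighborhood of $1(G_0)$ chosen to be both $s$- and $t$-vertically precompact. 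The step you yourself flag as ``the main obstacle'' is precisely the step that is missing from your argument.

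On projectivity, your local-trivialization-plus-partition-of-unity plan is close in spirit to the paper's proof, which constructs a bounded right $A(H)$-linear section of the action $M(P)\Cotimes A(H)\to M(P)$ in two stages (a section of the pullback $i^*$ built from local sections of the submersion $r$ and a partition of unity, and a section of the fiber integration $\beta_*$ built from a function on $P$ with $l$-fiberwise integral one); but the decisive verification---that the patched maps commute with the convolution action, which is checked by hand using the right invariance of the Haar system---is not supplied, and approximate units play no role in making it work. Your deduction of quasi-unitality is also off: quasi-unitality in Meyer's sense is the conjunction of a pointwise approximate unit with left and right \emph{separability} (an $A$-linear section of the multiplication $A\Cotimes A\to A$), not projectivity of $A$ over itself. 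What specializing the module argument to the identity bibundle $G_1$ actually yields is such a section, hence separability; combined with Theorem~\ref{thm:ApproxUnitExists} this gives quasi-unitality, and the projectivity of $A$ then follows via Proposition~\ref{prop:ModuleSectionProj}, not the other way around.
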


\subsection{Content}

We begin in Section~\ref{sec:bornology} by reviewing the category of bornological vector spaces $\Born$ as well as its reflective subcategories $\sBorn$ of separated and $\cBorn$ of complete bornological vector spaces,
\begin{equation*}
\begin{tikzcd}
  (\cBorn,\Cotimes) 
  \ar[r, shift right=1.2, hook] 
  \ar[r, phantom, "\scriptscriptstyle\boldsymbol{\bot}"]
  &
  (\sBorn, \Sotimes)
  \ar[l,shift right=1.2, "\Comp"']
  \ar[r, shift right=1.2, hook]
  \ar[r, phantom, "\scriptscriptstyle\boldsymbol{\bot}"]
  & 
  (\Born, \Botimes) 
  \ar[l,shift right=1.2, "\Sep"']
\end{tikzcd}
\,.
\end{equation*}
As a consequence of Day's reflection Theorem~\ref{thm:DaysReflection}, the tensor product $\Cotimes$ on $\cBorn$ is obtained by completing the algebraic tensor product $\Botimes$ with its natural bornology. Special attention is given to the bornology of mapping spaces and Mackey convergence, which will be used throughout the paper.

Section~\ref{sec:TestFuncBorn} contains a review of the bornology of the space $C_\mathrm{c}^\infty(X)$ of compactly supported smooth functions on a manifold $X$. The completed bornological tensor product is monoidal, $C_{\mathrm{c}}^\infty(X)\Cotimes C_{\mathrm{c}}^\infty(Y) \cong C_{\mathrm{c}}^\infty(X\times Y)$, which will imply that the convolution functor is monoidal. The section concludes with auxiliary lemmas for pullbacks and pushforwards needed for the proof of the main theorem.

In Section~\ref{sec:MoritaCategory}, we introduce the Morita $2$-category $\AlgMrt(\calC)$ of non-unital algebras in $\calC$. The algebras must be \emph{self-induced}, $A \otimes_A A \cong A$, and the $A$-$B$ bimodules \emph{smooth}, $A \otimes_A M \cong M$ and $M \otimes_B B \cong M$, so that the $A$-$A$ bimodule $A$ is the identity 1-morphism. We give useful criteria that guarantee these properties. In the case $\calC = \cBorn$, we show that they follow from the existence of strong approximate units.

In Section~\ref{sec:ConvolutionIngredients} we review the notion of smooth right principal groupoid bibundles and their composition, which are the 1-morphisms in the geometric Morita 2-category of Lie groupoids. We set up the integral formulas that define the convolution algebras and convolution bimodules, as well as the functoriality constraint in $\Born$. It is a straightforward consequence of our categorical setup that all these structures can be completed to structures on $\cBorn$, as summarized in Proposition~\ref{prop:CompletedAlgsMods}.

In Section~\ref{sec:functor} we state and prove Theorems~A and B. The longest part of the proof is to show that the bornological convolution algebra has a strong approximate left/right unit, which will make use of most of the concepts, results, and technical lemmas of the previous sections. 

The last Section~\ref{sec:Applications} contains a number of examples and applications of the bornological convolution functor. We show that bornological noncommutative tori are simple algebras by adapting the proof for $C^*$-algebras to the bornological setting. While most computations and results in this section are more or less immediate consequences of our main Theorem~\ref{thm:ConvFunc}, the analogous statements in the uncompleted smooth or the $C^*$-algebraic setting are often false or difficult to show.

\subsection{Related Work}

Bornological vector spaces were introduced by George Mackey and further developed in \cite{HogbeNlend1977}. R.~Meyer has demonstrated the convenience of bornology in homological algebra, for example in \cite{Meyer2004,Meyer2007, Meyer2011}. The present paper owes its motivation and many insights to his work. More recently, bornological methods have been developed in the context of derived analytic geometry \cite{benbassat2024}.

For an overview of Lie groupoids with a view towards noncommutative geometry, we refer to the text of Cannas da Silva and Weinstein \cite{CannasdaSilva1999}. Few papers have studied smooth groupoid convolution algebras. Crainic and Moerdijk prove the $H$-unitality of the smooth convolution algebra in \cite[Proposition~2]{CrainicMoerdijk:2001}, but their paper is agnostic of the functional analytic setting. The authors of \cite{Neumaier2006, OrbifoldCupProducts2011} study the Hochschild cohomology of the convolution algebra of orbifolds, that is, proper \'{e}tale Lie groupoids. This was extended in \cite{Pflaum2020} to proper Lie groupoids, using bornological language. In \cite{smoothGroupRep}, the bornological convolution algebra of Lie groups and their representation theory is studied, including a discussion of approximate units and smooth modules. In \cite{Posthuma23}, the authors construct a cochain map from Lie groupoid deformation cohomology to the Hochschild cohomology of the smooth convolution algebra.

This work builds on the Master's thesis~\cite{Aretz23} of D.A. advised by C.B.

\subsubsection{Prerequisites, notation}

We assume the reader to be familiar with basic category theory on the level of \cite{MacLane:Working} and basic notions from $2$-categories as in \cite{Benabou1967}. We will use the terms bicategory, $2$-category, and weak $2$-category interchangeably. All manifolds are assumed to be second countable and smooth. The coproduct of a family $\{V_i ~|~i \in I\}$ of bornological vector spaces will be denoted by $\bigoplus_{i \in I} V_i$. Since the category of (complete, separated) bornological vector spaces is additive, finite coproducts are biproducts, which motivates the notation.

\subsubsection*{Acknowledgements}

We are indebted to Ralf Meyer, David Miyamoto, Hessel Post\-huma, and Alan Weinstein for invaluable discussions and advice. D.A.~was supported by a scholarship of the Studienstiftung des deutschen Volkes. We thank the referee for helpful suggestions.

\section{Categories of bornological vector spaces}
\label{sec:bornology}
\subsection{Bornological vector spaces}

We recall the basic notions.

\begin{Definition}
A \textdef{bornology} on a set $X$ is a collection of subsets, called \textdef{bound\-ed}, such that the following holds:
\begin{itemize}

\item[(i)] Every subset of a bounded set is bounded.

\item[(ii)] The union of two bounded sets is bounded.

\item[(iii)] Every singleton $\{x\}$, $x \in X$ is bounded.

\end{itemize}
\end{Definition}
A set $X$ together with a bornology is called a \textdef{bornological space}. A map of sets with a bornology is called \textdef{bounded} if it sends bounded sets to bounded sets. The bounded maps are the morphisms of the category of bornological spaces. It can be shown that bornological spaces are a quasitopos \cite{AdamekHerrlich:1985}, which implies a number of good categorical properties.

\begin{Definition}
A \textdef{basis} of a bornological space $X$ is a collection of bounded sets $\calC$ such that every bounded set is the subset of a set in $\calC$.
\end{Definition}

\begin{Remark}
    Any collection of subsets of a set $X$ generates a bornology on $X$.
    A collection of subsets $\calC$ is a basis of the bornology it generates if and only if it satisfies the following properties:
    \begin{enumerate}
        \item[(ii)'] For $A,B\in \calC$ there is $C\in \calC$ such that $A\cup B\subset C$.
        \item[(iii)'] For all $x\in X$ there is a $C\in \calC$ with $x\in C$.
    \end{enumerate}
\end{Remark}

\begin{Example}
\label{ex:BanachBornology}
The eponymous example for a bornology is given by the bounded sets of a semi-normed, normed, or Banach space $(X,\lVert \Empty\rVert)$, which is called its \textdef{von Neumann bornology}.
Here, a subset $B\subset X$ is bounded if $\sup_{b\in B }\lVert b\rVert < \infty$.
\end{Example}

\begin{Definition}
A real or complex \textdef{bornological vector space} is a vector space internal to the category of bornological spaces, where the field is equipped with its natural bornology.     
\end{Definition}

The von Neumann bornology on a normed or Banach space has additional good properties. To describe these, we first recall some basic notions for real or complex vector spaces. Let $V$ be a $\bbK$-vector space, where $\bbK \in \{\bbR, \bbC\}$. For subsets $A, B \subset V$ and $\Lambda \subset \bbK$, we denote
\begin{align*}
  A + B 
  &:= \{ a + b ~|~ a \in A,~b \in B\}
  \\
  \Lambda\cdot A 
  &:= \{ \lambda a ~|~ \lambda \in \Lambda,~a \in A \}
  \,.
\end{align*}
For $\Lambda = \{\lambda\}$ a singleton, we write $\{\lambda\}\cdot A \equiv \lambda A$. Let
\begin{equation*}
  \bar{B}_1(\bbK) = \{\lambda \in \bbK ~|~ |\lambda| \leq 1\}
\end{equation*}
denote the closed unit ball in $\bbK$.

\begin{Remark}
\label{rmk:VecBornExplicit}
A bornology on the set underlying a vector space $V$ is a vector space bornology if and only if for all bounded $A$, $B$ and all scalars $\lambda \in \bbK$ the sets $A+B$, $\lambda A$, and $\bar{B}_1(\bbK) \cdot A$ are bounded \cite[Section~1:1.2, p.~19 and Exercise 2.E.1, p.~123]{HogbeNlend1977}.    
\end{Remark}

\begin{Definition}
\label{def:Disks1}
A subset $D$ of a real or complex vector space is called \textdef{circled} if $\bar{B}_1(\bbK) \cdot D \subset D$, and a \textdef{disk} if it is circled and convex.
\end{Definition}

\begin{Terminology}
Circled subsets are also called \textdef{balanced}. The set $\bar{B}_1(\bbK) \cdot D = \bigcup_{|\lambda| \leq 1} \lambda D$ is the smallest circled subset containing $D$. It is called the \textdef{circled hull} of $D$. A subset $D$ is a disk if and only if for $\mu, \nu \in \bbK$ such that $|\mu| + |\nu| \leq 1$, we have $\mu D + \nu D \subset D$. The set
\begin{equation*}
  D^\diamond := \bigl\{ \lambda_1 d_1 + \ldots + \lambda_n d_n ~|~
  n \geq 0\,; 
  d_1, \ldots, d_n \in D \,;
  {\textstyle\sum_{i=1}^n} |\lambda_i|  \leq 1
  \bigr\}
\end{equation*}
is the smallest disk containing $D$. It is called the \textdef{disked hull} of $D$.
\end{Terminology}

Recall that the \textdef{Minkowski functional} of an arbitrary subset $D \subset V$  is a function on $V$ defined by\begin{equation*}
  \| v \|_D 
  := \inf \{r \in \bbR~|~ r > 0\,;~v \in r D\}
  \,,
\end{equation*}
where the infimum of the empty set is defined to be $\infty$. Let $V_D$ denote the vector subspace generated by $D$.

\begin{Proposition}
If $D$ is a disk in a real or complex vector space, then the restriction of $\|\Empty\|_D$ to $V_D$ is a seminorm.
\end{Proposition}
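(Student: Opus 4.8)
Write $p$ for the restriction of $\|\Empty\|_D$ to $V_D$. The plan is to verify the three defining properties of a seminorm for $p$ --- nonnegativity, absolute homogeneity $p(\lambda v) = |\lambda|\,p(v)$, and subadditivity $p(v+w)\le p(v)+p(w)$ --- but \emph{first} I would check that $p$ is finite everywhere on $V_D$, since a genuine seminorm is real-valued rather than $[0,\infty]$-valued. Nonnegativity is immediate, as $p(v)$ is by definition an infimum of positive reals. Before anything else I would record that, $D$ being circled and (as usual) nonempty, it contains $0 = 0\cdot d$ for any $d\in D$; this fact is used repeatedly below.

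The key structural observation is the identity $V_D = \bigcup_{r>0} rD$. The inclusion $\supseteq$ is clear because each $rD\subset V_D$. For $\subseteq$ it suffices to show that $W := \bigcup_{r>0} rD$ is already a linear subspace, since it evidently contains $D$. Convexity of $D$ gives $\tfrac{r}{r+s}D + \tfrac{s}{r+s}D \subset D$, hence the crucial inclusion $rD + sD \subset (r+s)D$, so $W$ is closed under addition; circledness gives $\mu D \subset |\mu|\,D$ for every nonzero scalar $\mu$ (and $0\cdot D = \{0\}\subset D$), so $W$ is closed under scalar multiplication. Consequently every $v\in V_D$ lies in some $rD$, the defining set $\{r>0 \mid v\in rD\}$ is nonempty, and $p(v)<\infty$.

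For absolute homogeneity, the case $\lambda=0$ follows from $0\in rD$ for all $r>0$, giving $p(0)=0$. For $\lambda\neq 0$, circledness yields the equality $\tfrac{|\lambda|}{\lambda}D = D$ (both inclusions hold because the scalar $|\lambda|/\lambda$ has modulus one), whence $\lambda v\in rD \iff v\in \tfrac{r}{|\lambda|}D$; reparametrizing $s=r/|\lambda|$ converts the infimum defining $p(\lambda v)$ into $|\lambda|$ times the one defining $p(v)$. Subadditivity is then the cleanest step: given $r>p(v)$ and $s>p(w)$ with $v\in rD$ and $w\in sD$, the inclusion $rD+sD\subset (r+s)D$ established above forces $v+w\in (r+s)D$, so $p(v+w)\le r+s$, and taking infima over admissible $r,s$ gives the triangle inequality. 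I expect the only genuine subtlety --- the main obstacle --- to be the finiteness claim of the second paragraph: homogeneity and subadditivity are formal once one has the two inclusions $rD+sD\subset(r+s)D$ and $\mu D\subset|\mu|\,D$, but without the disk hypothesis $p$ could take the value $\infty$, so it is precisely the identification $V_D=\bigcup_{r>0}rD$ that makes the restriction a bona fide finite seminorm rather than merely an extended one.
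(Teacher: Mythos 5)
Your proof is correct and complete. The paper actually states this proposition without proof (it is treated as a standard fact about Minkowski functionals of disks), so there is no argument to compare against; yours is the standard one, resting on the two inclusions $rD + sD \subset (r+s)D$ (from convexity) and $\mu D \subset |\mu| D$ (from circledness), and you rightly isolate the identification $V_D = \bigcup_{r>0} rD$ as the point where the disk hypothesis is really used to guarantee finiteness --- note only that this step silently assumes $D \neq \emptyset$, which you flag as the usual convention.
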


\begin{Definition}
\label{def:Disks2}
A disk $D$ in a real or complex vector space $V$ is called \textdef{norming} if $\| \Empty \|_D$ is a norm on $V_D$. A norming disk $D$ is called \textdef{completant} if $V_D$ is complete.
\end{Definition}

\begin{Remark}
\label{rmk:NormingDiskClosed}
A norming disk $D \subset V$ is called \textdef{internally closed} if $D \subset V_D$ is closed in the norm topology. If the disk is internally closed, it is the closed unit ball in $V_D$. The internal closure of a norming disk is still a norming disk, so that we can assume without loss of generality that all norming disks are internally closed \cite{Meyer2007}.
\end{Remark}

\begin{Definition}
\label{def:Disks3}
A bornological vector space is called 
\begin{itemize}

\item[(i)] \textdef{convex} if every bounded set is contained in a bounded disk,

\item[(ii)] \textdef{separated} if every bounded set is contained in a bounded norming disk,

\item[(iii)] \textdef{complete} if every bounded set is contained in a bounded completant disk.

\end{itemize}
\end{Definition}

\begin{Remark}
By Remark~\ref{rmk:VecBornExplicit}, the circled hull of a bounded set is bounded. It follows that condition (i) of Definition~\ref{def:Disks3} is equivalent to the following: A bornological vector space is called
\begin{itemize}

\item[(i)'] \textdef{convex} if the convex hull of a bounded set is bounded.

\end{itemize}
\end{Remark}

\begin{Example}
The von Neumann bornology on a semi-normed space is separated if and only if the space is normed. The von Neumann bornology on a normed space is complete if and only if the space is norm complete, that is, Banach.
\end{Example}

\begin{Definition}
\label{def:Absorb}
A subset $A$ of a real or complex vector space \textdef{absorbs} another subset $S$ if there is a real number $r > 0$ such that $S \subset \lambda A$ for all scalars $\lambda$ satisfying $|\lambda| \geq r$.    
\end{Definition}

We can generalize Example~\ref{ex:BanachBornology} to arbitrary topological vector spaces.

\begin{Example}
\label{ex:vonNeumanBorn}
Let $V$ be a topological vector space. The \textdef{von Neumann bornology} is given by the sets that are bounded with respect to the Minkowski functionals $\| \Empty \|_U$ for all open neighborhoods $U$ of $0$. Equivalently, a subset $B$ is bounded if it is absorbed by every open neighborhood $U$ of $0$, that is, $B \subset rU$ for some $r < \infty$. Every singleton $\{v\}$ is bounded because every open neighborhood of $0$ is absorbing, so that $v \in rU$ for some $r > 0$. If $V$ is locally convex and $(p_i)_{i\in I}$ is a family of seminorms defining the topology, then a subset $B$ is bounded if and only if $p_i(B)$ is bounded in $\R$. 
\end{Example}

\begin{Definition}
\label{def:BornivorousTop}
A subset $A$ of a bornological vector space $V$ is called \textdef{bornivorous} if it absorbs every bounded subset. The \textdef{bornivorous topology} is the smallest vector space topology such that the balls $B_1(0) = \{ v \in V~|~ \|v\|_A < 1\}$ are open for all bornivorous $A$.
\end{Definition}

The bornivorous topology is the finest vector space topology such that the bornivorous sets form a neighborhood basis of $0$. A sequence $(v_n)_{n \in \bbN}$ converges in the bornivorous topology to $w \in V$ if and only if $\|v_n - w\|_A$ converges to zero for all bornivorous $A$. The following stronger notion of convergence is generally more useful and will be used throughout.


\begin{Definition}
\label{def:Mackey-Cauchy}
A sequence $(v_n)_{n \in \bbN}$ in a bornological vector space $V$ is \textdef{Mackey convergent (Mackey-Cauchy)} if there is a bounded disk $D \subset V$ such that the sequence is convergent (Cauchy) in the seminormed space $(V_D, \|\Empty\|_D)$.
\end{Definition}

Let us spell out Definition~\ref{def:Mackey-Cauchy} in more detail: $(v_n)_{n \in \bbN}$ Mackey converges to $v$ if for some bounded disk $D$ the following holds: $v \in V_D$ and for all $\epsilon > 0$ there exists an $N \in \bbN$ such that $v_n - v \in \epsilon D$ for all $n \geq N$. It is Mackey-Cauchy if for some bounded disk $D$ the following holds: For all $\epsilon > 0$ there exists an $N \in \bbN$ such that $v_n, v_m \in D$ and $v_n - v_m \in \epsilon D$ for all $n,m \geq N$.

A bornological vector space is separated if and only if the limit of each Mackey convergent sequence is unique. It is complete if and only if every bounded set is contained in a bounded disk $D$ for which every Mackey-Cauchy sequence with respect to $\|\Empty\|_D$ is also Mackey convergent in $\|\Empty\|_D$.
If a sequence is Mackey convergent, then it is also convergent in the bornivorous topology. The converse is wrong in general.

\begin{Proposition}
\label{prop:BoundedContinuous}
Bounded linear maps preserve Mackey convergence and the Mackey-Cauchy property of sequences.
\end{Proposition}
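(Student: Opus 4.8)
The plan is to push the witnessing bounded disk forward along the map and observe that a bounded linear map restricts to a contraction between the associated seminormed spaces. Concretely, let $f \colon V \to W$ be a bounded linear map and suppose $(v_n)_{n \in \bbN}$ Mackey converges to $v$, witnessed by a bounded disk $D \subset V$; by definition $v \in V_D$ and for every $\epsilon > 0$ there is an $N$ with $v_n - v \in \epsilon D$ for all $n \geq N$. The natural candidate for the witnessing disk in $W$ is the image $f(D)$, and the entire argument reduces to understanding this image.

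The first step is to check that $f(D)$ is again a bounded disk. Boundedness is immediate: $D$ is bounded and $f$ is a bounded map, so $f(D)$ is bounded in $W$. That $f(D)$ is a disk follows purely from linearity. Circledness of $D$ gives $\bar{B}_1(\bbK) \cdot f(D) = f\bigl(\bar{B}_1(\bbK) \cdot D\bigr) \subset f(D)$, and convexity of $D$ is preserved because $f$ carries convex combinations to convex combinations. Since $v \in V_D$ lies in the span of $D$, its image $f(v)$ lies in the span $W_{f(D)}$ of $f(D)$.

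The second step is to compare Minkowski functionals. For any $w \in V_D$ and any $r > 0$ with $w \in rD$ we have $f(w) \in f(rD) = r\,f(D)$, so $\{\,r > 0 \mid w \in rD\,\} \subset \{\,r > 0 \mid f(w) \in r\,f(D)\,\}$, whence $\|f(w)\|_{f(D)} \leq \|w\|_D$. Thus $f$ restricts to a contraction $(V_D, \|\Empty\|_D) \to (W_{f(D)}, \|\Empty\|_{f(D)})$. Applying this to $w = v_n - v$ gives $f(v_n) - f(v) \in \epsilon\, f(D)$ for all $n \geq N$, so $(f(v_n))_{n \in \bbN}$ Mackey converges to $f(v)$ with witness $f(D)$. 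The Mackey-Cauchy case is verbatim the same: from $v_n, v_m \in D$ and $v_n - v_m \in \epsilon D$ one reads off $f(v_n), f(v_m) \in f(D)$ and $f(v_n) - f(v_m) \in \epsilon\, f(D)$.

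I do not expect a genuine obstacle here; the only point requiring attention is the verification that the pushforward $f(D)$ is simultaneously a disk and bounded, and both facts are formal consequences of linearity and the boundedness hypothesis on $f$. The proposition is in essence just the observation that a bounded linear map induces contractions between the seminormed subspaces $V_D$ and $W_{f(D)}$, and contractions trivially preserve convergent and Cauchy sequences.
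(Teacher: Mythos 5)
Your proof is correct and follows essentially the same route as the paper's: both arguments take the image disk $f(D)$ as the witness in $W$ and use $f(\epsilon D) = \epsilon f(D)$ to transport the convergence estimates. Your version merely spells out in more detail the verification that $f(D)$ is a bounded disk and the resulting contraction of Minkowski functionals.
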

\begin{proof}
A bounded linear map $f:V \to W$ maps a bounded disk $D \subset V$ to the bounded disk $f(D)$ and $\epsilon D$ to $\epsilon f(D)$. It follows that, if $v \in \epsilon D$, then $f(v) \in \epsilon f(D)$. This shows that if the sequence $n \mapsto v_n \in V$ Mackey converges to $v$, then $n \mapsto f(v_n)$ Mackey converges to $f(v)$. It also shows that if the sequence is Mackey-Cauchy, then so is its image in $W$.
\end{proof}

\begin{Example}
In a Fr\'{e}chet space equipped with the von Neumann bornology, Mackey convergence is equivalent to convergence in the Fr\'{e}chet topology \cite[Theorem~1.36]{Meyer2007}.    
\end{Example}

\subsection{Limits and colimits of bornological vector spaces}

\begin{Definition}
We denote by $\Born$ the category of convex bornological vector spaces and bounded linear maps, by $\sBorn$ the full subcategory of separated convex bornological vector spaces, and by $\cBorn$ the full subcategory of complete convex bornological vector spaces.     
\end{Definition}

Every $\bbK$-vector space $V$ has a smallest and a largest convex bornology. In the largest convex bornology, called the \textdef{coarse bornology}, all subsets of $V$ are bounded. In the smallest convex bornology, called the \textdef{fine bornology}, a subset is bounded if it is contained in the circled hull of a finite subset of $V$. A subset is bounded in the fine bornology if and only if it is contained in a finite dimensional subspace $W \subset V$ and bounded in $W$ in the usual sense. This implies that the fine bornology is complete.

\begin{Proposition}
\label{prop:ForgetfulFunctors}
The forgetful functor $\Born \to \Vect$ is faithful and has a left and a right adjoint. The forgetful functors $\sBorn \to \Vect$ and $\cBorn \to \Vect$ are faithful and have left adjoints.
\end{Proposition}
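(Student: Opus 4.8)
The plan is to exhibit the adjoints explicitly as the fine and coarse bornologies and to verify their universal properties directly from the definitions, disposing of faithfulness first. A morphism in $\Born$, $\sBorn$, or $\cBorn$ is by definition a linear map subject to a boundedness condition, and the forgetful functor merely discards that condition. Hence two parallel morphisms with the same underlying linear map are literally equal, and the functor is injective on hom-sets; this gives faithfulness in all three cases at once.

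For the adjoints to $\Born \to \Vect$, write $U$ for the forgetful functor. I would take the right adjoint to be the coarse bornology functor $V \mapsto (V,\mathrm{coarse})$, in which every subset is bounded. Then every linear map $W \to (V,\mathrm{coarse})$ is automatically bounded, so there is a natural bijection $\Hom_\Born(W, (V,\mathrm{coarse})) \cong \Hom_\Vect(U W, V)$, exhibiting the coarse bornology as right adjoint to $U$. The left adjoint is the fine bornology functor $V \mapsto (V,\mathrm{fine})$, and here the content is the universal property: every linear map $f \colon (V,\mathrm{fine}) \to W$ is bounded. I would argue this from the description of the fine bornology recalled in the text: a bounded subset of $(V,\mathrm{fine})$ lies in the circled hull of a finite set, hence in a finite-dimensional subspace $V' \subseteq V$ on which it is bounded in the usual sense. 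Its image lies in the finite-dimensional subspace $f(V') \subseteq W$, and in a convex bornological vector space the usual bounded subsets of any finite-dimensional subspace are bounded, since by the vector-space axioms (Remark~\ref{rmk:VecBornExplicit}) the set $\bar{B}_1(\bbK)\cdot\{w_1\} + \ldots + \bar{B}_1(\bbK)\cdot\{w_n\}$ is bounded for any finite family $w_1,\ldots,w_n$. This yields the natural bijection $\Hom_\Born((V,\mathrm{fine}), W) \cong \Hom_\Vect(V, U W)$, so the fine bornology is left adjoint to $U$.

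For the separated and complete cases I would leverage that both are full subcategories. Since the text records that the fine bornology is complete, hence separated, the fine bornology functor already factors through $\cBorn \subseteq \sBorn$. The same natural bijection as above, combined with fullness of the inclusions, then shows that $V \mapsto (V,\mathrm{fine})$ is simultaneously left adjoint to $\sBorn \to \Vect$ and to $\cBorn \to \Vect$. Equivalently, one may compose the reflectors $\Sep$ and $\Comp$ of the diagram in the introduction with the fine bornology functor; since fine is already complete, these reflectors act trivially on it, so the two descriptions agree.

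The only genuinely nontrivial point, and the one I would treat most carefully, is the universal property of the fine bornology, specifically the claim that bounded subsets of a finite-dimensional subspace of a convex bornological vector space are always bounded; everything else is a formal consequence of the definitions. I would also note that the proposition deliberately claims only left adjoints for $\sBorn$ and $\cBorn$: the coarse bornology fails to be separated for infinite-dimensional spaces, so the right adjoint does not survive passage to the separated or complete subcategories.
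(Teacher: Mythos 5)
Your proposal is correct and follows essentially the same route as the paper: faithfulness from the definition of morphisms, the coarse bornology as right adjoint, and the fine bornology as left adjoint, which restricts to $\sBorn$ and $\cBorn$ because the fine bornology is complete. The only cosmetic difference is that the paper verifies boundedness of maps out of the fine bornology via preservation of disked hulls of finite sets, whereas you pass through finite-dimensional subspaces; both arguments are equivalent.
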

\begin{proof}
By definition, a morphism in $\Born$, $\sBorn$, or $\cBorn$ is a linear map that is bounded. If two bounded maps are equal, then the linear maps are equal. This shows that the forgetful functors are faithful.

Every linear map $V \to W$ is bounded if we equip $W$ with the coarse bornology. We conclude that equipping a vector space with the coarse bornology is the right adjoint to the forgetful functor $\Born \to \Vect$.

Every linear map $f: V \to W$ maps the disked hull of $\{v_1, \ldots, v_p\}$ to the disked hull of $\{f(v_1), \ldots, f(v_p)\}$. Since every bornology on $W$ contains the fine bornology, it follows that $f$ is bounded if we endow $V$ with the fine bornology. Since the fine bornology is complete, we conclude that equipping a vector space $V$ with the fine bornology is the left adjoint of the forgetful functor $\cBorn \to \Vect$. Composing the left adjoint with the full and faithful inclusion $\cBorn \hookrightarrow \sBorn$ is the left adjoint to the forgetful functor $\sBorn \to \Vect$. Composing further with the full and faithful inclusion $\sBorn \hookrightarrow \Born$ yields the left adjoint to the forgetful functor $\Born \to \Vect$.
\end{proof}

Since the forgetful functor $\Born \to \Vect$ is a right adjoint, it preserves all limits. This means that, in order to compute the limit of a diagram $V: \calI \to \Born$, $i \mapsto V_i$, we first compute the limit in $\Vect$ and then equip it with the appropriate bornology: Let $\tau_i: \Lim_{i \in \calI}V_i \to V_i$ denote the maps of the limit cone. A subset $A \subset \Lim_i V_i$ is bounded in the \textdef{limit bornology} if and only if $\tau_i(A) \subset V_i$ is bounded for all $i \in \calI$. It is the largest bornology such that all $\tau_i$ are bounded.

If the bornology of every $V_i$ is separated or complete, then so is the limit bornology. Since the forgetful functors $\sBorn \to \Vect$ and $\cBorn \to \Vect$, too, are right adjoints, limits in $\sBorn$ and $\cBorn$ work in the same way: We compute a limit in $\Vect$ and then equip it with the limit bornology.

Since the forgetful functor $\Born \to \Vect$ is also a left adjoint, it preserves all colimits. In order to compute the colimit of a diagram $V: \calI \to \Born$, we first compute the colimit in $\Vect$ and then equip it with the appropriate bornology: Let $\sigma_i: V_i \to \Colim_{i \in \calI} V_i$ denote the maps of the colimit cocone. The \textdef{colimit bornology} is the convex vector space bornology generated by the images $\sigma_i(A_i)$ of all bounded subsets $A_i \subset V_i$ for all $i$. It is the smallest bornology such that all $\sigma_i$ are bounded.

Every limit can be obtained by computing a product followed by an equalizer \cite[Section V.2]{MacLane:Working} and dually for colimits. In an additive category such as $\Born$, we can replace the equalizer of two morphisms by the kernel of their difference. Analogously, their coequalizer is the difference cokernel. It is helpful to spell out the limit bornology of products, coproducts, quotients, and subspaces.

\begin{Proposition}
\label{prop:ProdCoprodBorn}
Let $(V_i)_{i \in I}$ be a family of convex bornological vector spaces. 
\begin{itemize}

\item[(i)] A subset $A \subset \prod_i V_i$ is bounded in the product bornology if and only if it is a subset of a product $\prod_i B_i$ of bounded subsets $B_i \subset V_i$.

\item[(ii)] A subset $A \subset \bigoplus_i V_i$ is bounded in the coproduct bornology if and only if it is a subset of the sum $\sigma_{i_1}(B_{i_1}) + \ldots + \sigma_{i_n}(B_{i_n})$ of the images of a finite number of bounded subsets $B_{i_1} \subset V_{i_1},\ldots, B_{i_n} \subset V_{i_n}$.

\item[(iii)] Let $i:V \to W$ be an injective linear map to $W \in \Born$. A subset $A \subset V$ is bounded in the subspace bornology if and only if $i(A)$ is bounded.

\item[(iv)] Let $p:V \to W$ be a surjective linear map from $V \in \Born$. A subset $A \subset W$ is bounded in the quotient bornology if and only if $A = p(B)$ is the image of a bounded subset $B \subset V$.

\end{itemize}
\end{Proposition}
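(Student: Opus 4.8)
The plan is to split the four parts according to whether the construction is a limit or a colimit, and to exploit the two bornology descriptions recalled just before the proposition. Products (i) and subspaces (iii) are limits, so I would invoke the \emph{limit bornology}: the largest bornology making the cone maps (the projections $\pr_i$, resp.\ the inclusion $i$) bounded. Coproducts (ii) and quotients (iv) are colimits, so I would invoke the \emph{colimit bornology}: the smallest convex vector space bornology making the cocone maps (the inclusions $\sigma_i$, resp.\ the projection $p$) bounded, equivalently the convex bornology generated by the images of bounded sets. The limit cases are essentially immediate; the genuine content lies in the colimit cases, where I must show that the explicitly described collections of sets are already bornologies, so that the abstract ``generated by'' produces nothing larger.

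For (i), the projections $\pr_i\colon \prod_j V_j \to V_i$ form the limit cone, so by the limit bornology a set $A$ is bounded if and only if each $\pr_i(A)$ is bounded. If $A$ is bounded, set $B_i := \pr_i(A)$; then $B_i$ is bounded and $A \subset \prod_i B_i$, which gives one direction. Conversely, if $A \subset \prod_i B_i$ with each $B_i$ bounded, then $\pr_i(A) \subset B_i$ is bounded for every $i$, so $A$ is bounded. For (iii), the subspace bornology is the largest bornology on $V$ making $i$ bounded; this is exactly $\{A \subset V : i(A) \text{ bounded}\}$, since any bornology making $i$ bounded is contained in this collection, while the collection itself is a convex vector space bornology (as $i$ is linear and $W$ is convex) and makes $i$ bounded. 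This matches the limit bornology when $V$ is presented as a kernel.

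For (ii) and (iv) I would verify closure under the bornology operations directly, using Definition~\ref{def:Disks3} to assume that all chosen bounded sets are disks. In (ii), let $\mathcal{B}$ be the collection of subsets contained in some finite sum $\sigma_{i_1}(B_{i_1}) + \dots + \sigma_{i_n}(B_{i_n})$ of images of bounded disks. Since the $\sigma_i$ are linear and each $\sigma_i(B_i)$ is a disk containing $0$, one checks that $\mathcal{B}$ is closed under subsets, finite unions (embed each summand into a common larger sum, using $0 \in \sigma_i(B_i)$), sums, scalar multiples, and circled and disked hulls (a linear image of a disk is a disk, and a sum of disks is a disk), so by Remark~\ref{rmk:VecBornExplicit} it is a convex vector space bornology; it contains every $\sigma_i(A_i)$ and is clearly the smallest such, hence equals the colimit bornology. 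In (iv) there is a single surjective linear map $p$, and since $p$ commutes with unions, sums, scalar multiples, and disked hulls, the images $p(B)$ of bounded sets are already closed under all these operations; thus $\{A : A \subset p(B),\ B \text{ bounded}\}$ is the smallest bornology making $p$ bounded. To upgrade $A \subset p(B)$ to the stated equality $A = p(B')$, I replace $B$ by $B' := B \cap p^{-1}(A)$, which is bounded as a subset of $B$ and satisfies $p(B') = A$ by surjectivity of $p$ onto $A$.

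The only non-routine point is the verification in (ii) that finite sums of images of bounded disks already form a bornology: the reduction to disks and the use of $0 \in \sigma_i(B_i)$ are what allow finite unions to be absorbed into finite sums, while the facts that a linear image of a disk is a disk and a sum of disks is a disk secure convexity. The corresponding step in (iv) is trivial precisely because there is only one cocone map, which is exactly why (iv) does not require finite sums. I expect no further obstacles, as the remaining verifications are routine set-theoretic manipulations with linear maps.
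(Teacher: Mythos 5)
Your proof is correct and complete; the paper itself offers no argument here, deferring entirely to Meyer's book (Chapters 1.3.1 and 1.3.5), and your verification is the standard one that reference contains. The one step with genuine content --- checking in (ii) that finite sums of images of bounded disks already form a convex vector space bornology, so that ``generated by'' adds nothing --- is handled correctly via the reduction to disks and the observation that $0$ lies in each summand.
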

\begin{proof}
See \cite[Chapter~1.3.1 and 1.3.5]{Meyer2007}.
\end{proof}

\begin{Lemma}
\label{lem:CoProdComplete}
Let $(V_i)_{i \in I}$ be a family of convex bornological vector spaces. If all $V_i$ are separated or complete, then so is the product bornology on $\prod_i V_i$ and the coproduct bornology on $\bigoplus_i V_i$.
\end{Lemma}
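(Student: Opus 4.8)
The plan is to verify the defining property from Definition~\ref{def:Disks3} directly: for every bounded set I must exhibit a bounded norming (resp.\ completant) disk containing it. In all four cases the strategy is identical. I use the explicit description of bounded sets in Proposition~\ref{prop:ProdCoprodBorn}, replace the bounded set in each factor by a bounded norming (resp.\ completant) disk---which exists precisely because each $V_i$ is separated (resp.\ complete)---and assemble these into a single disk whose Minkowski functional I compute as a supremum/maximum of the factor seminorms. By Remark~\ref{rmk:NormingDiskClosed} I may assume throughout that all the chosen disks are internally closed, so that $\|v_i\|_{D_i} \le r$ is equivalent to $v_i \in r D_i$.

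For the product $\prod_i V_i$: given a bounded $A$, Proposition~\ref{prop:ProdCoprodBorn}(i) gives bounded $B_i \subseteq V_i$ with $A \subseteq \prod_i B_i$; I choose bounded norming (resp.\ completant) disks $D_i \supseteq B_i$ and set $D = \prod_i D_i$, a bounded disk containing $A$. Since $v \in rD$ iff $v_i \in r D_i$ for every $i$, the gauge is $\|v\|_D = \sup_i \|v_i\|_{D_i}$, so that $(V_D, \|\Empty\|_D)$ is isometrically the bounded ($\ell^\infty$-type) product $\{(v_i) : \sup_i \|v_i\|_{D_i} < \infty\}$ of the normed spaces $(V_{i,D_i}, \|\Empty\|_{D_i})$. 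This gauge is a norm, and it is a Banach space when the factors are Banach (a sup-norm Cauchy sequence is coordinatewise Cauchy, its coordinatewise limits have finite sup norm, and the convergence is uniform). Hence $D$ is norming (resp.\ completant) and $\prod_i V_i$ is separated (resp.\ complete). For the product this also follows from the remark preceding the lemma, as a product is a limit.

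For the coproduct $\bigoplus_i V_i$ the decisive feature is that, by Proposition~\ref{prop:ProdCoprodBorn}(ii), every bounded $A$ is contained in a sum $\sigma_{i_1}(B_{i_1}) + \dots + \sigma_{i_n}(B_{i_n})$ involving only \emph{finitely many} indices. Choosing bounded norming (resp.\ completant) disks $D_{i_k} \supseteq B_{i_k}$ and setting $D = \sigma_{i_1}(D_{i_1}) + \dots + \sigma_{i_n}(D_{i_n})$, I obtain a bounded disk containing $A$. Because the indices are distinct the internal sum is direct, so the decomposition of $v \in V_D$ as $v = \sum_k \sigma_{i_k}(v_k)$ is unique and $\|v\|_D = \max_{1\le k \le n}\|v_k\|_{D_{i_k}}$; thus $(V_D, \|\Empty\|_D)$ is the \emph{finite} direct sum of the spaces $(V_{i_k,D_{i_k}}, \|\Empty\|_{D_{i_k}})$ with the max norm. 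This is a norm, and it is complete whenever the summands are, since a finite direct sum of Banach spaces is Banach. Hence $D$ is norming (resp.\ completant) and $\bigoplus_i V_i$ is separated (resp.\ complete).

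The gauge computations are routine; the one conceptual point---and the reason the coproduct case is not automatic---is that coproducts are colimits, so unlike products they are not covered by the limit remark, and a priori the forgetful functor to $\Vect$ need not land in the reflective subcategory $\cBorn$. What rescues completeness is exactly the finiteness of support in the coproduct bornology, which collapses the problem to finite biproducts, where product and coproduct coincide and completeness is elementary. I therefore expect this finiteness observation to be the crux of the argument.
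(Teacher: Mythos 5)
Your proof is correct. The paper gives no argument of its own here---it simply cites Section~1.3.5 of Meyer's book---and your direct verification (realizing $(V_D,\|\Empty\|_D)$ for $D=\prod_i D_i$ as the sup-norm product of the spaces $(V_{i,D_i},\|\Empty\|_{D_i})$, and for $D=\sigma_{i_1}(D_{i_1})+\dots+\sigma_{i_n}(D_{i_n})$ as their finite max-norm direct sum, using the finiteness of support in the coproduct bornology) is exactly the standard argument that reference contains. One minor caveat: your closing aside that the product case ``also follows from the remark preceding the lemma'' is circular in the paper's own logical order, since that remark about limit bornologies is what this lemma is meant to underwrite; but this is only an aside and your self-contained proof does not rely on it.
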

\begin{proof}
     See Section~1.3.5 in \cite{Meyer2007}.
\end{proof}

Lemma~\ref{lem:CoProdComplete} implies that the product of a family in $\sBorn$ or $\cBorn$ is given by the product of the vector spaces equipped with the product bornology. The analogous statement holds for coproducts. The statement for products also follows formally using the adjunctions of Section~\ref{sec:separationcompletion}.

\begin{Lemma}
\label{lem:SubBornSep}
Let $V \hookrightarrow W$ be an injective linear map to $W \in \Born$. If $W$ is separated, then so is the subspace bornology on $V$.
\end{Lemma}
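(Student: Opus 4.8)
The plan is to prove separatedness directly from Definition~\ref{def:Disks3}(ii): I must exhibit, for every subset of $V$ that is bounded in the subspace bornology, a bounded norming disk containing it. Write $i \colon V \hookrightarrow W$ for the injection. Given a subset $A \subset V$ bounded in the subspace bornology, Proposition~\ref{prop:ProdCoprodBorn}(iii) says that $i(A)$ is bounded in $W$; since $W$ is separated, $i(A)$ lies in some bounded norming disk $E \subset W$.

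The key construction is to pull this disk back: set $D := i^{-1}(E)$. First I would check that $D$ is a disk, using the characterization in the Terminology paragraph after Definition~\ref{def:Disks1}: for $a,b \in D$ and scalars with $|\mu|+|\nu| \leq 1$ one has $i(\mu a + \nu b) = \mu\, i(a) + \nu\, i(b) \in E$ by the disk property of $E$, so $\mu a + \nu b \in D$. Next, $D$ is bounded in the subspace bornology, since $i(D) = E \cap i(V) \subset E$ is bounded (again Proposition~\ref{prop:ProdCoprodBorn}(iii)), and $A \subset D$ because $i(A) \subset E$. Thus $D$ is a bounded disk containing $A$.

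The one substantive point --- the step I expect to be the crux --- is that $D$ is \emph{norming}, i.e.\ that $\|\Empty\|_D$ restricts to an honest norm on $V_D$ rather than merely a seminorm. Here I would record the estimate $\|i(v)\|_E \leq \|v\|_D$ for $v \in V_D$, which holds because $v \in rD$ forces $i(v) \in rE$. Consequently, if $\|v\|_D = 0$ then $\|i(v)\|_E = 0$; since $E$ is norming this gives $i(v) = 0$, and injectivity of $i$ yields $v = 0$. Hence $\|\Empty\|_D$ separates the points of $V_D$, so $D$ is a bounded norming disk containing $A$, which is exactly what separatedness of the subspace bornology demands.

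Alternatively, I could argue via the Mackey characterization stated after Definition~\ref{def:Mackey-Cauchy}: a sequence in $V$ that Mackey converges with respect to a bounded disk $D$ is pushed by $i$ to a sequence Mackey converging in $W$ with respect to the bounded disk $i(D)$, so uniqueness of Mackey limits in $W$ together with injectivity of $i$ forces uniqueness of Mackey limits in $V$. I would prefer the direct disk-pullback argument, since it is self-contained modulo the definitions and avoids invoking that equivalence.
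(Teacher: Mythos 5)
Your proof is correct and complete: the pullback disk $D = i^{-1}(E)$ is indeed a bounded disk in the subspace bornology containing $A$, the estimate $\|i(v)\|_E \leq \|v\|_D$ holds, and injectivity of $i$ then forces $\|\Empty\|_D$ to be a norm on $V_D$, which is exactly Definition~\ref{def:Disks3}(ii). The paper does not give its own argument here --- it only cites Hogbe-Nlend --- and your disk-pullback proof is the standard one that reference supplies, so there is nothing substantive to compare.
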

\begin{proof}
See Corollary~(a) in Section~2:10 of \cite{HogbeNlend1977}.
\end{proof}

Lemma~\ref{lem:CoProdComplete} and Lemma~\ref{lem:SubBornSep} together imply that the limit of a diagram in $\sBorn$ is given by the limit of the vector spaces equipped with the limit bornology. Quotients of separated bornological vector spaces are generally not separated.

\begin{Lemma}
\label{lem:separatedcompletequotients}
Let $V \subset W$ be a bornological vector subspace of the bornological vector space $W$. Then $V$ is
separated if $W$ is separated, and $W/V$ is separated if and only if $V$ is closed in $W$. If $W$ is complete, then $V$ is complete if and only if it is closed. In this case, $W/V$ is complete as well.
\end{Lemma}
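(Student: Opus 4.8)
The first assertion, that $V$ is separated whenever $W$ is, is exactly Lemma~\ref{lem:SubBornSep}, so nothing remains there. For the rest I would work entirely with Mackey convergence (Definition~\ref{def:Mackey-Cauchy}), using that separatedness is uniqueness of Mackey limits and that completeness (Definition~\ref{def:Disks3}) means every bounded set lies in a bounded completant disk $D$, i.e.\ one for which $(W_D, \|\Empty\|_D)$ is a Banach space; recall also that completant disks are norming, so complete implies separated. Throughout, \emph{closed} means Mackey-closed: $V$ contains the Mackey limit of every Mackey-convergent sequence drawn from $V$. The single computational device underlying everything is the identity, for a bounded disk $D \subseteq W$ and the quotient map $p : W \to W/V$,
\[
  \|p(w)\|_{p(D)} = \inf_{v \in V} \|w - v\|_D,
\]
which follows by writing out the Minkowski functional of $p(D)$ and which identifies $\|\Empty\|_{p(D)}$ with the quotient seminorm. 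Note that $p(D)$ is a bounded disk of $W/V$ by the description of the quotient bornology in Proposition~\ref{prop:ProdCoprodBorn}(iv).

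For \emph{$W/V$ separated $\iff$ $V$ closed}, the forward direction is immediate: if $(v_n)$ in $V$ Mackey converges to $w$ in $W$, then $p$ preserves Mackey convergence (Proposition~\ref{prop:BoundedContinuous}), so $0 = p(v_n)$ converges to $p(w)$, and separatedness forces $p(w)=0$, i.e.\ $w \in V$. For the converse the identity above lets me lift convergence: if $p(w_n) \to 0$ with respect to $p(D)$, then I can choose $v_n \in V$ with $\|w_n - v_n\|_D \to 0$, so the lift $w_n - v_n \to 0$ Mackey in $W$. Applying this to a sequence Mackey converging in $W/V$ to both $p(a)$ and $p(b)$, I subtract two such lifts to produce a sequence in $V$ Mackey converging in $W$ to $a-b$; Mackey-closedness of $V$ then gives $a-b \in V$, i.e.\ $p(a)=p(b)$.

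For \emph{$V$ complete $\iff$ $V$ closed} (with $W$ complete): for ``$\Leftarrow$'', given a bounded $B \subseteq V$ I enclose it in a bounded completant disk $D \subseteq W$ (available since $W$ is complete, and taken internally closed by Remark~\ref{rmk:NormingDiskClosed}) and show that $D \cap V$ is a bounded completant disk of $V$ containing $B$. The point is that a $\|\Empty\|_{D\cap V}$-Cauchy sequence is $\|\Empty\|_D$-Cauchy, hence $\|\Empty\|_D$-converges to some $x \in W_D$; internal closedness of $D$ keeps the tail differences inside $\epsilon(D\cap V)$, while Mackey-closedness of $V$ forces $x \in V$, giving $\|\Empty\|_{D\cap V}$-convergence to $x$. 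For ``$\Rightarrow$'', a sequence in $V$ Mackey converging in $W$ is Mackey-Cauchy, and the witnessing disk $D$ may be replaced by $D\cap V$, so the sequence is Mackey-Cauchy in $V$; completeness of $V$ supplies a Mackey limit in $V$, and since $W$ is complete hence separated, this limit agrees with the original limit in $W$, which therefore lies in $V$.

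Finally, for the completeness of $W/V$ when $W$ is complete and $V$ is closed, I would again take a bounded completant internally closed disk $D \subseteq W$ and prove that $p(D)$ is completant. Using the seminorm identity, $\bigl((W/V)_{p(D)}, \|\Empty\|_{p(D)}\bigr)$ is isometrically $W_D/(V \cap W_D)$ with its quotient norm; since $V$ is Mackey-closed, $V \cap W_D$ is $\|\Empty\|_D$-closed in the Banach space $W_D$, and a Banach space modulo a closed subspace is again Banach, so $p(D)$ is completant. As every bounded disk of $W/V$ sits inside such a $p(D)$, the space $W/V$ is complete. The main obstacle, and the step I would be most careful about, is this last quotient argument together with its twin in the separatedness proof: one must (a) establish the quotient-seminorm identity, (b) lift Mackey data through $p$, and (c) see precisely that Mackey-closedness of $V$ upstairs is exactly what makes $V \cap W_D$ norm-closed in each Banach space $W_D$, which is the hinge connecting the bornological notion of ``closed'' to the functional-analytic completeness of the quotient.
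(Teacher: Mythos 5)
Your proof is correct. Note, though, that the paper does not actually prove this lemma: it defers entirely to \cite{Meyer2007} (Lemma~1.6.1 and Sections~1.3.2--1.3.3), so your argument is a self-contained replacement rather than a variant of an argument in the text; it follows the same standard route Meyer takes. The load-bearing identity $\|p(w)\|_{p(D)} = \inf_{v\in V}\|w-v\|_D$ is exactly right and, combined with the observation that for $w \in V$ one has $\|w\|_{D\cap V}=\|w\|_D$ (since $w\in rD$ and $w\in V$ force $w\in r(D\cap V)$), it makes all four implications go through: lifting Mackey-null sequences from $W/V$ to $W$ for separatedness, identifying $\bigl((W/V)_{p(D)},\|\Empty\|_{p(D)}\bigr)$ with the Banach quotient $W_D/(V\cap W_D)$ for completeness, and seeing that Mackey-closedness of $V$ is precisely norm-closedness of $V\cap W_D$ in each $W_D$. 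Two small points of hygiene: (a) you should state up front, as you do, that ``closed'' means Mackey-closed, since the paper uses the word without defining it and your equivalences are sensitive to this choice; (b) the internal closedness of $D$ is not actually needed in the $V$-closed-implies-complete step --- the agreement of $\|\Empty\|_{D\cap V}$ with $\|\Empty\|_D$ on $V\cap W_D$ already gives the convergence of the tail --- though invoking Remark~\ref{rmk:NormingDiskClosed} costs nothing.
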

\begin{proof}
See Lemma~1.6.1 and also Sections~1.3.2 and 1.3.3 in \cite{Meyer2007}.
\end{proof}

\begin{Corollary}
\label{cor:CompleteCoker}
The cokernel of a morphism $f: V \to W$ of complete bornological vector spaces is given by the canonical epimorphism $W \to W/\overline{f(V)}$.
\end{Corollary}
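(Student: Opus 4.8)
The plan is to verify directly that the canonical epimorphism $q\colon W \to W/\overline{f(V)}$ satisfies the universal property of the cokernel in $\cBorn$, namely that it is the coequalizer of $f$ with the zero morphism, equivalently the difference cokernel of $f$, as recalled just before Proposition~\ref{prop:ProdCoprodBorn}.

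First I would check that the proposed cokernel is a legitimate object of $\cBorn$ and that $q \circ f = 0$. Since $\overline{f(V)}$ is by construction closed in the complete space $W$, Lemma~\ref{lem:separatedcompletequotients} guarantees that $W/\overline{f(V)}$ is again complete, and the quotient map $q$ is bounded for the quotient bornology of Proposition~\ref{prop:ProdCoprodBorn}(iv). The composite $q \circ f$ vanishes because $f(V) \subseteq \overline{f(V)} = \ker q$.

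The substance of the argument is the universal property. Given any $g\colon W \to U$ in $\cBorn$ with $g \circ f = 0$, I must produce a unique bounded linear $h\colon W/\overline{f(V)} \to U$ with $h \circ q = g$. On the level of vector spaces this amounts to the inclusion $\overline{f(V)} \subseteq \ker g$. We only know a priori that $f(V) \subseteq \ker g$, so the key step is to upgrade this to the closure. Here I would invoke Proposition~\ref{prop:BoundedContinuous}: since $g$ is bounded it preserves Mackey convergence, hence it sends every Mackey limit of points of $f(V)$ to a Mackey limit of points of $g(f(V)) = \{0\}$; as $U$ is complete and therefore separated, such a limit is unique and equal to $0$. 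This yields $g(\overline{f(V)}) = \{0\}$, so $g$ factors through $q$ as a linear map $h$. That $h$ is bounded follows once more from Proposition~\ref{prop:ProdCoprodBorn}(iv): a bounded subset of $W/\overline{f(V)}$ is the image $q(B)$ of a bounded $B \subseteq W$, and $h(q(B)) = g(B)$ is bounded. Uniqueness of $h$ is immediate from the surjectivity of $q$.

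The main obstacle I anticipate is precisely the closure step $\overline{f(V)} \subseteq \ker g$, which hinges on matching the meaning of the bar (the Mackey/bornological closure) with the convergence preserved by Proposition~\ref{prop:BoundedContinuous}; one must be careful that the closure is realized by Mackey-convergent sequences so that boundedness of $g$ applies. A more structural alternative, which sidesteps pointwise arguments, is to use that $\cBorn$ is reflective in $\Born$: the cokernel in $\cBorn$ is then the reflector (completion $\Comp \circ \Sep$) applied to the cokernel $W/f(V)$ computed in $\Born$, and the remaining task is to identify this reflection---obtained by separating, i.e.\ dividing out the closure of $\{0\}$, and then completing---with $W/\overline{f(V)}$, the last step again using Lemma~\ref{lem:separatedcompletequotients} to see that the quotient by the closed subspace $\overline{f(V)}$ is already complete, so no further completion is needed.
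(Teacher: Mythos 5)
Your proposal is correct, and your closing ``structural alternative'' is in fact precisely the paper's (implicit) argument: the corollary is stated without proof as a consequence of Lemma~\ref{lem:separatedcompletequotients}, the point being that colimits in the reflective subcategory $\cBorn$ are obtained by applying $\Comp\circ\Sep$ to the colimit computed in $\Born$, that $\Sep(W/f(V)) = W/\overline{f(V)}$ by Proposition~\ref{prop:SepDivZero}, and that this quotient is already complete by the lemma because $\overline{f(V)}$ is closed in the complete space $W$. Your primary, hands-on verification of the universal property is a genuinely different and perfectly legitimate route; it buys a self-contained argument that does not appeal to the general theory of reflective colimits. The one point to tighten is exactly the one you flag: the bornological closure $\overline{f(V)}$ is the smallest Mackey-closed subspace containing $f(V)$, and this need not consist only of one-step Mackey limits of sequences in $f(V)$ (the Mackey adherence operation is not idempotent in general). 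So rather than arguing that $g$ kills every Mackey limit of points of $f(V)$, you should argue that $\ker g$ is itself Mackey-closed --- if $v_n\in\ker g$ Mackey converges to $v$, then by Proposition~\ref{prop:BoundedContinuous} $g(v_n)=0$ Mackey converges to $g(v)$, which equals $0$ by separatedness of $U$ --- and, containing $f(V)$, it therefore contains $\overline{f(V)}$ by minimality of the closure. With that adjustment both of your routes go through.
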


We will need the following statement about convergence in the product bornology.

\begin{Lemma}
\label{lem:InfSeqConverge}
Let $v_i \in V_i$, $i \in \bbN$ be elements of a countable family of convex bornological vector spaces. Then the sequence $v_{\leq n} = (v_1, \ldots, v_n, 0, 0, \ldots) \in \prod_{i=1}^\infty V_i$ Mackey converges to $v = (v_1, v_2, \ldots)$.
\end{Lemma}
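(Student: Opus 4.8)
The plan is to apply the definition of Mackey convergence (Definition~\ref{def:Mackey-Cauchy}) directly, producing a single bounded disk $D \subset \prod_{i=1}^\infty V_i$ together with a convergence rate. First I would record the difference
\[
v_{\leq n} - v = (0,\ldots,0,-v_{n+1},-v_{n+2},\ldots)\,,
\]
whose first $n$ coordinates vanish and whose $i$-th coordinate for $i > n$ equals $-v_i$.

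Since each $V_i$ is convex and the singleton $\{v_i\}$ is bounded, I can choose a bounded disk $B_i \subset V_i$ with $v_i \in B_i$. The essential idea is \emph{not} to take the naive product $\prod_i B_i$---because $v_i$ is a fixed, non-decaying vector, its tail would never enter $\epsilon\prod_i B_i$---but to build in diverging scaling factors. Concretely I would set
\[
D := \prod_{i=1}^\infty i\, B_i\,,
\]
which is a disk since a product of disks is circled and convex, and which is bounded in the product bornology by Proposition~\ref{prop:ProdCoprodBorn}(i) because each $i\,B_i$ is bounded. As $v_i \in B_i \subseteq i\,B_i$ for every $i$, we have $v \in D \subseteq V_D$.

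It then remains to verify the rate. Given $\epsilon > 0$, choose $N \in \bbN$ with $N > 1/\epsilon$. For $n \geq N$ and any $i > n$ one has $\epsilon i > \epsilon n \geq \epsilon N > 1$, so by circledness $-v_i \in B_i \subseteq \epsilon\, i\, B_i$; for $i \leq n$ the $i$-th coordinate is $0 \in \epsilon\, i\, B_i$. Coordinatewise this says exactly $v_{\leq n} - v \in \epsilon D$ for all $n \geq N$, which is Mackey convergence to $v$.

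The only real subtlety---and the step I expect to be the crux---is the choice of the diverging weights $i\,B_i$ in place of a constant product disk; this is precisely what allows the fixed tail entries $v_i$ to be absorbed once the index $i$ is large, and it relies on the freedom in the product bornology (Proposition~\ref{prop:ProdCoprodBorn}(i)) to enlarge each factor independently while remaining bounded.
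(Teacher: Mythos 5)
Your proof is correct and follows essentially the same route as the paper's: both use the weighted product disk $D=\prod_{i=1}^\infty i\,D_i$, whose boundedness comes from Proposition~\ref{prop:ProdCoprodBorn}~(i), to absorb the fixed tail entries. The paper simply packages your $\epsilon$--$N$ bookkeeping as the single containment $v_{\leq n}-v\in\frac{1}{n}D$.
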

\begin{proof}
For all $i \in \bbN$, let $D_i$ be a bounded disk containing $v_i$. By Proposition~\ref{prop:ProdCoprodBorn}~(i), $D = \prod_{i=1}^\infty iD_i$ is a bounded disk in $V = \prod_{i=1}^\infty V_i$. It contains $v$ and $v_{\leq n}$ for all $n \in \bbN$. For the difference we obtain
\begin{equation*}
\begin{split}
  v_{\leq n} - v 
  &= (0, \ldots, 0, -v_{n+1}, -v_{n+2}, \ldots)
  \\
  &\in \frac{1}{n}D_1 \times \ldots 
  \times \frac{n}{n} D_n
  \times \frac{n+1}{n} D_{n+1} 
  \times \frac{n+2}{n} D_{n+2} 
  \times
  \ldots
  = \frac{1}{n}D
  \,. \qedhere
\end{split}
\end{equation*}
\end{proof}

Lemma~\ref{lem:InfSeqConverge} illustrates the dependence of Mackey convergence on the disk. While $v_{\leq n} - v \in D' = \prod_{i=1}^\infty D_i$, the sequence does not converge in the norm $\lVert \Empty \rVert_{D'}$. In the case $V_i = \bbR$, Lemma~\ref{lem:InfSeqConverge} shows that Mackey convergence in the product bornology of $\prod_{i=1}^\infty \bbR$ is the convergence of formal power series in $\bbR[[x]]$.

\subsection{Separation and completion}
\label{sec:separationcompletion}
The inclusion $I:\SBorn \hookrightarrow \Born$ has a left adjoint,
\begin{equation*}
\begin{tikzcd}
\sBorn
  \ar[r, shift right=1.2, hook, "I"']
  \ar[r, phantom, "\scriptscriptstyle\boldsymbol{\bot}"]
& 
\Born
  \ar[l,shift right=1.2, "\Sep"']
\,,
\end{tikzcd}
\end{equation*}
called the separation functor. In order to describe the functor explicitly, we note that a bornological vector space is separated if and only if $\{0\}$ is the only bounded vector subspace. This suggests that the separation of $V$ is given by the quotient of $V$ by the largest bounded vector subspace of $V$.

\begin{Proposition}
\label{prop:SepDivZero}
The separation functor is given by $\Sep(V) = V/\overline{\{0\}}$, where the quotient is taken in the category of bornological vector spaces. 
\end{Proposition}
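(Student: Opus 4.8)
The plan is to verify directly that the canonical quotient map $q\colon V \to V/\overline{\{0\}}$ is the unit of the adjunction $\Sep \dashv I$ at $V$, which amounts to showing that the target lies in $\sBorn$ and that $q$ enjoys the universal property of the reflection. First I would pin down $\overline{\{0\}}$ explicitly. Tracing the definition of bornological (Mackey) closure, $v \in \overline{\{0\}}$ iff the constant sequence $0$ Mackey converges to $v$, i.e.\ iff $\|v\|_D = 0$ for some bounded disk $D$, equivalently iff the line $\bbK v$ is bounded. Using convexity (sums and circled hulls of bounded sets are bounded and sit inside bounded disks) one checks this set is a vector subspace, and it is closed as a bornological closure. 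Note that the motivational description as ``the largest bounded subspace'' is only heuristic: in general $\overline{\{0\}}$ need not itself be bounded, and no largest bounded subspace need exist, so the closure is the object one must use.

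Once $\overline{\{0\}}$ is identified as a \emph{closed} subspace, Lemma~\ref{lem:separatedcompletequotients} immediately gives that $V/\overline{\{0\}}$ is separated, hence lies in $\sBorn$; the quotient map $q$ is bounded by construction of the quotient bornology. For the universal property, let $W \in \sBorn$ and let $f\colon V \to W$ be a bounded linear map. The key step is that $f$ annihilates $\overline{\{0\}}$: if $v \in \overline{\{0\}}$, the constant sequence $0$ Mackey converges to $v$, so by Proposition~\ref{prop:BoundedContinuous} the (constant) sequence $f(0) = 0$ Mackey converges to $f(v)$ in $W$; since $W$ is separated, Mackey limits are unique, forcing $f(v) = 0$. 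Therefore $f$ descends to a unique linear map $\bar f\colon V/\overline{\{0\}} \to W$ with $f = \bar f \circ q$, and $\bar f$ is bounded because, by Proposition~\ref{prop:ProdCoprodBorn}~(iv), every bounded subset of $V/\overline{\{0\}}$ has the form $q(B)$ for some bounded $B \subset V$, whence $\bar f(q(B)) = f(B)$ is bounded. Uniqueness of $\bar f$ follows from surjectivity of $q$. This identifies $\Sep(V)$ with $V/\overline{\{0\}}$.

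The only genuinely non-formal input is that the quotient is separated, which rests entirely on $\overline{\{0\}}$ being closed; this is handed to us by Lemma~\ref{lem:separatedcompletequotients}, so the principal care needed lies in the bookkeeping of Step~1 (correctly describing $\overline{\{0\}}$ and confirming it is a closed subspace). Everything else is the routine verification of a universal property, with Proposition~\ref{prop:BoundedContinuous} doing the real work in showing $f|_{\overline{\{0\}}} = 0$.
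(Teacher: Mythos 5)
Your overall strategy (verify the universal property of the reflection directly) is sound, and since the paper only cites Hogbe-Nlend for this proposition, a self-contained argument would be welcome. But Step~1 contains a genuine gap that propagates into the later steps. The set you describe, $N := \{v \in V : \bbK v \text{ is bounded}\}$, is the Mackey \emph{adherence} of $\{0\}$ (the set of Mackey limits of sequences in $\{0\}$), not its closure: the adherence operation on bornological vector spaces is not idempotent, so $N$ need not be closed, and ``it is closed as a bornological closure'' is exactly the assertion that fails. Concretely, take $V = \bbR f \oplus \bigoplus_{n \geq 1} \bbR e_n$ with the convex vector bornology generated by the lines $\bbR e_n$ and the disked hull $D_0$ of $\{n f - n^2 e_n : n \geq 1\}$. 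Then $N = \Span\{e_n\}$ (a linear functional with $\phi(f)=1$ and $\phi(e_n) = 1/n$ for large $n$ is bounded on every bounded set but not on $\bbR f$, so $f \notin N$), yet $f - n e_n \in \tfrac{1}{n} D_0$ shows $f \in \mathrm{adh}(N) \setminus N$, so $N$ is not closed. Moreover $q(D_0) \supset \bbR\, q(f)$ is a nonzero bounded subspace of $V/N$, so $V/N$ is \emph{not} separated: quotienting by your $N$ does not produce $\Sep(V)$. Your Step~3 has the matching defect --- the argument ``the constant sequence $0$ Mackey converges to $v$, hence $f(v)=0$'' only shows that $f$ annihilates the adherence, not the closure.

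The fix is to work with the genuine closure $\overline{\{0\}}$ (the smallest subset containing $0$ that is stable under Mackey limits of sequences; one checks it is a subspace, e.g.\ by transfinite iteration of the adherence) and to avoid describing it explicitly. Separatedness of $V/\overline{\{0\}}$ is then really immediate from Lemma~\ref{lem:separatedcompletequotients}. For the universal property, replace your limit computation by: if $f \colon V \to W$ is bounded and $W$ is separated, then $f^{-1}(0)$ is Mackey-closed (bounded maps preserve Mackey convergence by Proposition~\ref{prop:BoundedContinuous}, and Mackey limits in $W$ are unique), hence $f^{-1}(0) \supset \overline{\{0\}}$. The rest of your argument (descent of $f$, boundedness of $\bar f$ via Proposition~\ref{prop:ProdCoprodBorn}~(iv), uniqueness from surjectivity of $q$) is correct as written.
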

\begin{proof}
See Proposition~(2) in Section~2:12 of \cite{HogbeNlend1977}.
\end{proof}

Any colimit in $\sBorn$ is obtained by computing the colimit in $\Born$ and then applying the separation functor. Since every completant disk is norming, every complete bornological vector space is separated. The inclusion $J:\CBorn \hookrightarrow \SBorn$ has a left adjoint,
\begin{equation}
\label{eq:CompJAdjoint}
\begin{tikzcd}
\cBorn
  \ar[r, shift right=1.2, hook, "J"']
  \ar[r, phantom, "\scriptscriptstyle\boldsymbol{\bot}"]
& 
\sBorn
  \ar[l,shift right=1.2, "\Comp"']
\,,
\end{tikzcd}
\end{equation}
called the completion functor. It can be constructed by formally adding limits of Mackey-Cauchy sequences.
More concisely, it can be defined in terms of the completion functor $(\Empty)^c$ from normed spaces to Banach spaces.
By definition, every separated bornological vector space $V$ may be written as a colimit of normed spaces $V = \Colim_{D\in \Disk(V)} V_D$, where $\Disk(V)$ denotes the filtered category of disks with absorption (Definition~\ref{def:Absorb}) as morphisms. The cocontinuous extension of the Banach completion functor then provides a completion functor \cite[Chapter 1.5]{Meyer2007}:
\begin{equation*}
  \Comp(V) = \Colim_{D \in \Disk(V)}
  (V_D)^c
  \,.
\end{equation*}

\begin{Remark}
The previous construction implicitly uses that $\cBorn$ is equivalent to the category of strict ind-Banach spaces, that is, the full subcategory of ind-Banach spaces consisting of objects represented by filtered diagrams of monomorphisms \cite[Chapter~1.5]{Meyer2007}.
\end{Remark}

\begin{Proposition}[Proposition~1.126 in \cite{Meyer2007}]
The categories $\Born$, $\sBorn$, and $\cBorn$ have all limits and colimits.
\end{Proposition}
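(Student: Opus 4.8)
The plan is to reduce everything to the bicompleteness of $\Vect$, treating $\Born$ directly and then deriving $\sBorn$ and $\cBorn$ from the reflective-subcategory formalism of Section~\ref{sec:separationcompletion}.

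First I would show that the forgetful functor $U\colon \Born \to \Vect$ \emph{creates} both limits and colimits. Given a diagram $V\colon \calI \to \Born$, one forms $\Lim_i UV_i$ in $\Vect$ (which exists since $\Vect$ is complete) and equips it with the limit bornology, i.e.\ the largest bornology making all projections $\tau_i$ bounded. A cone in $\Born$ over $V$ is in particular a cone in $\Vect$, so it factors uniquely through $\Lim_i UV_i$ as a linear map; that map is bounded precisely because a linear map into the limit is bounded if and only if each composite with $\tau_i$ is bounded, which is the defining property of the limit bornology. Dually, $\Colim_i UV_i$ with the colimit bornology (the smallest making all coprojections $\sigma_i$ bounded) is a colimit in $\Born$, since a map out of it is bounded if and only if each composite with $\sigma_i$ is. As $\Vect$ is bicomplete, this yields all limits and colimits in $\Born$, realized concretely by the products, coproducts, kernels, and cokernels described in Proposition~\ref{prop:ProdCoprodBorn} together with the fact that every limit is a product followed by an equalizer and dually.

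For $\sBorn$ and $\cBorn$ I would invoke that both are reflective full subcategories of $\Born$: $\Sep\dashv I$ exhibits $\sBorn$ as reflective in $\Born$, $\Comp\dashv J$ exhibits $\cBorn$ as reflective in $\sBorn$, and composing inclusions makes $\cBorn$ reflective in $\Born$ with reflector $\Comp\circ\Sep$. For limits the inclusions are right adjoints, hence preserve limits, so it remains to check that the subcategories are closed under the limits computed in $\Born$. Products are handled by Lemma~\ref{lem:CoProdComplete}, and equalizers reduce to kernels, which are Mackey-closed subspaces (by Proposition~\ref{prop:BoundedContinuous} the preimage of $0$ under a bounded map is closed under Mackey limits) and hence separated by Lemma~\ref{lem:SubBornSep}, resp.\ complete by Lemma~\ref{lem:separatedcompletequotients}. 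Thus the limit bornology of a diagram of separated (resp.\ complete) spaces is again separated (resp.\ complete), and limits in $\sBorn$ and $\cBorn$ are computed as in $\Born$. For colimits I would use the standard reflective construction: the colimit of $F\colon \calI \to \sBorn$ is $\Sep\bigl(\Colim_i IF_i\bigr)$, and in $\cBorn$ one applies $\Comp\circ\Sep$ to the $\Born$-colimit, the universal property following formally from the adjunctions.

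The categorical skeleton is routine once the analytic inputs are in place, so the weight of the proof sits in the cited lemmas rather than in the formal argument: the only genuine obstacle is verifying that completeness and separatedness are preserved under the limit bornology — in particular that an arbitrary product of completant bornologies is again completant and that a Mackey-closed subspace of a complete space is complete. Given Lemmas~\ref{lem:CoProdComplete}, \ref{lem:SubBornSep}, and \ref{lem:separatedcompletequotients} and the existence of the reflectors $\Sep$ and $\Comp$ from Section~\ref{sec:separationcompletion}, everything else is the formal manipulation of adjunctions.
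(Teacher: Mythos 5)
Your proof is correct and follows essentially the same route the paper takes: the paper itself only cites Meyer for this proposition, but the surrounding text of Section~2.2--2.3 (forgetful functor being both a left and a right adjoint, limit/colimit bornologies, Lemmas~\ref{lem:CoProdComplete}, \ref{lem:SubBornSep}, \ref{lem:separatedcompletequotients}, and the reflectors $\Sep$ and $\Comp$) is exactly the argument you assemble. The only cosmetic remark is that closure of a reflective subcategory under limits is automatic from the adjunction, so your explicit verification that kernels of bounded maps into separated/complete spaces are closed is not needed for bare existence, though it does identify the limits concretely as the paper does.
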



\begin{Remark}
\label{rmk:vonNeumann}
The construction of the von Neumann bornology in Example~\ref{ex:vonNeumanBorn} defines a functor $\vN:\lcTVS\to \Born$. Equipping a bornological vector space with its bornivorous topology (Definition~\ref{def:BornivorousTop}) is the right adjoint $\Born\to \lcTVS$. The adjunction restricts to the subcategories of separated bornological spaces on the left and Hausdorff topological spaces on the right. $\vN$ maps complete locally convex vector spaces to complete bornological vector spaces.
\end{Remark}

\subsection{Tensor products}

The goal of this section is to define a tensor product on the category of complete bornological spaces. In a first step, we recall that the tensor product in the category $\Born$ of convex bornological vector spaces is defined by the usual universal property.

\begin{Definition}
\label{def:TensorBorn}
Let $V$ and $W$ be convex bornological vector spaces. Their tensor product, if it exists, is a convex bornological vector space $V \Botimes W$ together with a bounded bilinear map $i: V \times W \to V \Botimes W$ such that for all $U \in \Born$ and all bounded bilinear maps $\phi: V \times W \to U$ there is a unique bounded linear map $f: V \Botimes W \to U$ such that the diagram of the underlying sets
\begin{equation*}
\begin{tikzcd}
V \times W
\ar[r, "\phi"] \ar[d, "i"']
&
U
\\
V \Botimes W
\ar[ur, "\exists!f"', dashed]
\end{tikzcd}
\end{equation*}
commutes.
\end{Definition}

\begin{Proposition}
\label{prop:TensorBorn}
The tensor product $V\Botimes W$ of convex bornological vector spaces $V$ and $W$ is given by the algebraic tensor product $V \otimes W$ together with the bornology in which a subset is bounded if it is contained in the convex hull $\Conv(i(A\times B))$ for some bounded subsets $A \subset V$ and $B \subset W$.
\end{Proposition}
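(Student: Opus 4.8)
The plan is to construct the claimed object explicitly and verify the universal property of Definition~\ref{def:TensorBorn} directly. I take the underlying vector space to be the algebraic tensor product $V \otimes W$, equip it with the canonical bilinear map $i \colon V \times W \to V \otimes W$, $(v,w) \mapsto v \otimes w$, and declare the collection $\calC = \{\Conv(i(A \times B)) \mid A \subset V,\ B \subset W \text{ bounded}\}$ to be a basis of bounded sets. Everything then reduces to two tasks: showing that $\calC$ generates a convex vector space bornology, and showing that with this bornology $i$ is bounded and enjoys the required factorization.

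First I would check that $\calC$ is a basis of a convex vector space bornology. Using Remark~\ref{rmk:VecBornExplicit} together with the convexity criterion (i)' stated after Definition~\ref{def:Disks3}, it suffices to verify the basis conditions and stability under $S + T$, $\lambda S$, $\bar{B}_1(\bbK)\cdot S$, and the formation of convex hulls. Each singleton is bounded because an element $t = \sum_{k=1}^n a_k \otimes b_k$ lies in $\Conv(i(A\times B))$ with $A = \{n a_1,\ldots,n a_n\}$ and $B = \{b_1,\ldots,b_n\}$, since $t = \sum_k \tfrac{1}{n}(n a_k \otimes b_k)$ is a convex combination. The union condition follows from monotonicity of $\Conv$ and $i$ after enlarging $A,B$ to bounded sets containing the two given pairs. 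The scalar and circled cases use $\lambda(a\otimes b) = (\lambda a)\otimes b$ together with $\lambda\,\Conv(X) = \Conv(\lambda X)$ and the fact that $\bar{B}_1(\bbK)\cdot A$ is bounded (Remark~\ref{rmk:VecBornExplicit}). For the sum I would invoke $\Conv(X) + \Conv(Y) = \Conv(X+Y)$ and the rescaling observation $a\otimes b + a'\otimes b' = \tfrac{1}{2}(2a\otimes b) + \tfrac{1}{2}(2a'\otimes b') \in \Conv\bigl(i((2A\cup 2A')\times(B\cup B'))\bigr)$; convexity of the bornology is then immediate from $\Conv(\Conv(X)) = \Conv(X)$.

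Next I would verify boundedness of $i$ and the universal property. A bounded subset of $V\times W$ is contained in a product $A\times B$ of bounded sets (Proposition~\ref{prop:ProdCoprodBorn}(i)), and $i(A\times B)\subset \Conv(i(A\times B))$ is bounded by construction, so $i$ is a bounded bilinear map. Given $U \in \Born$ and a bounded bilinear $\phi\colon V\times W \to U$, the algebraic universal property of $V\otimes W$ supplies a unique linear $f\colon V\otimes W\to U$ with $f\circ i = \phi$. To see that $f$ is bounded, take a basic bounded set $\Conv(i(A\times B))$; since $f$ is linear it commutes with convex hulls, so $f(\Conv(i(A\times B))) = \Conv(\phi(A\times B))$. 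Here $\phi(A\times B)$ is bounded in $U$ because $\phi$ is a bounded bilinear map, and its convex hull is bounded because $U$ is a convex bornological space. Hence $f$ sends bounded sets to bounded sets and is the required unique bounded linear factorization.

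I expect no genuine obstacle. The only point requiring care is the stability of the bornology under addition of bounded sets, where the factor-of-two rescaling is needed to keep the sum inside a single convex hull of elementary tensors; every other step is a routine manipulation of convex hulls and of the identity $f(\Conv(X)) = \Conv(f(X))$ for linear $f$.
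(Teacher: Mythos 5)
Your proof is correct and is exactly the direct verification that the paper declares ``straightforward'' and delegates to Meyer: one checks that the sets $\Conv(i(A\times B))$ form a basis of a convex vector space bornology and then verifies the universal property by noting that the linear factorization $f$ satisfies $f(\Conv(i(A\times B)))=\Conv(\phi(A\times B))$. All the individual manipulations (the rescaling trick for sums, the containment $\Conv(X)+\Conv(Y)\subset\Conv(X+Y)$, and the use of Proposition~\ref{prop:ProdCoprodBorn}(i) for boundedness of $i$) are sound, so your write-up simply supplies the details the paper omits.
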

\begin{proof}
The proof is straightforward. See also Section~1.3.6 and Equation~(1.82) in \cite{Meyer2007}.
\end{proof}

\begin{Proposition}
The tensor product of Definition~\ref{def:TensorBorn} equips $\Born$ with a symmetric monoidal structure.
\end{Proposition}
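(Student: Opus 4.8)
The plan is to lift the symmetric monoidal structure of $(\Vect, \otimes)$ to $\Born$ along the faithful forgetful functor $\Born \to \Vect$ of Proposition~\ref{prop:ForgetfulFunctors}: the unit, associator, unitors, and symmetry will all be the underlying algebraic structure maps, so the only thing to check is that each is a morphism of $\Born$, i.e. bounded with bounded inverse. The pentagon, triangle, and hexagon identities then hold automatically, since they already hold in $\Vect$ and a faithful functor reflects equalities of morphisms. First I would record bifunctoriality of $\Botimes$. Given bounded linear maps $f \colon V \to V'$ and $g \colon W \to W'$, Proposition~\ref{prop:TensorBorn} describes a bounded subset of $V \Botimes W$ as one contained in $\Conv(i(A \times B))$ for bounded $A \subset V$, $B \subset W$; since $(f \otimes g)(\Conv(i(A \times B))) \subset \Conv(i(f(A) \times g(B)))$ and $f(A)$, $g(B)$ are bounded, the map $f \otimes g$ is bounded, and functoriality in each slot is inherited from $\Vect$.

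The next step is to exhibit the coherence isomorphisms. I would take the unit object to be $\bbK$ with its natural bornology. The unitors $\lambda_V \colon \bbK \Botimes V \to V$ and $\rho_V \colon V \Botimes \bbK \to V$ are induced by the bounded bilinear scalar multiplications, and their inverses $v \mapsto 1 \otimes v$ and $v \mapsto v \otimes 1$ are bounded because $i(\{1\} \times B)$ and $i(B \times \{1\})$ are bounded for bounded $B$ (Proposition~\ref{prop:TensorBorn}). The symmetry $\sigma_{V,W} \colon V \Botimes W \to W \Botimes V$ is induced by the bounded bilinear swap, and $\sigma_{W,V} \circ \sigma_{V,W} = \id$ already on underlying spaces. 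For the associator I find it cleanest to argue representably: I would show that, for any bracketing, the iterated tensor product together with its canonical multilinear map satisfies the universal property for \emph{bounded multilinear maps}, where a separately linear map $V_1 \times \dots \times V_n \to U$ is called bounded if it sends products $A_1 \times \dots \times A_n$ of bounded sets to bounded sets. Granting this, both $(V \Botimes W) \Botimes U$ and $V \Botimes (W \Botimes U)$ represent the functor of bounded trilinear maps out of $V \times W \times U$, so the algebraic associator is the induced comparison and is a bounded isomorphism. Since all five structure maps underlie the symmetric monoidal coherence isomorphisms of $\Vect$, faithfulness of $\Born \to \Vect$ forces the coherence diagrams to commute in $\Born$.

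The hard part will be the multilinear representability, which reduces to a single nontrivial assertion: the linear map induced by a bounded multilinear map is again bounded. For $n = 2$ this is exactly Definition~\ref{def:TensorBorn} together with Proposition~\ref{prop:TensorBorn}. For the inductive step I would show that bounded bilinear maps $(V \Botimes W) \times U \to Z$ correspond to bounded trilinear maps $V \times W \times U \to Z$. The forward direction is immediate because $i(A \times B)$ is bounded; the converse is the crux. Given a bounded trilinear $\phi$, the induced map $\psi \colon (V \Botimes W) \times U \to Z$ is linear in the first variable, so for bounded $C \subset V \Botimes W$ with $C \subset \Conv(i(A \times B))$ and bounded $E \subset U$ one has $\psi(C \times E) \subset \Conv(\phi(A \times B \times E))$; the set $\phi(A \times B \times E)$ is bounded by hypothesis, and its convex hull is bounded because objects of $\Born$ are convex (Definition~\ref{def:Disks3}). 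Carefully tracking these convex hulls, and confirming that the two bracketings induce the same bornology on the common underlying space $V \otimes W \otimes U$, is the only genuine computation; everything else is formal.
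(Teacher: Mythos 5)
Your proof is correct and takes essentially the same route as the paper, whose entire proof is the one-line assertion that associativity and symmetry follow from the universal property of Definition~\ref{def:TensorBorn}; your representability argument for bounded multilinear maps is precisely the content behind that assertion, worked out in full. The only step requiring genuine verification --- that a bounded trilinear map induces a bounded bilinear map out of $(V \Botimes W) \times U$, using convexity of the bornology to bound the convex hull $\Conv(\phi(A \times B \times E))$ --- is handled correctly.
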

\begin{proof}
The associativity and symmetry of the tensor product follows from the universal property of Definition~\ref{def:TensorBorn}.
\end{proof}

\begin{Proposition}
\label{prop:TensorConvergence}
Let $V$ and $W$ be bornological vector spaces. If a sequence $v_n \in V$ Mackey converges to $v$ and a sequence $w_n \in W$ Mackey converges to $w$, then $v_n\otimes w_n$ Mackey converges to $v \otimes w$ in $V \Botimes W$. 
\end{Proposition}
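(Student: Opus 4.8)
The plan is to reduce the claim to the explicit description of the bounded sets of $V \Botimes W$ in Proposition~\ref{prop:TensorBorn}, applied to the bilinear expansion of the difference $v_n \otimes w_n - v \otimes w$. First I would unpack Definition~\ref{def:Mackey-Cauchy}: there are bounded disks $D \subset V$ and $E \subset W$ with $v \in V_D$, $w \in W_E$, and for every $\epsilon > 0$ an index $N$ beyond which $v_n - v \in \epsilon D$ and $w_n - w \in \epsilon E$. Writing $a_n := v_n - v$ and $b_n := w_n - w$, the key algebraic identity obtained by expanding $(v + a_n)\otimes(w + b_n)$ is
\begin{equation*}
  v_n \otimes w_n - v \otimes w = a_n \otimes w + v \otimes b_n + a_n \otimes b_n \,.
\end{equation*}

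Next I would fix a single bounded disk in $V \Botimes W$ controlling all three summands at once. Since $\Born$ consists of convex bornological vector spaces and singletons are bounded, I can choose a bounded disk $A \subset V$ containing $D \cup \{v\}$ and a bounded disk $B \subset W$ containing $E \cup \{w\}$. By Proposition~\ref{prop:TensorBorn} the set $F := \Conv\bigl(i(A \times B)\bigr)$ is bounded; since $A$ and $B$ are circled, $i(A \times B)$ is circled and hence so is its convex hull, so $F$ is a bounded disk, and $v \otimes w = i(v,w) \in F \subseteq (V \Botimes W)_F$.

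With $F$ fixed I would estimate each summand using the bilinearity of $i$. For $n \geq N$ one has $a_n \otimes w \in i(\epsilon A \times B) = \epsilon\, i(A \times B) \subseteq \epsilon F$ and likewise $v \otimes b_n \in \epsilon F$, while the cross term satisfies $a_n \otimes b_n \in \epsilon^2\, i(A \times B) \subseteq \epsilon F$ as soon as $\epsilon \leq 1$. Since $F$ is a disk these combine to $v_n \otimes w_n - v \otimes w \in 3\epsilon F$. To conclude Mackey convergence with respect to the single disk $F$, given a target $\delta > 0$ I would run the argument with $\epsilon = \min(\delta/3, 1)$, so that $3\epsilon F \subseteq \delta F$ for $n$ beyond the corresponding threshold.

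This proof is essentially a routine bilinear estimate, so there is no deep obstacle; the only points demanding care are organisational. One must commit to a single disk $F$ that is independent of both $\epsilon$ and $n$ — which is why I fold $\{v\}$ and $\{w\}$ into $A$ and $B$ at the outset rather than controlling the three terms with three different disks — and one must absorb both the quadratic scaling $\epsilon^2$ of the cross term and the factor $3$ from the three summands, handled by the restriction $\epsilon \leq 1$ and the rescaling $\epsilon = \min(\delta/3,1)$. The structural reason the statement holds is precisely that Proposition~\ref{prop:TensorBorn} lets the image $i(A \times B)$ of a product of controlling disks serve as the controlling disk for the tensor product.
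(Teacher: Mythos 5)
Your proof is correct and is essentially the argument the paper uses: the paper simply packages the bilinear estimate as a separate Lemma~\ref{lem:BilinearContinuous} on bounded bilinear maps and applies it to $i:V\times W\to V\Botimes W$, whereas you inline it for $i$ directly. The only cosmetic difference is that the paper splits the difference into two terms, $i(v_n-v,\,w_n)+i(v,\,w_n-w)$, which avoids your cross term $a_n\otimes b_n$ and the attendant $\epsilon\le 1$ bookkeeping, but your three-term version is handled correctly.
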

\begin{proof}
The proof follows by applying Lemma~\ref{lem:BilinearContinuous} to the bilinear map $i:V \times W \to V \Botimes W$.
\end{proof}

\begin{Lemma}
\label{lem:BilinearContinuous}
Let $U$, $V$, and $W$ be bornological vector spaces; let $f: V\times W \to U$ be a bounded bilinear map. If a sequence $v_n \in V$ Mackey converges to $v$ and a sequence $w_n \in W$ Mackey converges to $w$, then $f(v_n, w_n)$ Mackey converges to $f(v, w) \in U$.
\end{Lemma}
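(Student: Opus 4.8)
The plan is to reduce the statement to an elementary estimate between the seminorms attached to the disks witnessing Mackey convergence. By Definition~\ref{def:Mackey-Cauchy} I would choose a bounded disk $D_V \subset V$ with $v, v_n \in V_{D_V}$ and $\lVert v_n - v\rVert_{D_V} \to 0$, and a bounded disk $D_W \subset W$ with $w, w_n \in V_{D_W}$ and $\lVert w_n - w\rVert_{D_W} \to 0$ (enlarging each disk if necessary to absorb the finitely many initial terms). It then suffices to exhibit a single bounded disk $D_U \subset U$ such that $f(v,w), f(v_n,w_n) \in U_{D_U}$ and $\lVert f(v_n,w_n) - f(v,w)\rVert_{D_U} \to 0$.

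The heart of the argument is a multiplicativity estimate for $f$ relative to these disks. Since $f$ is bounded bilinear, the image $f(D_V \times D_W)$ is a bounded subset of $U$; let $D_U$ be a bounded disk containing it (for instance its disked hull). For $a \in V_{D_V}$ and $b \in V_{D_W}$ with $\lVert a\rVert_{D_V} = s$ and $\lVert b\rVert_{D_W} = t$ one has $a \in s'D_V$ and $b \in t'D_W$ for any $s' > s$, $t' > t$, whence bilinearity gives $f(a,b) \in s't'\,f(D_V\times D_W) \subset s't'\,D_U$. Letting $s' \downarrow s$ and $t' \downarrow t$ yields the bound
\begin{equation*}
  \lVert f(a,b)\rVert_{D_U} \;\leq\; \lVert a\rVert_{D_V}\,\lVert b\rVert_{D_W}
  \,,
\end{equation*}
so that $f$ restricts to a jointly bounded bilinear map $V_{D_V}\times V_{D_W}\to U_{D_U}$ of seminormed spaces.

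With this estimate in hand I would finish by the usual telescoping trick. Writing
\begin{equation*}
  f(v_n,w_n) - f(v,w)
  = f(v_n - v,\, w_n) + f(v,\, w_n - w)
  \,,
\end{equation*}
the triangle inequality together with the multiplicativity bound gives
\begin{equation*}
  \lVert f(v_n,w_n) - f(v,w)\rVert_{D_U}
  \leq \lVert v_n - v\rVert_{D_V}\,\lVert w_n\rVert_{D_W}
  + \lVert v\rVert_{D_V}\,\lVert w_n - w\rVert_{D_W}
  \,.
\end{equation*}
Since $\lVert w_n\rVert_{D_W} \leq \lVert w\rVert_{D_W} + \lVert w_n - w\rVert_{D_W}$ is bounded (a Mackey convergent sequence is bounded in its disk), while $\lVert v_n - v\rVert_{D_V} \to 0$ and $\lVert w_n - w\rVert_{D_W}\to 0$, the right-hand side tends to $0$. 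Hence $f(v_n,w_n)$ Mackey converges to $f(v,w)$ via the disk $D_U$.

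The only real obstacle is the multiplicativity estimate, i.e.\ the choice of $D_U$ and the verification that $f(D_V\times D_W)$ is bounded; this is exactly the point where the hypothesis that $f$ is \emph{bounded} bilinear (sending products of bounded sets to bounded sets) enters, and everything afterward is a formal consequence of working inside the seminormed spaces $V_{D_V}$, $V_{D_W}$, and $U_{D_U}$. Proposition~\ref{prop:TensorConvergence} then follows as the special case $f = i\colon V\times W \to V\Botimes W$, the canonical bounded bilinear map.
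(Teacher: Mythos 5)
Your proof is correct and follows essentially the same route as the paper's: the same telescoping decomposition $f(v_n,w_n)-f(v,w)=f(v_n-v,\,w_n)+f(v,\,w_n-w)$ and the same key input that $f(D_V\times D_W)$ lies in a bounded disk $D_U$. The only cosmetic difference is that you phrase the final step via the Minkowski seminorm inequality $\lVert f(a,b)\rVert_{D_U}\leq\lVert a\rVert_{D_V}\lVert b\rVert_{D_W}$, whereas the paper argues directly with the set containment $f(\epsilon D\times D')+f(D\times\epsilon D')\subset 2\epsilon D_U$; these are equivalent.
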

\begin{proof}
By definition there are bounded disks $D\subset V$ and $D'\subset W$ such that $v, v_n \in D$ and $w, w_n \in D'$, and for every $\epsilon > 0$ there is an $N \in \N$ such that $v_n-v\in \epsilon D$ and $w_n-w\in \epsilon D'$ for all $n\geq N$. By the boundedness of $f$, the set $f(D\times D')$ is bounded and hence lies in a bounded disk $B\subset U$. It follows that
\begin{equation*}
  f(v_n,w_n) - f(v,w)
  = f(v_n-v,w_n) + f(v,w_n-w)
\end{equation*}
lies in
\begin{equation*}
      f(\epsilon D \times D')
    + f(D\times \epsilon D')
    = 2\epsilon\, f(D \times D') 
    \subset 2\epsilon B
\end{equation*}
for all $n \geq N$. We conclude that $f(v_n,w_n)$ Mackey converges to $f(v,w)$.
\end{proof}

The tensor product on convex bornological vector spaces restricts to separated bornological vector spaces, as the next proposition shows. 

\begin{Proposition}
\label{prop:TensSepIsSep}
If two convex bornological vector spaces $V$ and $W$ are separated, then so is their tensor product $V \Botimes W$.
\end{Proposition}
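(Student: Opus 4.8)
The plan is to verify separatedness through the norming-disk characterization of Definition~\ref{def:Disks3}(ii): I must show that every bounded subset of $V \Botimes W$ is contained in a bounded \emph{norming} disk. By Proposition~\ref{prop:TensorBorn}, any bounded set is contained in $\Conv(i(A \times B))$ for some bounded $A \subset V$ and $B \subset W$, where $i$ is the canonical bounded bilinear map. Since $V$ and $W$ are separated, I may enlarge $A$ and $B$ to bounded norming disks $D \subset V$ and $D' \subset W$; then the given bounded set is contained in $\Delta := \Conv\bigl(i(D \times D')\bigr)$. Because $D$ and $D'$ are circled, $i(D \times D')$ is circled, so $\Delta$ is a disk, and it is bounded by Proposition~\ref{prop:TensorBorn}. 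Everything therefore reduces to the single claim that $\Delta$ is norming, i.e. that the Minkowski functional $\|\Empty\|_\Delta$ is a norm on the subspace $V_\Delta$ it generates.

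The next step is to identify this situation with the projective tensor norm of normed spaces. The subspace $V_\Delta$ is the image of the algebraic tensor product $V_D \otimes W_{D'}$ under the canonical map into $V \otimes W = V \Botimes W$. Since $D$ and $D'$ are norming, the inclusions $V_D \hookrightarrow V$ and $W_{D'} \hookrightarrow W$ are injective, and over a field the tensor product of injective linear maps is again injective; hence $V_D \otimes W_{D'} \to V_\Delta$ is a linear isomorphism. Under this identification, $\Delta$ is exactly the disked hull of $\{ d \otimes d' \mid \|d\|_D \le 1,\ \|d'\|_{D'} \le 1\}$, so its Minkowski functional $\|\Empty\|_\Delta$ coincides with the projective tensor norm $\|\Empty\|_\pi$ of the normed factors $(V_D, \|\Empty\|_D)$ and $(W_{D'}, \|\Empty\|_{D'})$, given by $\|z\|_\pi = \inf \sum_k \|x_k\|_D \, \|y_k\|_{D'}$ over all finite representations $z = \sum_k x_k \otimes y_k$.

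The main obstacle is then precisely the \emph{non-degeneracy} of the projective tensor norm: a priori $\|\Empty\|_\Delta$ is only a seminorm, and I must rule out nonzero null vectors. This is where separatedness of $V$ and $W$ enters essentially, since it guarantees that $(V_D,\|\Empty\|_D)$ and $(W_{D'},\|\Empty\|_{D'})$ are genuine normed (not merely seminormed) spaces, so that Hahn–Banach supplies bounded linear functionals separating their points. Given $0 \neq z \in V_D \otimes W_{D'}$, choosing continuous functionals $f$ on $V_D$ and $g$ on $W_{D'}$ with $(f \otimes g)(z) \neq 0$ yields the lower bound $\|z\|_\pi \ge |(f \otimes g)(z)| / (\|f\|\,\|g\|) > 0$, so $\|\Empty\|_\pi = \|\Empty\|_\Delta$ is a norm and $\Delta$ is a bounded norming disk containing the given bounded set. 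Hence $V \Botimes W$ is separated. Equivalently, one may simply invoke the standard fact that the projective tensor product of two normed spaces is itself a normed space, which is exactly the content needed in this last step.
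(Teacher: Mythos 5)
Your proof is correct, but it takes a genuinely different route from the one in the paper. The paper uses the characterization of Proposition~\ref{prop:SepDivZero} (separated $\Leftrightarrow$ $\{0\}$ is the only bounded subspace): it first treats finite-dimensional $V$ and $W$, where bounded sets are precompact and hence contain no nonzero subspace, and then disposes of the general case by observing that a putative bounded line in $V \Botimes W$ would already lie in the tensor product of the finite-dimensional spans of the entries of a single basis vector. You instead verify Definition~\ref{def:Disks3}~(ii) directly, exhibiting a basis of bounded norming disks of the form $\Conv\bigl(i(D \times D')\bigr)$ and identifying $\bigl(V_\Delta, \|\Empty\|_\Delta\bigr)$ with the projective tensor product of the normed spaces $(V_D, \|\Empty\|_D)$ and $(W_{D'}, \|\Empty\|_{D'})$, whose non-degeneracy follows from Hahn--Banach. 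Your argument is less elementary (it invokes Hahn--Banach, where the paper's needs nothing beyond compactness in finite dimensions), but it yields more structural information: it makes explicit that the bounded disks of $V \Botimes W$ are controlled by projective tensor products of the normed pieces $V_D$, $W_{D'}$, which meshes well with the ind-normed/ind-Banach description of $\sBorn$ and $\cBorn$ used in Section~\ref{sec:separationcompletion}. The only points worth making explicit in a final write-up are (a) the reduction to a minimal-rank representation of $z$ before applying Hahn--Banach, and (b) that the identification $\|\Empty\|_\Delta = \|\Empty\|_\pi$ uses internally closed norming disks (Remark~\ref{rmk:NormingDiskClosed}), or, more cheaply, that only the one-sided inequality $\|\Empty\|_\pi \leq \|\Empty\|_\Delta$ is needed to conclude that $\Delta$ is norming.
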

\begin{proof}
We give a proof in Section~\ref{sec:TensSepIsSep} of the appendix.
\end{proof}

\begin{Notation}
We will denote by $\Sotimes: \sBorn \times \sBorn \to \sBorn$, $(V,W) \mapsto V \Sotimes W$ the restriction of $\Botimes$ to the subcategory of separated bornological vector spaces.    
\end{Notation}

\begin{Corollary}
$(\sBorn, \Sotimes)$ is a symmetric monoidal subcategory of $(\Born, \Botimes)$. The universal property of Definition~\ref{def:TensorBorn} restricts to the analogous universal property for $(\sBorn, \Sotimes)$.    
\end{Corollary}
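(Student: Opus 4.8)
The plan is to leverage Proposition~\ref{prop:TensSepIsSep} in order to reduce every assertion to the corresponding fact already established for $(\Born, \Botimes)$, exploiting throughout that the inclusion $\sBorn \hookrightarrow \Born$ is a \emph{full} subcategory inclusion. First I would check that $(\sBorn, \Sotimes)$ is closed under the monoidal data. By Proposition~\ref{prop:TensSepIsSep}, whenever $V, W \in \sBorn$ the tensor product $V \Botimes W$ is again separated, so $\Sotimes$ is a well-defined functor $\sBorn \times \sBorn \to \sBorn$ agreeing with $\Botimes$ on objects and morphisms. The monoidal unit $\bbK$, carrying its natural von Neumann bornology, is a normed space and hence separated, so it lies in $\sBorn$ and provides a unit object inside the subcategory.

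Next I would restrict the structural isomorphisms. The associator, the left and right unitors, and the symmetry of $(\Born, \Botimes)$ are isomorphisms whose sources and targets are iterated $\Botimes$-products of separated spaces, hence objects of $\sBorn$; because the inclusion is full, these isomorphisms and their inverses are already morphisms in $\sBorn$. Thus they restrict to structural isomorphisms for $(\sBorn, \Sotimes)$. The coherence conditions---the pentagon, triangle, and hexagon identities---together with the naturality of the transformations are equalities of parallel morphisms built from these restricted isomorphisms. Since $\sBorn \hookrightarrow \Born$ is full and faithful, equality of two parallel morphisms in $\sBorn$ is equivalent to their equality in $\Born$, where the identities hold by hypothesis. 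This establishes that $(\sBorn, \Sotimes)$ is a symmetric monoidal subcategory.

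For the universal property I would simply restrict the class of admissible targets. Let $V, W, U \in \sBorn$ and let $\phi : V \times W \to U$ be a bounded bilinear map. Viewing $U$ as an object of $\Born$ and applying the universal property of Definition~\ref{def:TensorBorn}, there is a unique bounded linear map $f : V \Botimes W \to U$ with $f \circ i = \phi$. Since $V \Sotimes W = V \Botimes W$ as bornological vector spaces and the inclusion is full, $f$ is automatically a morphism in $\sBorn$, and its uniqueness is inherited from the uniqueness in $\Born$. Hence $V \Sotimes W$ together with the canonical map $i$ satisfies the analogous universal property within $\sBorn$.

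The only genuinely nonformal ingredient is Proposition~\ref{prop:TensSepIsSep}, the separatedness of the tensor product, which is proved separately in the appendix; once it is granted, the remaining obstacle is pure bookkeeping. The most delicate point to confirm explicitly is that the unit object $\bbK$ is separated, so that $(\sBorn, \Sotimes)$ genuinely inherits a monoidal unit lying inside the subcategory rather than merely in the ambient category.
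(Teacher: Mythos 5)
Your proposal is correct and matches the paper's (implicit) argument: the paper states this as an immediate corollary of Proposition~\ref{prop:TensSepIsSep} together with the fullness of the inclusion $\sBorn \hookrightarrow \Born$, which is precisely the bookkeeping you carry out. The one point you flag as delicate---that the unit $\bbK$ is separated---is indeed the right thing to check and follows from the paper's observation that the von Neumann bornology of a normed space is separated.
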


It is not true that the tensor product $V \Sotimes W$ of complete bornological vector spaces is complete. Defining the tensor product of complete bornological vector spaces will be deferred to Section~\ref{sec:CompTensorProd}.

\subsection{Mapping spaces}

By definition, the \textdef{bornological mapping spaces} or \textdef{inner homs} $\intBorn(V,W)$ are an enrichment of the sets of morphisms $\Born(V,W)$ in $\Born$, such that there is an adjunction
\begin{equation}
\label{eq:TensorInnHomAdj}
\begin{tikzcd}
\Empty \Botimes V : \Born 
\ar[r, shift left=0.6ex]
& 
\Born 
\ar[l, shift left=.6ex] 
:\intBorn(V,\Empty)
\end{tikzcd}
\end{equation}
for all $V \in \Born$. If the inner homs exist, they are determined by this adjunction up to unique isomorphism. If a symmetric monoidal category has inner homs, it is called \textdef{closed}. The mapping space bornology is given explicitly as follows.

\begin{Proposition}
\label{prop:UnifBoundBorn}
Let $V$ and $W$ be convex bornological vector spaces. For every subset $F \subset \Born(V,W)$ and subset $A \subset V$, let
\begin{equation}
\label{eq:UnifBound}
  F(A) 
  := \{ f(a) \in W ~|~ f \in F \,,~ a \in A \}
  \,.
\end{equation}
There is a convex vector space bornology on $\Born(V,W)$ in which a subset $F$ is bounded if and only if for every bounded $A \subset V$ the subset $F(A) \subset W$ is bounded.
\end{Proposition}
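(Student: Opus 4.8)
\emph{Proof plan.} The plan is to verify directly that the collection
\begin{equation*}
  \calB := \bigl\{ F \subseteq \Born(V,W) ~\big|~ F(A) \text{ is bounded for every bounded } A \subseteq V \bigr\}
\end{equation*}
is a convex vector space bornology, where $F(A)$ is as in \eqref{eq:UnifBound}. The single fact that makes every verification routine is that, for a fixed $A \subseteq V$, the assignment $F \mapsto F(A)$ is compatible with the pointwise algebraic operations on $\Born(V,W)$: since $(f+g)(a) = f(a) + g(a)$ and $(\lambda f)(a) = \lambda f(a)$, one has $(F \cup G)(A) = F(A) \cup G(A)$, $(F + G)(A) \subseteq F(A) + G(A)$, $(\lambda F)(A) = \lambda \, F(A)$, $(\bar{B}_1(\bbK) \cdot F)(A) = \bar{B}_1(\bbK) \cdot F(A)$, and $\Conv(F)(A) \subseteq \Conv\bigl(F(A)\bigr)$. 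I would establish these identities and inclusions first, as they reduce every condition on $\calB$ to the corresponding condition on the convex vector space bornology of $W$.

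With these in hand, the three bornology axioms are immediate. For (i), if $F' \subseteq F \in \calB$ then $F'(A) \subseteq F(A)$ is bounded, so $F' \in \calB$. For (ii), $(F \cup G)(A) = F(A) \cup G(A)$ is a union of two bounded sets, hence bounded. For (iii), a singleton $\{f\}$ lies in $\calB$ precisely because $f$ is a \emph{bounded} linear map, so that $\{f\}(A) = f(A)$ is bounded for every bounded $A$. To see that $\calB$ is a \emph{vector space} bornology I would invoke the criterion of Remark~\ref{rmk:VecBornExplicit}: for $F, G \in \calB$ and $\lambda \in \bbK$, the relations above show that $(F+G)(A)$, $(\lambda F)(A)$, and $(\bar{B}_1(\bbK) \cdot F)(A)$ are contained in $F(A) + G(A)$, $\lambda\, F(A)$, and $\bar{B}_1(\bbK) \cdot F(A)$ respectively, each of which is bounded because the bornology of $W$ is a vector space bornology.

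Finally, convexity is the one place where the hypothesis that $W$ is a \emph{convex} bornological vector space enters: from $\Conv(F)(A) \subseteq \Conv\bigl(F(A)\bigr)$ and the fact that the convex hull of the bounded set $F(A)$ is again bounded in $W$, it follows that $\Conv(F) \in \calB$, which is exactly the reformulation (i)$'$ of Definition~\ref{def:Disks3}. I do not expect a genuine obstacle; the whole statement is a transport of the bornology axioms of $W$ through the evaluation maps $F \mapsto F(A)$, and the only point demanding care is to keep track of the distinction between the exact identities (for unions, scalars, and circled hulls) and the mere inclusions (for sums and convex hulls), the latter sufficing because a subset of a bounded set of $W$ is again bounded by axiom (i).
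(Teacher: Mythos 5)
Your proposal is correct and follows essentially the same route as the paper's proof: verify the bornology axioms directly (singletons bounded because each $f$ is a bounded map, unions via $(F\cup F')(A)=F(A)\cup F'(A)$) and then check the vector space and convexity conditions by transporting the operations through $F\mapsto F(A)$. If anything, your version is slightly more careful than the paper's, which asserts equalities such as $(F+F')(A)=F(A)+F'(A)$ and $\Conv(F)(A)=\Conv(F(A))$ where in general only the inclusions $\subseteq$ hold (the right-hand sides allow different arguments $a$ in different summands); as you correctly observe, the inclusions suffice since subsets of bounded sets are bounded.
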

\begin{proof}
For every bounded linear map $f:V \to W$ and any bounded subset $A \subset V$, $F(\{f\})(A) = f(A)$ is bounded, which shows that every singleton is bounded. Since $(F \cup F')(A) = F(A) \cup F'(A)$ it follows that the union of bounded subsets is bounded. We conclude that the bounded sets constitute a bornology on  the set $\Born(V, W)$.

The vector space structure on $\Born(V,W)$ is defined by $(f+g)(v) = f(v) + g(v)$ and $(\lambda f)(v) = \lambda( f(v))$ for $f, g \in \Born(V,W)$ and $\lambda \in \bbK$. It follows that
\begin{align*}
  (\lambda F)(A) 
  &= \lambda\bigl( F(A) \bigr)
  \\
  \bigl(\bar{B}_1(\bbK)\cdot F\bigr)(A) 
  &= \bar{B}_1(\bbK)\cdot \bigl( F(A) \bigr)
  \\
  (F + F')(A)
  &=
  F(A) + F'(A)
  \\
  \bigl( \Conv(F) \bigr)(A)
  &=
  \Conv\bigl( F(A) \bigr)
  \,,
\end{align*}
which shows that we have a convex vector space bornology.
\end{proof}

\begin{Terminology}
\label{term:equibounded}
A bounded set of maps in the functional bornology of Proposition~\ref{prop:UnifBoundBorn} is also called \textdef{equibounded}. The terminology comes from normed spaces, where a set of linear maps is equibounded if their operator norms have a common upper bound.
\end{Terminology}

\begin{Proposition}
\label{prop:MappinBornInnerHom}
The vector spaces $\Born(V,W)$ equipped with the bornology of Proposition~\ref{prop:UnifBoundBorn} are the inner homs $\intBorn(V,W)$ of the symmetric monoidal category $(\Born, \Botimes)$.
\end{Proposition}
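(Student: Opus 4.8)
The plan is to verify that the functional bornology of Proposition~\ref{prop:UnifBoundBorn} realizes the adjunction~\eqref{eq:TensorInnHomAdj} that characterizes the inner hom. Writing $\intBorn(V,W)$ for $\Born(V,W)$ equipped with the bornology of Proposition~\ref{prop:UnifBoundBorn}, it suffices to exhibit a bijection
\begin{equation*}
  \Born(U \Botimes V, W) \;\cong\; \Born\bigl(U, \intBorn(V,W)\bigr)
\end{equation*}
natural in $U$, $V$, and $W$. By the universal property of Definition~\ref{def:TensorBorn}, bounded linear maps $U \Botimes V \to W$ are in natural bijection with bounded bilinear maps $b \colon U \times V \to W$, where the product carries the product bornology of Proposition~\ref{prop:ProdCoprodBorn}~(i). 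It therefore remains to match bounded bilinear maps $U \times V \to W$ with bounded linear maps $U \to \intBorn(V,W)$ by currying.

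First I would curry: given a bounded bilinear $b \colon U \times V \to W$, set $g_b(u) := b(u, \Empty)$. For each $u \in U$ the singleton $\{u\}$ is bounded, so for bounded $A \subset V$ the set $b(\{u\} \times A) = g_b(u)(A)$ is bounded in $W$; hence $g_b(u) \in \Born(V,W)$, and $g_b$ is evidently linear. Conversely, uncurrying sends a bounded linear $g \colon U \to \intBorn(V,W)$ to the bilinear map $b_g(u,v) := g(u)(v)$. These two assignments are manifestly mutually inverse, so the content lies entirely in checking that each respects boundedness.

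The key bookkeeping identity is that for bounded subsets $B \subset U$ and $A \subset V$ the set $F_B := \{\, g_b(u) ~|~ u \in B \,\}$ satisfies $F_B(A) = b(B \times A)$ in the notation of~\eqref{eq:UnifBound}. If $b$ is bounded, then $b(B \times A)$ is bounded because $B \times A$ is bounded in the product bornology; by the defining condition of Proposition~\ref{prop:UnifBoundBorn} this says exactly that $F_B$ is equibounded, i.e.\ that $g_b$ is bounded into $\intBorn(V,W)$. Running the identity in reverse: if $g$ is bounded, then $g(B)$ is equibounded, so $g(B)(A) = b_g(B \times A)$ is bounded for all bounded $A$, $B$, and since every bounded subset of $U \times V$ is contained in such a product $B \times A$ by Proposition~\ref{prop:ProdCoprodBorn}~(i), the map $b_g$ is bounded bilinear. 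Naturality in all three variables is then inherited from the naturality of the universal property of $\Botimes$ together with the functoriality of $F \mapsto F(A)$, and requires only routine diagram-chasing.

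I expect no genuine analytic obstacle: the functional bornology of Proposition~\ref{prop:UnifBoundBorn} was engineered precisely so that ``$b$ is bounded bilinear'' and ``$g_b$ is equibounded'' are literally the same condition. The only point demanding a little care is that bounded bilinearity is tested on the product bornology of $U \times V$, so one must invoke Proposition~\ref{prop:ProdCoprodBorn}~(i) to reduce to product test sets $B \times A$; this reduction is exactly what lets the identity $F_B(A) = b(B \times A)$ encode boundedness in both directions simultaneously.
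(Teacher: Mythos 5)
Your proposal is correct and follows essentially the same route as the paper's proof in Section~\ref{sec:MappinBornInnerHom}: both reduce via the universal property of $\Botimes$ to the currying bijection between bounded bilinear maps $U \times V \to W$ and bounded linear maps $U \to \intBorn(V,W)$, with the identity $F_B(A) = b(B \times A)$ doing all the work in both directions. Your explicit appeal to Proposition~\ref{prop:ProdCoprodBorn}~(i) to reduce bounded subsets of the product to sets of the form $B \times A$ is a point the paper leaves implicit, but the argument is the same.
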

\begin{proof}
We give a proof in Section~\ref{sec:MappinBornInnerHom} of the appendix.
\end{proof}

\begin{Proposition}
\label{prop:InnHomSep}
Let $V$ and $W$ be convex bornological vector spaces. If $W$ is separated, then so is $\intBorn(V,W)$.
\end{Proposition}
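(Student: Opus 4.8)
The plan is to use the characterization of separatedness noted just before Proposition~\ref{prop:SepDivZero}: a convex bornological vector space is separated if and only if $\{0\}$ is its only bounded vector subspace. Accordingly, I would take an arbitrary bounded vector subspace $M \subset \intBorn(V,W)$ and show that $M = \{0\}$, from which the separatedness of $\intBorn(V,W)$ follows immediately.

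The key idea is to transport the problem to $W$ by evaluating at points. First I would fix a vector $v \in V$ and consider the evaluation map $\Eval_v : \intBorn(V,W) \to W$, $f \mapsto f(v)$. This map is linear by the vector space structure on $\Born(V,W)$ described in Proposition~\ref{prop:UnifBoundBorn}, so its image $\Eval_v(M) = \{ f(v) \mid f \in M \}$ is a vector subspace of $W$. On the other hand, the singleton $\{v\} \subset V$ is bounded, so by the definition of the mapping-space bornology (Proposition~\ref{prop:UnifBoundBorn}) together with the assumed boundedness of $M$, the set $M(\{v\}) = \Eval_v(M)$ is bounded in $W$. Hence $\Eval_v(M)$ is a \emph{bounded vector subspace} of $W$.

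Since $W$ is separated, the characterization forces $\Eval_v(M) = \{0\}$, that is, $f(v) = 0$ for every $f \in M$. As $v \in V$ was arbitrary, each $f \in M$ is the zero map, so $M = \{0\}$, as desired. I do not anticipate a genuine obstacle in this argument; essentially everything rests on selecting the right reformulation of separatedness (bounded subspaces rather than, say, norming disks or uniqueness of Mackey limits), after which the reduction to evaluation at singletons is routine. The only point deserving a word of care is that the evaluation maps are linear for the pointwise vector space structure, which is exactly the structure recorded in Proposition~\ref{prop:UnifBoundBorn}, and that singletons are bounded, which holds in any bornological space by axiom~(iii) of the definition of a bornology.
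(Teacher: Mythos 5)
Your proof is correct and follows essentially the same route as the paper's: both rest on the characterization of separatedness via bounded vector subspaces and the observation that a bounded set $M$ of maps sends the bounded singleton $\{v\}$ to the bounded vector subspace $M(\{v\}) = \Eval_v(M) \subset W$. The paper merely phrases the argument as a proof by contradiction, whereas you argue directly; the content is identical.
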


\begin{proof}
Let $W$ be separated. Assume that $\intBorn(V,W)$ is not separated, so that there is a non-zero bounded vector subspace $F \subset \intBorn(V,W)$. Since $F$ is non-zero, there is an $f \in F$ and a $v \in V$ such that $f(v) \neq 0$. Then the vector subspace $F(\{v\})$ as defined in Proposition~\ref{prop:UnifBoundBorn} is non-zero. Since $F$ is bounded and since the singleton $\{v\}$ is bounded, $F(\{v\}) \subset W$ is a bounded vector subspace, which is a contradiction to the assumption that $W$ is separated. We conclude that $\intBorn(V,W)$ is separated.
\end{proof}

Proposition~\ref{prop:InnHomSep} states that the inner hom functor $\intBorn: \Born^\op \times \Born \to \Born$ restricts to a functor $\intsBorn: \sBorn^\op \times \sBorn \to \sBorn$ defined by
\begin{equation*}
  I\bigl(\intsBorn(V,W)\bigr) = \intBorn(IV, IW)
  \,,
\end{equation*}
where $I: \sBorn \hookrightarrow \Born$ is the inclusion. 

\begin{Proposition}
\label{prop:InnsBorn}
$\intsBorn(V,W)$ is the inner hom of the symmetric monoidal category $(\sBorn, \Sotimes)$.
\end{Proposition}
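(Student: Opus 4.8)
The plan is to deduce the tensor--hom adjunction for $(\sBorn, \Sotimes)$ directly from the one already established for $(\Born, \Botimes)$, exploiting that $\sBorn$ is a full monoidal subcategory and that, by Proposition~\ref{prop:InnHomSep}, the inner hom of $\Born$ already lands in $\sBorn$ as soon as its target is separated. Write $I : \sBorn \hookrightarrow \Born$ for the inclusion, which is full and faithful. Two inputs are needed. First, by Proposition~\ref{prop:TensSepIsSep} the tensor product of separated spaces is separated, so $\Sotimes$ is literally the restriction of $\Botimes$ and hence $I(U \Sotimes V) = IU \Botimes IV$. Second, by Proposition~\ref{prop:InnHomSep} the space $\intBorn(IV, IW)$ is separated whenever $W$ is separated, so the defining relation $I\bigl(\intsBorn(V,W)\bigr) = \intBorn(IV, IW)$ genuinely produces an object of $\sBorn$.

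Given these, I would assemble the chain of isomorphisms
\begin{align*}
\sBorn\bigl(U, \intsBorn(V,W)\bigr)
&\cong \Born\bigl(IU, \intBorn(IV, IW)\bigr) \\
&\cong \Born\bigl(IU \Botimes IV, IW\bigr) \\
&= \Born\bigl(I(U \Sotimes V), IW\bigr) \\
&\cong \sBorn\bigl(U \Sotimes V, W\bigr),
\end{align*}
where the first and last isomorphisms use that $I$ is full and faithful, together with the defining equation for $\intsBorn$ and the identity $I(U \Sotimes V) = IU \Botimes IV$, while the middle isomorphism is the adjunction \eqref{eq:TensorInnHomAdj} in $\Born$. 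Each step is natural in $U$, $V$, and $W$: the outer two are instances of the full-and-faithful bijection $\sBorn(X,Y) \cong \Born(IX, IY)$ composed with functoriality of $\intsBorn$ and $\intBorn$, and the middle one is the natural adjunction isomorphism of $\Born$. Composing them yields a natural isomorphism $\sBorn(\Empty \Sotimes V, \Empty) \cong \sBorn(\Empty, \intsBorn(V, \Empty))$, which is exactly the adjunction characterizing $\intsBorn(V, \Empty)$ as right adjoint to $\Empty \Sotimes V$.

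Since inner homs, when they exist, are determined up to unique isomorphism by this adjunction, this identifies $\intsBorn(V,W)$ as the inner hom of $(\sBorn, \Sotimes)$ and shows that $(\sBorn, \Sotimes)$ is closed. I expect no genuine difficulty here: the argument is formal bookkeeping of an adjunction transported through a full and faithful inclusion, and the one substantive point---that the inner hom does not leave the subcategory---has already been secured by Proposition~\ref{prop:InnHomSep}. The only item warranting a second look is naturality in the contravariant slot $V$, but this is immediate since $\intsBorn$ is a bifunctor defined through $\intBorn$ and $I$ reflects the morphisms involved.
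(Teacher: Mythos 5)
Your proof is correct and is essentially identical to the paper's: the same chain of natural isomorphisms (written in the reverse order), using full-and-faithfulness of $I$, the identity $I(U \Sotimes V) = IU \Botimes IV$ from Proposition~\ref{prop:TensSepIsSep}, the adjunction~\eqref{eq:TensorInnHomAdj} in $\Born$, and the definition of $\intsBorn$ secured by Proposition~\ref{prop:InnHomSep}. Your added remarks on naturality are a welcome but minor elaboration of what the paper leaves implicit.
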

\begin{proof}
We have the natural bijections
\begin{equation*}
\begin{split}
  \sBorn(U \Sotimes V, W)
  &\cong
  \Born\bigl( I(U \Sotimes V) , IW \bigr)
  \\
  &\cong
  \Born\bigl( IU \Botimes IV , IW \bigr)
  \\
  &\cong
  \Born\bigl( IU, \intBorn(IV, IW) \bigr)
  \\
  &\cong
  \Born\bigl( IU, I(\intsBorn(V, W)) \bigr)
  \\
  &\cong
  \sBorn\bigl( U, \intsBorn(V, W) \bigr)
  \,,
\end{split}
\end{equation*}
where we have used that $I$ is full and faithful, the definition of $\Sotimes$, the adjunction~\eqref{eq:TensorInnHomAdj}, the definition of $\intsBorn(V,W)$, and again that $I$ is full and faithful. 
\end{proof}
We record the following useful results about convergence in the internal Hom.
\begin{Proposition}
\label{prop:convergenceComposition}
Let $U$, $V$, and $W$ be bornological vector spaces. If a sequence $f_n \in \intHom(U,V)$ Mackey converges to $f$ and a sequence $g_n \in \intHom(V,W)$ Mackey converges to $g$, then the sequence $g_n \circ f_n \in \intHom(U,W)$ Mackey converges to $g\circ f$.
\end{Proposition}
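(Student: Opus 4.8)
The plan is to recognize composition as a bounded bilinear map and then invoke Lemma~\ref{lem:BilinearContinuous}. Concretely, consider the composition map
\begin{equation*}
  \circ \colon \intHom(V,W) \times \intHom(U,V) \longrightarrow \intHom(U,W), \qquad (g,f) \longmapsto g \circ f.
\end{equation*}
By Proposition~\ref{prop:MappinBornInnerHom} the three inner homs are genuine objects of $\Born$, so Mackey convergence in them makes sense. The map $\circ$ is evidently bilinear, so it will suffice to show that it is bounded: Lemma~\ref{lem:BilinearContinuous} then applies to the Mackey convergent sequences $g_n \to g$ in $\intHom(V,W)$ and $f_n \to f$ in $\intHom(U,V)$ and yields at once that $g_n \circ f_n$ Mackey converges to $g \circ f$, which is the claim.

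To verify boundedness, recall from Proposition~\ref{prop:UnifBoundBorn} that a subset $H \subset \Born(U,W)$ is bounded precisely when $H(A)$ is bounded in $W$ for every bounded $A \subset U$, where $H(A) = \{\, h(a) \mid h \in H,\ a \in A \,\}$. Since bounded sets in the product $\intHom(V,W) \times \intHom(U,V)$ are (subsets of) products of equibounded sets, I must check that whenever $G \subset \intHom(V,W)$ and $F \subset \intHom(U,V)$ are equibounded, the set $G \circ F = \{\, g \circ f \mid g \in G,\ f \in F \,\}$ is again equibounded. Fix a bounded $A \subset U$. Because $F$ is equibounded, $F(A) \subset V$ is bounded; because $G$ is equibounded, $G\bigl(F(A)\bigr) \subset W$ is bounded. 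The set-level identity
\begin{equation*}
  (G \circ F)(A) = \{\, g\bigl(f(a)\bigr) \mid g \in G,\ f \in F,\ a \in A \,\} = G\bigl( F(A) \bigr)
\end{equation*}
then shows that $(G\circ F)(A)$ is bounded for every bounded $A$, i.e.\ that $G \circ F$ is equibounded. Hence $\circ$ carries products of bounded sets to bounded sets and is therefore a bounded bilinear map.

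With boundedness of $\circ$ in hand, Lemma~\ref{lem:BilinearContinuous}, applied to the bounded bilinear map $\circ$ with first argument the sequence $g_n$ and second argument the sequence $f_n$, immediately gives that $g_n \circ f_n$ Mackey converges to $g \circ f$ in $\intHom(U,W)$. I expect no real obstacle here: the only substantive point is the two-step application of equiboundedness encoded in the identity $(G\circ F)(A) = G\bigl(F(A)\bigr)$, after which the result is a direct reduction to the already-established continuity of bounded bilinear maps with respect to Mackey convergence.
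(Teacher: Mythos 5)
Your proof is correct and follows essentially the same route as the paper: the paper views composition as a bounded linear map on $\intHom(V,W) \Botimes \intHom(U,V)$ and invokes Proposition~\ref{prop:BoundedContinuous} (together, implicitly, with Proposition~\ref{prop:TensorConvergence}), which amounts to exactly your direct application of Lemma~\ref{lem:BilinearContinuous} to the bilinear composition map. Your explicit check that composition is bounded, via $(G\circ F)(A) = G\bigl(F(A)\bigr)$, is a detail the paper leaves implicit, and it is carried out correctly.
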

\begin{proof}
The proof follows from applying Proposition~\ref{prop:BoundedContinuous} to the bounded linear map of composition $\circ: \intHom(V,W) \Botimes \intHom(U,V) \to \intHom(U,W)$.
\end{proof}

Let $V := \bigoplus_{i=1}^\infty V_i$ be the coproduct of a countable family of bornological vector spaces. Let $\sigma_i: V_i \to V$ denote the inclusion. Since the inner hom sends colimits in the first argument to limits, we have
\begin{equation}
\label{eq:EndSumVi}
  \intHom(V,V) \cong \prod_{i = 1}^\infty \intHom(V_i, V)
  \,.
\end{equation}
On the right side of~\eqref{eq:EndSumVi}, the identity is given by the family
\begin{equation*}
  \id_V = (\sigma_1, \sigma_2, \ldots) 
  \in \prod_{i = 1}^\infty \intHom(V_i, V)
  \,.
\end{equation*}
Let us denote by $I_n \in \intHom(V,V)$ the partial identity map given by
\begin{equation*}
  I_n = (\sigma_1, \ldots, \sigma_n, 0, 0, \ldots)
  \in \prod_{i = 1}^\infty \intHom(V_i, V)
  \,.
\end{equation*}
If $V_i = \bbR$, then $I_n$ is the $n\times n$ identity matrix sitting inside the space of endomorphism of $\bbR^\infty = \bigoplus_{i=1}^\infty \bbR$. 

    
\begin{Lemma}
\label{lem:convergenceFiniteDiagonals}
The partial identities $I_n \in \intHom(V,V)$ Mackey converge to $\id_V$.
\end{Lemma}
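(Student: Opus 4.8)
The plan is to recognize that, after transporting everything through the natural isomorphism~\eqref{eq:EndSumVi}, this statement is exactly an instance of Lemma~\ref{lem:InfSeqConverge}. Indeed, write $W_i := \intHom(V_i, V)$, so that~\eqref{eq:EndSumVi} reads $\intHom(V,V) \cong \prod_{i=1}^\infty W_i$. Under this isomorphism the identity corresponds to the element $\id_V = (\sigma_1, \sigma_2, \ldots)$, and the partial identity $I_n$ corresponds to the truncation $(\sigma_1, \ldots, \sigma_n, 0, 0, \ldots)$. This is precisely the partial-sum sequence $v_{\leq n}$ of Lemma~\ref{lem:InfSeqConverge}, applied to the family $(W_i)_{i \in \bbN}$ and the elements $\sigma_i \in W_i$.

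First I would invoke Lemma~\ref{lem:InfSeqConverge} directly: it gives that $(\sigma_1, \ldots, \sigma_n, 0, 0, \ldots)$ Mackey converges to $(\sigma_1, \sigma_2, \ldots)$ in the product bornology on $\prod_{i=1}^\infty W_i$. The explicit bounded disk produced in that proof is $D = \prod_{i=1}^\infty i D_i$, where each $D_i$ is a bounded disk containing $\sigma_i$; since each $W_i = \intHom(V_i, V)$ is a convex bornological vector space, such disks exist, so the hypotheses of the lemma are met without modification.

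Next I would transport this convergence back to $\intHom(V,V)$. The isomorphism~\eqref{eq:EndSumVi} is an isomorphism of bornological vector spaces, being the adjunction isomorphism expressing that $\intHom(\Empty, V)$ sends the colimit $V = \bigoplus_i V_i$ to the limit $\prod_i \intHom(V_i, V)$; in particular both it and its inverse are bounded linear maps. By Proposition~\ref{prop:BoundedContinuous}, bounded linear maps preserve Mackey convergence, so applying the inverse isomorphism to the convergence established above yields that $I_n$ Mackey converges to $\id_V$ in $\intHom(V,V)$, as claimed.

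I expect no genuine obstacle here: the only point requiring a word of justification is that~\eqref{eq:EndSumVi} is a bornological isomorphism rather than a mere linear bijection, so that Proposition~\ref{prop:BoundedContinuous} applies in both directions. Once that is noted, the statement is a one-line consequence of Lemma~\ref{lem:InfSeqConverge}. (Should one prefer to avoid citing the isomorphism's bornological naturality, one can instead argue directly: the image disk $\phi^{-1}(D) \subset \intHom(V,V)$ is bounded, and the same $\tfrac1n$-estimate for the tail transports verbatim, giving Mackey convergence with respect to $\lVert\Empty\rVert_{\phi^{-1}(D)}$.)
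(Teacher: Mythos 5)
Your proposal is correct and follows exactly the paper's route: the paper's proof is the one-line observation that the statement follows from Lemma~\ref{lem:InfSeqConverge} applied to the product decomposition~\eqref{eq:EndSumVi}, which is precisely what you spell out. Your extra care in noting that~\eqref{eq:EndSumVi} is a bornological isomorphism (so Mackey convergence transports via Proposition~\ref{prop:BoundedContinuous}) is a welcome elaboration of a point the paper leaves implicit.
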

\begin{proof}
The proof follows from Lemma~\ref{lem:InfSeqConverge}.   
\end{proof}

\subsection{The complete tensor product and mapping space}
\label{sec:CompTensorProd}

\begin{Definition}
\label{def:TensorcBorn}
Let $J: \cBorn \hookrightarrow \sBorn$ be the inclusion of the full subcategory and $\Comp: \sBorn \to \cBorn$ its left adjoint, the completion functor. The tensor product of complete bornological vector spaces is defined by the commutative diagram
\begin{equation*}
\begin{tikzcd}
\cBorn \times \cBorn
\ar[r, "\Cotimes"]
\ar[d, "J \times J"']
&
\cBorn
\\
\sBorn \times \sBorn
\ar[r, "\Sotimes"']
&
\sBorn
\ar[u, "\Comp"']
\end{tikzcd}
\end{equation*}
of categories.
\end{Definition}

We have to show that $\Cotimes$ is associative. Since the inclusion $J: \cBorn \to \sBorn$ is full and faithful, the counit $\epsilon: \Comp J \to \Id$ of the adjunction~\eqref{eq:CompJAdjoint} is an isomorphism. In other words, $\cBorn \hookrightarrow \sBorn$ is a \textdef{reflective subcategory}. Let $\eta: \Id \to J\Comp$ denote the unit of the adjunction. A sufficient condition for the tensor product to be associative is that 
\begin{equation}
\label{eq:DayConvCond}
  \Comp(JU \Sotimes JV)
  \xrightarrow{~\Comp(\eta_U \Sotimes \id_V)~}
  \Comp( J\Comp U \Sotimes V)
\end{equation}
is an isomorphism. If this is the case, then we have the natural isomorphisms
\begin{equation*}
\begin{split}
  (U \Cotimes V) \Cotimes W
  &=
  \Comp \bigl( J\Comp(JU \Sotimes JV) \Sotimes JW \bigr)
  \\
  &\cong
  \Comp \bigl( (JU \Sotimes JV) \Sotimes JW \bigr)
  \\
  &\cong
  \Comp \bigl( JU \Sotimes (JV \Sotimes JW) \bigr)
  \\
  &\cong
  \Comp \bigl( JU \Sotimes J\Comp(JV \Sotimes JW) \bigr)
  \\
  &\cong
  U \Cotimes (V \Cotimes W)
  \,,
\end{split}
\end{equation*}
where we have used that $\Sotimes$ is associative and symmetric. Day's reflection theorem states four equivalent conditions for~\eqref{eq:DayConvCond} to be an isomorphism.

\begin{Theorem}[Theorem~1.2 in \cite{Day:1972}]
\label{thm:DaysReflection}
Let $\calD$ be a cartesian closed symmetric monoidal category; let $R: \calC \to \calD$ be a full and faithful functor with a left adjoint $L: \calD \to \calC$; let $\eta: \Id \to RL$ denote the unit of the adjunction. The following are equivalent: For all objects $c \in \calC$ and $d, d' \in \calD$
\begin{align}
\eta_{\intHom(d,Rc)}: \intHom(d, Rc) &\longrightarrow 
RL\intHom(d, Rc) \quad\text{is an isomorphism;}
\tag{i}
\\
\intHom(\eta_D,Rc): \intHom(RL d, Rc) &\longrightarrow 
\intHom(d, Rc) \quad\text{is an isomorphism;}
\tag{ii}
\\
L(\eta_d \otimes \id_{d'}): L (d \otimes d') &\longrightarrow
L(RLd \otimes d') \quad\text{is an isomorphism;}
\tag{iii}
\\
L(\eta_d \otimes \eta_{d'}): L (d \otimes d') &\longrightarrow
L(RLd \otimes RLd') \quad\text{is an isomorphism.}
\tag{iv}
\end{align}
\end{Theorem}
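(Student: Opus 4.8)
The plan is to reduce all four conditions to a single statement about \emph{ordinary} (external) hom-sets, and then to compare the resulting statements directly. Throughout I use the closed symmetric monoidal structure of $\calD$, i.e. the tensor-hom adjunction $\calD(a \otimes b, e) \cong \calD(a, \intHom(b,e))$ together with the symmetry of $\otimes$, and the adjunction $L \dashv R$ with unit $\eta$. The basic tool is an orthogonality principle: for a morphism $f: d \to d'$ in $\calD$, the map $\calD(f, Rc)$ corresponds under $\calD(-,Rc) \cong \calC(L(-),c)$ to $\calC(Lf, c)$, so by Yoneda $Lf$ is invertible if and only if $\calD(f, Rc)$ is a bijection for all $c \in \calC$. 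Since $R$ is full and faithful, the counit $\epsilon: LR \to \Id$ is invertible; consequently every object $RLd$ is local ($\eta_{RLd}$ is an isomorphism) and $L\eta_d$ is always invertible. Writing (E3) for the statement ``$\calD(\eta_a \otimes \id_b, Rc)$ is a bijection for all $a,b \in \calD$ and $c \in \calC$'' and (E4) for ``$\calD(\eta_a \otimes \eta_b, Rc)$ is a bijection for all $a,b,c$'', the goal becomes to prove (i)$\iff$(ii)$\iff$(iii)$\iff$(E3) and (iv)$\iff$(E4)$\iff$(E3).

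First I would translate the three ``(iii)-type'' conditions into (E3). Condition (iii) is literally (E3) by the orthogonality principle. For (ii), I test the map $\intHom(\eta_d, Rc)$ against $\calD(a,-)$; the tensor-hom adjunction rewrites it as the demand that $\calD(\id_a \otimes \eta_d, Rc)$ be a bijection for all $a$, and the symmetry isomorphism $\id_a \otimes \eta_d \cong \eta_d \otimes \id_a$ identifies this with (E3). For (i), the object $\intHom(d,Rc)$ is local precisely when it is orthogonal to every unit map, i.e. $\calD(\eta_{d'}, \intHom(d,Rc))$ is a bijection for all $d'$ (the standard characterization of local objects in a reflective subcategory); applying the tensor-hom adjunction turns this into bijectivity of $\calD(\eta_{d'} \otimes \id_d, Rc)$, which is once more (E3). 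Finally, condition (iv) is (E4) by the orthogonality principle.

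It remains to establish (E3)$\iff$(E4). The implication (E3)$\implies$(E4) is routine: factoring $\eta_a \otimes \eta_b = (\id_{RLa} \otimes \eta_b) \circ (\eta_a \otimes \id_b)$ and applying $\calD(-,Rc)$ exhibits the (E4)-map as a composite of two maps, each a bijection by (E3) (the second after using symmetry to rewrite $\id_{RLa} \otimes \eta_b \cong \eta_b \otimes \id_{RLa}$).

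The reverse implication (E4)$\implies$(E3) is the main obstacle, because a priori (E4) only controls tensor products of \emph{two} unit maps, whereas (E3) must hold with an arbitrary second argument. The key trick is to apply (E4) at the pair $(a, RLb)$: since $RLb$ is local, $\eta_{RLb}$ is invertible, so in $\eta_a \otimes \eta_{RLb} = (\id_{RLa} \otimes \eta_{RLb}) \circ (\eta_a \otimes \id_{RLb})$ the first factor is invertible, and bijectivity of $\calD(\eta_a \otimes \eta_{RLb}, Rc)$ forces $\calD(\eta_a \otimes \id_{RLb}, Rc)$ to be a bijection. Now I apply $\calD(-,Rc)$ to the two factorizations of $\eta_a \otimes \eta_b$, obtaining a commuting square whose two composites both equal the bijective (E4)-map; the factor through $\calD(a \otimes RLb, Rc)$ reads $\calD(\id_a \otimes \eta_b, Rc) \circ \calD(\eta_a \otimes \id_{RLb}, Rc)$. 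Since the right-hand factor has just been shown invertible, I may cancel it and conclude that $\calD(\id_a \otimes \eta_b, Rc)$, equivalently (by symmetry) $\calD(\eta_b \otimes \id_a, Rc)$, is a bijection. As $a,b,c$ were arbitrary, this is exactly (E3), closing the chain of equivalences and proving the theorem.
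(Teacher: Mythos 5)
Your proof is correct. Note that the paper gives no proof of this statement---it is quoted from Day's 1972 article---so there is nothing internal to compare against; your argument is the standard one and essentially Day's own: each condition is reduced, via the tensor--hom adjunction and Yoneda, to the single representability statement that $\calD(\eta_a\otimes\id_b,Rc)$ be bijective for all $a,b,c$, using that $L\eta_d$ and $\eta_{RLd}$ are invertible because $R$ is fully faithful, and the passage between (E3) and (E4) via the two factorizations of $\eta_a\otimes\eta_b$ is carried out correctly. The one step you invoke without proof, the characterization of local objects by orthogonality to the unit maps (needed for the equivalence of (i) with (E3)), is indeed standard; for completeness, the nontrivial direction goes as follows: bijectivity of $\calD(\eta_x,x)$ yields a retraction $g$ of $\eta_x$, and $\eta_x\circ g=\id_{RLx}$ then follows from injectivity of $\calD(\eta_x,RLx)$, which holds automatically because $RLx$ lies in the image of $R$.
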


We want to apply Theorem~\ref{thm:DaysReflection} to the adjunction~\eqref{eq:CompJAdjoint}. The inclusion $J: \cBorn \to \sBorn$ is full and faithful and $\Comp: \sBorn \to \cBorn$ its left adjoint. The next proposition states that Condition~(i) of Theorem~\ref{thm:DaysReflection} is satisfied.

\begin{Proposition}
\label{prop:InnHomComp}
Let $V$ and $V'$ be separated convex bornological vector spaces. If $V'$ is complete, then so is $\intBorn(V,V')$.
\end{Proposition}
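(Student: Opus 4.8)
The plan is to verify the completeness criterion of Definition~\ref{def:Disks3}(iii) directly: I will show that every bounded disk $F \subset \intBorn(V,V')$ is contained in a bounded completant disk $G$. Since $\intBorn(V,V')$ is already separated by Proposition~\ref{prop:InnHomSep}, and since every bounded set is contained in its (bounded) disked hull, it is enough to treat bounded disks.

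First I would use the completeness of $V'$ to manufacture a candidate disk. For every bounded disk $A \subset V$ the set $F(A) \subset V'$ is bounded by the definition of the functional bornology (Proposition~\ref{prop:UnifBoundBorn}), so completeness of $V'$ provides a bounded completant disk $C_A \subset V'$ with $F(A) \subset C_A$; by Remark~\ref{rmk:NormingDiskClosed} I take each $C_A$ internally closed, i.e.\ the closed unit ball of the Banach space $V'_{C_A}$. I then set
\begin{equation*}
  G := \bigl\{\, g \in \intBorn(V,V') ~\big|~ g(A) \subset C_A \text{ for every bounded disk } A \subset V \,\bigr\}.
\end{equation*}
It is routine that $G$ is a disk (each constraint $g(A)\subset C_A$ cuts out a convex circled set) and that $F \subset G$ (for $g \in F$ one has $g(A) \subset F(A) \subset C_A$). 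Moreover $G$ is bounded, since $G(A) \subset C_A$ is bounded for every bounded disk $A$ and every bounded subset of $V$ lies in such a disk.

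The heart of the argument is to prove that $G$ is completant, i.e.\ that $(V_G, \|\Empty\|_G)$ is a Banach space. I would take a $\|\Empty\|_G$-Cauchy sequence $(g_n)$ and evaluate it pointwise. For a bounded disk $A \subset V$, the inclusion $g_n - g_m \in \epsilon G$ gives $(g_n-g_m)(v) \in \epsilon C_A$ for all $v \in A$, so $\bigl(g_n(v)\bigr)_n$ is Cauchy in the Banach space $V'_{C_A}$ and converges there. Since $V'$ is separated, the limit $g(v) := \lim_n g_n(v) \in V'$ is independent of the disk $A$ containing $v$ (norm convergence in $V'_{C_A}$ is Mackey convergence in $V'$ in the sense of Definition~\ref{def:Mackey-Cauchy}, and Mackey limits are unique). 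Passing to the limit pointwise shows $g$ is linear, and letting $m \to \infty$ in $(g_n - g_m)(A) \subset \epsilon C_A$---legitimate because $\epsilon C_A$ is closed in $V'_{C_A}$---yields $(g_n - g)(A) \subset \epsilon C_A$ for every $A$ and all large $n$. This says precisely $\|g_n - g\|_G \le \epsilon$, so $g_n \to g$ in $\|\Empty\|_G$ and $g = g_N - (g_N - g) \in V_G$, with $g$ bounded since it lies in the span of the bounded set $G$. The same evaluation argument shows $\|\Empty\|_G$ is a norm, as a vector with $\|\Empty\|_G$-seminorm zero maps every bounded disk into $\bigcap_{r>0} r C_A = \{0\}$; hence $G$ is norming, and therefore completant.

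The main obstacle I anticipate is keeping the two directions of uniformity straight: the completant disks $C_A$ genuinely depend on $A$, whereas the Cauchy index $N(\epsilon)$ extracted from $\|\Empty\|_G$-Cauchyness must be uniform in $A$. It is exactly this uniformity that converts the per-disk convergence in the Banach spaces $V'_{C_A}$ back into $\|\Empty\|_G$-convergence in the mapping space; without it one would only recover convergence in the bornivorous topology. A secondary point requiring care is the internal closedness of the $C_A$, needed so that the inclusion $(g_n-g_m)(v)\in \epsilon C_A$ survives the limit $m\to\infty$, which is supplied by Remark~\ref{rmk:NormingDiskClosed}.
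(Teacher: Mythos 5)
Your proof is correct and takes essentially the same route as the paper: you realize the required completant disk as the set of maps sending each prescribed bounded disk of $V$ into a prescribed completant (internally closed) disk of $V'$, identify $\|\Empty\|_G$ as a supremum of operator norms, and obtain completeness from per-disk convergence in the Banach spaces $V'_{C_A}$ together with the uniformity of the Cauchy index. The only difference from the paper is organizational -- it indexes over a basis of disks of $V$ and of completant disks of $V'$ via maps $\sigma: I \to J$, whereas you build one completant disk around each given bounded disk -- which does not change the substance of the argument.
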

\begin{proof}
We give a proof in Section~\ref{sec:InnHomComp} of the appendix.
\end{proof}

Proposition~\ref{prop:InnHomComp} states that the inner hom functor $\intsBorn: \sBorn^\op \times \sBorn \to \sBorn$ restricts to a functor $\intcBorn: \cBorn^\op \times \cBorn \to \cBorn$ defined by
\begin{equation}
\label{eq:intcBorn}
  J\bigl(\intcBorn(V,W)\bigr) = \intsBorn(JV, JW)
  \,,
\end{equation}
where $J: \cBorn \hookrightarrow \sBorn$ is the inclusion. 

\begin{Corollary}
The complete tensor product $\Cotimes$ of Definition~\ref{def:TensorcBorn} and the inner hom $\intcBorn(\Empty, \Empty)$ of~\eqref{eq:intcBorn} equip the category $\cBorn$ of complete bornological vector spaces with a closed symmetric monoidal structure.   
\end{Corollary}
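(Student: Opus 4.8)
The plan is to transport the symmetric monoidal closed structure of $(\sBorn, \Sotimes, \intsBorn)$ onto $\cBorn$ along the reflective adjunction $\Comp \dashv J$, with Day's reflection Theorem~\ref{thm:DaysReflection} supplying the single piece of nonformal compatibility. Almost everything is then bookkeeping, so I would organize the argument as associativity, then symmetry and unit, then closedness.

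First I would establish associativity of $\Cotimes$. The displayed chain of natural isomorphisms immediately preceding the statement already reduces associativity to the assertion that the comparison map~\eqref{eq:DayConvCond} is an isomorphism; this is precisely Condition~(iii) of Theorem~\ref{thm:DaysReflection} for the adjunction $\Comp \dashv J$, applied with $d = JU \Sotimes JV$ and $d' = JW$. Now Proposition~\ref{prop:InnHomComp} asserts that $\intsBorn(V, V')$ is complete whenever $V'$ is complete. Since $\cBorn \hookrightarrow \sBorn$ is reflective, an object lies in the essential image of $J$ exactly when the unit $\eta$ is an isomorphism at it, so completeness of $\intsBorn(d, Jc)$ is exactly Condition~(i) of the theorem. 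As the four conditions are equivalent, Condition~(iii) holds, and $\Cotimes$ is associative.

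Next I would handle symmetry and the unit. The braiding for $\Sotimes$ induces one for $\Cotimes = \Comp\bigl(J(\Empty) \Sotimes J(\Empty)\bigr)$ by functoriality of $\Comp$, and the hexagon axiom descends from its counterpart in $\sBorn$. The unit object is $\bbK$ with its natural (fine) bornology, which is complete; because the counit $\Comp J \to \Id$ is an isomorphism by reflectivity, the unitors of $\Sotimes$ yield $\bbK \Cotimes V = \Comp(\bbK \Sotimes JV) \cong \Comp(JV) \cong V$. The pentagon and triangle coherences then follow from those of $\sBorn$ together with naturality of $\eta$ and $\epsilon$ and the universal property of $\Comp$; this is the standard fact that the monoidal structure descends to a reflective subcategory once Day's condition is met.

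Finally, for closedness I would verify the adjunction $\Empty \Cotimes V \dashv \intcBorn(V,\Empty)$ by the chain of natural bijections
\begin{equation*}
\begin{split}
  \cBorn(U \Cotimes V, W)
  &\cong \sBorn(JU \Sotimes JV, JW)
  \\
  &\cong \sBorn\bigl(JU, \intsBorn(JV, JW)\bigr)
  \\
  &\cong \sBorn\bigl(JU, J\,\intcBorn(V, W)\bigr)
  \\
  &\cong \cBorn\bigl(U, \intcBorn(V, W)\bigr),
\end{split}
\end{equation*}
using the adjunction $\Comp \dashv J$ together with Definition~\ref{def:TensorcBorn} in the first step, the closed structure of $(\sBorn, \Sotimes)$ from Proposition~\ref{prop:InnsBorn} in the second, the defining equation~\eqref{eq:intcBorn} in the third, and fullness and faithfulness of $J$ in the last --- exactly parallel to the proof of Proposition~\ref{prop:InnsBorn}. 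The only step carrying genuine analytic content is the verification of Day's condition, which is already discharged by Proposition~\ref{prop:InnHomComp}; I therefore expect no real obstacle, and the remaining work is the routine, though necessary, confirmation of the coherence axioms by transport across the reflective adjunction.
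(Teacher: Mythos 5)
Your proposal is correct and follows essentially the same route as the paper, which simply invokes Day's reflection Theorem~\ref{thm:DaysReflection}~(i) with Condition~(i) discharged by Proposition~\ref{prop:InnHomComp}. You merely spell out the bookkeeping (symmetry, unit, and the adjunction chain for closedness) that the paper leaves implicit.
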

\begin{proof}
This follows from Day's reflection Theorem~\ref{thm:DaysReflection}~(i).
\end{proof}

From now on, we will write $\intHom$ for any of the internal hom functors of $\Born$, $\sBorn$, and $\cBorn$. We have shown this to be unambiguous.

\section{The bornology of test functions}
\label{sec:TestFuncBorn}

\subsection{The bornological vector space of test functions}

Our main example of a complete bornological vector space is the vector space $C^\infty_\mathrm{c}(X)$ of compactly supported smooth functions on a manifold $X$, commonly called \textdef{test functions}, which we will review now. 
We start by reviewing its well-known structure as a locally convex LF space and discuss the von Neumann bornology.

\begin{Remark}
The functor $\vN:\lcTVS \to \Born$ from Remark~\ref{rmk:vonNeumann} restricts to an equivalence on the subcategory of LF spaces. For a detailed comparison, we refer to \cite[Chapter 1.1.6]{Meyer2007}.
\end{Remark}

Let $K \subset \bbR^n$ be a compact subset. Let the space 
\begin{equation*}
  C^\infty_K(\bbR^n)
  := \{ f \in C^\infty(\bbR^n) ~|~ \Supp f \subset K \}
\end{equation*}
denote the vector space of smooth functions with support in $K$. We have the family of seminorms
\begin{equation}
\label{eq:TestFuncSemiNorms}
  p_{K,k}(f)
  := 
   \sum_{|\alpha| \leq k} \frac{1}{\alpha!} \sup_{x \in K} \left| \partial^\alpha f \right|
\end{equation}
indexed by a multi-index $\alpha \in \bbN^n$.
These seminorms are submultiplicative, that is,
\begin{equation*}
  p_{K,k}(fg) 
  \leq p_{K,k}(f)\,p_{K,k}(g)
\end{equation*}
for the pointwise product $fg$ of any two functions $f$ and $g$ with support in $K$.

As locally convex topological vector space, the seminorms equip $C^\infty_K(\R^n)$ with a Fr\'echet topology. Using the von Neumann bornology, we can regard it as a complete bornological vector space. When $K \subset L$ for another compact subset $L$, we have a bounded injection
\begin{equation*}
  C^\infty_K(\bbR^n) \hookrightarrow  
  C^\infty_{L}(\bbR^n)
\end{equation*}
of bornological vector spaces. The bornological space of all compactly supported functions is obtained as the colimit
\begin{equation*}
  C_\mathrm{c}^\infty(\bbR^n)
  = \Colim_{
    \substack{
      K\subset \bbR^n\\
      K\text{ compact}
    }} 
    C^\infty_K(\bbR^n)
\end{equation*}
in $\Born$, where the indexing category has compact sets as objects and inclusions as morphisms.

It follows from Proposition~\ref{prop:ProdCoprodBorn}~(ii) and (iii), that a bounded set in the colimit bornology is a subset of a sum $A_1 + \ldots + A_k$, where each $A_i$ is a bounded subset of $C_{K_i}^\infty(\bbR^n)$ for $K_i$ compact. In particular, all functions in $A$ have support in the compact subset $K = \bigcup_{1 \leq i \leq k} K_i$. The upshot is that $A\subset C_\mathrm{c}^\infty(\R^n)$ is bounded if and only if
\begin{itemize}

\item[(i)] all $a \in A$ have support in some fixed compact $K \subset \bbR^n$, and

\item[(ii)] all seminorms~\eqref{eq:TestFuncSemiNorms} are bounded on $A$.

\end{itemize}
This implies that a sequence $(f_i)_{i \in \bbN}$ converges in $C_\mathrm{c}^\infty(\bbR^n)$ if and only if all $f_i$ have support in some fixed compact $K$ and $f_i$ converges in $C_K^\infty(\bbR^n)$. Since $C_K^\infty(\bbR^n)$ is complete, it follows that $C_\mathrm{c}^\infty(\bbR^n)$ is complete.
In particular, the colimit in $\Born$ coincides with the colimit in $\cBorn$.

This generalizes to a manifold $X$ by taking the colimit over the compact subsets subordinate to the charts of an atlas. Since any coordinate transformation and its inverse are bounded, the bornology does not depend on the atlas. The upshot is that the bornological space of compactly supported smooth functions on $X$ is given by the colimit
\begin{equation}
\label{eq:CompFuncMColim}
  C_\mathrm{c}^\infty(X)
  = \Colim_{
    \substack{
      K\subset X\\
      K\text{ compact}
    }} 
    C^\infty_K(X) \,,
\end{equation}
taken in any of the categories $\cBorn$, $\sBorn$, or $\Born$.
The following property holds only for the complete bornological tensor product.

\begin{Proposition}[Example~1.95 in \cite{Meyer2007}]
\label{prop:TestTensProd}
Let $X$ and $Y$ be smooth manifolds. Then there is a natural isomorphism of complete bornological vector spaces
\begin{equation*}
  C_\mathrm{c}^\infty(X) \Cotimes C_\mathrm{c}^\infty(Y)
  \cong
  C_\mathrm{c}^\infty(X\times Y)      
\end{equation*}
induced by $f \otimes g \mapsto f(x)\,g(y)$.    
\end{Proposition}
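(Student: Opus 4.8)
The plan is to build the isomorphism from the external product of test functions, reduce it to a statement about the Fr\'echet building blocks $C^\infty_K$, and then settle that statement by combining a density argument with the nuclearity of these spaces.

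\textbf{Defining the map.} First I would consider the external product $(f,g)\mapsto f\boxtimes g$, where $(f\boxtimes g)(x,y):=f(x)\,g(y)$, as a bilinear map $C_\mathrm{c}^\infty(X)\times C_\mathrm{c}^\infty(Y)\to C_\mathrm{c}^\infty(X\times Y)$. To check boundedness, let $A$ and $B$ be bounded, so their elements are supported in fixed compacta $K\subset X$, $L\subset Y$ with the seminorms~\eqref{eq:TestFuncSemiNorms} uniformly bounded. Then $f\boxtimes g$ is supported in $K\times L$, and since $\partial^{(\alpha,\beta)}(f\boxtimes g)=(\partial^\alpha f)\boxtimes(\partial^\beta g)$ and $(\alpha,\beta)!=\alpha!\,\beta!$, the multi-indices with $|\alpha|+|\beta|\le k$ form a subset of those with $|\alpha|\le k$, $|\beta|\le k$, giving $p_{K\times L,k}(f\boxtimes g)\le p_{K,k}(f)\,p_{L,k}(g)$. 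Hence the image of $A\times B$ is bounded, so the external product is bounded bilinear. By the universal property of $\Botimes$ (Definition~\ref{def:TensorBorn}), together with the separatedness and completeness of the target, it factors uniquely through a bounded linear map $\Phi\colon C_\mathrm{c}^\infty(X)\Cotimes C_\mathrm{c}^\infty(Y)\to C_\mathrm{c}^\infty(X\times Y)$ sending $f\otimes g$ to $f\boxtimes g$.

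\textbf{Reduction to compact supports.} Since $\cBorn$ is closed symmetric monoidal, the functor $\Empty\Cotimes W$ is a left adjoint and preserves colimits. Writing $C_\mathrm{c}^\infty(X)=\Colim_K C^\infty_K(X)$ and $C_\mathrm{c}^\infty(Y)=\Colim_L C^\infty_L(Y)$ as in~\eqref{eq:CompFuncMColim}, I obtain
\[
  C_\mathrm{c}^\infty(X)\Cotimes C_\mathrm{c}^\infty(Y)\cong\Colim_{K,L}\bigl(C^\infty_K(X)\Cotimes C^\infty_L(Y)\bigr).
\]
On the other side, every compact $M\subset X\times Y$ lies in $\pi_X(M)\times\pi_Y(M)$, so product compacta $K\times L$ are cofinal among the compacta of $X\times Y$, whence $C_\mathrm{c}^\infty(X\times Y)\cong\Colim_{K,L}C^\infty_{K\times L}(X\times Y)$. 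Under these identifications $\Phi$ is the colimit of the local maps $\Phi_{K,L}\colon C^\infty_K(X)\Cotimes C^\infty_L(Y)\to C^\infty_{K\times L}(X\times Y)$, so it suffices to prove each $\Phi_{K,L}$ is an isomorphism.

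\textbf{The Fr\'echet building blocks.} Working in product charts I may assume $K\subset\bbR^m$ and $L\subset\bbR^n$, so $C^\infty_K$, $C^\infty_L$, $C^\infty_{K\times L}$ are nuclear Fr\'echet spaces with their von Neumann bornology, for which Mackey convergence agrees with Fr\'echet convergence. For density I would enclose $K\times L$ in a box, expand a given $h$ in a Fourier series on the corresponding torus, and multiply the exponentials $e^{\mathrm{i}(\xi\cdot x+\eta\cdot y)}=e^{\mathrm{i}\xi\cdot x}\,e^{\mathrm{i}\eta\cdot y}$ by fixed cutoffs equal to $1$ near $K$ and $L$; by the rapid decay of the Fourier coefficients the partial sums, which are finite combinations of elementary tensors, converge to $h$ in every seminorm, hence Mackey. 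Thus the image of the algebraic tensor product is Mackey dense in $C^\infty_{K\times L}$, which yields surjectivity of the bounded extension $\Phi_{K,L}$.

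\textbf{Main obstacle.} The crux is to show $\Phi_{K,L}$ is a bornological \emph{embedding}, i.e.\ that the von Neumann bornology of $C^\infty_{K\times L}$ is no finer than the completed tensor product bornology, equivalently that $\Phi_{K,L}^{-1}$ is bounded. This is exactly where the nuclearity of $C^\infty_K$ and $C^\infty_L$ is indispensable: it produces, for each $h$, an absolutely convergent expansion $h=\sum_i\lambda_i\,f_i\boxtimes g_i$ with $\sum_i|\lambda_i|<\infty$ and $(f_i)$, $(g_i)$ bounded, placing $h$ inside the image of a bounded completant disk of $C^\infty_K\Cotimes C^\infty_L$ and so bounding the inverse. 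Equivalently, one invokes that on metrizable spaces the completed bornological tensor product coincides with the completed projective topological tensor product, together with the classical identity $C^\infty_K(\bbR^m)\projotimes C^\infty_L(\bbR^n)\cong C^\infty_{K\times L}(\bbR^{m+n})$ for nuclear Fr\'echet spaces \cite{Meyer2004,Meyer2007}. Finally, naturality in $X$ and $Y$ is immediate, as $\Phi$ is defined by the formula $f\otimes g\mapsto f\boxtimes g$, which is visibly compatible with the functorial operations on test functions.
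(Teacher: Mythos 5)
Your argument is correct and is essentially the proof that the paper delegates to Meyer (the proposition is stated with the citation Example~1.95 in \cite{Meyer2007} and no proof of its own): reduce, via cofinality of the product compacta and the fact that $\Cotimes$ preserves colimits in each variable, to the Fr\'echet blocks $C^\infty_K$, then identify the complete bornological tensor product with the completed projective tensor product on Fr\'echet spaces and invoke nuclearity of $C^\infty_K$. The only imprecision is the step ``working in product charts I may assume $K\subset\bbR^m$'' --- a compact subset of a manifold need not lie in a single chart --- but this is harmless, since $C^\infty_K(X)$ is nuclear Fr\'echet for an arbitrary compact $K$ in a second-countable manifold, and the density of elementary tensors reduces to the chart case by a partition of unity.
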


\begin{Remark}
\label{rmk:lcTVSpathology}
In the category $\lcTVS$ of locally convex topological vector spaces, the analogue of Proposition~\ref{prop:TestTensProd} holds only for the complete inductive tensor product. However, this does \emph{not} equip $\lcTVS$ with a symmetric monoidal structure.    
\end{Remark}

\subsection{Pullback and fiber integration}
\label{sec:PullbackFibInt}

\begin{Proposition}
\label{prop:TestFuncPullback}
Let $\phi: X \to Y$ be a smooth map of manifolds. If $\phi$ is proper, then the pullback $\phi^*: C_\mathrm{c}^\infty(Y) \to C_\mathrm{c}^\infty(X)$, $f \mapsto f \circ \phi$ is bounded.  
\end{Proposition}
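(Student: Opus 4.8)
The plan is to verify directly the two defining conditions for boundedness of a subset of $C_\mathrm{c}^\infty(X)$, as characterized in this section: being contained in $C_K^\infty$ for a fixed compact $K$, together with uniform bounds on all the seminorms \eqref{eq:TestFuncSemiNorms} computed in charts. So I would take an arbitrary bounded subset $B \subset C_\mathrm{c}^\infty(Y)$ and show that its image $\phi^*(B) = \{ f \circ \phi \mid f \in B\}$ satisfies both conditions, which by the explicit description of the colimit bornology in \eqref{eq:CompFuncMColim} is exactly what boundedness of $\phi^*$ amounts to.

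First comes the support condition, which is where properness enters. For any $f \in C_\mathrm{c}^\infty(Y)$ one has $\Supp(f \circ \phi) \subseteq \phi^{-1}(\Supp f)$. Since $B$ is bounded, there is a single compact $L \subset Y$ with $\Supp f \subseteq L$ for all $f \in B$; by properness $K := \phi^{-1}(L)$ is compact, and $\Supp(f\circ\phi)$ is a closed subset of $K$, hence compact. Thus every $f \circ \phi$ lies in $C_K^\infty(X)$ for one fixed compact $K$, which simultaneously shows that $\phi^*$ is well defined as a map into $C_\mathrm{c}^\infty(X)$ and secures the first boundedness condition.

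Second comes the seminorm estimate. Covering $K$ by finitely many charts of $X$ whose images under $\phi$ lie in charts of $Y$ (possible by compactness of $K$ and continuity of $\phi$, after shrinking), the map $\phi$ becomes in each chart pair a smooth map between open subsets of Euclidean spaces. The chain rule in the form of the Fa\`a di Bruno formula then expresses $\partial^\alpha(f \circ \phi)$ as a universal polynomial in the partials $\partial^\beta\phi$ with $|\beta| \leq |\alpha|$ and in the compositions $(\partial^\gamma f)\circ\phi$ with $|\gamma| \leq |\alpha|$. Because $\phi$ is smooth and $K$ is compact, all $\partial^\beta\phi$ with $|\beta| \leq k$ are bounded on $K$, so there is a constant $C_{K,k}$, depending only on $\phi$, $K$, and $k$, with $p_{K,k}(f \circ \phi) \leq C_{K,k}\, p_{L,k}(f)$ for every $f$ supported in $L$, where $\phi(K) \subseteq L$ is used to control the factors $(\partial^\gamma f)\circ\phi$. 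Taking the supremum over $f \in B$ and using $\sup_{f \in B} p_{L,k}(f) < \infty$ shows that every seminorm is bounded on $\phi^*(B)$, giving the second condition and completing the argument.

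The main obstacle is this last seminorm estimate: carrying out the Fa\`a di Bruno bookkeeping cleanly across the finite chart cover and making the constant $C_{K,k}$ genuinely independent of $f$. The chart-matching step is routine given compactness of $K$ and $L$, but it should be stated carefully so that the seminorms on source and target are compared in compatible coordinates. Properness is used only once, but it is essential: without it $\phi^{-1}(L)$ need not be compact, and $\phi^*$ could fail even to land in $C_\mathrm{c}^\infty(X)$.
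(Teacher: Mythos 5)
Your proposal is correct and follows essentially the same route as the paper: properness gives the compact support condition via $\phi^{-1}(L)$, and the chain rule together with compactness of the support gives the uniform seminorm estimates; the paper phrases this as boundedness of the restrictions $C_K^\infty(Y)\to C_{\phi^{-1}(K)}^\infty(X)$ combined with the colimit formula~\eqref{eq:CompFuncMColim}, while you unwind the same colimit bornology into its explicit two-condition description, but the mathematical content is identical.
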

\begin{proof}
Let $K \subset Y$ be compact. By the assumption that $\phi$ is proper, $\phi^{-1}(K) \subset X$ is compact. The pullback restricts to a map $C_K^\infty(Y) \to C_{\phi^{-1}(K)}^\infty(X)$. Since $K$ is compact, all derivatives of $\phi$ are bounded on $K$. It follows from the chain rule that the derivatives of $f \circ \phi$ are bounded for all $f \in C_K^\infty(Y)$. By the colimit formula~\eqref{eq:CompFuncMColim}, a map on $C_\mathrm{c}^\infty(Y)$ is bounded if its restrictions to $C_K^\infty(Y)$ are bounded for all compact $K$. This shows that $\phi^*$ is bounded.
\end{proof}

Proposition~\ref{prop:TestTensProd} and Proposition~\ref{prop:TestFuncPullback} can be summarized by saying that $C_\mathrm{c}^\infty$ is a monoidal contravariant functor from the category of smooth manifolds and proper maps to the category of complete bornological spaces.    

\begin{Example}
\label{ex:TestEval}
Let $x: * \to X$ be a point in a smooth manifold. The pullback $x^*: C_\mathrm{c}^\infty(X) \to \bbR$, $f \mapsto f(x)$ is the evaluation at $x$. Let $f \in C_\mathrm{c}^\infty(X)$ be such that $f(x) = 1$. Then the map $\bbR \to C_\mathrm{c}^\infty(X)$, $r \mapsto rf$ is a section of $x^*$, which shows that $x^*$ is a split epimorphism of bornological vector spaces.
\end{Example}

\begin{Proposition}
\label{prop:TestSubmfgPullback}
Let $\phi: X \to Y$ be a closed embedded submanifold. Then the pullback $\phi^*: C_\mathrm{c}^\infty(Y) \to C_\mathrm{c}^\infty(X)$ is a split epimorphism of bornological vector spaces.
\end{Proposition}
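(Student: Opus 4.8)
The plan is to exhibit $\phi^*$ as a split epimorphism directly, by constructing a bounded linear \emph{extension operator} $s : C_\mathrm{c}^\infty(X) \to C_\mathrm{c}^\infty(Y)$ with $\phi^* \circ s = \id$; by definition of a split epimorphism in $\cBorn$ this is exactly what is required, and surjectivity of $\phi^*$ then follows automatically. I first note that $\phi^*$ is defined and bounded: a closed embedding is proper (the preimage of a compact $K\subset Y$ is the closed subset $\phi(X)\cap K$ of $K$, hence compact), so this is the special case of Proposition~\ref{prop:TestFuncPullback}. The construction below generalizes Example~\ref{ex:TestEval}, where $X$ is a point: the bump function used there is replaced by a cutoff in the normal directions of a tubular neighborhood.

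\textbf{Construction of the section.} I would invoke the tubular neighborhood theorem (available since manifolds are assumed second countable): as $\phi$ embeds $X$ as a closed submanifold, there is an open neighborhood $U$ of $\phi(X)$ in $Y$ and a diffeomorphism $\psi : N \xrightarrow{\sim} U$ of the total space of the normal bundle $\pi : N \to X$ onto $U$, carrying the zero section to $\phi(X)$. Composing gives a smooth retraction $r := \pi \circ \psi^{-1} : U \to X$ with $r \circ \phi = \id_X$. Fix a fibre metric on $N$, writing $|v|$ for the norm of $v \in N$, and a smooth $\beta : [0,\infty) \to [0,1]$ with $\beta \equiv 1$ on $[0,\tfrac12]$ and $\Supp\beta \subset [0,1]$. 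Setting $\chi := \beta(|\psi^{-1}(\Empty)|) : U \to [0,1]$ (so $\chi \equiv 1$ on $\phi(X)$), I define
\begin{equation*}
  s(g) := \chi \cdot (g \circ r) \ \text{ on } U, \qquad s(g) := 0 \ \text{ on } Y \setminus U.
\end{equation*}

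\textbf{Smoothness and the splitting identity.} The support of $\chi \cdot (g\circ r)$ is contained in $\{\chi \neq 0\} \cap r^{-1}(\Supp g)$, which under $\psi$ corresponds to the closed unit disk bundle of $N$ over the compact set $\Supp g \subset X$; this is a compact set $L \subset U$. Since $L$ is closed in the Hausdorff space $Y$ and $U \cup (Y\setminus L) = Y$, the extension by zero is smooth on all of $Y$, so $s(g) \in C_\mathrm{c}^\infty(Y)$. Restricting to $\phi(X)$, where $\chi \equiv 1$ and $r \circ \phi = \id_X$, yields $\phi^* s(g) = s(g)\circ\phi = g$, so $\phi^* \circ s = \id$; linearity is clear.

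\textbf{Boundedness (the main point).} The one step requiring care is that $s$ must send bounded sets to bounded sets, which demands a \emph{uniform} compact support and \emph{uniform} seminorm bounds over a bounded family. Given a bounded $A \subset C_\mathrm{c}^\infty(X)$, all $g \in A$ are supported in a single compact $K \subset X$ with uniformly bounded seminorms \eqref{eq:TestFuncSemiNorms}. By the support computation above, every $s(g)$ is then supported in the one compact set $L$ = closed unit disk bundle over $K$, independent of $g$. Covering $L$ by charts, the Leibniz and chain rules express each $\partial^\alpha s(g)$ as a finite sum of products of derivatives of the \emph{fixed} smooth functions $\chi$ and $r$ (bounded on the compact $L$) with derivatives of $g$ evaluated along $r$ (bounded by $\sup_{g\in A} p_{K,|\alpha|}(g) < \infty$). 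Hence the seminorms of $s(g)$ are uniformly bounded over $A$, so $s(A)$ is bounded and, by the characterization of bounded sets following \eqref{eq:CompFuncMColim}, $s$ is bounded. This is precisely the chain-rule estimate of Proposition~\ref{prop:TestFuncPullback} applied to the pullback along $r$, made legitimate by the cutoff $\chi$, which restores the fixed compact support that the non-proper retraction $r$ alone would not provide.
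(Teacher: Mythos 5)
Your proof is correct and follows essentially the same route as the paper: a tubular neighborhood with retraction onto $\phi(X)$, a cutoff function that is $1$ on the submanifold, and the section $g \mapsto \chi\cdot(g\circ r)$, with boundedness checked via uniform compact support and chain-rule seminorm estimates. The only cosmetic difference is that you control the support via the unit disk bundle of the normal bundle, whereas the paper chooses the tubular neighborhood so that $\pi^{-1}(K)$ is precompact and intersects with $\Supp\chi$; both devices accomplish the same thing.
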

\begin{proof}
Since $\phi$ is a closed embedding, it is proper. Since $\phi$ is an embedding, there is a tubular neighborhood $U \subset Y$ of $\phi(X)$ with projection $\pi:U \to X$, $\pi\circ \phi = \id_X$. We can choose $U$ such that $\pi^{-1}(K)$ is precompact in $Y$ for all compact $K\subset X$. We can find a function $\chi \in C^\infty(Y)$ with support in $U$ such that $\phi^*\chi = 1$.

For every $f \in C_K^\infty(X)$, the pullback $\pi^* f: U \to \bbR$ has support in $\pi^{-1}(K)$, which is precompact in $Y$. For every smooth $g: U \to \bbR$ with support in $\pi^{-1}(K)$, multiplication by $\chi$ yields a smooth function on $U$ with support contained in the compact set $\pi^{-1}(K) \cap \Supp\chi$. Composing the two operations, we obtain a sequence of bounded linear maps
\begin{equation*}
\begin{tikzcd}
C_K^\infty(X) \ar[r, "\pi^*"]
& 
C^\infty_{\pi^{-1}(K)}(U) \ar[r,"\cdot \chi"]
&         
C_\mathrm{c}^\infty(U) \ar[r, hook]
&
C_\mathrm{c}^\infty(Y)
\,.
\end{tikzcd}
\end{equation*}
Taking the colimit in $\Born$ over all compact $K \subset X$, we obtain the bounded linear map $\sigma: C_\mathrm{c}^\infty(X) \to C_\mathrm{c}^\infty(Y)$, $f \mapsto (\pi^* f)\chi$. Since $\phi^* \bigl((\pi^* f) \chi\bigr) = (\phi^* \pi^* f)(\phi^* \chi) = f$, $\sigma$ is a section of $\phi^*$. We conclude that $\phi^*$ is a split epimorphism in $\Born$.
\end{proof}

\begin{Example}
\label{ex:PointwiseMult}
Let $\Delta_X: X \to X \times X$, $x \mapsto (x,x)$  be the diagonal, which is a closed embedded submanifold. By Proposition~\ref{prop:TestFuncPullback}, the pullback
\begin{equation*}
  \Delta_X^*:
  C_\mathrm{c}^\infty(X) \Cotimes C_\mathrm{c}^\infty(X)
  \longrightarrow
  C_\mathrm{c}^\infty(X)      
\end{equation*}
is bounded. It is the bornological completion of the pointwise multiplication of functions. It follows from Proposition~\ref{prop:TestSubmfgPullback} that it is a split epimorphism.
\end{Example}

\begin{Proposition}
\label{prop:TestIntegr}
The integration over a manifold $X$ with a Radon measure $\lambda$,
\begin{equation*}
\begin{aligned}
  C_\mathrm{c}^\infty(X)      
  &\longrightarrow \bbR
  \\
  f &\longmapsto \int\nolimits_X f\, \di \lambda
\end{aligned}
\end{equation*}
is bounded.
\end{Proposition}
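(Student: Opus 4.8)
The plan is to reduce the claim to a boundedness estimate on each of the building blocks $C_K^\infty(X)$ of the colimit~\eqref{eq:CompFuncMColim}. By the colimit formula, a linear map out of $C_\mathrm{c}^\infty(X)$ is bounded precisely when its restriction to $C_K^\infty(X)$ is bounded for every compact $K \subset X$ (this is the same reduction used in the proof of Proposition~\ref{prop:TestFuncPullback}). Thus it suffices to fix a compact set $K$ and show that integration sends bounded subsets of $C_K^\infty(X)$ into bounded subsets of $\bbR$.

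First I would recall that, for the von Neumann bornology of the Fr\'echet space $C_K^\infty(X)$, a subset $A \subset C_K^\infty(X)$ is bounded if and only if all the seminorms~\eqref{eq:TestFuncSemiNorms} are bounded on $A$ (Example~\ref{ex:vonNeumanBorn}). In particular the zeroth seminorm $p_{K,0}(f) = \sup_{x\in K}|f(x)|$ is bounded on $A$, say $\sup_{f\in A} p_{K,0}(f) = C < \infty$. Since $\lambda$ is a Radon measure it is finite on the compact set $K$, and every $f \in C_K^\infty(X)$ vanishes outside $K$, so
\[
  \Bigl| \int\nolimits_X f \, \di\lambda \Bigr|
  = \Bigl| \int\nolimits_K f \, \di\lambda \Bigr|
  \leq \lambda(K)\, \sup_{x\in K}|f(x)|
  = \lambda(K)\, p_{K,0}(f)
  \,.
\]
Taking the supremum over $f \in A$ yields $\sup_{f\in A} \bigl|\int\nolimits_X f\,\di\lambda\bigr| \leq \lambda(K)\, C < \infty$, so the image of $A$ in $\bbR$ is bounded. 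This shows that the restriction of integration to $C_K^\infty(X)$ is bounded, and hence, by the colimit criterion, that integration on $C_\mathrm{c}^\infty(X)$ is bounded.

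The estimate uses only the single seminorm $p_{K,0}$, with no control of derivatives needed; this reflects the fact that integration against a measure is already continuous for the supremum norm on functions supported in $K$. The one genuinely essential input is the defining local finiteness of a Radon measure, $\lambda(K) < \infty$ for compact $K$, without which the estimate breaks down. I therefore expect no real obstacle here beyond correctly tracking the support condition and invoking the colimit description of the bornology on $C_\mathrm{c}^\infty(X)$.
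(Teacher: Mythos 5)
Your argument is correct and is essentially the paper's own proof: the paper's one-line proof is exactly the estimate $|\int_X f\,\di\lambda| \leq \sup_K|f|\,\lambda(K)$ with $\lambda(K)<\infty$ by compactness. You merely spell out the colimit reduction and the seminorm bookkeeping that the paper leaves implicit.
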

\begin{proof}
For every $f$ with support in a compact subset $K \subset X$, we have $| \int_X f\, \di\lambda| \leq \sup_K |f| \, \lambda(K)$, where the measure $\lambda(K)$ is finite since $K$ is compact.
\end{proof}

\begin{Proposition}
\label{prop:TestFiberIntegr}
Let $\phi: X \to Y$ be a surjective submersion. Let $\{\lambda_y \}_{y \in Y}$ be a family of strictly positive Radon measures on the fibers $\phi^{-1}(y)$ such that the integration over the fibers sends smooth compactly supported maps to smooth (necessarily compactly supported) maps. Then the fiber integration
\begin{equation*}
\begin{aligned}
  \phi_*: C_\mathrm{c}^\infty(X)      
  &\longrightarrow C_\mathrm{c}^\infty(Y)
  \\
  (\phi_* f)(y) 
  &:= \int\nolimits_{\phi^{-1}(y)} f\, \di \lambda_y
\end{aligned}
\end{equation*}
is a regular epimorphism of bornological vector spaces.
\end{Proposition}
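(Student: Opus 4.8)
The plan is to exhibit $\phi_*$ as a \emph{split} epimorphism, that is, to construct a bounded linear section $\sigma \colon C_\mathrm{c}^\infty(Y) \to C_\mathrm{c}^\infty(X)$ with $\phi_* \circ \sigma = \id$. Since a split epimorphism in any category is automatically a regular epimorphism — concretely, $\phi_*$ is then the coequalizer of the pair $(\sigma\phi_*, \id)$ — this gives the claim. Two things have to be checked: that $\phi_*$ is bounded, so that it is a morphism of $\cBorn$ at all, and that the section may be chosen bounded.

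For boundedness, by the colimit description~\eqref{eq:CompFuncMColim} it suffices to bound $\phi_*$ on each $C_K^\infty(X)$. Given a bounded $A \subset C_K^\infty(X)$, every $\phi_* f$ with $f \in A$ is supported in the compact set $\phi(K)$. Covering $K$ by finitely many charts in which $\phi$ is the projection $\bbR^m\times\bbR^k \to \bbR^k$ and the family $\lambda_y$ is carried by a smooth positive density (as the smoothness hypothesis on fibre integration ensures), and using a subordinate partition of unity, the local pieces of $\phi_* f$ are honest integrals $v \mapsto \int f(u,v)\,\di\lambda_v(u)$ over a fixed compact range of $u$. Differentiating under the integral sign expresses $\partial_v^\beta(\phi_* f)$ through the derivatives of $f$ and the fixed derivatives of the density, so the seminorms of $\phi_* f$ are controlled by those of $f$ uniformly over $A$; hence $\phi_*(A)$ is bounded.

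The heart of the proof is the construction of the bounded section. Choose a locally finite cover $\{V_j\}$ of $Y$ by relatively compact opens with a subordinate partition of unity $\{\psi_j\}$, $\sum_j \psi_j = 1$. For each $j$ I produce $\chi_j \in C_\mathrm{c}^\infty(X)$, $\chi_j \geq 0$, with $\phi_* \chi_j > 0$ on a neighborhood of $\overline{V_j}$: since $\phi$ is a surjective submersion, $\overline{V_j}$ is covered by finitely many images of local sections, whose union over slightly smaller compacta gives a compact $K'_j \subset X$ meeting every fibre over $\overline{V_j}$; taking $\chi_j$ strictly positive on an open set containing $K'_j$ and using that each $\lambda_y$ is strictly positive on nonempty opens forces $(\phi_*\chi_j)(y) > 0$ there. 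Then set
\begin{equation*}
  \sigma(g) \;=\; \sum_j \chi_j \cdot \phi^*\!\left( \frac{\psi_j\, g}{\phi_* \chi_j} \right).
\end{equation*}
Each summand is well defined: $\psi_j g/\phi_*\chi_j$ is smooth and supported in $\overline{V_j}$ (the denominator being positive there), its pullback is smooth, and multiplying by the compactly supported $\chi_j$ lands in $C_\mathrm{c}^\infty(X)$. Using $\phi_*(\chi_j\,\phi^* h) = h\,\phi_*\chi_j$ for $h \in C_\mathrm{c}^\infty(Y)$ one gets $\phi_* \sigma(g) = \sum_j \psi_j g = g$, so $\sigma$ is a section. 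On any bounded set — functions with common compact support $L$ — only the finitely many $j$ with $V_j \cap L \neq \emptyset$ contribute, so $\sigma$ is there a finite sum of bounded operations and is therefore bounded; via Proposition~\ref{prop:ProdCoprodBorn}~(iv) this also exhibits every bounded subset of $C_\mathrm{c}^\infty(Y)$ directly as the image of a bounded subset of $C_\mathrm{c}^\infty(X)$.

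The main obstacle is that the pullback $\phi^*$ is \emph{not} bounded on $C_\mathrm{c}^\infty(Y)$, because $\phi$ need not be proper (contrast Proposition~\ref{prop:TestFuncPullback}), so one cannot lift $g$ by merely pulling it back. The device that circumvents this is to pull back only after dividing by $\phi_*\chi_j$ and then to cut down by the compactly supported multiplier $\chi_j$: the composite $h \mapsto \chi_j\,\phi^* h$ is bounded by the chain rule, since $\phi$ has bounded derivatives on the compact $\Supp\chi_j$, even though $\phi^*$ alone is not. Globalizing this local-section-plus-bump construction coherently over $Y$ through the partition of unity, while keeping supports compact and the sum locally finite, is the only genuinely delicate point; the boundedness estimates and the identity $\phi_*\sigma = \id$ are then routine.
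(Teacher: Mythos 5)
Your proof is correct, and it takes a genuinely different route from the paper's. The paper argues purely locally: in a chart adapted to the submersion, $\phi_*$ becomes $\id \Cotimes \int_V(\Empty)\,\di\lambda$ on $C_\mathrm{c}^\infty(U) \Cotimes C_\mathrm{c}^\infty(V)$, which is split by functoriality of the tensor product; it then passes to the colimit over compacta contained in charts and invokes the fact that colimits preserve regular (but not split) epimorphisms, so it only obtains regularity globally. You instead build a single global bounded section $\sigma(g) = \sum_j \chi_j\,\phi^*\bigl(\psi_j g/\phi_*\chi_j\bigr)$, which proves the stronger statement that $\phi_*$ is a \emph{split} epimorphism. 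All the ingredients check out: the positivity of $\phi_*\chi_j$ near $\overline{V_j}$ follows from local sections of the submersion plus strict positivity of the $\lambda_y$; the projection formula $\phi_*(\chi_j\,\phi^*h) = h\,\phi_*\chi_j$ gives $\phi_*\sigma = \id$; and you correctly isolate the one genuinely delicate point, namely that $\phi^*$ alone is unbounded when $\phi$ is not proper, but the cut-down composite $h \mapsto \chi_j\,\phi^* h$ is bounded because all derivatives of $\phi$ are controlled on the compact $\Supp\chi_j$ (this is exactly the mechanism of the paper's Proposition~\ref{prop:TestSubmfgPullback}, and your $\chi_j$ construction is the same device the paper deploys later for the auxiliary function $e_1$ with $l_*e_1 = 1$ in Lemma~\ref{lem:exactnessKernel}). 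What each approach buys: the paper's is shorter and avoids global patching, deferring the section-building technology to where it is really needed (projectivity of convolution modules); yours yields splitness immediately, which would in fact simplify some downstream arguments where only "strong epimorphism" is currently extracted from regularity.
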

\begin{proof}
Locally, every submersion is isomorphic to a projection $\pr: \bbR^p \times \bbR^q \to \bbR^p$, where $p+q = \dim X$ and $p = \dim Y$. That is, there are neighborhoods $U \subset \bbR^p$ and $V \subset \bbR^q$ such that $U \times V$ is a chart of $X$, $U$ a chart of $Y$, and $\pr: U \times V \to U$ the projection to the first factor. Moreover, since the measures are strictly positive, the coordinates can be chosen such that the measure $\lambda_y$ does not depend on $y \in \bbR^p$, that is, $\lambda_y = \lambda$ is a measure on $V$. If we consider functions with support in $U \times V$, the fiber integration is obtained by tensoring the identity on $C_\mathrm{c}^\infty(U)$ with the integration of Proposition~\ref{prop:TestIntegr},
\begin{equation*}
  \phi_*: 
  C_\mathrm{c}^\infty(U \times V)
  \cong 
  C_\mathrm{c}^\infty(U) \Cotimes
  C_\mathrm{c}^\infty(V)
  \xrightarrow{~\id \Cotimes \int_V(\Empty)\di\lambda~}
  C_\mathrm{c}^\infty(U) \Cotimes \bbR
  \cong
  C_\mathrm{c}^\infty(U)
  \,.
\end{equation*}
It follows from the functoriality of the tensor product that this map is a split epimorphism of bornological vector spaces, so a fortiori regular. We conclude that if the compact subset $K \subset X$ is contained in this chart, then $\phi_*: C_K^\infty(X) \to C_{\phi(K)}^\infty(Y)$ is a regular epimorphism. By taking the colimit over all such $K$, which cover $X$, we obtain the morphism of bornological vector spaces
\begin{equation}
\label{eq:RegEpi1}
   \phi_*:
   C_\mathrm{c}^\infty(X) \longrightarrow 
   C_\mathrm{c}^\infty\bigl(\phi(X)\bigr)
   \,,
\end{equation}
which shows that $\phi_*$ is bounded. Since regular epimorphisms are by definition coequalizers, they are preserved by colimits. It follows that~\eqref{eq:RegEpi1} is a regular epimorphism. Since $\phi$ is assumed to be surjective, $\phi(X) = Y$, which concludes the proof.
\end{proof}

\section{Morita 2-category of bornological algebras}
\label{sec:MoritaCategory}

\subsection{Internal algebras and modules}

For the convenience of the reader, we recall some basic notions of algebra internal to a symmetric monoidal category $\calC$ with tensor product $\otimes$, associator $\Asc$, left and right unit isomorphisms $\Unitl$ and $\Unitr$. An \textdef{algebra} in $\calC$ is an object $A$ together with a morphism $\mu: A \otimes A \to A$ such that the diagram of associativity
\begin{equation}
\begin{tikzcd}[column sep=3.5em]
(A\otimes A) \otimes A
\ar[r,"\Asc_{A,A,A}"] 
\ar[d,"\mu \otimes \id_A"']
& 
A\otimes (A\otimes A) 
\ar[r,"\id_A \otimes \mu"] 
&
A\otimes A 
\ar[d,"\mu"]
\\
A\otimes A 
\ar[rr,"\mu"'] 
&& 
A
\end{tikzcd}
\end{equation}
is commutative. A \textdef{unit} of the algebra is a morphism $\eta: \mathbbm{1} \to A$ such that the diagram
\begin{equation}
\begin{tikzcd}[column sep=3em]
\mathbbm{1} \otimes A 
\ar[r, "\eta \otimes \id_A"]
\ar[rd, "\Unitl_A"'] 
& 
A\otimes A 
\ar[d,"\mu"] 
& 
A\otimes \mathbbm{1} 
\ar[ld,"\Unitr_A"] 
\ar[l,"\id_A\otimes \eta"']
\\
&
A
&
\end{tikzcd}
\end{equation}
is commutative. Unless stated explicitly, we will not assume algebras to have units.

A \textdef{left $A$-module} is an object $M$ in $\calC$ with a morphism $\lambda : A \otimes M \to M$, such that the diagram 
\begin{equation}
\label{diag:ModAssoc}
\begin{tikzcd}[column sep=3.5em]
(A \otimes A) \otimes M 
\ar[r,"\Asc_{A,A,M}"]
\ar[d, "\mu \otimes \id_M"']
& 
A\otimes (A\otimes M) 
\ar[r,"\id_A \otimes \lambda"] 
&
A\otimes M \ar[d, "\lambda"]
\\
A\otimes M \ar[rr, "\lambda"']
&& 
M
\end{tikzcd}
\end{equation}
commutes. If $A$ has a unit, then the module is called \textdef{unital} if the diagram
\begin{equation*}
\begin{tikzcd}
\mathbbm{1} \otimes M
\ar[r, "\eta \otimes \id_M"]
\ar[dr, "\Unitl_M"']
&
A \otimes M
\ar[d, "\lambda"]
\\
&
M
\end{tikzcd}
\end{equation*}
commutes. A morphism $f: M \to M'$ in $\calC$ is a \textdef{morphism of $A$-modules} if $f \circ \lambda = \lambda' \circ (\id_A \otimes f)$.

Let $B$ be another algebra in $\calC$. A right $B$-module is an object $M$ with a morphism $\rho: M \otimes B \to M$ such that the analogous diagram of associativity commutes. An \textdef{$A$-$B$-bimodule} is an object $M$ that is a left $A$-module and a right $B$-module such that the diagram
\begin{equation}
\begin{tikzcd}[column sep=3em]
(A \otimes M) \otimes B 
\ar[r,"\Asc_{A,M,B}"]
\ar[d, "\lambda \otimes \id_B"']
& 
A\otimes (M\otimes B) 
\ar[r, "\id_A \otimes \rho"] 
&
A \otimes M \ar[d, "\lambda"]
\\
M \otimes B \ar[rr, "\rho"']
&& 
M
\end{tikzcd}
\end{equation}
commutes. The set of $A$-$B$ bimodules will be denoted by $\Mod(A,B)$. Let $M'$ be another $A$-$B$ bimodule. A morphism $M \to M'$ in $\calC$ is a morphism of $A$-$B$-bimodules if it is a morphism of left $A$-modules and a morphism of right $B$-modules.

\subsection{Tensor product of modules}

Let $M$ be a right $A$-module and $N$ a left $A$-module. A morphism $f:M \otimes N \to P$ is \textdef{$A$-tensorial} if the diagram
\begin{equation}
\label{eq:TensProdModCoeq}
\begin{tikzcd}[column sep=6em]
(M \otimes A) \otimes N
\ar[r, shift left=1.2, "\rho \otimes \id_N"]
\ar[r, shift left=-1.2, "(\id_M \otimes \lambda) \circ \Asc_{M,A,N}"']
&
M \otimes N
\ar[r, "f"]
&
P
\end{tikzcd}
\end{equation}
is commutative, where $\rho$ is the right $A$-action on $M$ and $\lambda$ the left $A$-action on $N$. The coequalizer of the parallel arrows in $\calC$ is the \textdef{tensor product over $A$}, denoted by $M \otimes_A N$.
It has the universal property that any $A$-tensorial morphism $f:M\otimes N\to P$ induces a unique map $M\otimes_A N\to P$.

Let $M \in \Mod(A,B)$ and $N \in \Mod(B,C)$. The tensor product $M \otimes_B N$ is an $A$-$C$-bimodule. Let $P \in \Mod(C,D)$. Then the associator $\Asc_{M,N,P}$ descends to a natural isomorphism of $A$-$D$-bimodules
\begin{equation*}
  (M \otimes_B N) \otimes_C P
  \cong
  M \otimes_B (N \otimes_C P)
  \,.
\end{equation*}
In this sense, the tensor product of bimodules is associative. 

We would like to view the map
\begin{equation*}
\begin{aligned}
  \Mod(A,B) \times \Mod(B,C)
  &\longrightarrow \Mod(A,C)
  \\
  (M,N) &\longmapsto M \otimes_B N
\end{aligned}
\end{equation*}
as the composition morphisms of a category. However, since the algebras are not assumed to be unital, tensoring with $A$ over $A$ is generally not an isomorphism. The commutativity of Diagram~\eqref{diag:ModAssoc} means that the action $\lambda: A \otimes M \to M$ is $A$-tensorial. It follows that it descends to a unique morphism $\tilde{\lambda}$ on the tensor product over $A$, such that the diagram
\begin{equation}
\label{diag:AotimesAMtoM}
\begin{tikzcd}
A \otimes M 
\ar[dr, "\lambda"]
\ar[d]
&
\\
A \otimes_A M
\ar[r, "\tilde{\lambda}"']
&
M
\end{tikzcd}
\end{equation}
commutes. However, $\tilde{\lambda}$ is generally not an isomorphism.

\begin{Definition}[Definition~3.2 in \cite{Meyer2011}]
Let $A$ be an algebra in the monoidal category $\calC$. A left $A$-module $M$ is called \textdef{smooth} if the morphism $A \otimes_A M \to M$ from~\eqref{diag:AotimesAMtoM} is an isomorphism. A right $B$-module $M$ is smooth if the analogous morphism $M \otimes_B B \to M$ is an isomorphism. A bimodule is smooth if it is smooth as a left and as a right module. An algebra $A$ that is smooth as a left (and therefore right) module over itself is called \textdef{self-induced}.
\end{Definition}

Let $A$ be a self-induced algebra and $M$ a left $A$-module. Then
\begin{equation*}
  A \otimes_A (A \otimes_A M)
  \cong
  (A \otimes_A A) \otimes_A M
  \cong
  A \otimes_A M
  \,,
\end{equation*}
which shows that $A \otimes_A M$ is smooth. Moreover, the operator $S := A\otimes_A \Empty$ is idempotent up to isomorphism, $S^2M \cong SM$. We conclude that $M$ is smooth if and only if it is in the essential image of $S$. The operator $S$ is the right adjoint of the inclusion of smooth $A$-modules into all $A$-modules.


\begin{Proposition}
\label{prop:MorCat}
Let $\calC$ be a monoidal category. Then there is a 2-category that has self-induced algebras as objects, smooth bimodules as 1-morphisms with the tensor product as composition, and morphisms of bimodules as 2-morphisms.
\end{Proposition}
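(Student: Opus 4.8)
The plan is to name the data of a bicategory and check the axioms, with all structural $2$-isomorphisms induced from the monoidal structure of $\calC$ by the universal property of the relative tensor product. Objects are self-induced algebras; a $1$-morphism $A \to B$ is a smooth $A$-$B$-bimodule; a $2$-morphism is a bimodule map; horizontal composition of $1$-morphisms $M \in \Mod(A,B)$ and $N \in \Mod(B,C)$ is the relative tensor product $M \otimes_B N$; and the identity $1$-morphism on $A$ is the bimodule $A$ itself, which is a smooth $A$-$A$-bimodule precisely because $A$ is self-induced. Throughout I assume, as is implicit in the definition of $M \otimes_A N$ as a coequalizer, that the relevant coequalizers exist in $\calC$ and that $\otimes$ preserves them in each variable; this is what makes the iterated relative tensor products, their functoriality, and the already-established associativity isomorphism $\Asc_{M,N,P}$ available.

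The first point to check is that horizontal composition lands in the $1$-morphisms, i.e.\ that the relative tensor product of smooth bimodules is smooth. For smooth $M \in \Mod(A,B)$ and $N \in \Mod(B,C)$, associativity of the relative tensor product applied to $A \in \Mod(A,A)$, $M$, and $N$ gives $A \otimes_A (M \otimes_B N) \cong (A \otimes_A M) \otimes_B N$, and smoothness of $M$ as a left $A$-module turns the right-hand side into $M \otimes_B N$; the mirror-image argument applied to $M$, $N$, and $C \in \Mod(C,C)$, using $N \otimes_C C \cong N$, shows $(M \otimes_B N) \otimes_C C \cong M \otimes_B N$. Hence $M \otimes_B N$ is a smooth $A$-$C$-bimodule.

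The unitors are supplied by smoothness itself: the left unitor at a smooth $M \in \Mod(A,B)$ is the isomorphism $\tilde{\lambda} : A \otimes_A M \to M$ of~\eqref{diag:AotimesAMtoM}, and the right unitor is the analogous isomorphism $M \otimes_B B \to M$; naturality in $M$ is immediate from the universal property of the coequalizer. Vertical composition of $2$-morphisms is ordinary composition in $\calC$, which is strictly associative and unital. Horizontal composition of bimodule maps $\alpha : M \to M'$ and $\beta : N \to N'$ is the map $\alpha \otimes_B \beta : M \otimes_B N \to M' \otimes_B N'$ induced on coequalizers by $\alpha \otimes \beta$; this descent exists because $\alpha$ and $\beta$ are $B$-linear, so $\alpha \otimes \beta$ intertwines the two parallel arrows of~\eqref{eq:TensProdModCoeq}. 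The interchange law and the functoriality of $\otimes_B$ then follow from the corresponding properties of $\otimes$ in $\calC$ together with the uniqueness clause of the coequalizer universal property.

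What remains is coherence, which I expect to be the only real (though routine) obstacle. The associator of the bicategory is the descent of $\Asc$ along the defining coequalizers; since these projections remain epimorphic after tensoring (by the assumed exactness of $\otimes$), the pentagon and triangle identities can be tested after precomposition with them, where they reduce to the pentagon and triangle identities for $\Asc$, $\Unitl$, $\Unitr$ in $\calC$, using additionally the compatibility of $\tilde{\lambda}$ with $\lambda$ recorded in~\eqref{diag:AotimesAMtoM}. As these hold by the coherence of the monoidal structure on $\calC$, every coherence diagram for the putative bicategory is the quotient of a commuting diagram and hence commutes. This establishes the $2$-category.
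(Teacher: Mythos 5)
Your proposal is correct and matches the argument the paper leaves implicit: the paper states this proposition without a proof environment, relying on the preceding discussion of the descended associator $(M\otimes_B N)\otimes_C P\cong M\otimes_B(N\otimes_C P)$ and on exactly the same two-line computation ($A\otimes_A(M\otimes_B N)\cong(A\otimes_A M)\otimes_B N\cong M\otimes_B N$ and its mirror) to see that compositions of smooth bimodules are smooth and that the self-induced condition makes $A$ the identity $1$-morphism. Your one genuine addition is making explicit the hypothesis that the relevant coequalizers exist and are preserved by $\otimes$ in each variable, which the paper tacitly assumes (and which holds in the intended application $\calC=\cBorn$, where $\otimes$ is a left adjoint in each variable); this is a careful and welcome clarification rather than a deviation.
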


\begin{Terminology}
The 2-category of Proposition~\ref{prop:MorCat} will be called the \textdef{Morita 2-category} of algebras in $\calC$ and denoted by $\AlgMrt(\calC)$. A weak isomorphism in this 2-category is called a \textdef{Morita equivalence} of algebras. 
\end{Terminology}

\begin{Remark}
The tensor product of algebras, bimodules, and biequivariant maps equips $\AlgMrt(\calC)$ with a monoidal structure. 
\end{Remark}

In the remainder of the paper, we will invoke MacLane's coherence theorem and omit the associator of the tensor product in formulas and commutative diagrams, which will improve the legibility.

\subsection{Separable algebras and nondegenerate modules}

\begin{Proposition}
\label{prop:UnitalSelfInduced}
If an algebra $A$ in $\calC$ is unital, then $A$ is self-induced and every unital $A$-module is smooth.    
\end{Proposition}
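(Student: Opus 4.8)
The plan is to construct an explicit inverse to the canonical morphism $\tilde\lambda: A \otimes_A M \to M$ of Diagram~\eqref{diag:AotimesAMtoM} using the unit $\eta: \mathbbm{1} \to A$. Writing $q: A \otimes M \to A \otimes_A M$ for the coequalizer projection, so that $\tilde\lambda \circ q = \lambda$ by the defining property of $\tilde\lambda$, I would take as candidate inverse the composite
\[
  s := q \circ (\eta \otimes \id_M) \circ \Unitl_M^{-1} : M \longrightarrow A \otimes_A M\,.
\]
The identity $\tilde\lambda \circ s = \id_M$ is immediate: precomposing $\tilde\lambda \circ q = \lambda$ gives $\tilde\lambda \circ s = \lambda \circ (\eta \otimes \id_M) \circ \Unitl_M^{-1}$, and the commutativity of the unital-module diagram yields $\lambda \circ (\eta \otimes \id_M) = \Unitl_M$, whence $\tilde\lambda \circ s = \Unitl_M \circ \Unitl_M^{-1} = \id_M$.

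The substance of the argument is the reverse identity $s \circ \tilde\lambda = \id_{A \otimes_A M}$. Since $q$ is an epimorphism (being a coequalizer) and $\tilde\lambda \circ q = \lambda$, it suffices to prove $s \circ \lambda = q$ as morphisms $A \otimes M \to A \otimes_A M$. For this I would exploit the defining coequalizer relation
\[
  q \circ (\mu \otimes \id_M) = q \circ (\id_A \otimes \lambda) \circ \Asc_{A,A,M}\,,
\]
precomposed with the morphism $\iota := \bigl((\eta \otimes \id_A) \circ \Unitl_A^{-1}\bigr) \otimes \id_M$ that inserts the unit into the first tensor slot. On the left, the algebra unit axiom $\mu \circ (\eta \otimes \id_A) = \Unitl_A$ collapses $(\mu \otimes \id_M) \circ \iota$ to $\id_{A \otimes M}$, so the left-hand side becomes $q$. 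On the right, MacLane coherence identifies $\Asc_{A,A,M} \circ \iota$ with $(\eta \otimes \id_{A \otimes M}) \circ \Unitl_{A\otimes M}^{-1}$; then bifunctoriality of $\otimes$ rewrites $(\id_A \otimes \lambda) \circ (\eta \otimes \id_{A \otimes M})$ as $(\eta \otimes \id_M) \circ (\id_{\mathbbm{1}} \otimes \lambda)$, and naturality of $\Unitl$ turns $(\id_{\mathbbm{1}} \otimes \lambda) \circ \Unitl_{A\otimes M}^{-1}$ into $\Unitl_M^{-1} \circ \lambda$. The right-hand side thus becomes $q \circ (\eta \otimes \id_M) \circ \Unitl_M^{-1} \circ \lambda = s \circ \lambda$. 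Comparing the two sides gives $s \circ \lambda = q$, so $M$ is smooth.

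Self-inducedness then follows by specialization: regarding $A$ as a left module over itself via $\mu$, the left unit axiom of the algebra is precisely the unital-module condition, so $A$ is a unital $A$-module and the argument above shows $\tilde\mu : A \otimes_A A \to A$ is an isomorphism. I expect the reverse identity to be the only real obstacle, since the care lies entirely in manipulating the associator and unitor coherences correctly rather than in any deep idea; at the level of generalized elements the computation is just the chain $q(a \otimes m) = q(1\cdot a \otimes m) = q(1 \otimes am)$, obtained by feeding $1 \otimes a \otimes m$ through the coequalizer relation, and the paragraph above is the faithful rendering of this chain in terms of the structural isomorphisms.
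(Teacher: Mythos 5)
Your proof is correct and takes essentially the same route as the paper: both use $\pi \circ \sigma$ with $\sigma = (\eta \otimes \id_M) \circ \Unitl_M^{-1}$ as the candidate inverse of $\tilde{\lambda}$, check that it is a right inverse via the unital-module axiom, and obtain the left-inverse identity by testing it against the epimorphism $\pi$ using the coequalizer relation. The only difference is that you spell out the associator/unitor coherences that the paper compresses into ``it follows from the universal property of the coequalizer.''
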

\begin{proof}
Let $M$ be a left $A$-module with action $\lambda: A \otimes M \to M$; let $\pi: A \otimes M \to A \otimes_A M$ be the coequalizer and $\tilde{\lambda}: A \otimes_A M \to M$ the morphism from Diagram~\eqref{diag:AotimesAMtoM}; let $\eta: \mathbbm{1} \to A$ be the unit of $A$. Assume that $M$ is unital, that is, $\lambda \circ (\eta \otimes \id_M) = \Unitl_M$. This means that $\sigma := (\eta \otimes \id_M) \circ \Unitl_M^{-1}$ is a section of the action $\lambda$,
\begin{equation*}
  \lambda \circ \sigma = \id_M
  \,.
\end{equation*}
It follows that
\begin{equation*}
\begin{split}
  \tilde{\lambda} \circ (\pi \circ \sigma)
  &= 
  (\tilde{\lambda} \circ \pi) \circ \sigma
  =
  \lambda \circ \sigma
  \\
  &=
  \id_M
  \,,
\end{split}
\end{equation*}
which shows that $\pi \circ \sigma$ is the right inverse of $\tilde{\lambda}$. It follows from the universal property of the coequalizer $\pi$ that $(\pi \circ \sigma) \circ \tilde{\lambda} =  \id_{A \otimes_A M}$, so that $\pi \circ \sigma$ is also the left inverse of $\tilde{\lambda}$. We conclude that $\tilde{\lambda}$ is an isomorphism, that is, $M$ is smooth. For $M = A$, viewed as a left $A$-module, this shows that $A$ is self-induced.
\end{proof}

Non-unital algebras are generally not self-induced. For example, in a category with a zero object, we can equip any object $A$ with the zero multiplication. Then $A \otimes_A A = 0$. If $A$ is non-unital, the action $\lambda: A \otimes M \to M$ does generally not have a section. For example, if $\lambda$ is the zero action, then $A \otimes_A M = 0$. There are weaker conditions that ensure that algebras are self-induced and modules are smooth.

\begin{Definition}
\label{def:Separable}
An algebra $A$ is called \textdef{left (right) separable} if the multiplication $A \otimes A \to A$ has a left (right) $A$-linear section. $A$ is called \textdef{separable} if there is a section that is both left and right $A$-linear.
\end{Definition}

\begin{Example}
Every unital algebra is left and right separable, with sections $a \mapsto a \otimes 1$ and $a \mapsto 1 \otimes a$, but generally not separable.    
\end{Example}

\begin{Example}
The commutative algebra of test functions $C_\mathrm{c}^\infty(X)$ on a non-compact manifold $X$ is non-unital because it does not contain the constant function $1$. When viewed as an algebra in $\cBorn$, we have seen in Example~\ref{ex:PointwiseMult} that the pointwise multiplication is the pullback along the diagonal in $X \to X \times X$, which is a  closed embedded submanifold. It follows from Proposition~\ref{prop:TestSubmfgPullback} and its proof that this pullback has a section that is left and right $C_\mathrm{c}^\infty(X)$-bilinear. We conclude that $C_\mathrm{c}^\infty(X)$ is separable. 
\end{Example}

\begin{Definition}
\label{def:NondegenerateMod}
Let $A$ be a (not necessarily unital) algebra in $\calC$. A left $A$-module with action $\lambda: A \otimes M \to M$ will be called
\begin{itemize}

\item[(i)] \textdef{nondegenerate} if $\lambda$ is an epimorphism;

\item[(ii)] \textdef{strongly nondegenerate} if $\lambda$ is a strong epimorphism;

\item[(iii)] \textdef{split nondegenerate} if $\lambda$ is a split epimorphism;

\end{itemize}
The notions for right modules are analogous. See Appendix~\ref{sec:monosandEpis} for the different notions of epimorphisms.
\end{Definition}

\begin{Proposition}
\label{prop:QuasiUnitalAction}
Every split nondegenerate module is smooth. Every algebra that is split nondegenerate as left or right module over itself is self-induced. In particular, every left or right separable algebra is self-induced.
\end{Proposition}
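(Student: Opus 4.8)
The three assertions are nested, so the plan is to concentrate on the first and harvest the other two. The last follows from the second because a left (right) separable algebra is, by Definition~\ref{def:Separable}, precisely one that is split nondegenerate as a left (right) module over itself, its section being $A$-linear by construction. The second follows from the first by taking $M=A$ with $\lambda=\mu$, since self-inducedness is exactly smoothness of the left regular module. Thus everything reduces to showing that a split nondegenerate left $A$-module $M$ is smooth.

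To prove this first assertion I would follow the template of the proof of Proposition~\ref{prop:UnitalSelfInduced}, replacing the unit-induced section by the given splitting. Write $\pi\colon A\otimes M\to A\otimes_A M$ for the coequalizer of $\mu\otimes\id_M$ and $\id_A\otimes\lambda$, and let $\tilde\lambda\colon A\otimes_A M\to M$ be the morphism of Diagram~\eqref{diag:AotimesAMtoM}, characterized by $\tilde\lambda\circ\pi=\lambda$. Let $s\colon M\to A\otimes M$ be the section of $\lambda$; since $\lambda$ is a morphism of left $A$-modules, with $A\otimes M$ carrying the action $\alpha:=\mu\otimes\id_M$ on the first factor, I take $s$ to be left $A$-linear. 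The candidate inverse of $\tilde\lambda$ is $t:=\pi\circ s$. That $t$ is a right inverse is immediate: $\tilde\lambda\circ t=\tilde\lambda\circ\pi\circ s=\lambda\circ s=\id_M$.

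The crux is to show that $t$ is also a left inverse, i.e. $t\circ\tilde\lambda=\id_{A\otimes_A M}$. Since $\pi$ is an epimorphism (being a coequalizer) it suffices to check this after precomposition with $\pi$, which, using $\tilde\lambda\circ\pi=\lambda$, reduces the claim to the identity $\pi\circ s\circ\lambda=\pi$. Here I would use the $A$-linearity of $s$ in the form $s\circ\lambda=\alpha\circ(\id_A\otimes s)$, followed by the defining relation $\pi\circ(\mu\otimes\id_M)=\pi\circ(\id_A\otimes\lambda)$ of the coequalizer, to compute $\pi\circ s\circ\lambda=\pi\circ\alpha\circ(\id_A\otimes s)=\pi\circ(\id_A\otimes\lambda)\circ(\id_A\otimes s)=\pi\circ(\id_A\otimes(\lambda\circ s))=\pi$. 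Hence $\tilde\lambda$ is an isomorphism and $M$ is smooth.

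I expect this last identity to be the only real obstacle, and its resolution hinges entirely on the section being a morphism of left $A$-modules rather than merely a morphism in $\calC$: the $A$-linearity is exactly what allows the coequalizer relation to absorb $s$. This is also why I read the splitting in Definition~\ref{def:NondegenerateMod} in the module sense—a bare section in $\calC$ would not suffice, since nondegeneracy together with a $\calC$-linear splitting (automatic over a field) does not force smoothness. With the first assertion established, specializing to $M=A$ gives self-inducedness of any algebra that is split nondegenerate over itself, and unwinding Definition~\ref{def:Separable}, whose section is $A$-linear, yields the final ``in particular'' clause.
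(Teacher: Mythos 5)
Your proof is correct and follows essentially the same route as the paper: the paper's proof simply says it is identical to that of Proposition~\ref{prop:UnitalSelfInduced} with $\eta\otimes\id$ replaced by the section, and your computation $\pi\circ s\circ\lambda=\pi\circ(\mu\otimes\id_M)\circ(\id_A\otimes s)=\pi\circ(\id_A\otimes(\lambda\circ s))=\pi$ is exactly the step that verification amounts to. Your closing remark is also well taken: the argument genuinely needs the section to interact with the $A$-action (for a left $A$-linear section of $\mu$ one uses the first-factor action, for the unital section $m\mapsto 1\otimes m$ the second-factor action, and in either case the coequalizer relation absorbs $s$), whereas a bare section in $\calC$ would only exhibit $\tilde\lambda$ as a split epimorphism --- so Definition~\ref{def:NondegenerateMod}, read literally as a splitting in $\calC$, does not by itself support the first assertion, and the module-level reading you adopt is the one the paper's proof (and the ``in particular'' clause on separable algebras, whose sections are $A$-linear by Definition~\ref{def:Separable}) actually uses.
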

\begin{proof}
The proof is identical to Proposition~\ref{prop:UnitalSelfInduced} upon replacing $\eta \otimes \id$ by the section of the multiplication.
\end{proof}

\subsection{Projective modules}

The general definition for an object $P$ in some category to be projective is to require that every morphism $P \to N$ factors through every epimorphism $M \twoheadrightarrow N$,
\begin{equation*}
\begin{tikzcd}
&
M \ar[d, two heads]
\\
P \ar[r] \ar[ur, dashed, "\exists"]
& N
\end{tikzcd}   
\end{equation*}
For the category of $A$-modules internal to some monoidal category, this notion is too strict and not very useful. Instead, we will use the following notion:

\begin{Definition}
\label{def:ProjectiveMod}
Let $A$ be an algebra in a monoidal category $\calC$. A morphism $M \to N$ of $A$-modules is called a \textdef{strict epimorphism} if the underlying morphism in $\calC$ is a split epimorphism. An $A$-module $P$ will be called \textdef{projective} if every morphism of $A$-modules $P \to N$ factors through every strict epimorphism $M \twoheadrightarrow N$ of $A$-modules.
\end{Definition}

Let $A_+ = A \oplus \bbC 1$ denote the unital algebra obtained by the free adjunction of a unit. The projection $\epsilon = \pr_2: A_+ \to \bbC$ is an augmentation of $A_+$ with ideal $\ker \epsilon = A$. Since $A$ is a subalgebra of $A_+$, $A_+$ is a left and right $A$-module. A left $A$-module is free if it is of the form $A_+ \otimes V \cong (A \otimes V) \oplus V$ for some vector space $V$. When $\calC$ is concrete, the $A$-action is given by $b \cdot \bigl( (a \otimes v) \oplus v'\bigr) = ba \otimes v + b \otimes v'$. 

\begin{Proposition}[Lemma~5.54 in \cite{Aretz23}]
\label{prop:ProjModCriteria}
Let $P$ be an $A$-module in $\calC$. The following are equivalent:
\begin{itemize}

\item[(i)] $P$ is projective.

\item[(ii)] Every strict epimorphism  of $A$-modules $M \to P$ splits.

\item[(iii)] $P$ is the direct summand of a free module.

\end{itemize}
\end{Proposition}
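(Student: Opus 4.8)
The plan is to prove the cycle of implications (i) $\Rightarrow$ (ii) $\Rightarrow$ (iii) $\Rightarrow$ (i). The technical backbone is the free–forgetful adjunction for the free module functor $V \mapsto A_+ \otimes V$: for any object $V$ of $\calC$ and any $A$-module $M$, restriction along the unit inclusion $\iota_V : V = \mathbbm{1} \otimes V \hookrightarrow A_+ \otimes V$ gives a bijection $\Hom_A(A_+ \otimes V, M) \cong \Hom_\calC(V, M)$, $f \mapsto f \circ \iota_V$. Indeed, a module map $f$ out of $A_+ \otimes V \cong (A \otimes V) \oplus V$ is forced by $A$-linearity to satisfy $f(a \otimes v) = \lambda_M\bigl(a \otimes f(1 \otimes v)\bigr)$, so it is determined by $\phi := f \circ \iota_V$; conversely every $\phi : V \to M$ extends to a module map by this formula. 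I would record this as the single lemma on which everything rests.

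For (i) $\Rightarrow$ (ii): if $P$ is projective and $q : M \to P$ is a strict epimorphism, I apply the defining factorization property to the identity $\id_P : P \to P$. It factors through $q$, producing an $A$-module section $s$ with $q \circ s = \id_P$, so $q$ splits. For (ii) $\Rightarrow$ (iii): take $V$ to be the underlying object of $P$ and let $\pi : A_+ \otimes P \to P$ be the module map corresponding under the adjunction to $\id_P$; concretely $\pi$ restricts to the action $\lambda_P$ on $A \otimes P$ and to $\id_P$ on the summand $P$. Its underlying $\calC$-morphism is split by $\iota_P$, since $\pi \circ \iota_P = \id_P$, so $\pi$ is a strict epimorphism of $A$-modules. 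Hypothesis (ii) then splits $\pi$ in the module category, exhibiting $P$ as a retract---hence a direct summand---of the free module $A_+ \otimes P$.

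For (iii) $\Rightarrow$ (i), I first show every free module $F = A_+ \otimes V$ is projective. Given a module map $g : F \to N$ and a strict epimorphism $q : M \to N$ with $\calC$-section $\tau : N \to M$ (so $q \circ \tau = \id_N$), let $\phi := g \circ \iota_V : V \to N$ be the $\calC$-map corresponding to $g$. Then $\tau \circ \phi : V \to M$ corresponds to a module map $\tilde g : F \to M$ with $\tilde g \circ \iota_V = \tau \circ \phi$, whence $(q \circ \tilde g) \circ \iota_V = q \circ \tau \circ \phi = \phi = g \circ \iota_V$; since $h \mapsto h \circ \iota_V$ is injective, $q \circ \tilde g = g$, so $g$ lifts. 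For a general $P$ that is a direct summand of a free $F$, with inclusion $j : P \to F$ and retraction $r : F \to P$ ($r \circ j = \id_P$), and given $g : P \to N$ and a strict epimorphism $q : M \to N$, I lift $g \circ r : F \to N$ through $q$ using projectivity of $F$ to obtain $h : F \to M$, and set $\tilde g := h \circ j$; then $q \circ \tilde g = q \circ h \circ j = g \circ r \circ j = g$. Hence $P$ is projective.

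The main obstacle is establishing and correctly using the adjunction in the non-unital, internal setting: one must verify that the formula extending $\phi$ genuinely lands in $A$-module maps, using associativity of the $A$-action and that $1 \in A_+$ acts as the identity on the $V$-summand, and that the correspondence $h \mapsto h \circ \iota_V$ is a bijection compatible with post-composition. This compatibility is exactly what converts the $\calC$-level splitting $q \circ \tau = \id_N$ into the module-level lifting in the free case. A secondary point is the identification of a retract with a direct summand, which holds because the category of $A$-modules in $\calC$ is additive; I would phrase (iii) in terms of retracts so that the splitting produced in (ii) $\Rightarrow$ (iii) and the decomposition used in (iii) $\Rightarrow$ (i) match verbatim.
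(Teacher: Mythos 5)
Your argument is correct and complete: the cycle (i)~$\Rightarrow$~(ii)~$\Rightarrow$~(iii)~$\Rightarrow$~(i) via the free--forgetful adjunction $\Hom_A(A_+\otimes V,M)\cong\Hom_{\calC}(V,M)$ is the standard relative-projectivity argument, and the one point needing care --- that the formula extending $\phi\colon V\to M$ to a module map uses only the associativity square of the action, so no unitality or smoothness of $M$ is required --- is exactly the point you flag and it goes through. The paper itself gives no proof, deferring to Lemma~5.54 of the cited thesis, which proceeds along the same lines; so there is nothing to contrast with, and your write-up can stand as is.
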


\begin{Proposition}
\label{prop:ModuleSectionProj}
Let $A$ be a left separable algebra and $P$ a left $A$-module. If the action $A \otimes P \to P$ has an $A$-linear section, then $P$ is projective. In particular, $A$ is projective as left $A$-module.
\end{Proposition}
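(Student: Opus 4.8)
The plan is to reduce everything to the characterization of projectivity in Proposition~\ref{prop:ProjModCriteria}(iii): it suffices to exhibit $P$ as a retract (direct summand) of a free module $A_+ \otimes V$. First I would record the two splittings at hand. Left separability of $A$ (Definition~\ref{def:Separable}) supplies a left $A$-linear section $s\colon A \to A \otimes A$ of the multiplication, with $\mu \circ s = \id_A$, where $A \otimes A$ carries the $A$-action on the first tensor factor. The hypothesis on $P$ supplies a left $A$-linear section $\sigma\colon P \to A \otimes P$ of the action, with $\lambda \circ \sigma = \id_P$, again with $A$ acting on the first factor.

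The key construction is the free module $F := A_+ \otimes (A \otimes P)$, in which $A \otimes P$ is regarded merely as the underlying object of $\calC$. I would use the canonical $A$-linear map $\hat\beta\colon F \to A \otimes P$ extending $\id_{A \otimes P}$ along $\mathbbm{1} \otimes (A\otimes P) \cong A \otimes P$; its restriction to the summand $A \otimes (A \otimes P) \subset F$ (coming from $A \hookrightarrow A_+$) is, by $A$-linearity, the multiplication $\mu \otimes \id_P$. Then set
\begin{equation*}
  \iota := (s \otimes \id_P) \circ \sigma \colon P \longrightarrow A \otimes A \otimes P \hookrightarrow F \,,
  \qquad
  \rho := \lambda \circ \hat\beta \colon F \longrightarrow P \,.
\end{equation*}
Both are $A$-linear: $\iota$ because $\sigma$ and $s \otimes \id_P$ are and because its image lands in the submodule $A \otimes (A \otimes P)$, and $\rho$ because $\hat\beta$ and $\lambda$ are. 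Using $\hat\beta|_{A \otimes (A \otimes P)} = \mu \otimes \id_P$, then $\mu \circ s = \id_A$, and finally $\lambda \circ \sigma = \id_P$, the composite is
\begin{equation*}
  \rho \circ \iota
  = \lambda \circ (\mu \otimes \id_P) \circ (s \otimes \id_P) \circ \sigma
  = \lambda \circ \bigl((\mu \circ s) \otimes \id_P\bigr) \circ \sigma
  = \lambda \circ \sigma
  = \id_P \,.
\end{equation*}
Hence $P$ is a retract of the free module $F$, and Proposition~\ref{prop:ProjModCriteria} yields that $P$ is projective. The final claim is then the special case $P = A$, $\lambda = \mu$, $\sigma = s$, where the hypothesis holds precisely because $A$ is left separable, so $A$ is projective as a left $A$-module.

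The main obstacle — and the reason a naive argument fails — is that free modules have the form $A_+ \otimes V$ rather than $A \otimes V$, and the evident object-level projection $A_+ \otimes V \twoheadrightarrow A \otimes V$ that kills the unit summand is \emph{not} $A$-linear (the action carries $\mathbbm{1} \otimes V$ into $A \otimes V$). Separability is exactly what repairs this: by first landing in $A_+ \otimes (A \otimes P)$ and splitting with $s \otimes \id_P$, one obtains an honest $A$-linear retraction onto $A \otimes P$, which is what allows $A \otimes P$, and hence $P$, to be recognized as a direct summand of a free module. The remaining verifications — that $\hat\beta$ is well defined and $A$-linear, and that $\iota$ factors through the claimed submodule — are routine given the explicit $A$-action on free modules recorded before Proposition~\ref{prop:ProjModCriteria}.
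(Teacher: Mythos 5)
Your proof is correct and follows essentially the same route as the paper: exhibit $P$ as a retract of $A\otimes P$ via the hypothesis, and $A\otimes P$ as a retract of the free module $A_+\otimes(A\otimes P)$ via the separability section $s\otimes\id_P$, then conclude with Proposition~\ref{prop:ProjModCriteria}~(iii). Your $\hat\beta$ is exactly the paper's $\tilde\mu\otimes\id_P$, so the only difference is that you write out the composite section and retraction explicitly.
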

\begin{proof}
Since $A$ is an ideal of $A_+$, the multiplication of $A_+$ restricts to a left $A$-linear map $\tilde{\mu}: A_+ \otimes A \to A$. $A$ is left separable, which means that there is a left $A$-linear section $\sigma: A\to A\otimes A$ of the multiplication $\mu: A \otimes A \to A$. Composing $\sigma$ with the inclusion of the left $A$-submodule $A \otimes A \hookrightarrow A_+ \otimes A$ is a section of $\tilde{\mu}$, which shows that $A$ is a direct summand of the free left $A$-module $A_+ \otimes A$.

A left $A$-linear splitting of $A \otimes P \to A$ makes $P$ a direct summand of $A \otimes P$ which is itself a direct summand of the free $A$-module $A_+ \otimes A \otimes P$. It follows from Proposition~\ref{prop:ProjModCriteria}~(iii) that $P$ is projective.
\end{proof}

\subsection{Approximate units}

In bornological algebras we have the notion of approximate units, which is analogous to $C^*$-algebras.

\begin{Definition}
\label{def:ApproxUnit}
Let $A$ be an algebra in $\Born$; for every $b \in A$, let $r_b: A \to A$, $a \mapsto ab$ denote the right multiplication by $b$. A \textdef{countable strong approximate right unit} of $A$ is a sequence $(e_n)_{n \in \bbN}$ of elements in $A$ such that the sequence $r_{e_n}\in \intHom(A,A)$ Mackey converges to the identity, $\lim_{n\to \infty} r_{e_n} = \id_A$.
\end{Definition}

All approximate units of this paper will be countable, so we will omit the adjective ``countable''. There is an obvious notion of strong approximate left unit. If a strong approximate right unit is also a strong approximate left unit, then it is called a strong approximate unit. 

\begin{Proposition}
\label{prop:StrongPointwiseApprUnit}
Let $(e_n)_{n \in \bbN}$ be a strong approximate right unit of a bornological algebra $A$. Then
\begin{equation}
\label{eq:PointwiseApproxUnit}
  \lim_{n\to \infty} a\,e_n = a
\end{equation}
for all $a\in A$.
\end{Proposition}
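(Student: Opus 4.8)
The plan is to recognize that Equation~\eqref{eq:PointwiseApproxUnit} is nothing but the assertion that the sequence $r_{e_n}$ converges to $\id_A$ \emph{pointwise} in the Mackey sense, and that this follows formally from Mackey convergence in $\intHom(A,A)$ by postcomposing with the bounded evaluation functional. The only tool needed is Proposition~\ref{prop:BoundedContinuous}, that bounded linear maps preserve Mackey convergence.

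First I would fix an element $a \in A$ and consider the evaluation map
\begin{equation*}
  \mathrm{ev}_a \colon \intHom(A,A) \longrightarrow A
  \,,\qquad
  f \longmapsto f(a)
  \,.
\end{equation*}
This map is linear, since $(f+g)(a) = f(a) + g(a)$ and $(\lambda f)(a) = \lambda f(a)$. To see that it is bounded, I would appeal to the explicit description of the functional bornology in Proposition~\ref{prop:UnifBoundBorn}: if $F \subset \intHom(A,A)$ is bounded, then because the singleton $\{a\}$ is bounded, the set $F(\{a\}) = \{ f(a) ~|~ f \in F \} = \mathrm{ev}_a(F)$ is bounded in $A$. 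Hence $\mathrm{ev}_a$ sends bounded sets to bounded sets and is a bounded linear map.

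Second, I would simply evaluate. By the definition of right multiplication we have $\mathrm{ev}_a(r_{e_n}) = r_{e_n}(a) = a\,e_n$ and $\mathrm{ev}_a(\id_A) = a$. By the definition of a strong approximate right unit (Definition~\ref{def:ApproxUnit}), the sequence $r_{e_n} \in \intHom(A,A)$ Mackey converges to $\id_A$. Applying Proposition~\ref{prop:BoundedContinuous} to the bounded linear map $\mathrm{ev}_a$ then yields that $a\,e_n = \mathrm{ev}_a(r_{e_n})$ Mackey converges to $\mathrm{ev}_a(\id_A) = a$, which is precisely Equation~\eqref{eq:PointwiseApproxUnit}.

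There is essentially no obstacle to overcome here: the statement is a direct specialization of the general fact that Mackey convergence in the functional bornology is stronger than pointwise Mackey convergence. The single point that requires attention, rather than difficulty, is the verification that $\mathrm{ev}_a$ is bounded, and this is immediate from the fact that singletons are bounded together with the defining property of the bornology on $\intHom(A,A)$. No norm estimates or choices of disks beyond those packaged inside Proposition~\ref{prop:BoundedContinuous} are needed.
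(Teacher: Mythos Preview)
Your proof is correct and follows the same underlying idea as the paper---push Mackey convergence forward along a bounded linear map terminating in $A$---but you take a slightly more direct route. The paper first uses Proposition~\ref{prop:TensorConvergence} to conclude that $r_{e_n} \Botimes a \to \id_A \Botimes a$ in $\intHom(A,A) \Botimes A$, and then applies Proposition~\ref{prop:BoundedContinuous} to the bilinear evaluation $\Eval: \intHom(A,A) \Botimes A \to A$ coming from the closed monoidal structure. You instead fix $a$ from the outset and apply Proposition~\ref{prop:BoundedContinuous} to the linear map $\mathrm{ev}_a: \intHom(A,A) \to A$, whose boundedness you read off directly from Proposition~\ref{prop:UnifBoundBorn}. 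Your argument avoids the tensor product and the closed-monoidal machinery entirely, which makes it marginally more elementary; the paper's version, on the other hand, makes explicit that pointwise convergence is a special case of the categorical evaluation pairing.
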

\begin{proof}
Let $A$ be an algebra in $\Born$; let $(e_n)_{n \in \bbN}$ be a strong approximate unit of $A$. It follows from Proposition~\ref{prop:TensorConvergence} for every $a \in A$, that $r_{e_n} \Botimes a \in \intHom(A,A) \Botimes A$ Mackey converges to $\id_A \Botimes a$. Since $\Born$ is closed monoidal, we have an evaluation map $\Eval:\intHom(A,A)\Botimes A\to A$, which maps $r_{e_n}\Botimes a \mapsto a\,e_n$. Since $\Eval$ is bounded linear, it preserves Mackey convergence by Proposition~\ref{prop:BoundedContinuous}. This shows that
\begin{equation*}
  \lim_{n\to \infty} a\, e_n
  = \lim_{n\to \infty }\Eval(r_{e_n} \Botimes a) 
  = \Eval\bigl(\lim_{n\to \infty}( r_{e_n} \Botimes a)\bigr) 
  = \Eval(\id\Botimes a) = a
  \qedhere
\end{equation*}
\end{proof}

\begin{Terminology}
\label{term:QuasiUnital}
A sequence $(e_n)_{n \in \bbN}$ with the pointwise convergence~\eqref{eq:PointwiseApproxUnit} is called an \textdef{approximate unit} without the adjective ``strong''. A bornological algebra with such an approximate unit that is left and right separable (Definition~\ref{def:Separable}) is called \textdef{quasi-unital} in \cite[Definition~14]{Meyer2004}.
\end{Terminology}

\begin{Example}
Let $M$ be a smooth manifold. The commutative bornological algebra $C_\mathrm{c}^\infty(M)$ of test functions has a strong approximate unit given by any sequence of compactly supported functions $\xi_n$ with $\xi_n|_{K_n} = 1$ on an exhaustion $K_0 \subset K_1 \subset \ldots \subset M$ by compact sets. This is a consequence of Lemma~\ref{lem:fiberdirac} (see Remark~\ref{rmk:approximateunitcompactsupport}).
\end{Example}

\begin{Proposition}
\label{prop:MAAtoAinjective}
Let $A$ be an algebra and $M$ a right $A$-module in $\cBorn$. If $A$ has a strong approximate right unit, then the natural map $M \Cotimes_A A \to M$ is a monomorphism.
\end{Proposition}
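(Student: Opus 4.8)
The plan is to exhibit the natural map $\tilde{\rho}: M \Cotimes_A A \to M$---the right-module analogue of Diagram~\eqref{diag:AotimesAMtoM}, characterized by $\tilde{\rho} \circ \pi = \rho$, where $\rho: M \Cotimes A \to M$ is the action and $\pi: M \Cotimes A \to M \Cotimes_A A$ is the canonical quotient---as a pointwise Mackey limit of maps that manifestly factor through it. Concretely, for each $n$ I set $\sigma_n: M \to M \Cotimes_A A$, $m \mapsto \pi(m \otimes e_n)$, and define the bounded operator $\Phi_n := \sigma_n \circ \tilde{\rho}$. If $\Phi_n$ Mackey converges pointwise to $\id_{M \Cotimes_A A}$, injectivity is immediate: for $\xi$ with $\tilde{\rho}(\xi) = 0$ one gets $\Phi_n(\xi) = \sigma_n(0) = 0$ for all $n$, whence $\xi = \lim_n \Phi_n(\xi) = 0$ by uniqueness of Mackey limits in the separated space $M \Cotimes_A A$ (it is complete, being a cokernel in $\cBorn$, hence separated). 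Since the forgetful functor to $\Vect$ is faithful (Proposition~\ref{prop:ForgetfulFunctors}), an injective bounded map is a monomorphism, which is exactly the claim.

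First I would identify $\Phi_n$ with the descent of right multiplication by $e_n$ on the second tensor factor. The operator $\id_M \Cotimes r_{e_n}$ on $M \Cotimes A$ respects the tensorial relation, since the coequalizer identification gives $[ma \otimes e_n] = [m \otimes a e_n]$, so it descends to an operator on $M \Cotimes_A A$ with $\Phi_n' \circ \pi = \pi \circ (\id_M \Cotimes r_{e_n})$. Comparing on elementary tensors, $\sigma_n(\rho(m \otimes a)) = [ma \otimes e_n] = [m \otimes a e_n] = \pi\bigl((\id_M \Cotimes r_{e_n})(m \otimes a)\bigr)$; as the elementary tensors span a Mackey-dense subspace of $M \Cotimes A$, the two bounded maps $\sigma_n \circ \rho$ and $\pi \circ (\id_M \Cotimes r_{e_n})$ agree on a dense subspace and therefore coincide, giving $\Phi_n = \Phi_n' = \sigma_n \circ \tilde{\rho}$. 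Here I use that $\pi$ is surjective, which holds because in $\cBorn$ the cokernel is the quotient by the closure of the image (Corollary~\ref{cor:CompleteCoker}).

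The convergence $\Phi_n \to \id$ is then transported from $\intHom(A,A)$. By hypothesis $r_{e_n} \to \id_A$ Mackey in $\intHom(A,A)$; the map $f \mapsto \id_M \Cotimes f$ is a bounded operation $\intHom(A,A) \to \intHom(M \Cotimes A, M \Cotimes A)$ by functoriality of the closed monoidal structure, so it preserves Mackey convergence (Proposition~\ref{prop:BoundedContinuous}) and yields $\id_M \Cotimes r_{e_n} \to \id_{M \Cotimes A}$ Mackey. Fixing a preimage $\xi = \pi(\zeta)$ and noting that evaluation at the single point $\zeta$ and the quotient map $\pi$ are both bounded, hence preserve Mackey convergence (again Proposition~\ref{prop:BoundedContinuous}), I obtain $\Phi_n(\xi) = \pi\bigl((\id_M \Cotimes r_{e_n})(\zeta)\bigr) \to \pi(\zeta) = \xi$, which is the required pointwise Mackey convergence.

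I expect the main obstacle to be the bookkeeping around the quotient $\pi$: verifying that $\id_M \Cotimes r_{e_n}$ genuinely descends through the completed coequalizer, and that operator convergence in $\intHom(M \Cotimes A, M \Cotimes A)$ really passes to pointwise convergence on $M \Cotimes_A A$. In particular, the Mackey density of elementary tensors in the completed tensor product and the surjectivity of $\pi$ are the points requiring care, whereas the remaining manipulations are formal consequences of the closed monoidal structure and Proposition~\ref{prop:BoundedContinuous}.
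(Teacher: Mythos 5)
Your proof is correct and follows essentially the same route as the paper: both arguments descend $\id_M \Cotimes r_{e_n}$ through the coequalizer, use the tensorial relation $[ma \otimes e_n] = [m \otimes ae_n]$ to see that the descended operator factors through $\tilde{\rho}$ (the paper inlines this as $\pi\bigl((\id_M \Cotimes r_{e_n})(\zeta)\bigr) = \pi(\rho(\zeta) \Cotimes e_n)$ rather than naming $\sigma_n$ and $\Phi_n$), and then pass to the Mackey limit using boundedness of $\pi$ and separatedness of the quotient. The only cosmetic difference is that where you invoke density of elementary tensors to identify two bounded maps out of $M \Cotimes A$, the cleanest justification is the universal property of the completion functor (maps out of $\Comp(V)$ into a complete space are determined by their restriction to the image of $V$), but this does not affect the validity of the argument.
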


\begin{proof}
We have the commutative diagram
\begin{equation}
\label{diag:SmoothModule}
\begin{tikzcd}
M \Cotimes A
\ar[dr, "\rho"]
\ar[d, "\pi"']
&
\\
M \Cotimes_A A
\ar[r, "\tilde{\rho}"']
&
M
\end{tikzcd}
\end{equation}
where $\rho$ is the right $A$-action on $M$, $\pi$ is the coequalizer~\eqref{eq:TensProdModCoeq} that defines the tensor product over $A$, and $\tilde{\rho}$ the map given by the universal property of the coequalizer. Assume that $\tilde{\zeta}$ is an element of $M \Cotimes_A A$ lying in the kernel $\tilde{\rho}(\tilde{\zeta}) = 0$. Since $\pi$ is a regular epimorphism and regular epimorphisms in $\cBorn$ are surjective (cf. Lemma~\ref{lem:separatedcompletequotients}), there is a preimage $\zeta \in M \Cotimes A$ such that $\pi(\zeta) = \tilde{\zeta}$. It follows that $\rho(\zeta) = 0$.

Let $e_n\in A$ be a strong approximate right unit. The tensor product of morphisms that sends $f: M \to M$ and $g:A \to A$ to $f \Cotimes g: M \Cotimes A \to M \Cotimes A$ is a bounded bilinear map
\begin{equation*}
  \intHom(M,M) \times \intHom(A,A)
  \xrightarrow{~\Cotimes~} 
  \intHom(M\Cotimes A,M\Cotimes A)
  \,.
\end{equation*}
Since $(\id_M, r_{e_n})$ Mackey converges component-wise to $(\id_M, \id_A)$, it follows that the tensor product $\id_M \Cotimes r_{e_n}$ Mackey converges to $\id_M \Cotimes \id_A = \id_{M \Cotimes A}$. Using the coequalizer property~\eqref{eq:TensProdModCoeq} of the tensor product over $A$, we obtain 
\begin{equation*}
\begin{split}
  \pi\bigl( (\id_M\Cotimes r_{e_n})(\zeta) \bigr)
  &= \bigl( \pi \circ (\id_M \Cotimes \mu)\bigr) 
    (\zeta\Cotimes e_n)
  \\
  &= \bigl( \pi \circ (\rho \Cotimes \id_A) \bigr)
    (\zeta\Cotimes e_n)
  \\
  &= \pi(\rho(\zeta)\Cotimes e_n)
  \\
  &= 0 \,.    
\end{split}
\end{equation*}
In the limit $n\to \infty$ we obtain $\pi(\zeta) = \tilde{\zeta} = 0$, which shows that $\tilde{\rho}$ is injective. Since the forgetful functor $\cBorn \to \Vect$ is faithful (Proposition~\ref{prop:ForgetfulFunctors}), an injective map in $\cBorn$ is a monomorphism.
\end{proof}

\begin{Remark}
Meyer proves the same proposition in \cite[Lemma~4.4]{smoothGroupRep} using a weaker notion of approximate unit. The proof needs a better understanding of generic elements in the tensor product; they can be expressed as infinite sums of elementary tensors.
\end{Remark}

\begin{Proposition}
\label{prop:ApproxUnitSmoothMod}
Let $\calA$ be an algebra in $\cBorn$ with a strong approximate right unit; let $M$ be a right $A$-module. The following are equivalent:
\begin{itemize}

\item[(i)] $M$ is smooth.

\item[(ii)] The action $M \Cotimes A \to M$ is a strong epimorphism (Definition~\ref{def:Epis}).

\end{itemize}
\end{Proposition}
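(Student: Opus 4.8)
The plan is to run everything through the canonical factorization $\rho = \tilde{\rho} \circ \pi$ of the action through the coequalizer, precisely the factorization recorded in Diagram~\eqref{diag:SmoothModule}: here $\pi\colon M \Cotimes A \to M \Cotimes_A A$ is the coequalizer defining the tensor product over $A$, and $\tilde{\rho}\colon M \Cotimes_A A \to M$ is the induced map. By the definition of a smooth module, $M$ is smooth exactly when $\tilde{\rho}$ is an isomorphism, so the entire statement becomes a comparison between $\tilde{\rho}$ and $\rho$. Two ingredients are available at the outset. First, $\pi$ is a coequalizer, hence a regular epimorphism, and regular epimorphisms are strong epimorphisms (Appendix~\ref{sec:monosandEpis}). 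Second, Proposition~\ref{prop:MAAtoAinjective} guarantees that $\tilde{\rho}$ is a monomorphism; this is the only place where the strong approximate right unit enters, and it has already been established.

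For the implication (i)$\Rightarrow$(ii) I would observe that if $\tilde{\rho}$ is an isomorphism, then $\rho = \tilde{\rho} \circ \pi$ exhibits $\rho$ as the composite of the strong epimorphism $\pi$ followed by an isomorphism. Since strong epimorphisms are stable under postcomposition with isomorphisms, $\rho$ is a strong epimorphism, which is condition~(ii).

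For the converse (ii)$\Rightarrow$(i) I would invoke the standard cancellation property of strong epimorphisms: if a composite $g \circ f$ is a strong epimorphism, then $g$ is a strong epimorphism. Applied to $\rho = \tilde{\rho} \circ \pi$, this forces $\tilde{\rho}$ to be a strong epimorphism. Combining this with the fact that $\tilde{\rho}$ is a monomorphism (Proposition~\ref{prop:MAAtoAinjective}), and using that a morphism which is simultaneously a monomorphism and a strong epimorphism is an isomorphism, I conclude that $\tilde{\rho}$ is an isomorphism, i.e.\ that $M$ is smooth.

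Thus the argument is purely formal once Proposition~\ref{prop:MAAtoAinjective} is granted, and there is no genuinely analytic obstacle remaining: the limit argument powered by the strong approximate unit has been absorbed into that proposition. The only point requiring care is that the three categorical facts used --- regular epimorphism $\Rightarrow$ strong epimorphism, the cancellation property ``$g \circ f$ strong epi $\Rightarrow$ $g$ strong epi'', and ``monomorphism $+$ strong epimorphism $\Rightarrow$ isomorphism'' --- are valid in $\cBorn$. Since these hold in any category by elementary diagonal-filler diagram chases, I would record them in Appendix~\ref{sec:monosandEpis} and cite them here rather than reprove them.
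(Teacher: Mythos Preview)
Your proposal is correct and follows essentially the same route as the paper: factor $\rho = \tilde{\rho}\circ\pi$, use that $\pi$ is a regular (hence strong) epimorphism for (i)$\Rightarrow$(ii), and for (ii)$\Rightarrow$(i) cancel to get $\tilde{\rho}$ strong epi, combine with the monomorphism statement of Proposition~\ref{prop:MAAtoAinjective}, and conclude that $\tilde{\rho}$ is an isomorphism. The paper's only cosmetic difference is that it phrases the forward direction as ``iso composed with regular epi is regular epi'' rather than invoking stability of strong epis under composition with isomorphisms.
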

\begin{proof}
Assume $M$ is smooth, that is, $\tilde{\rho}$ of Diagram~\eqref{diag:SmoothModule} is an isomorphism. Since $\pi$ is a regular epimorphism, $\rho = \tilde{\rho} \circ \pi$ is a regular, so a fortiori a strong epimorphism.

Conversely, assume that $\rho = \tilde{\rho} \circ \pi$ is a strong epimorphism, which implies that $\tilde{\rho}$ is a strong epimorphism. By Proposition~\ref{prop:MAAtoAinjective}, $\tilde{\rho}$ is a monomorphism. A strong epimorphism that is also a monomorphism is an isomorphism.
\end{proof}

In the terminology of Definition~\ref{def:NondegenerateMod}, Proposition~\ref{prop:ApproxUnitSmoothMod} can be stated as follows: If an algebra has a strong approximate right unit, then its modules are smooth if and only if they are strongly nondegenerate.

\section{Bornological convolution of Lie groupoids}
\label{sec:ConvolutionIngredients}

\subsection{The geometric Morita 2-category of Lie groupoids}
\label{sec:GrpBibu}

\begin{Definition}
\label{def:Gbun}
Let $G$ be a Lie groupoid. A \textdef{left $G$-bundle} is a smooth map $l: P \to G_0$ together with a smooth map
\begin{equation*}
\begin{aligned}
  G_1 \times^{s,l}_{G_0} P 
  &\longrightarrow P
  \\
  (g, p) 
  &\longmapsto g \cdot p
  \,,
\end{aligned}
\end{equation*}
called the \textdef{left $G$-action}, such that $1_{l(p)} \cdot p = p$ and $g \cdot (g' \cdot p) = gg' \cdot p$, whenever defined.
\end{Definition}

\begin{Lemma}
\label{lem:ActionSubmersion}
The action map of a left $G$-bundle is a surjective submersion.  \end{Lemma}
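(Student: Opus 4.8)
The plan is to factor the action map as a diffeomorphism followed by a manifestly surjective submersion, thereby reducing the statement to the stability of submersions under base change. Throughout I use that the source and target maps $s,t\colon G_1 \to G_0$ of a Lie groupoid are surjective submersions, that the anchor is equivariant in the sense that $l(g\cdot p) = t(g)$ whenever $g\cdot p$ is defined, and that consequently both fibered products $G_1 \times^{s,l}_{G_0} P$ and $G_1 \times^{t,l}_{G_0} P$ are smooth manifolds, by transversality of a submersion against any smooth map.

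Surjectivity of the action map $a\colon G_1 \times^{s,l}_{G_0} P \to P$, $(g,p)\mapsto g\cdot p$, is immediate: for every $p\in P$ the pair $(1_{l(p)},p)$ lies in the domain and $a(1_{l(p)},p) = 1_{l(p)}\cdot p = p$. The content of the lemma is therefore the submersion property.

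First I would introduce the shear map
\begin{equation*}
  \Psi\colon G_1 \times^{s,l}_{G_0} P \longrightarrow G_1 \times^{t,l}_{G_0} P
  \,,\qquad
  (g,p) \longmapsto (g,\, g\cdot p)
  \,,
\end{equation*}
which is well defined since $l(g\cdot p) = t(g)$, and smooth since the action is smooth. Using the action axioms $g^{-1}\cdot(g\cdot p) = 1_{s(g)}\cdot p = p$ and $g\cdot(g^{-1}\cdot q) = 1_{t(g)}\cdot q = q$, one checks that $(g,q)\mapsto (g,\, g^{-1}\cdot q)$ is a two-sided smooth inverse of $\Psi$; hence $\Psi$ is a diffeomorphism. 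The main bookkeeping obstacle lies here: one must keep track of which fibered product each expression lives in and verify that the source/target conditions match up, so that both $\Psi$ and its candidate inverse land in the correct manifold.

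Next I would observe that the second projection
\begin{equation*}
  \pr_2 \colon G_1 \times^{t,l}_{G_0} P \longrightarrow P
  \,,\qquad
  (g,q)\longmapsto q
\end{equation*}
is the pullback of the surjective submersion $t\colon G_1 \to G_0$ along $l\colon P \to G_0$. Since submersions are stable under base change, $\pr_2$ is a submersion, and it is surjective because $q = \pr_2(1_{l(q)},q)$ for every $q\in P$. Finally, since $a = \pr_2 \circ \Psi$ by construction, the action map is the composite of a diffeomorphism and a surjective submersion, and is therefore itself a surjective submersion, which completes the proof.
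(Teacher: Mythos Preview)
Your proof is correct and takes a genuinely different route from the paper. The paper shows that the action map $\alpha$ is a submersion by constructing explicit local sections: given $(g,p_0')$ in the fiber over $p_0$, it picks a local section $\sigma$ of $t$ through $g$ and verifies that $p\mapsto\bigl(\sigma(l(p)),\,\sigma(l(p))^{-1}\cdot p\bigr)$ is a local section of $\alpha$ through $(g,p_0')$. You instead factor $a=\pr_2\circ\Psi$ through the shear diffeomorphism and invoke stability of submersions under base change. Your argument is cleaner and coordinate-free, and it makes transparent that only the submersion property of $t$ is being used; the paper's hands-on construction has the minor advantage that the local sections it writes down could in principle be reused in later explicit computations. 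Both arguments rely on the anchor equivariance $l(g\cdot p)=t(g)$, which you correctly flagged even though it is not literally written into Definition~\ref{def:Gbun}.
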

\begin{proof}
Let the action be denoted by $\alpha: G_1 \times_{G_0}^{s,l} P \to P$. Let $(g, p_0')$ be in the $\alpha$-fiber over $p_0 \in P$, that is, $l(p_0') = s(g)$ and $g \cdot p_0' = p_0$. Then $p_0' = g^{-1} \cdot p_0$. As is the case of the target of any Lie groupoid, $t: G_1 \to G_0$ is a submersion. Therefore, there is a local section of $t$ through $g$, that is, we have an open neighborhood $V$ of $t(g) = l(p_0)$ with a smooth map $\sigma: V \to G_1$ satisfying $\sigma(l(p_0)) = g$ and $t(\sigma(v)) = v$ for all $v \in V$. Let $U := l^{-1}(V)$. Since $l$ is continuous, $U$ is an open set. Since $l(p_0) \in V$ it follows that $p_0 \in U$. Consider the map
\begin{equation*}
\begin{aligned}
  \tau:
  U &\longrightarrow 
  G_1 \times_{G_0}^{s,l} P
  \\
  p &\longmapsto 
  \bigl(\sigma(lp), \sigma(lp)^{-1} \cdot p \bigr)
  \,.
\end{aligned} 
\end{equation*}
where we have used shorthand $lp \equiv l(p)$.

We have $\alpha\bigl( \tau(p) \bigr) = \sigma(lp) \cdot \bigl(\sigma(lp)^{-1} \cdot p\bigr) = p$, which shows that $\tau$ is a local section of $\alpha$. Moreover, $\tau(p_0) = \bigl( \sigma(lp_0), \sigma(lp_0)^{-1} \cdot p_0 \bigr) = (g, g^{-1} \cdot p_0) = (g, p_0')$. We conclude that $\alpha$ has a local section through any given point $(g,p_0')$ in the fiber over $p_0$, which shows that $\alpha$ is a submersion. Since $1_{lp} \cdot p = p$, $\alpha$ is surjective.
\end{proof}

\begin{Definition}
Let $l_P: P \to G_0$ and $l_Q: Q \to G_0$ be left $G$-bundles. A smooth map $\phi: P \to Q$ is called \textdef{$G$-equivariant} if $l_Q(\phi(p)) = l_P(p)$ and $\phi(g \cdot p) = g \cdot \phi(p)$, whenever defined.
\end{Definition}

There is an analogous definition of right groupoid bundles and right equivariant maps.

\begin{Definition}
Let $G$ and $H$ be Lie groupoids. A \textdef{$G$-$H$-bibundle} is a span $G_0 \xleftarrow{l} P \xrightarrow{r} H_0$ with a left $G$-action and a right $H$-action that commute, that is, $(g \cdot p) \cdot h = g \cdot (p \cdot h)$, whenever $s(g) = l(p)$ and $r(p) = t(h)$.
\end{Definition}

We may depict a bibundle by the following diagram:
\begin{equation*}
\begin{tikzcd}
G_1
\ar[d, "t_G"', shift left=-1.2, near start]
\ar[d, "s_G", shift left=1.2, near start]
\ar[r, phantom, "\circlearrowright"]
& 
P 
\ar[ld, "l"]
\ar[rd, "r"']
& 
H_1 
\ar[d, "t_H"', shift left=-1.2, near start]
\ar[d, "s_H", shift left=1.2, near start] 
\ar[l, phantom, "\circlearrowleft"]
\\
G_0 &[-1em] &[-1em] H_0
\end{tikzcd}
\end{equation*}

\begin{Definition}
Let $G_0 \xleftarrow{l_P} P \xrightarrow{r_P} H_0$ and $G_0 \xleftarrow{l_Q} Q \xrightarrow{r_Q} H_0$ be left $G$-bundles. A smooth map $\phi: P \to Q$ is called \textdef{biequivariant} if it is $G$-equivariant and $H$-equivariant.
\end{Definition}

Let $r_P: P \to H_0$ be a right $H$-module and $l_Q: Q \to H_0$ a left $H$-module. A map $\phi: P \times_{H_0} Q \to M$ will be called \textdef{$H$-compositional} if 
\begin{equation*}
  \phi(p \cdot h, q) = \phi(p, h \cdot q)
  \,,
\end{equation*}
whenever defined. That is, the diagram
\begin{equation}
\label{diag:BibundCompCoeq}
\begin{tikzcd}[column sep=3em]
P \times_{H_0} H \times_{H_0} Q
\ar[r, shift left=1.2, "{(\beta, \id_Q)}"]
\ar[r, shift left=-1.2, "{(\id_P, \alpha)}"']
&
P \times_{H_0} Q
\ar[r, "\phi"]
&
M
\end{tikzcd}
\end{equation}
is commutative, where $\beta$ is the right $H$-action on $P$ and $\alpha$ the left $H$-action on $Q$. The coequalizer of the parallel arrows of~\eqref{diag:BibundCompCoeq} is the \textdef{composition} of groupoid bundles, denoted by 
\begin{equation*}
  P \circ_H Q = (P \times_{H_0} Q)/H
  \,,
\end{equation*}
where the quotient is by the diagonal $H$-action $(p,q) \cdot h = (p \cdot h, h^{-1} \cdot q)$. We can depict the construction by the following diagram.
\begin{equation}
\begin{tikzcd}
&[-1ex] &[-2em]
P \circ_H Q
\ar[llddd, "l_{P\circ_H Q}"']
\ar[rrddd, "r_{P\circ_H Q}"]
&[-2em] &
\\
&&
P \times_{H_0} Q 
\ar[u, two heads]
\ar[ld]
\ar[rd]
&&
\\
G_1 
\ar[d,"s_G", shift left=1.2, near start] 
\ar[d,"t_G"', shift left=-1.2, near start] 
& 
P 
\ar[ld, "l_P"]
\ar[rd, "r_P"']
& 
H_1 
\ar[d,"s_H", shift left=1.2, near start] 
\ar[d,"t_H"', shift left=-1.2, near start] 
& 
Q \ar[ld, "l_Q"]
\ar[rd, "r_Q"'] 
& 
K_1 
\ar[d,"s_K", shift left=1.2, near start] 
\ar[d,"t_K"', shift left=-1.2, near start] 
\\
G_0 
&& 
H_0 
&& 
K_0
\end{tikzcd}
\end{equation}
Since pullbacks and coequalizers do not generally exist in smooth manifolds, the composition is a priori only a topological space. If $P$ is a $G$-$H$-bibundle and $Q$ a $H$-$K$-bibundle, then $P \circ_H Q$ is a topological $G$-$K$-bibundle. We want the composition to be a smooth manifold and, moreover, that $H \circ_H Q \cong Q$ and $P \circ_H H \cong P$. For this, we have to impose the following conditions.

\begin{Definition}
\label{def:principal}
Let $G$ and $H$ be Lie groupoids. A $G$-$H$ bibundle $G_0 \xleftarrow{l} P \xrightarrow{r} H_0$ is \textdef{right principal} if the following three conditions hold: 
\begin{itemize}
 \item[(P1)] $l$ is a surjective submersion.
 \item[(P2)] The $H$-action is free.
 \item[(P3)] The $H$-action is transitive on the $l$-fibers.
\end{itemize}
\end{Definition}

Assume that $l_P$ is a submersion, not necessarily surjective. Then we have the smooth map
\begin{equation}
\label{eq:ActionCharMap}
\begin{aligned}
  P \times_{H_0} H &\longrightarrow 
  P \times_{H_0}^{l,l} P
  \\
  (p,h) & \longmapsto (p, p\cdot h)
  \,.
\end{aligned}
\end{equation}
The $H$-action is free if and only if~\eqref{eq:ActionCharMap} is injective. It is transitive on the $l$-fibers if and only if~\eqref{eq:ActionCharMap} is surjective. It can be shown that if~\eqref{eq:ActionCharMap} is a bijection, then it is a diffeomorphism \cite{Blohmann2008}. It follows that the action is right principal if and only if~\eqref{eq:ActionCharMap} is a diffeomorphism. A groupoid action is called \textdef{proper} if~\eqref{eq:ActionCharMap} is a proper map. Principal groupoid actions are always proper.

\begin{Proposition}[Proposition~2.11 in \cite{Blohmann2008}]
\label{prop:GrpdBibEquiv2Cat}
Let $G$, $H$, $K$ be Lie groupoids, $P$ a smooth $G$-$H$ bibundle, and $Q$ a smooth $H$-$K$ bibundle. If $P$ and $Q$ are right principal then the composition $P \circ_H Q$ is a smooth right principal $G$-$K$-bibundle.
\end{Proposition}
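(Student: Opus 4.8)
The plan is to realize $P \circ_H Q$ as the quotient of a smooth manifold by a free and proper groupoid action, and then to verify the three principality conditions (P1)--(P3) of Definition~\ref{def:principal} directly.

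First I would check that the fiber product $P \times_{H_0} Q = \{(p,q) \mid r_P(p) = l_Q(q)\}$ is a smooth manifold. Since $Q$ is right principal, $l_Q : Q \to H_0$ is a surjective submersion, so the pullback of $l_Q$ along $r_P$ exists in smooth manifolds and the projection $\pr_P : P \times_{H_0} Q \to P$ is again a surjective submersion. The diagonal $H$-action $(p,q)\cdot h = (p\cdot h,\, h^{-1}\cdot q)$ is free: if $(p\cdot h, h^{-1}\cdot q) = (p,q)$, then $p\cdot h = p$, and freeness of the $H$-action on $P$ (condition (P2) for $P$) forces $h$ to be a unit. It is also proper, because the $H$-action on $P$ is principal and therefore proper, and the properness of the action on $P$ already controls the admissible $h$'s through the first factor via the characterization map~\eqref{eq:ActionCharMap}. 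By Godement's criterion for free and proper groupoid actions, $P \circ_H Q = (P\times_{H_0} Q)/H$ is then a smooth manifold and the quotient map $\pi$ is a surjective submersion.

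Next I would descend the structure data. The maps $l_P \circ \pr_P$ and $r_Q \circ \pr_Q$ are $H$-invariant, since $l_P(p\cdot h) = l_P(p)$ and $r_Q(h^{-1}\cdot q) = r_Q(q)$, so they factor through $\pi$ to give smooth maps $l_{P\circ_H Q}$ and $r_{P\circ_H Q}$; the left $G$- and right $K$-actions $g\cdot[p,q] = [g\cdot p, q]$ and $[p,q]\cdot k = [p, q\cdot k]$ are well defined and commute. For (P1) I would use $l_{P\circ_H Q}\circ \pi = l_P \circ \pr_P$: the right-hand side is a composite of the submersions $\pr_P$ and $l_P$, hence a submersion, and since $\pi$ is a surjective submersion it follows that $l_{P\circ_H Q}$ is itself a submersion; its surjectivity is immediate from surjectivity of $l_P$ together with surjectivity of $l_Q$.

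The remaining conditions are element chases. For freeness (P2), the relation $[p,q]\cdot k = [p,q]$ produces an $h$ with $p\cdot h = p$ and $h^{-1}\cdot q\cdot k = q$; freeness of the $H$-action on $P$ makes $h$ a unit, and then freeness of the $K$-action on $Q$ (condition (P2) for $Q$) forces $k$ to be a unit. For transitivity on the $l$-fibers (P3), given classes $[p,q]$ and $[p',q']$ with $l_P(p) = l_P(p')$, transitivity of the $H$-action on the $l_P$-fibers (condition (P3) for $P$) yields $h$ with $p' = p\cdot h$, so $[p',q'] = [p, h\cdot q']$; since $q$ and $h\cdot q'$ then lie in the same $l_Q$-fiber, transitivity of the $K$-action on the $l_Q$-fibers (condition (P3) for $Q$) yields $k$ with $h\cdot q' = q\cdot k$, whence $[p',q'] = [p,q]\cdot k$. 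The main obstacle is the first paragraph: extracting properness of the diagonal $H$-action from principality of $P$ so that the quotient manifold theorem applies and $\pi$ is a genuine surjective submersion. The freeness and the algebraic identities (P2), (P3) are routine; it is this analytic input that upgrades the composition from a mere topological bibundle to a smooth one.
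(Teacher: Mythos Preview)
Your proof is correct and follows the standard route one would take to establish this result. However, the present paper does not actually prove this proposition: it simply cites it as Proposition~2.11 of \cite{Blohmann2008} and moves on, so there is no in-paper argument to compare against. Your argument is essentially the one found in that reference (and in other standard treatments of the Morita 2-category of Lie groupoids): use principality of $P$ to exhibit $P\times_{H_0} Q$ as a smooth manifold carrying a free and proper diagonal $H$-action, invoke the quotient manifold theorem, then verify (P1)--(P3) by elementary descent arguments. The one place worth tightening slightly is the properness step: you might make explicit that properness of the diagonal action follows because the characteristic map~\eqref{eq:ActionCharMap} for the diagonal action, when composed with the projection to the $P$-factors, recovers the characteristic map for the principal $H$-action on $P$, which is a diffeomorphism; this controls $h$ entirely through the first factor, as you say.
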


The triple composition $P\circ_H Q \circ_K R$ of bibundles is given by the colimit of the diagram
\begin{equation*}
\begin{tikzcd}[column sep=4em, row sep=6ex]
P \times_{H_0} H_1 \times_{H_0} Q \times_{K_0} K_1 \times_{K_0} R
\ar[d, "\beta_P \times \id_Q \times \id"', shift right=1.2] 
\ar[d, "\id_P \times \alpha_Q \times \id", shift right=-1.2] 
\ar[r, "\id \times \beta_Q \times \id"', shift right=1.2] 
\ar[r, "\id \times \id_Q \times \alpha_R", shift right=-1.2] 
&
P \times_{H_0} H_1 \times_{H_0} Q \times_{K_0} R
\ar[d, "\beta_P \times \id_Q \times \id"', shift right=1.2] 
\ar[d, "\id_P \times \alpha_Q \times \id", shift right=-1.2] 
\\
P \times_{H_0} Q \times_{K_0} K_1 \times_{K_0} R
\ar[r, "\id \times \beta_Q \times \id"', shift right=1.2] 
\ar[r, "\id \times \id_Q \times \alpha_R", shift right=-1.2] 
& 
P \times_{H_0} Q \times_{K_0} R 
\end{tikzcd}
\end{equation*}
Since the indexing category is the square of the category $\{0 \rightrightarrows 1\}$ of two parallel arrows, the colimit can be computed in two steps. Computing first the colimit of the columns and then of the resulting row, we obtain $(P \circ_H Q) \circ_K R$. Computing the colimits in the other order, we obtain $P \circ_H (Q \circ_K R)$. The unique natural isomorphism $(P \circ_H Q) \circ_K R \cong P \circ_H Q \circ_K R \cong P \circ_H (Q \circ_K R)$ is the associator.

\begin{Proposition}[Proposition~2.12 in \cite{Blohmann2008}]
\label{prop:GrpdBibuCat}
There is a weak 2-category that has Lie groupoids as objects, smooth right principal bibundles as 1-morphisms, and smooth biequivariant maps of bibundles as 2-morphisms.
\end{Proposition}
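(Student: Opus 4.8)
The plan is to assemble the full bicategorical data and then verify the coherence axioms, leaning on the colimit descriptions already set up. Objects are Lie groupoids. For 1-morphisms I take smooth right principal bibundles, with horizontal composition $P \circ_H Q$; this is well defined and lands again among right principal bibundles by Proposition~\ref{prop:GrpdBibEquiv2Cat}. For 2-morphisms I take smooth biequivariant maps, with vertical composition given by ordinary composition of maps, which is strictly associative and has the identity maps as strict units. The identity 1-morphism on a groupoid $G$ is $G$ itself, regarded as the $G$-$G$-bibundle $G_0 \xleftarrow{t_G} G_1 \xrightarrow{s_G} G_0$ with left and right multiplication as actions; I would check that it is right principal directly: $t_G$ is a surjective submersion (P1), the right multiplication action is free since $gh = g$ forces $h = 1_{s(g)}$ (P2), and it is transitive on the $t_G$-fibers since $g, g'$ with $t(g) = t(g')$ are related by $g(g^{-1}g') = g'$ (P3).

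Next I would upgrade horizontal composition to 2-morphisms. Given biequivariant maps $\phi: P \to P'$ and $\psi: Q \to Q'$ over a common middle groupoid $H$, the product $\phi \times \psi: P \times_{H_0} Q \to P' \times_{H_0} Q'$ intertwines the two diagonal $H$-actions, hence is $H$-compositional in the sense of Diagram~\eqref{diag:BibundCompCoeq}; by the universal property of the coequalizer it descends to a unique biequivariant map $\phi \circ_H \psi: P \circ_H Q \to P' \circ_H Q'$. Functoriality of $\circ_H$ in each slot and the interchange law relating horizontal and vertical composition then follow formally: both sides of each identity are the unique map induced on the quotient by the corresponding map of products, so they coincide by uniqueness.

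The structure isomorphisms are supplied by the colimit formalism. The associator $(P \circ_H Q) \circ_K R \cong P \circ_H (Q \circ_K R)$ is exactly the canonical comparison between the two ways of evaluating the iterated colimit over the square-indexed diagram displayed just before the statement, and it is natural in $P, Q, R$ because it is built entirely from universal maps. For the unitors I would use the left action map $G_1 \times_{G_0} Q \to Q$, $(g, q) \mapsto g \cdot q$, which is $G$-compositional and descends to an isomorphism $G \circ_G Q \cong Q$ with inverse $q \mapsto [1_{l(q)}, q]$; symmetrically, the right action gives $P \circ_H H \cong P$. These are isomorphisms of bibundles, and naturality again follows from the universal property of the quotients.

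Finally I would verify the coherence axioms, which I expect to be the main obstacle. The pentagon identity compares the five reassociations of a fourfold composite $P \circ_H Q \circ_K R \circ_L S$; since every associator is an instance of the canonical ``iterated colimits agree'' isomorphism, the pentagon reduces to the assertion that the fourfold iterated colimit over the cube-indexed diagram $\{0 \rightrightarrows 1\}^{3}$ is independent of the order of evaluation, which holds by uniqueness of colimits. The triangle identity reduces to checking that the unitors above are compatible with the associator on a triple composite of the form $P \circ_H H \circ_H Q$, which one reads off from the explicit formulas for the unitors together with the descent of the middle $H$-action. The genuinely delicate part is the bookkeeping required to phrase each axiom as a statement about a single iterated colimit; once that is in place, every axiom becomes an instance of the uniqueness of universal maps.
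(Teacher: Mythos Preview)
The paper does not give its own proof of this proposition; it is stated with a citation to \cite{Blohmann2008}, and the only argument supplied is the paragraph immediately preceding it, which constructs the associator as the canonical isomorphism between the two orders of evaluating the iterated colimit over $\{0\rightrightarrows 1\}^2$. Your proposal is correct and follows precisely this approach: you use the same colimit description for the associator, extend it to the cube $\{0\rightrightarrows 1\}^3$ for the pentagon, and supply the standard unitors via the action maps, which is exactly how the cited reference proceeds.

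One small caveat worth making explicit: your appeal to ``uniqueness of colimits'' for the pentagon and triangle is taking place in smooth manifolds, where the relevant pullbacks and coequalizers do not exist in general; the paper flags this just before Definition~\ref{def:principal}. The argument is still valid because right principality (via Proposition~\ref{prop:GrpdBibEquiv2Cat}) guarantees that every intermediate quotient and fiber product appearing in the cube diagram is again a smooth manifold, so the specific colimits you need do exist and the universal-property reasoning goes through. You implicitly rely on this but do not say so; it is the one place a careful reader might pause.
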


\begin{Terminology}
The 2-category of Proposition~\ref{prop:GrpdBibuCat} will be called the \textdef{Morita 2-category} of Lie groupoids and denoted by $\GrpdMrt$. A weak isomorphism in this 2-category is called a \textdef{Morita equivalence} of Lie groupoids.
\end{Terminology}

\begin{Remark}
The cartesian product of Lie groupoids, right principal bibundles, and biequivariant maps is the categorical product of $\GrpdMrt$ \cite[Section~3.4]{Blohmann2008}. In particular, it is a weak symmetric monoidal structure. 
\end{Remark}

\begin{Remark}
A smooth biequivariant map of right principal bibundles is always a diffeomorphism. This shows that $\GrpdMrt$ is a $(2,1)$-category.
\end{Remark}

\begin{Theorem}[Theorem~2.18 in \cite{Blohmann2008}]
The Morita 2-category of Lie groupoids is equivalent to the 2-category of differentiable stacks.
\end{Theorem}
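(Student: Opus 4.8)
The plan is to construct an explicit $2$-functor $B \colon \GrpdMrt \to \mathrm{DiffStack}$ and prove it is a biequivalence by checking essential surjectivity on objects together with equivalences on all Hom-groupoids. On objects, I would send a Lie groupoid $G$ to its \emph{classifying stack} $BG$, the stack on the site of manifolds whose groupoid of $S$-points consists of right principal $G$-bundles over $S$ and their equivariant isomorphisms. The first task is to verify that $BG$ is a differentiable stack: the unit bundle exhibits a representable surjective submersion $G_0 \to BG$ (an atlas), and the $2$-fibre product $G_0 \times_{BG} G_0$ is canonically the manifold $G_1$, so that the groupoid $(G_0 \times_{BG} G_0 \rightrightarrows G_0)$ recovers $G$.

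On $1$-morphisms, a right principal $G$-$H$ bibundle $P$ is sent to the stack morphism $BP \colon BG \to BH$ defined on $S$-points by composition $E \mapsto E \circ_G P$, where the right principal $G$-bundle $E$ over $S$ is viewed as a bibundle from the unit groupoid on $S$ to $G$. Proposition~\ref{prop:GrpdBibEquiv2Cat} guarantees that $E \circ_G P$ is again right principal, hence a right principal $H$-bundle over $S$, and compatibility with pullback along maps of $S$ shows this is a morphism of stacks. A biequivariant map $\phi \colon P \to P'$ induces, by functoriality of $\Empty \circ_G \Empty$, a natural isomorphism $BP \Rightarrow BP'$, which defines $B$ on $2$-morphisms. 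The associators of $\GrpdMrt$ (computed by the triple-composition colimit displayed above) and those of $\mathrm{DiffStack}$ agree because both are governed by the same universal property of the relevant fibre products and coequalizers, so $B$ is a genuine weak $2$-functor.

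For essential surjectivity on objects, every differentiable stack $\calX$ admits by definition an atlas $X_0 \to \calX$, a representable surjective submersion from a manifold; setting $G_1 := X_0 \times_\calX X_0$, which is a manifold precisely by representability of the atlas, yields a Lie groupoid $G = (G_1 \rightrightarrows X_0)$ with a canonical equivalence $BG \simeq \calX$. The heart of the argument, and the step I expect to be the main obstacle, is to show that $B$ restricts to an equivalence of groupoids
\begin{equation*}
  \mathrm{Bibun}(G,H) \;\xrightarrow{\ \sim\ }\; \Mor_{\mathrm{DiffStack}}(BG, BH)
  \,.
\end{equation*}
Essential surjectivity here is a descent/$2$-Yoneda argument: given a stack morphism $f \colon BG \to BH$, pull back the tautological $H$-bundle along $G_0 \to BG \xrightarrow{f} BH$ to obtain an $H$-bundle $P \to G_0$, and use the identification $G_1 \cong G_0 \times_{BG} G_0$ together with the coherence $2$-cell carried by $f$ to equip $P$ with a left $G$-action commuting with the $H$-action; conditions (P1)--(P3) then follow from $f$ being a morphism of stacks. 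Fullness and faithfulness on $2$-morphisms translate natural transformations back into biequivariant maps by the same descent. The delicate points are verifying \emph{smoothness} of the reconstructed bibundle --- which rests on the representability and submersion hypotheses built into the notion of atlas, so that the relevant fibre products are manifolds and the action maps are smooth --- and checking that composition of bibundles matches composition of stack morphisms, which again reduces to Proposition~\ref{prop:GrpdBibEquiv2Cat} and the compatibility of $\Empty \circ_G \Empty$ with the fibre products defining $BG$.

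An alternative, more abstract route is via Pronk's theorem on bicategories of fractions: the essential equivalences of Lie groupoids admit a right calculus of fractions, $\GrpdMrt$ realizes the resulting localization (a right principal bibundle being exactly a generalized morphism, a span whose left leg is an essential equivalence), and stackification identifies this localization with $\mathrm{DiffStack}$. I would follow the direct construction above, referring to \cite{Blohmann2008} for the full verifications.
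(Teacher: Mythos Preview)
The paper does not prove this theorem at all; it is stated as a citation of \cite[Theorem~2.18]{Blohmann2008} and used as a black box. There is no proof in the paper to compare your proposal against.

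That said, your outline is the standard argument and is essentially what is carried out in the cited reference: construct $B\colon G\mapsto BG$ by sending a groupoid to its classifying stack of principal bundles, send bibundles to the induced composition-of-bibundles maps, check essential surjectivity via atlases, and prove the Hom-level equivalence by descent along the atlas $G_0\to BG$. Your identification of the delicate points---smoothness of the reconstructed bibundle via representability of the atlas, and compatibility of bibundle composition with stack-morphism composition---is accurate, and the alternative via Pronk's calculus of fractions is also a valid route. Nothing in your sketch is wrong; it simply goes beyond what the present paper does, which is to invoke the result without proof.
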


\subsection{Convolution of Lie groupoids and bibundles}

\begin{Definition}
A \textdef{right Haar system} on a Lie groupoid $G_1 \rightrightarrows G_0$ is a collection of strictly positive Radon measures $\lambda_x$ on the source fiber $s^{-1}(x)$ for every $x \in G_0$, satisfying the following conditions:
\begin{itemize}

\item[(i)] Integration is smooth: For every $\phi \in C^\infty_\mathrm{c}(G_1)$, the pushforward function $s_* \phi: G_0 \to \bbC$ obtained by fiber integration
\begin{equation*}
  (s_* \phi)(x)
  := 
  \int_{\mathclap{s^{-1}(x)}} \phi(h) \,\di \lambda_x(h) 
\end{equation*}
is smooth.
		
\item[(ii)] Integration is right $G$-invariant: For every $\phi \in C^\infty_\mathrm{c}(G_1)$ and $g \in G_1$, we have
\begin{equation*}
  \int_{\mathclap{s^{-1}(s(g))}} \phi(h) \,\di \lambda_{s(g)}(h) 
  = 
  \int_{\mathclap{s^{-1}(t(g))}} \phi(hg) \,\di \lambda_{t(g)}(h)
  \,.
\end{equation*}
\end{itemize}
\end{Definition}

A right Haar system on a groupoid can be obtained by choosing a nowhere vanishing smooth section of density bundle of the Lie algebroid and then extending it to a density on the $s$-fibers by right $G$-translation. Haar systems are not unique. The convolution product and action defined below both depend, up to isomorphism, on the choice of the Haar system. These choices are absorbed in the 2-categorical level of the convolution functor, as explained in Remark~\ref{rmk:HaarSystemChoice}.

\begin{Definition}
\label{def:ConvProd}
Let $G$ be a Lie groupoid with a right Haar system $\{\lambda_x\}_{x\in G_0}$. The \textdef{convolution product} of two functions $a, b \in C^\infty_\mathrm{c}(G_1)$ is the function
\begin{equation}
\label{eq:ConvProd}
  (a * b)(g)
  =
  \int_{\mathclap{s^{-1}(s(g))}}
  a(gh^{-1})\, b(h) \, \di\lambda_{s(g)}(h)
\end{equation}
on $G_1$.
\end{Definition}

The convolution product is linear in each variable. Let $K$ be the compact support of $a$ and $L$ the compact support of $b$. Then the support of $a * b$ is contained in $KL$, the set of all products of composable arrows, which is compact. This shows that the convolution product defines a bilinear map
\begin{equation*}
  *: 
  C^\infty_\mathrm{c}(G_1) \times C^\infty_\mathrm{c}(G_1)
  \longrightarrow
  C^\infty_\mathrm{c}(G_1)
\end{equation*}
It follows from the right invariance of the Haar system and the Fubini theorem that the product is associative. The upshot is that the convolution product defines a non-unital algebra on the vector space $A(G) := C^\infty_\mathrm{c}(G_1)$.

\begin{Definition}
Let $G$ be a Lie groupoid and $l: P \to G_0$ a left $G$-bundle. The \textdef{left convolution action} of a function $a \in C^\infty_\mathrm{c}(G_1)$ on a function $m \in C^\infty_\mathrm{c}(P)$ is the function
\begin{equation}
\label{eq:ConvActLeft}
  (a \cdot m)(p)
  := \int_{\mathclap{s^{-1}(l(p))}} 
  a(h^{-1})\, m(h \cdot p)\, \di \lambda_{l(p)}(h)
\end{equation}
on $P$.
\end{Definition}

As for the convolution product, it follows from the compactness of the support of $a$ and $m$ that the support of $a\cdot m$ is compact, and from the right invariance of the Haar system and the Fubini theorem that the convolution action is associative. In other words, the vector space $M(P) := C^\infty_\mathrm{c}(P)$ with the left convolution action is a left $A(G)$-module.

Given a right $H$ action on $r: P \to H_0$, the \textdef{right convolution action} of $b \in A(H)$ on $m \in M(P)$ is the function
\begin{equation}
\label{eq:ConvActRight}
  (m \cdot b)(p)
  := \int_{\mathclap{s^{-1}(r(p))}} 
  m(p \cdot h^{-1})\, b(h)\, \di \lambda_{r(p)}(h)
  \,,
\end{equation}
which equips $M(P)$ with a right $A(H)$-action. If $P$ is an $G$-$H$ bibundle, it follows from the commutativity of the left and right actions that the left $A(G)$-action and the right $A(H)$-action on $M$ commute. 

If $P'$ is another $G$-$H$ bibundle, and $\phi: P \to P'$ a biequivariant isomorphism, then the pullback
\begin{equation}
\label{eq:ConvPullback}
  \phi^*: C_\mathrm{c}^\infty(P') 
  \longrightarrow 
  C_\mathrm{c}^\infty(P)
\end{equation}
is $A(G)$-$A(H)$-bilinear. We summarize:

\begin{Proposition}
Let $G$ and $H$ be Lie groupoids; let $P$ and $P'$ be $G$-$H$-bibundles; let $\phi: P \to P'$ be a biequivariant isomorphism. Then:
\begin{itemize}

\item[(i)] The convolution product~\eqref{eq:ConvProd} equips the vector space $A(G) = C^\infty_\mathrm{c}(G_1)$ with the structure of a non-unital algebra.

\item[(ii)] The convolution actions~\eqref{eq:ConvActLeft} and \eqref{eq:ConvActRight} equip the vector space $M(P) = C^\infty_\mathrm{c}(P)$ with the structure of an $A(G)$-$A(H)$-bimodule.

\item[(iii)] The pullback~\eqref{eq:ConvPullback} is an isomorphism $\phi^*: M(P') \to M(P)$ of bimodules.

\end{itemize}
\end{Proposition}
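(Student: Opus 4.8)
The plan is to dispatch the three parts in turn, observing that the \emph{linearity} of the product and actions and the \emph{compactness of supports} have already been recorded in the paragraphs preceding the statement (e.g.\ $\Supp(a*b)\subset \Supp(a)\,\Supp(b)$). Thus the only substantive content is the associativity of $*$, the associativity and mutual commutativity of the two actions, and the bilinearity of the pullback. Every associativity statement I would prove by one and the same mechanism: expand the relevant nested fiber integral, apply Fubini's theorem (legitimate because each integrand is continuous with compact support, by the support bounds just mentioned), and then reparametrize an inner integral by a right translation, invoking the right-invariance of the Haar system so that the two iterated expressions coincide.

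For part (i), I would substitute \eqref{eq:ConvProd} into itself to write out $((a*b)*c)(g)$ and $(a*(b*c))(g)$. Both become double integrals over a pair of composable arrows; after interchanging the order of integration by Fubini and performing a substitution of the form $h\mapsto hk$, under which the Haar measures transform as prescribed, the two expressions agree. Combined with bilinearity, this yields that $\bigl(A(G),*\bigr)$ is a non-unital algebra. For part (ii), the left and right module axioms $(a*b)\cdot m = a\cdot(b\cdot m)$ and $(m\cdot b)\cdot b' = m\cdot(b*b')$ follow from the identical Fubini-plus-right-translation argument applied to \eqref{eq:ConvActLeft} and \eqref{eq:ConvActRight}; here the reparametrizations use the groupoid actions on $P$ in place of the multiplication of $G$, but the combinatorics are the same. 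The commutativity $(a\cdot m)\cdot b = a\cdot(m\cdot b)$, which upgrades $M(P)$ to a genuine $A(G)$-$A(H)$-bimodule, follows from Fubini together with the hypothesis that the left $G$-action and the right $H$-action on $P$ commute: the two integrations run over an $s$-fiber of $G$ and an $s$-fiber of $H$ respectively, so the commuting actions let the integrand be rearranged and the order of integration exchanged freely.

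For part (iii), biequivariance supplies $l_{P'}\circ\phi = l_P$, $r_{P'}\circ\phi = r_P$, $\phi(g\cdot p)=g\cdot\phi(p)$ and $\phi(p\cdot h)=\phi(p)\cdot h$. Substituting into \eqref{eq:ConvActLeft} and using $l_{P'}\circ\phi=l_P$ to match the base point and fiber of the integral, and the left equivariance to transport the integrand, one reads off directly that $\phi^*(a\cdot m') = a\cdot(\phi^* m')$; the right-action case is symmetric using $r_{P'}\circ\phi=r_P$. Finally, a smooth biequivariant map of right principal bibundles is a diffeomorphism, hence proper, so $\phi^*$ is bounded by Proposition~\ref{prop:TestFuncPullback}, and $(\phi^{-1})^*$ is a two-sided inverse; therefore $\phi^*$ is an isomorphism of bimodules.

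The genuinely delicate point, and the step I expect to be the main obstacle, is the bookkeeping of the source/target conventions of the groupoid and of the bibundle actions, and in particular checking that each reparametrization is effected by a map under which the chosen Haar measures are invariant in exactly the sense required. Getting the \emph{direction} of the right translations correct, so that the measure-transformation law lines up with the stated right-invariance, is where an error is most likely to intrude; by contrast the Fubini applications are routine once the compact-support bounds are in hand.
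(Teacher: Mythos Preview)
Your proposal is correct and matches the paper's approach: the paper presents this proposition as a summary of the preceding discussion, which already records that associativity follows from Fubini together with right-invariance of the Haar system, that the bimodule compatibility follows from the commutativity of the left and right groupoid actions, and that the pullback by a biequivariant diffeomorphism is bilinear. Your one addition---invoking Proposition~\ref{prop:TestFuncPullback} for boundedness of $\phi^*$---is not needed at this stage (the proposition is stated at the level of vector spaces, before the bornological completion of Section~\ref{sec:ConvCompletion}), but it is harmless and anticipates Proposition~\ref{prop:CompletedAlgsMods}.
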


\begin{Warning}
Let $\phi: G \to H$ be a proper homomorphism of Lie groupoids. The pullback $\phi^*: C_\mathrm{c}^\infty(H_1) \to C_\mathrm{c}^\infty(G_1)$ is generally not a homomorphism of convolution algebras. As example, consider the inclusion $i_y: * \to H$, $* \mapsto 1_y$ of a point $y\in H_0$. The pullback is the evaluation of a function on $H_1$ at $1_y$. However, $(a * b)(1_y) = \int_{s^{-1}(y)} a(h^{-1})\, b(h)\, \di\lambda_y(h) \neq a(1_y)\, b(1_y)$.
\end{Warning}

\subsection{The functoriality constraint}

Let $P$ be a right principal $G$-$H$-bibundle and $Q$ a right principal $H$-$K$-bibundle. Let $m \in C^\infty_\mathrm{c}(P)$ and $n \in C^\infty_\mathrm{c}(Q)$. By viewing the function $m \Botimes n \in M(P) \Cotimes M(Q)$ as function in $C^\infty_\mathrm{c}(P \times Q)$, further restricting it to $P \times_{H_0} Q$, followed by fiber integration over the quotient map $\pi: P \times_{H_0} Q \to P \circ_H Q$, we obtain a function in $P \circ_H Q$ defined by
\begin{equation*}
    \tau'(m, n)[p,q]=\int_{s^{-1}(r(p))} m(p\cdot h^{-1})\,n(h\cdot q) \,\di \lambda_{r(p)}(h)
    \, .
\end{equation*}
It follows from the right invariance of the Haar system that the map
\begin{equation*}
  \tau': M(P) \Botimes M(Q) 
  \longrightarrow 
  C^\infty_\mathrm{c}(P \circ_H Q)
\end{equation*}
is left $A(G)$-linear, right $A(K)$-linear, and $A(H)$-tensorial. By the universal property of the tensor product of bimodules, it descends to a morphism of bimodules
\begin{equation*}
  \tau_{P,Q}: M(P) \Botimes_{A(H)} M(Q) 
  \longrightarrow
  M(P \circ_H Q)
  \,.
\end{equation*}
The situation can be depicted by the following commutative diagram: 
\begin{equation}
\label{eq:taudiagram}
\begin{tikzcd}[column sep=2.5em]
M(P) \Botimes
A(H) \Botimes
M(Q)
\ar[r, shift left=1.2, "{\rho\otimes  \id}"]
\ar[r, shift left=-1.2, "{\id \otimes \lambda}"']
\ar[d]
&
M(P) \Botimes 
M(Q)
\ar[r]
\ar[d]
&[-1em]
M(P) \Botimes_{A(H)}
M(Q)
\ar[dd, "\tau_{P,Q}"]
\\
C_\mathrm{c}^\infty(P \times H_1 \times Q)
\ar[d, "i^*"]
&
C_\mathrm{c}^\infty(P \times Q)
\ar[dr, "\tau'"]
\ar[d, "j^*"]
&
\\
C_\mathrm{c}^\infty(
P \times_{H_0} H_1 \times_{H_0} Q
)
\ar[r, shift left=1.2, "{(\beta \times \id_Q)_*}"]
\ar[r, shift left=-1.2, "{(\id_P \times \alpha)_*}"']
&
C_\mathrm{c}^\infty(
P \times_{H_0} Q
)
\ar[r, "\pi_*"']
&
C_\mathrm{c}^\infty(
P \circ_H Q
)
\,.
\end{tikzcd}
\end{equation}
Here, $\rho$ is the right $A(H)$-action on $M(P)$ and $\lambda$ the left $A(H)$-action on $M(Q)$. The top row is the coequalizer defining the tensor product over $A(H)$. The vertical maps $i^*$ and $j^*$ are the pullbacks along the embedding of the fiber products and $\alpha_*$, $\beta_*$, and $\pi_*$ the maps obtained by fiber integration. The left square is serially commutative, so that we obtain the morphism $\tau_{P,Q}$ between the coequalizers of each row. 

It follows from the naturality of this construction that $\tau_{P,Q}$ is natural in $P$ and $Q$. That is, if $\phi:P\to P'$ and $\psi:Q\to Q'$ are biequivariant diffeomorphisms, then the diagram
\begin{equation}
\label{diag:tauNatural}
\begin{tikzcd}[column sep=3.5em]
  M(P)\Botimes_{A(H)} M(Q) 
  \ar[r,"\phi_*\Botimes \psi_*"]
  \ar[d,"\tau_{P,Q}"']
  & 
  M(P')\Botimes_{A(H)} M(Q') \ar[d,"\tau_{P',Q'}"]
  \\
  M(P\circ_H Q) \ar[r,"(\phi\circ \psi)_*"']&
  M(P'\circ_H Q')
\end{tikzcd}
\end{equation}
commutes. The convolution functor maps the composition of bibundles naturally to the tensor product of the convolution bimodules if and only if $\tau_{P,Q}$ is a natural isomorphism. We will call $\tau$ the \textdef{functoriality constraint} of convolution (see Appendix~\ref{sec:bicategories} for conventions on $2$-functors).

\subsection{Bornological completion of convolution product and action}
\label{sec:ConvCompletion}

In the category $\Born$ of convex bornological vector spaces, the algebra $(A(G), *)$ is generally not self-induced, the $A(G)$-$A(H)$-bimodule $M(P)$ generally not smooth, and the functoriality constraint $\tau_{P,Q}$ generally not an isomorphism. This means that convolution does not map groupoids and bibundles to algebras and bimodules in the Morita category, and that the composition of bibundles is not mapped to the composition of bibundles. We will show in our main Theorem~\ref{thm:ConvFunc} that these issues are solved by the bornological completion of the convolution product, the convolution actions, and the functoriality constraint.

By the universal property of the bornological tensor product, (Definition~\ref{def:TensorBorn}), the convolution product can be viewed as a bounded linear map
\begin{equation*}
  * : A(G) \Botimes A(G) \longrightarrow A(G)
  \,.
\end{equation*}
By applying the monoidal functors $\Sep: \Born \to \sBorn$ and $\Comp: \sBorn \to \cBorn$, we obtain the complete convolution product, which we will also denote by
\begin{equation*}
  * := (\Comp \Sep)(*): 
  A(G) \Cotimes A(G) \longrightarrow A(G)
  \,.
\end{equation*}
It will be clear from the context, when $*$ denotes the completed convolution product. Similarly, we can complete the right and left convolution action, which yields maps
\begin{equation*}
\begin{aligned}
  A(G) \Cotimes M(P) 
  &\longrightarrow M(P)
  \\
  M(P) \Cotimes A(H) 
  &\longrightarrow M(P)
  \,.
\end{aligned}
\end{equation*}

To describe the completed convolution product and module structures explicitly, we recall from Section~\ref{sec:TestFuncBorn} that $A(G) = C_\mathrm{c}^\infty(G_1)$ is already complete as a bornological vector space. (It follows from Proposition~\ref{prop:TensSepIsSep} that $\Sep( A(G) \Botimes A(G)) = A(G) \Sotimes A(G)$ as bornological vector space.) The map from the algebraic tensor product with the tensor product bornology (Definition~\ref{def:TensorBorn}) into the complete tensor product, $A(G) \Botimes A(G) \to A(G) \Cotimes A(G)$, is bounded. The convolution product~\eqref{eq:ConvProd} on $A(G) \Botimes A(G) \to A(G)$ can be obtained by composing the following morphisms of bornological vector spaces
\begin{equation}
\label{eq:ConvAlgSeq}
\begin{split}
  C_\mathrm{c}^\infty(G_1) \Botimes
  C_\mathrm{c}^\infty(G_1)
  &\xrightarrow{~JI~} 
  C_\mathrm{c}^\infty(G_1) \Cotimes
  C_\mathrm{c}^\infty(G_1)
  \\
  &\xrightarrow{~\cong~}
  C_\mathrm{c}^\infty(G_1 \times G_1)
  \\
  &\xrightarrow{~i^*~}
  C_\mathrm{c}^\infty (G_1\times_{G_0}^{s,t} G_1)
  \\
  &\xrightarrow{~m_*~}
  C_\mathrm{c}^\infty(G_1)
  \,,
\end{split}
\end{equation}
where $i^*$ is the pullback along the inclusion $i: G_1 \times_{G_0} G_1 \to G_1 \times G_1$ and $m_*$ the integration over the fibers of groupoid multiplication $m: G_1 \times_{G_0}^{s,t} G_1 \to G_1$. 

More precisely, we use the bar isomorphism \cite[Chapter 1.5]{Brown1982} for groupoids,
\begin{equation*}
\begin{tikzcd}[row sep=0.0ex]
\mathllap{G_2 =~} G_1 \times_{G_0}^{s,t} G_1
\ar[r, "\GrpdBar", shift left=1]
&  
G_1 \times_{G_0}^{s,s} G_1 
\mathrlap{~= \bar{G}_2}
\ar[l, "\GrpdBar^{-1}", shift left=1]
\\
(g_1, g_2)
\ar[r, mapsto]
&
(g_1 g_2, g_2)
\\
(gh^{-1}, h)
&
(g,h)
\ar[l, mapsto]
\end{tikzcd}
\end{equation*}
and pull back along
\begin{align*}
  \bar{\imath} := \GrpdBar^{-1} \circ i: \bar{G}_2 
  &\longrightarrow G_1 \times G_1\\
\intertext{followed by the fiber integration of}
  \bar{m} := m \circ \GrpdBar^{-1}: \bar{G}_2 &\longrightarrow G_1
  \,.
\end{align*}
The tensor product of two functions $a, b \in C_\mathrm{c}^\infty(G_1)$ is the function $a \Cotimes b \in C_\mathrm{c}^\infty(G_1 \times G_1)$ given by the pointwise product $(a \Cotimes b)(g_1, g_2) = a(g_1) b(g_2)$. Its pullback along $\bar{\imath}$ is the function given by
\begin{equation*}
  \bigl( \bar{\imath}^{\,*}(a \Cotimes b) \bigr)(g,h)
  = a(gh^{-1})b(h)
\end{equation*}
for all $(g,h) \in \bar{G}_2$, which is the product of the functions $\bar{a}(g,h) = a(gh^{-1})$ and $\bar{b}(g,h) = b(h)$. Since $\bar{m} = \pr_1$, the $g$-fiber of $\bar{m}$ is given by
\begin{equation*}
  \bar{m}^{-1}(g)
  = \{g\} \times_{G_0}^{s,s} G_1
  = s^{-1}(s(g))
  \,,
\end{equation*}
which is equipped with the measure $\lambda_{s(g)}$ of the  Haar system. We conclude that the fiber integration of $\bar{\imath}^{\,*}(a \Cotimes b)$ over the fibers of $\bar{m}^{-1}(g)$ yields the formula~\eqref{eq:ConvProd}.


Similarly, the right module action of $A(H)$ on $M(P)$ can be obtained by the sequence of bornological vector spaces
\begin{equation}
\label{eq:ConvModSeq}
\begin{split}
  C_\mathrm{c}^\infty(P) \Botimes
  C_\mathrm{c}^\infty(H_1)
  &\xrightarrow{~JI~} 
  C_\mathrm{c}^\infty(P) \Cotimes
  C_\mathrm{c}^\infty(H_1)
  \\
  &\xrightarrow{~\cong~}
  C_\mathrm{c}^\infty(P \times H_1)
  \\
  &\xrightarrow{~j^*~}C_\mathrm{c}^\infty (P\times_{H_0} H_1)
  \\
  &\xrightarrow{~\beta_*~}
  C_\mathrm{c}^\infty(P)
  \,,
\end{split}
\end{equation}
where $j^*$ is the pullback along the inclusion $j: P \times_{H_0} H_1 \to P \times H_1$ and $\beta_*$ the integration over the fibers of the right groupoid action $\beta: P \times_{H_0} H_1 \to P$. The fiber of $\beta$ over $p$ is given by
\begin{equation*}
  \beta^{-1}(p) 
  = \{ (p \cdot h^{-1}, h) \} 
  \cong s^{-1}(r(p))
  \,,
\end{equation*}
which is equipped with a measure from the Haar system. There is an analogous sequence of bounded maps for the left action. This leads us to the following statements.

\begin{Proposition}
\label{prop:CompletedAlgsMods}
Let $G$, $H$, $K$, $L$ be Lie groupoids; let $P$ and $P'$ be right principal $G$-$H$-bibundles; let $Q$ be a right principal $H$-$K$-bibundle; let $R$ be a right principal $K$-$L$ bibundle. Then:
\begin{itemize}

\item[(i)] The completion of the convolution product equips $A(G)$ with the structure of an associative algebra in $\cBorn$.

\item[(ii)] The completions of the left and right convolution actions equip $M(P)$ with the structure of an $A(G)$-$A(H)$-bimodule in $\cBorn$.

\item[(iii)] For every biequivariant diffeomorphism of bibundles $\phi: P \to P'$, the pushforward $\phi_*: M(P') \to M(P)$ is an isomorphism of bimodules in $\cBorn$.

\item[(iv)] The completion of the functoriality constraint $\tau_{P,Q}$ yields a morphism 
\begin{equation*}
  \hat{\tau}_{P,Q}: M(P)\Cotimes_{A(H)} M(Q) 
  \longrightarrow M(P\circ_H Q)   
\end{equation*}
of bimodules in $\cBorn$ that is natural in $(P,Q)$.

\item[(v)] The completion of the convolution algebra of a product of groupoids is isomorphic to the tensor algebra, $A(G \times H) \cong A(G) \Cotimes A(H)$. The convolution of a product of bibundles $P \times R$ viewed as $(G \times K)$-$(H \times L)$ bibundle is isomorphic to the tensor product $M(P \times R) \cong M(P) \Cotimes M(R)$ viewed as $A(G) \Cotimes A(K)$-$A(H) \Cotimes A(L)$ bimodule.

\end{itemize}
Moreover, bornological completion commutes with the composition of bimodules,
\begin{equation}
\label{eq:CompletedAlgsTens}
  (\Comp\Sep)\bigl( M(P) \Botimes_{A(H)} M(Q) \bigr)
  \cong
  M(P) \Cotimes_{A(H)} M(Q)
  \,.
\end{equation}
\end{Proposition}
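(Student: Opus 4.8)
The plan is to present both sides as coequalizers and to exploit that $\Comp\Sep$ is a reflector onto $\cBorn$, hence cocontinuous. By definition~\eqref{eq:TensProdModCoeq} (and suppressing the associator per our convention), the space $M(P) \Botimes_{A(H)} M(Q)$ is the coequalizer in $\Born$ of
\begin{equation*}
\begin{tikzcd}[column sep=4em]
M(P) \Botimes A(H) \Botimes M(Q)
\ar[r, shift left=1.2, "\rho \Botimes \id"]
\ar[r, shift left=-1.2, "\id \Botimes \lambda"']
&
M(P) \Botimes M(Q)
\end{tikzcd}
\,,
\end{equation*}
where $\rho$ is the right $A(H)$-action on $M(P)$ and $\lambda$ the left $A(H)$-action on $M(Q)$. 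Likewise $M(P) \Cotimes_{A(H)} M(Q)$ is the coequalizer in $\cBorn$ of the analogous pair formed from $\Cotimes$ and the completed actions $\hat\rho := (\Comp\Sep)(\rho)$ and $\hat\lambda := (\Comp\Sep)(\lambda)$ introduced in Section~\ref{sec:ConvCompletion}.

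First I would observe that $\Comp\Sep : \Born \to \cBorn$ is the composite of the two reflectors $\Sep$ and $\Comp$ of Section~\ref{sec:separationcompletion}, hence a left adjoint to the inclusion $\cBorn \hookrightarrow \Born$. As a left adjoint it preserves all colimits, in particular coequalizers. Applying $\Comp\Sep$ to the displayed coequalizer therefore produces a coequalizer in $\cBorn$ whose apex is $(\Comp\Sep)\bigl(M(P) \Botimes_{A(H)} M(Q)\bigr)$.

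It remains to identify the resulting diagram. Since $\Sep$ and $\Comp$ are strong monoidal — this is exactly the content of Day's reflection Theorem~\ref{thm:DaysReflection}~(iv) for each of the two reflections — the composite $\Comp\Sep$ satisfies $(\Comp\Sep)(V \Botimes W) \cong (\Comp\Sep)(V) \Cotimes (\Comp\Sep)(W)$ naturally in $V$ and $W$. As $M(P)$, $M(Q)$, and $A(H)$ are already complete (Section~\ref{sec:TestFuncBorn}), $\Comp\Sep$ fixes them, and the two objects of the image diagram become $M(P) \Cotimes A(H) \Cotimes M(Q)$ and $M(P) \Cotimes M(Q)$. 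By naturality of the monoidal comparison, together with $\hat\rho = (\Comp\Sep)(\rho)$ and $\hat\lambda = (\Comp\Sep)(\lambda)$ from the definition of the completed actions, the two parallel arrows $(\Comp\Sep)(\rho \Botimes \id)$ and $(\Comp\Sep)(\id \Botimes \lambda)$ are carried to $\hat\rho \Cotimes \id$ and $\id \Cotimes \hat\lambda$. Thus the image coequalizer diagram is precisely the one defining $M(P) \Cotimes_{A(H)} M(Q)$, and uniqueness of colimits yields the claimed isomorphism~\eqref{eq:CompletedAlgsTens}.

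The only delicate point is this last identification of the two parallel arrows: one must verify that the strong-monoidal comparison isomorphisms genuinely intertwine $(\Comp\Sep)(\rho \Botimes \id)$ with $\hat\rho \Cotimes \id$, rather than with some conjugate of it. This is where the naturality of the monoidal structure on $\Comp\Sep$ and the definition of $\hat\rho$, $\hat\lambda$ as the reflector applied to the algebraic actions must be used in tandem. Everything else is a formal consequence of the cocontinuity of the reflector $\Comp\Sep$.
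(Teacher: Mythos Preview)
Your argument for the isomorphism~\eqref{eq:CompletedAlgsTens} is correct and is precisely the paper's approach: the reflector $\Comp\Sep$ is a left adjoint, hence preserves the coequalizer~\eqref{eq:TensProdModCoeq} defining $\Botimes_{A(H)}$, and its monoidal structure (Day's reflection, Theorem~\ref{thm:DaysReflection}, together with Proposition~\ref{prop:TensSepIsSep} for the $\Sep$ step) identifies the image diagram with the one defining $\Cotimes_{A(H)}$. The paper compresses this to a single sentence, but the content is identical, including your careful remark about why the parallel arrows match up.

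However, you only treat the ``Moreover'' clause and say nothing about (i)--(v). These are not hard, but they are part of the statement: (i)--(iii) follow because any (symmetric) monoidal functor carries associative algebras, bimodules, and bimodule morphisms to the same; (iv) follows by applying $\Comp\Sep$ to the naturality square~\eqref{diag:tauNatural} and invoking the isomorphism~\eqref{eq:CompletedAlgsTens} you have just established; and (v) is a direct consequence of $C_\mathrm{c}^\infty(X)\Cotimes C_\mathrm{c}^\infty(Y)\cong C_\mathrm{c}^\infty(X\times Y)$ (Proposition~\ref{prop:TestTensProd}). A sentence or two covering these would make your proof complete.
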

\begin{proof}
Since $\Comp$ and $\Sep$ are symmetric monoidal functors, $\Comp\Sep$ maps algebras to algebras, modules to modules, and morphisms of modules to morphisms of modules, which proves (i), (ii), and (iii). Since $\Comp$ and $\Sep$ are left adjoints, $\Comp\Sep$ preserves colimits, in particular the coequalizer~\eqref{eq:TensProdModCoeq} defining the tensor product of modules. This implies~\eqref{eq:CompletedAlgsTens}. Now, when we apply $\Comp\Sep$ to Diagram~\eqref{diag:tauNatural}, we obtain the commutative square defining the naturality of $\hat{\tau}_{P,Q}$. Finally, $C_{\mathrm{c}}^\infty(X)\Cotimes C_{\mathrm{c}}^\infty(Y) \cong C_{\mathrm{c}}^\infty(X\times Y)$ implies (v).
\end{proof}

\section{Functoriality of bornological groupoid convolution}
\label{sec:functor}
\subsection{The main theorem}

We are now ready to state our main results.

\begin{Theorem}
\label{thm:ConvFunc}
The assignments of the complete bornological convolution algebras $G \mapsto A(G)$, bimodules $P \mapsto M(P)$, and pushforwards of biequivariant diffeomorphisms $\phi \mapsto \phi_*$, together with the completed functoriality constraint $(P,Q) \mapsto \hat{\tau}_{P,Q}$ define a weak monoidal 2-functor
 \begin{equation*}
  \GrpdMrt \longrightarrow \AlgMrt(\cBorn)
\end{equation*}
from the geometric Morita 2-category of Lie groupoids, right principal bibundles, and biequivariant diffeomorphisms (Proposition~\ref{prop:GrpdBibuCat}) to the Morita 2-category of self-induced algebras, smooth bimodules, and bilinear morphisms in the category of complete bornological vector spaces (Proposition~\ref{prop:MorCat}).
\end{Theorem}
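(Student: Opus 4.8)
The plan is to treat Theorem~\ref{thm:ApproxUnitExists}, the existence of strong left and right approximate units on every convolution algebra $A(G)$, as the essential analytic input, and to assemble the pseudofunctor from the data already collected in Proposition~\ref{prop:CompletedAlgsMods}: the object assignment $G\mapsto A(G)$, the $1$-morphism assignment $P\mapsto M(P)$, the $2$-morphism assignment $\phi\mapsto\phi_*$, the completed functoriality constraint $\hat\tau_{P,Q}$, and the monoidal comparison isomorphisms of part~(v). What must be checked is that these land in $\AlgMrt(\cBorn)$ and satisfy the coherence axioms of a weak monoidal $2$-functor. First I would record that each $A(G)$ is an object of $\AlgMrt(\cBorn)$: by Corollary~\ref{cor:quasiunitality} it is separable, hence self-induced by Proposition~\ref{prop:QuasiUnitalAction}.

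The \emph{heart} of the proof is to show that $\hat\tau_{P,Q}\colon M(P)\Cotimes_{A(H)}M(Q)\to M(P\circ_H Q)$ is an isomorphism for every pair of composable right principal bibundles; this is precisely what makes the assignment send composition of bibundles to composition of bimodules. For surjectivity I would use that $\hat\tau_{P,Q}$ is induced, via the completed coequalizer~\eqref{eq:CompletedAlgsTens}, by the fiber integration $\pi_*$ along the quotient map $P\times_{H_0}Q\to P\circ_H Q$, which is a regular (hence strong) epimorphism by Proposition~\ref{prop:TestFiberIntegr}; since $\Comp\Sep$ is a left adjoint and the tensor product over $A(H)$ is a coequalizer, regular epimorphisms are preserved and surjectivity descends. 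Injectivity is the main obstacle, and it is where bornology is indispensable: I would generalize the approximate-unit argument of Proposition~\ref{prop:MAAtoAinjective}. Lifting an element of the kernel to $\zeta\in M(P)\Cotimes M(Q)$, inserting the strong approximate right unit $e_n$ of $A(H)$, and using that $\id_{M(P)}\Cotimes r_{e_n}$ Mackey converges to the identity of $M(P)\Cotimes M(Q)$ as in the proof of Proposition~\ref{prop:MAAtoAinjective}, one rewrites $\pi(\zeta)$ as a Mackey limit of terms that vanish once $\hat\tau_{P,Q}(\pi(\zeta))=0$, forcing $\pi(\zeta)=0$. The purely algebraic analogue of this statement is false, which is exactly why the completion is required.

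Granting that $\hat\tau$ is an isomorphism, the remaining structure follows formally. The unit bibundle of $G$ is $G$ itself, and $M(G)=A(G)$ as an $A(G)$-$A(G)$-bimodule essentially by definition; the unit laws $G\circ_G P\cong P$ and $P\circ_H H\cong P$ for bibundles, combined with the isomorphisms $\hat\tau_{G,P}$ and $\hat\tau_{P,H}$, then exhibit $A(G)\Cotimes_{A(G)}M(P)\cong M(P)\cong M(P)\Cotimes_{A(H)}A(H)$, so every $M(P)$ is a smooth bimodule and hence a genuine $1$-morphism of $\AlgMrt(\cBorn)$; these same isomorphisms supply the left and right unitors of the pseudofunctor. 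The associativity constraint is obtained from the two-step evaluation of the triple composition $P\circ_H Q\circ_K R$ as an iterated colimit: both bracketings of $\hat\tau$ descend from the same $A(H)$- and $A(K)$-tensorial map on the underlying spaces of test functions, so they agree, and the pentagon together with the unit axioms reduce to uniqueness of maps out of coequalizers, with associators suppressed by MacLane coherence.

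Finally I would verify $2$-functoriality on $2$-morphisms and the monoidal structure. That $\phi\mapsto\phi_*$ preserves identities and vertical and horizontal composites and satisfies the interchange law follows from functoriality of the pushforward of compactly supported functions together with the naturality square~\eqref{diag:tauNatural} of $\hat\tau$. For the monoidal structure, Proposition~\ref{prop:CompletedAlgsMods}~(v) furnishes the comparison isomorphisms $A(G\times H)\cong A(G)\Cotimes A(H)$ and $M(P\times R)\cong M(P)\Cotimes M(R)$; the associativity and unit constraints of the monoidal $2$-functor then follow from the corresponding coherence of $\Cotimes$ on $\cBorn$ and of the cartesian product on $\GrpdMrt$, and the symmetry is inherited from the symmetry of both. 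The only genuinely hard step is the injectivity of $\hat\tau_{P,Q}$; everything else is coherence bookkeeping resting on Theorem~\ref{thm:ApproxUnitExists} and Proposition~\ref{prop:CompletedAlgsMods}.
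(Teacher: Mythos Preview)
Your overall architecture matches the paper's: assemble the pseudofunctor from Proposition~\ref{prop:CompletedAlgsMods}, verify that $\hat\tau_{P,Q}$ is an isomorphism, and check coherence. Surjectivity of $\hat\tau_{P,Q}$, self-inducedness of $A(G)$, smoothness of $M(P)$ via the special cases $\hat\tau_{G,P}$ and $\hat\tau_{P,H}$, and the coherence bookkeeping are all correct and close to what the paper does.

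The genuine gap is your injectivity argument for $\hat\tau_{P,Q}$ when \emph{neither} factor is the identity bibundle. The approximate-unit trick of Proposition~\ref{prop:MAAtoAinjective} relies on a structural coincidence that is absent here: in $M\Cotimes A\to M$, inserting $e_n$ on the $A$-side and passing it across the coequalizer relation produces $\pi\bigl(\rho(\zeta)\Cotimes e_n\bigr)$, where $\rho(\zeta)\in M$ is exactly the element whose vanishing you have assumed. For $M(P)\Cotimes M(Q)\to M(P\circ_H Q)$ the target is a third space, and inserting $e_n\in A(H)$ between the factors merely moves it, via the balancing relation, from the left action on $M(Q)$ to the right action on $M(P)$. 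There is no factorization of $\Pi\bigl((\id\Cotimes l_{e_n})(\zeta)\bigr)$ through $\tau'(\zeta)$, so although your sequence converges to $\Pi(\zeta)$, you have not exhibited the individual terms as zero. The argument as written does not close.

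The paper handles this step by a completely different mechanism. Diagram~\eqref{eq:tauhatcoker} presents $\hat\tau_{P,Q}$ as the induced map on cokernels of a commutative square with $i^*$ surjective; the four lemma reduces injectivity to the purely algebraic inclusion $\ker j^*\subset\mathrm{im}\,\phi$. Lemma~\ref{lem:exactnessKernel} proves this by an explicit construction that uses the right-\emph{principality} of the $H$-action on $P$, via the diffeomorphism $P\times_{H_0}H_1\cong P\times_{H_0}^{l,l}P$ of~\eqref{eq:ActionCharMap}, to manufacture compactly supported preimages in $C_\mathrm{c}^\infty(P\times H_1\times Q)$. Approximate units play no role in this part of the proof; principality is the essential input your sketch does not invoke.
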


\begin{Corollary}
If two Lie groupoids are Morita equivalent then so are their completed bornological convolution algebras.
\end{Corollary}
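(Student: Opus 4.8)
The plan is to derive the statement as a formal consequence of the main Theorem~\ref{thm:ConvFunc}, using the elementary categorical principle that a weak $2$-functor preserves equivalences. By the terminology fixed above, a Morita equivalence of Lie groupoids is precisely a weak isomorphism in $\GrpdMrt$, and a Morita equivalence of bornological algebras is a weak isomorphism in $\AlgMrt(\cBorn)$. Thus the entire content of the corollary is that the convolution $2$-functor sends weak isomorphisms to weak isomorphisms.

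First I would unpack the hypothesis. If $G$ and $H$ are Morita equivalent, then there is a right principal bibundle $P$ presenting a $1$-morphism $G \to H$ in $\GrpdMrt$ that is invertible up to $2$-isomorphism. Concretely, there is a right principal $H$-$G$ bibundle $Q$ together with biequivariant diffeomorphisms $P \circ_H Q \cong G$ and $Q \circ_G P \cong H$, where the unit bibundles $G$ and $H$ are the identity $1$-morphisms.

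Next I would apply the $2$-functor $F \colon \GrpdMrt \to \AlgMrt(\cBorn)$ of Theorem~\ref{thm:ConvFunc}. It sends $P$ and $Q$ to the smooth bimodules $M(P)$ and $M(Q)$ and the unit bibundles to the identity $1$-morphisms $A(G)$ and $A(H)$; the latter makes sense because the theorem lands in $\AlgMrt(\cBorn)$, so in particular $A(G)$ and $A(H)$ are self-induced and hence really are the identities on themselves. Since $F$ is a weak $2$-functor, its functoriality constraint $\hat{\tau}$ is a natural isomorphism; composing it with $F$ applied to the witnessing $2$-isomorphisms yields
\begin{equation*}
  M(P) \Cotimes_{A(H)} M(Q)
  \;\cong\;
  M(P \circ_H Q)
  \;\cong\;
  M(G)
  \;\cong\;
  A(G)
  \,,
\end{equation*}
and symmetrically $M(Q) \Cotimes_{A(G)} M(P) \cong A(H)$. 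Hence $M(P)$ and $M(Q)$ exhibit $A(G)$ and $A(H)$ as weakly isomorphic objects of $\AlgMrt(\cBorn)$, that is, as Morita equivalent bornological algebras.

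There is essentially no obstacle here: all the analytic and categorical labour has been absorbed into Theorem~\ref{thm:ConvFunc}, and what remains is the purely formal observation that equivalences are preserved by $2$-functors. The only point requiring a moment's care is that the functoriality constraint $\hat{\tau}_{P,Q}$ be an isomorphism on the relevant pair $(P,Q)$; but this is exactly what it means for $\hat{\tau}$ to be the coherent structure constraint of a genuine weak $2$-functor, and is therefore guaranteed by the theorem.
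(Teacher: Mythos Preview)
Your proposal is correct and matches the paper's approach: the paper states this corollary immediately after Theorem~\ref{thm:ConvFunc} with no proof, treating it as the formal observation that weak $2$-functors preserve weak isomorphisms. Your write-up simply makes explicit the one-line argument the paper leaves implicit.
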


\begin{Remark}
Since $\GrpdMrt$ is equivalent to the 2-category of differentiable stacks, the convolution functor of Theorem~\ref{thm:ConvFunc} can be viewed as a 2-functor on differentiable stacks. The convolution algebra of a differentiable stack can be interpreted as its smooth noncommutative geometry. We are not aware of an alternative construction that does not pass through the presentation of the stacks by Lie groupoids.
\end{Remark}

\begin{Remark}
\label{rmk:HaarSystemChoice}
The definition of the convolution algebra depends on a choice of Haar system. There are several ways to manage this dependence. A right Haar system corresponds bijectively to a nowhere vanishing smooth section of the density bundle $\mathrm{Dens}(A)$ of the Lie algebroid $A \to G_0$, via right translation. It follows that two Haar systems are related by multiplication by a positive smooth function on $G_0$, which gives rise to an explicit isomorphism between the associated convolution algebras.
    
Haar systems can be avoided by defining convolution in terms of compactly supported half-densities \cite[Chapter~2.5]{Connes1994} or densities along the source fibers (e.g.~in \cite{Posthuma23}). This makes the choice of a measure part of every element of the convolution algebra, which makes the underlying vector space larger and studying the bornology more cumbersome. For the sake of readability and concreteness, we have decided to work with compactly supported functions.

To make this precise, we can either fix a Haar system on each Lie groupoid and define a 2-category of pairs $(G, \lambda)$. The forgetful functor from this 2-category to the 2-category of Lie groupoids is an equivalence. Or we can consider the choice of a Haar system as part of the construction of the functor. Since we are working in the setting of \emph{weak} 2-categories, this has no bearing on Theorem~\ref{thm:ConvFunc}.
\end{Remark}

\begin{Proposition}
\label{prop:ConvModProjective}
Let $H_1 \rightrightarrows H_0$ be a Lie groupoid; let $X \stackrel{l}{\leftarrow }P \stackrel{r}{\rightarrow} H_0$ be submersions with a right groupoid action that is proper and transitive on the $l$-fibers. Then the convolution $A(H)$-module $M(P)$ is projective in the sense of Definition~\ref{def:ProjectiveMod}.
\end{Proposition}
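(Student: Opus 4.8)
The plan is to apply the right-module analogue of Proposition~\ref{prop:ModuleSectionProj}: to prove that $M(P)$ is projective it suffices to exhibit (a) an $A(H)$-linear section of the right action $\rho\colon M(P)\Cotimes A(H)\to M(P)$, together with (b) right separability of $A(H)$. Given these, Proposition~\ref{prop:ModuleSectionProj} realises $M(P)$ as a retract of $M(P)\Cotimes A(H)$ and the latter as a direct summand of a free module, so that $M(P)$ is projective by Proposition~\ref{prop:ProjModCriteria}~(iii). Note that the hypotheses here are weaker than right principality (freeness is dropped), so $P$ is not a bibundle; only the action's properties matter.

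The geometric heart is the construction of the section in (a), for which I would use a \emph{cutoff function}. First I would produce a smooth $c\colon P\to\bbR$ whose support is $l$-proper (that is, $l$ restricted to $\Supp c$ is proper) and which is normalised along the orbits,
\[
  \int_{s^{-1}(r(p))} c(p\cdot h^{-1})\,\di\lambda_{r(p)}(h) = 1
  \qquad\text{for all }p\in P.
\]
Existence of such a $c$ is exactly where the hypotheses enter: transitivity on the $l$-fibres identifies the orbit of $p$ with the $l$-fibre, properness of the action map~\eqref{eq:ActionCharMap} forces the integrand to have compact support along each orbit, and the submersivity of $l$ and $r$ provides the smooth normalisation. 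Using $c$ I would define, on the closed fibre product $P\times_{H_0}H_1$, the map $\tilde\sigma(m)(p,h) := m(p\cdot h)\,c(p)$, and check that it is bounded (the $l$-properness of $\Supp c$ together with properness of~\eqref{eq:ActionCharMap} controls supports), that it is right $A(H)$-linear, and that $\beta_*\circ\tilde\sigma=\id_{M(P)}$ by the normalisation of $c$. Since $\rho=\beta_*\circ j^*$ and $\rho$ evaluates its argument only on points $(p\cdot h^{-1},h)$ of the fibre product, it then remains to lift $\tilde\sigma$ through the restriction $j^*\colon M(P)\Cotimes A(H)\to C^\infty_\mathrm{c}(P\times_{H_0}H_1)$ by an $A(H)$-linear map.

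The hard part will be this equivariant lift. The observation that makes it possible is that the convolution action sends the $H_1$-argument $h$ to $hk^{-1}$ while preserving $t(h)$, so the map $\tau\colon P\times H_1\to H_0\times H_0$, $(p,h)\mapsto(r(p),t(h))$, is invariant under the right $A(H)$-action, and the fibre product equals $\tau^{-1}(\Delta_{H_0})$. I would therefore build the section of $j^*$ from a tubular neighbourhood of $\Delta_{H_0}\subset H_0\times H_0$ pulled back along the submersion $\tau$, in the manner of the proof of Proposition~\ref{prop:TestSubmfgPullback}; because the extension off the fibre product is performed purely in the $\tau$-normal directions, which the action fixes, the resulting section is automatically $A(H)$-linear, and composing it with $\tilde\sigma$ yields the desired section $\sigma$ of $\rho$. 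Finally, right separability of $A(H)$ in (b) is essentially the special case $P=H_1$ and can be obtained directly from a cutoff supported near the closed unit embedding $H_0\hookrightarrow H_1$ and normalised along the $t$-fibres. I expect the bookkeeping of supports and the verification that the transverse extension is genuinely $A(H)$-equivariant and bounded to be the most delicate points of the argument.
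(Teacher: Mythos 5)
Your proposal is correct and follows essentially the same route as the paper: both factor the action as $\beta_*\circ j^*$, build the section of $\beta_*$ from an $l$-properly supported cutoff normalised along the orbits (using properness of the action map to control supports), build the equivariant section of $j^*$ by exploiting that the right action preserves $(r(p),t(h))$, and conclude via Proposition~\ref{prop:ModuleSectionProj}. The only cosmetic difference is in the $j^*$-step: where you pull back a tubular neighbourhood of $\Delta_{H_0}$ along $\tau$, the paper equivalently uses local retractions $\zeta_i(p,t(h))$ of the submersion $r$ (Corollary~\ref{cor:PullbackRetract}) glued by a partition of unity on $P$, which is precisely the equivariant lift of the retraction that your sketch still needs to make explicit.
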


\begin{Corollary}
\label{cor:ConvMoritaProjective}
The convolution module of a Morita bibundle is left and right projective.
\end{Corollary}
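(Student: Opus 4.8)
The plan is to deduce the corollary directly from Proposition~\ref{prop:ConvModProjective} by first unwinding what a Morita bibundle is. A Morita equivalence is an invertible $1$-morphism in $\GrpdMrt$, and such bibundles are precisely the \emph{biprincipal} ones: a $G$-$H$-bibundle $G_0 \xleftarrow{l} P \xrightarrow{r} H_0$ that is right principal in the sense of Definition~\ref{def:principal} and simultaneously left principal, the latter being the mirror condition in which $r$ is a surjective submersion and the left $G$-action is free and transitive on the $r$-fibers. I would record this identification at the outset and recall from the discussion following Definition~\ref{def:principal} that principal actions are automatically proper. The key point is that biprincipality makes \emph{both} $l$ and $r$ surjective submersions, whereas right-principality alone only guarantees this for $l$.

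For right projectivity I would apply Proposition~\ref{prop:ConvModProjective} essentially verbatim, taking $X := G_0$. Its hypotheses are then met: $l: P \to G_0$ and $r: P \to H_0$ are submersions (the former by right-principality, the latter by left-principality), the right $H$-action is proper because it is principal, and it is transitive on the $l$-fibers by condition (P3). The proposition then yields that $M(P) = C^\infty_\mathrm{c}(P)$ is projective as a right $A(H)$-module in the sense of Definition~\ref{def:ProjectiveMod}.

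For left projectivity I would invoke the left-right symmetric version of Proposition~\ref{prop:ConvModProjective}. Concretely, the left $G$-action on $P$ lives over $l: P \to G_0$, is proper (being principal), and is transitive on the $r$-fibers by the left-principality condition, while $l$ and $r$ are both submersions; this is exactly the mirror of the hypotheses used above. One may formalize this either by re-running the proof of Proposition~\ref{prop:ConvModProjective} with the roles of left and right interchanged, or, more economically, by passing to the opposite groupoid $G^{\op}$ and using that a left $A(G)$-module is the same as a right $A(G^{\op})$-module under the anti-isomorphism $A(G^{\op}) \cong A(G)^{\op}$ induced by groupoid inversion. Either route gives that $M(P)$ is projective as a left $A(G)$-module.

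Since all the real content sits in Proposition~\ref{prop:ConvModProjective}, there is no serious obstacle to overcome; the corollary is a bookkeeping consequence of applying that result on each side. The only step deserving a little care is the left-module case, where one must justify the symmetric formulation of the proposition. The cleanest treatment is the opposite-groupoid identification, which turns left projectivity into a genuine instance of the already-proven right statement rather than requiring the proof to be repeated.
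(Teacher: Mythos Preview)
Your proposal is correct and is exactly the argument the paper intends: the corollary is stated without proof immediately after Proposition~\ref{prop:ConvModProjective}, so the paper implicitly leaves the reader to verify the hypotheses on each side of a biprincipal bibundle, which is precisely what you do. Your observation that left principality is what supplies the submersivity of $r$ (not guaranteed by right principality alone) is the key point, and it is consistent with the paper's own remark that the example following the corollary shows this hypothesis cannot be dropped.
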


\begin{Corollary}
\label{cor:quasiunitality}
The convolution algebra of a Lie groupoid is quasi-unital (Terminology~\ref{term:QuasiUnital}).
\end{Corollary}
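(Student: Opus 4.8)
The plan is to read off the two ingredients hidden in Terminology~\ref{term:QuasiUnital}: a pointwise approximate unit and left/right separability of $A(G)$. The approximate unit costs nothing. Theorem~\ref{thm:ApproxUnitExists} supplies a strong approximate left/right unit $(e_n)_{n\in\bbN}$, and Proposition~\ref{prop:StrongPointwiseApprUnit} upgrades the Mackey convergence $r_{e_n}\to\id$ of the multiplication operators to the pointwise statement $a*e_n\to a$ for every $a\in A(G)$, which is exactly the approximate unit demanded by Terminology~\ref{term:QuasiUnital}. Thus the whole problem reduces to separability.

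To establish separability I would view $A(G)$ as the convolution module $M(G_1)$ of the unit bibundle $G_0\xleftarrow{t}G_1\xrightarrow{s}G_0$. A direct check against formulas~\eqref{eq:ConvActLeft} and~\eqref{eq:ConvActRight}, using right-invariance of the Haar system, shows that both the left and the right convolution action of $A(G)$ on $M(G_1)$ are the convolution product $*$. Hence a \emph{left} (resp.\ \emph{right}) $A(G)$-linear section of the action is nothing but a left (resp.\ right) $A(G)$-linear section of $*$, i.e.\ a witness of left (resp.\ right) separability in the sense of Definition~\ref{def:Separable}. Now the unit bibundle is biprincipal: $s$ and $t$ are surjective submersions, and left/right translation is free, proper (principal actions are proper) and transitive on the $t$- resp.\ $s$-fibers, so it satisfies the geometric hypotheses of Proposition~\ref{prop:ConvModProjective}. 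The geometric \emph{input} to that proposition---prior to, and independent of, invoking Proposition~\ref{prop:ModuleSectionProj}---is precisely the construction of a module-linear section of the convolution action; specializing that construction to the unit bibundle produces the desired sections of $*$ and thereby both separabilities, with no circularity. Feeding left and right separability into Terminology~\ref{term:QuasiUnital} then concludes the proof.

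The hard part, and where the bornology is genuinely used, is the construction of this section. Naively one wants a fiberwise-normalized cutoff on the source fibers, that is a single $c\in C_\mathrm{c}^\infty(G_1)$ with $\int_{s^{-1}(x)} c\,\di\lambda_x = 1$ for \emph{every} $x\in G_0$; such a compactly supported $c$ cannot exist once $G_0$ is non-compact, since its integral must vanish for $x$ outside a compact set. The remedy is to subordinate the construction to an exhaustion of $G_0$ by compacta and to assemble the section as a Mackey-convergent series of elementary tensors inside the \emph{complete} tensor product $A(G)\Cotimes A(G)$, controlled by Lemma~\ref{lem:InfSeqConverge} and Proposition~\ref{prop:TensorConvergence} and tied to the approximate unit; left/right linearity then follows from the equivariance built into the cutoffs. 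It is exactly this infinite-sum step that has no counterpart in the uncompleted algebraic setting or in the $C^*$-setting, and I expect it---rather than the formal module bookkeeping---to be the main obstacle, shared with the proof of Proposition~\ref{prop:ConvModProjective}.
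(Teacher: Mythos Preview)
Your overall reduction is correct and matches the paper's intended argument: the approximate unit comes from Theorem~\ref{thm:ApproxUnitExists} via Proposition~\ref{prop:StrongPointwiseApprUnit}, and left/right separability is exactly the $A(G)$-linear section of the convolution action that is constructed in the proof of Proposition~\ref{prop:ConvModProjective}, specialized to the identity bibundle $G_1$. There is no circularity, since that proof builds the section first and only then invokes Proposition~\ref{prop:ModuleSectionProj}.

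Your speculative final paragraph, however, misidentifies the mechanism. The paper does \emph{not} assemble the section as a Mackey-convergent series of elementary tensors. The point is that by Proposition~\ref{prop:TestTensProd} one has $A(G)\Cotimes A(G)\cong C_\mathrm{c}^\infty(G_1\times G_1)$, so one may construct the section directly as an honest compactly supported function on $G_1\times G_1$ (or on $G_1\times_{G_0}H_1$), bypassing infinite sums of tensors altogether. Concretely, the paper's $\sigma_2$ uses a cutoff $e\in C^\infty(P)$ that is only compactly supported in the $l$-direction (not globally) with $l$-fiber integral $1$; the candidate $f^e(p,h)=e(p)\,f(p\cdot h)$ is then compactly supported because it is the pullback of a compactly supported function along the \emph{proper} characteristic map~\eqref{eq:ActionCharMap}. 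The section $\sigma_1$ of $i^*$ uses a locally finite partition of unity and local fiber retractions (Corollary~\ref{cor:PullbackRetract}); for any fixed $f$ only finitely many terms contribute, so again no limiting process is needed. Thus the crucial ingredients are properness of the action and the identification of the completed tensor product with a function space, not Lemma~\ref{lem:InfSeqConverge} or Proposition~\ref{prop:TensorConvergence}. Your diagnosis of the obstacle (no global compactly supported normalized cutoff when $G_0$ is non-compact) is right; your proposed cure is not the one the paper uses.
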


In Proposition~4.12 of \cite{smoothGroupRep}, a special case of Proposition~\ref{prop:ConvModProjective} was proved for the regular action of a subgroup $H \subset G$ on $G$, that is, for the bibundle $G \leftarrow G \rightarrow *$ with the right action by $H \rightrightarrows *$. The following example shows that the assumption for the right bundle map to be a submersion is necessary.

\begin{Example}
Consider the span $* \leftarrow * \stackrel{x}{\rightarrow} \bbR$ for some $x \in \bbR$, with the trivial right action of the groupoid $\bbR \rightrightarrows \bbR$. The real convolution algebra is the commutative algebra of functions $A(\bbR\rightrightarrows \bbR) = C_\mathrm{c}^\infty(\bbR)$. The convolution module is $M(*) = \bbR$ with the action $\bbR \Cotimes C_\mathrm{c}^\infty(\bbR) \to \bbR$, $\nu \Cotimes a \mapsto \nu\, a(x)$. An $\bbR$-linear section $\sigma$ is determined by a single function $\sigma(1) = 1 \Cotimes f$, such that $f(1) = 1$. The condition for $\sigma$ to be $C_\mathrm{c}^\infty(\bbR)$-linear is
\begin{equation*}
  1 \Cotimes f\, a(x)  
  =
  \sigma\bigl( a(x) \bigr)
  =
  \sigma(1 \cdot a)
  \stackrel{!}{=}
  \sigma(1)\, a
  =
  1 \Cotimes fa
  \,,
\end{equation*}
where in the first step we have used the $\bbR$-linearity of $\sigma$. This condition holds for all $a$ if and only if $f = 1$, which does not have compact support. We conclude that the action does not have a $C_\mathrm{c}^\infty(\bbR)$-linear section.
\end{Example}

The remainder of this section will be devoted to the proofs of Theorem~\ref{thm:ConvFunc} and Proposition~\ref{prop:ConvModProjective}. In the proof of Theorem~\ref{thm:ConvFunc}, we have to show that the completed convolution algebras $A(G)$ are self-induced and that the completed bimodules $M(P)$ are smooth. It follows from the functoriality of the pullback of functions that the pushforward $\phi_* = (\phi^{-1})^*$ of an isomorphism is an isomorphism. For the functoriality we have to show that the completed functoriality constraint $\hat{\tau}_{P,Q}$ is an isomorphism. The identity constraint is given by the equality $M(\id_G) = A(G) = \id_{A(G)}$ of bimodules, so tautological. Finally, we have to check the coherence conditions of a 2-functor, which is straightforward. For Proposition~\ref{prop:ConvModProjective}, we have to show that the action $M(P) \Cotimes A(H) \to M(P)$ has a right $A(H)$-linear section. The projectivity then follows from Proposition~\ref{prop:ModuleSectionProj}.

\subsection{Proof of existence of strong approximate units}
\label{sec:ApproxUnitExists}

\begin{Theorem}
\label{thm:ApproxUnitExists}
The completed bornological convolution algebra $A(G)$ has a strong approximate right (left) unit.
\end{Theorem}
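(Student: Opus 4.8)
The plan is to exhibit an explicit sequence $(e_n)_{n\in\bbN}$ in $A(G)=C_\mathrm{c}^\infty(G_1)$ that behaves, fibrewise along the source submersion $s\colon G_1\to G_0$, like a normalized Dirac sequence concentrating at the units $u\colon G_0\to G_1$, $x\mapsto 1_x$, and then to verify the defining condition of Definition~\ref{def:ApproxUnit}, namely Mackey convergence $r_{e_n}\to\id_{A(G)}$, \emph{directly in the operator bornology} $\intHom(A(G),A(G))$ of Proposition~\ref{prop:UnifBoundBorn}. Mere pointwise convergence $a*e_n\to a$ of the kind in Proposition~\ref{prop:StrongPointwiseApprUnit} is not enough; the substance of the theorem is the uniform, equibounded control over bounded families of functions.

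To construct $(e_n)$, I would fix an exhaustion $K_0\subset K_1\subset\cdots$ of $G_0$ by compact sets together with cutoffs $\chi_n\in C_\mathrm{c}^\infty(G_0)$ satisfying $\chi_n|_{K_n}=1$. Since $u(G_0)\subset G_1$ is a closed embedded submanifold that is transverse and complementary to the $s$-fibres (as $s\circ u=\id_{G_0}$), one can choose smooth bumps $\delta_n$ on $G_1$ supported in a tubular neighbourhood of $u(G_0)$ that shrinks to $u(G_0)$ in the fibre direction, normalized by the Haar system so that $\int_{s^{-1}(x)}\delta_n\,\di\lambda_x=1$ for every $x$; smoothness of $\delta_n$ and of its normalization uses axiom~(i) of the right Haar system. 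Setting $e_n:=(s^*\chi_n)\,\delta_n$ yields an element of $C_\mathrm{c}^\infty(G_1)$. Here the cutoff makes $e_n$ compactly supported while leaving the fibre mass equal to $1$ over $K_n$, and for any bounded $B\subset A(G)$---whose members share a fixed compact support $K\subset G_1$---the functions $a*e_n$ with $a\in B$ all have support in one compact set $K'$ independent of $n$, because $h\in\Supp\delta_n$ is close to the unit $1_{s(h)}$ forces $g\approx gh^{-1}\in K$ in~\eqref{eq:ConvProd}. Consequently $s^*\chi_n$ equals the constant $1$ on all fibres $s^{-1}(s(g))$ relevant to $a\in B$ as soon as $K_n\supset s(K')$.

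With $e_n$ so normalized, for $a\in B$ and $n$ large the cutoff disappears and, using $\int_{s^{-1}(s(g))}\delta_n\,\di\lambda_{s(g)}=1$,
\begin{equation*}
  (a*e_n)(g)-a(g)
  =\int_{\mathclap{s^{-1}(s(g))}}\bigl(a(gh^{-1})-a(g)\bigr)\,\delta_n(h)\,\di\lambda_{s(g)}(h)\,.
\end{equation*}
The hard part---and the main obstacle---is to turn this pointwise identity into a Mackey estimate: I must produce a single bounded disk $\mathcal{D}\subset\intHom(A(G),A(G))$ and a null sequence $\epsilon_n\to0$ with $r_{e_n}-\id\in\epsilon_n\mathcal{D}$, equivalently a bound $p_{L',k'}(a*e_n-a)\le\epsilon_n\,C(K,k)$ holding \emph{uniformly} over all $a\in B$ with $p_{L,k}(a)\le1$ and over all the seminorms~\eqref{eq:TestFuncSemiNorms} simultaneously. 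To obtain this I would cover the relevant compact region by submersion charts in which $s$ is a projection $\pr\colon\bbR^p\times\bbR^q\to\bbR^p$ and $\lambda$ is a fixed measure on $\bbR^q$ (as in the proof of Proposition~\ref{prop:TestFiberIntegr}), so that $\delta_n$ becomes a standard mollifier sequence on $\bbR^q$ and fibre integration is $\id\Cotimes\int$. In such charts the maps $h\mapsto gh^{-1}$ and the groupoid multiplication are smooth with derivatives bounded on the fixed compact, and a Taylor expansion combined with the submultiplicativity of the seminorms bounds every derivative of $a*e_n-a$ by $\epsilon_n$ times a constant depending only on $K$, $k$, and the chart data---and not on $a$ or $n$---where $\epsilon_n$ measures the fibre-diameter of $\Supp\delta_n$. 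Assembling these local estimates through the colimit description~\eqref{eq:CompFuncMColim} produces $\mathcal{D}$ as the disked hull of $\{\epsilon_n^{-1}(r_{e_n}-\id)\}$, whose boundedness in $\intHom(A(G),A(G))$ is precisely the uniform estimate just described; this yields the strong approximate right unit. The left case is entirely analogous, concentrating the same fibre Dirac sequence at the units and using right invariance of the Haar system, or by passing to the opposite groupoid.
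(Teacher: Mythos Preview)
Your strategy---a fibrewise Dirac sequence at the identity bisection, cut off for compact support, then Mackey convergence of $r_{e_n}\to\id$ in the operator bornology---is the paper's as well, but the execution is quite different. You propose direct seminorm estimates via Taylor expansion in submersion charts; the paper instead expresses the operator $a\mapsto a*e_n$ as a composition of \emph{fixed} bounded linear maps with a single convergent factor, so that Proposition~\ref{prop:convergenceComposition} closes the argument without any chart-by-chart derivative bookkeeping. Concretely, the paper first proves a vector-bundle lemma (Lemma~\ref{lem:fiberdirac}): for $\pi\colon A\to X$ one has $\pi_*\circ r_{e_n}\to 0_A^*$ in $\intHom(C^\infty_{/X}(A),C^\infty_\mathrm{c}(X))$, where $e_n=\sum_{i\le n}e_{n,i}$ is built from a partition of unity over local trivializations so that fibre integration literally becomes $\id\Cotimes\int\Empty\epsilon_n$ and reduces to the elementary Dirac estimate (Lemma~\ref{lem:DiracApproximation}); the truncation to $i\le n$ is handled by Lemma~\ref{lem:convergenceFiniteDiagonals} on partial identities of an infinite direct sum. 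The groupoid case then follows by identifying an $s$- and $t$-vertically precompact tubular neighbourhood of $u(G_0)$ with the Lie algebroid, rewriting $a*e_n$ via the bar isomorphism, and invoking the pullback version (Corollary~\ref{cor:fiberdiracPullback}) on $s^*A\to G_1$. This buys a clean reduction to two already-proved convergence statements; your route trades that structure for hands-on analysis.

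Your direct-estimate approach is viable, but two points are looser than they should be. First, a globally constructed $\delta_n$ cannot ``become a standard mollifier on $\bbR^q$'' in every chart simultaneously---coordinate changes between overlapping trivializations destroy that product form. What your Taylor argument actually requires is the uniform $L^1$ bound $\int_{\text{fibre}}|\partial_x^\alpha\delta_n|\,\di\lambda\le C_\alpha$ for every base multi-index $\alpha$, independent of $n$; this holds for sensible constructions (e.g.\ $\delta_n(x,v)=c_n(x)\,\epsilon(n\lVert v\rVert_x)$ with a bundle metric) but is the real hypothesis to state and check, not the mollifier form. The paper sidesteps the issue by building $e_n$ chart-by-chart from the start. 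Second, when you differentiate the convolution integral in the $G_0$-direction of $g$, the fibre $s^{-1}(s(g))$ moves, so the derivative hits $\delta_n$ through its base argument; your phrase ``derivatives bounded on the fixed compact'' elides this, though with the uniform $L^1$ bound above the estimate does go through. The paper's factoring argument never differentiates under the integral and so never confronts either issue.
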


For the case of a Lie group $G$, Meyer has proved the existence of pointwise approximate units on $A(G)$ \cite[Proposition~4.3]{smoothGroupRep}. We generalize his result to groupoids and strengthen it to Mackey convergence in the operator bornology. The idea of the proof is to construct a sequence of functions $e_n \in C_\mathrm{c}^\infty(G_1)$ such that the integration over every source fiber converges to the Dirac distribution. The difficulty is that since $e_n$ must have compact support, it cannot be constant along the base direction of $s: G_1 \to G_0$, yet $a * e_n$ is required to converge uniformly in all derivatives of $a$ in the direction of $G_0$. In the first step, we solve in Lemma~\ref{lem:fiberdirac} this technical problem for the fiber integration of a vector bundle by constructing $e_n$ in local bundle trivializations and then using a partition of unity argument. In the second step, we show in Lemma~\ref{lem:fiberdirac} that the convergence property of $e_n$ is stable under pullbacks of the vector bundle, which will be needed for the convolution product. In the third step, we find a suitable tubular neighborhood $U \supset 1(G_0)$ of the identity bisection that is isomorphic, as a bundle $s:U \to G_0$ to the Lie algebroid $A \to G_0$, to which we can then apply the two lemmas. First, we recall the following well-known result.

\begin{Lemma}
\label{lem:DiracApproximation}
Let $\epsilon$ be a nonnegative compactly supported smooth function on $\bbR^d$ with $\int_{\bbR^d}\epsilon(y) \, \di y = 1$, where $y = (y^1, \ldots, y^d)$ and $\di y = dy^1\cdots dy^d$ the Lebesgue measure; let $\epsilon_n= n^d\epsilon(ny)$ for $n \in \bbN$. Then the sequence of functionals $\int\Empty \epsilon_n: C_\mathrm{c}^\infty(\bbR^d) \to \bbR$, $f \mapsto \int_{\bbR^d}f\epsilon_n \,\di y$ Mackey converges in $\intHom(C_\mathrm{c}^\infty(\bbR^d), \bbR)$ to the Dirac delta functional $\delta_0: f \mapsto f(0)$.
\end{Lemma}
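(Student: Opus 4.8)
The plan is to reduce the statement to a single uniform estimate on the functionals $T_n := \int\Empty\,\epsilon_n - \delta_0$ and then repackage that estimate as Mackey convergence in the functional bornology of Proposition~\ref{prop:UnifBoundBorn}. Fix a radius $R>0$ with $\Supp\epsilon \subset \bar B_R$, where $\bar B_R \subset \bbR^d$ is the closed ball of radius $R$ about the origin. Since $\Supp\epsilon_n \subseteq \tfrac1n\Supp\epsilon \subseteq \bar B_R$ for every $n\ge 1$, \emph{all} the $\epsilon_n$ are supported in the single fixed compact set $\bar B_R$, which is the observation that will make the argument work.

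First I would compute $T_n$ explicitly. Using $\int_{\bbR^d}\epsilon = 1$ and the substitution $y = z/n$ (so that $\epsilon_n(y)\,\di y = \epsilon(z)\,\di z$), one gets
\[
  T_n(f) = \int_{\bbR^d}\bigl(f(z/n) - f(0)\bigr)\,\epsilon(z)\,\di z .
\]
A first-order Taylor estimate $f(z/n)-f(0) = \tfrac1n\int_0^1 \langle z,(\nabla f)(tz/n)\rangle\,\di t$, together with $|z|\le R$ on $\Supp\epsilon$ and $tz/n\in\bar B_R$, yields
\[
  |T_n(f)| \le \frac{C}{n}\,p_{\bar B_R,1}(f), \qquad C := \int_{\bbR^d}|z|\,\epsilon(z)\,\di z < \infty ,
\]
where $p_{\bar B_R,1}$ is the order-one seminorm~\eqref{eq:TestFuncSemiNorms} attached to the fixed compact $\bar B_R$. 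This is the classical mollifier bound; the only point worth stressing is that a \emph{single} seminorm, on the \emph{fixed} compact $\bar B_R$, controls every $T_n$, precisely because all the $\epsilon_n$ are supported there.

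The heart of the argument is to upgrade this to Mackey convergence. I would set
\[
  D := \bigl\{\, T \in \intHom(C_\mathrm{c}^\infty(\bbR^d),\bbR) ~\big|~ |T(f)| \le p_{\bar B_R,1}(f)\ \text{ for all } f \,\bigr\},
\]
which is visibly circled and convex, hence a disk. The key claim is that $D$ is \emph{bounded} in the functional bornology of Proposition~\ref{prop:UnifBoundBorn}: given any bounded $A\subset C_\mathrm{c}^\infty(\bbR^d)$, all $f\in A$ share a compact support $K$ with $p_{K,1}(A)$ bounded (the description of bounded sets recorded after~\eqref{eq:CompFuncMColim}), and since each $\partial^\alpha f$ vanishes off $K$ we have $p_{\bar B_R,1}(f)\le p_{K,1}(f)$; hence $D(A)\subset\bbR$ is bounded.

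Finally, the estimate says exactly that $T_n \in \tfrac{C}{n}D$, i.e. $\lVert T_n\rVert_D \le C/n \to 0$ in the seminormed space $(V_D,\lVert\Empty\rVert_D)$, which by Definition~\ref{def:Mackey-Cauchy} is Mackey convergence of $T_n$ to $0$; equivalently $\int\Empty\,\epsilon_n$ Mackey converges to $\delta_0$. I expect the main obstacle to be not the analysis but the bookkeeping of this last step: one must produce one bounded disk $D$, independent of $n$, that simultaneously dominates all the $T_n$ up to a scalar tending to zero, and the equiboundedness of $D$ is exactly where the common-support observation and the explicit characterization of bounded subsets of $C_\mathrm{c}^\infty(\bbR^d)$ become indispensable.
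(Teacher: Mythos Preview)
Your proof is correct and follows essentially the same route as the paper's: both obtain the mollifier estimate $|T_n(f)|\le\tfrac{C}{n}\sup_{K}\lVert\nabla f\rVert$ on the fixed compact $K=\Supp\epsilon$, then exhibit a single equibounded disk $D$ of functionals (defined via this seminorm) with $T_n\in\tfrac{C}{n}D$. The only cosmetic difference is that you package $D$ as the $p_{\bar B_R,1}$-unit ball of functionals and verify equiboundedness via $p_{\bar B_R,1}(f)\le p_{K,1}(f)$ for $f$ supported in $K$, whereas the paper describes $D$ directly in terms of the images $D(B)$ of bounded sets $B$; both amount to the same disk.
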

\begin{proof}
Let $K := \Supp \epsilon \subset \bbR^d$, which is compact by assumption. By standard estimates, there exists a constant $C < \infty$ such that for all $f \in C_\mathrm{c}^\infty(\bbR^d)$ we have
\begin{equation}
\label{eq:DiracApproximation01}
  \Bigl| \int_{\bbR^d} f(y)\epsilon_n(y)\,\di y-f(0)\Bigr|
  \leq 
  \frac{1}{n}
  C\sup_{y\in K} \lVert (\nabla f)(y)\rVert \,.
\end{equation}
Let $B \subset C_\mathrm{c}^\infty(\bbR^d)$ be bounded. Then $p_1(B) := \sup_{f \in B} \sup_{y \in K} \lVert (\nabla f)(y)\rVert$ is finite. Let $D$ be the equibounded (Terminology~\ref{term:equibounded}) set of functionals that map any bounded subset $B\subset C_\mathrm{c}^\infty(X)$ to the interval $[-Cp_1(B),Cp_1(B)]$. Inequality~\eqref{eq:DiracApproximation01} shows that $\int \Empty\epsilon_n -\delta_0 \in \frac{1}{n}D$. We conclude that the sequence converges in $\lVert \Empty \rVert_D$.
\end{proof}

Let $\pi: Z \to X$ be a smooth map of manifolds. We denote
\begin{equation*}
    C_{/X}^\infty(Z)
    := \Colim_{
    \substack{
      K\subset X\\
      K\text{ compact}
    }} C^\infty_{\pi^{-1}(K)}(Z) 
    \,.
\end{equation*}
As vector space, this is the space of all smooth functions $f: Z \to \bbR$ such that $\pi(\Supp f)$ is precompact. A sequence Mackey converges in $C_{/X}^\infty(Z)$ if the sequence Mackey converges in $C^\infty_{\pi^{-1}(K)}(Z)$ for some fixed compact $K$.

Assume that $\Supp g \cap \pi^{-1}(K)$ is compact for every compact $K \subset X$. If $\Supp f \subset \pi^{-1}(K)$ for some compact $K$, then $\Supp(fg)$ is compact. This shows that we have a map
\begin{equation*}
\begin{aligned}
  r_g:
  C_{/X}^\infty(Z)
  &\longrightarrow
  C_\mathrm{c}^\infty(Z)
  \\
  f
  &\longmapsto
  fg
  \,,
\end{aligned}
\end{equation*}
which is linear and bounded. 

We recall from Proposition~\ref{prop:TestFiberIntegr} that, if $\pi: Z \to X$ is a surjective submersion and the fibers are equipped with a strictly smooth positive density along the fibers, the fiber integration,
\begin{equation*}
  \pi_*: 
  C_\mathrm{c}^\infty(Z) \longrightarrow
  C_\mathrm{c}^\infty(X)
\end{equation*}
is a bounded linear map.

\begin{Lemma}
\label{lem:fiberdirac}
Let $\pi: A\to X$ be a vector bundle; let $0_A: X \to A$ denote its zero section; let $\rho$ be a smooth, strictly positive density along the fibers of $\pi$. Then there exists a sequence $(e_n)_{n \in \bbN}$ of functions in $C_\mathrm{c}^\infty(A)$ such that
\begin{equation*}
    \lim_{n\to \infty} \pi_*\circ r_{e_n} = 0_A^* \,,
\end{equation*}
in $\intHom(C^\infty_{/X}(A), C_\mathrm{c}^\infty(X))$.
\end{Lemma}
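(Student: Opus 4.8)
The plan is to concentrate, fibrewise, a fixed mollifier at the origin, rescaled by the inverse of the fibre density so that each fibre carries total mass $1$, and then to glue these local pieces by a partition of unity, finally multiplying by a base cut-off to secure compact support. I will then estimate the resulting error operator $T_n := \pi_*\circ r_{e_n} - 0_A^*$ by reducing it, over any fixed bounded set, to the fibrewise mollification error controlled by Lemma~\ref{lem:DiracApproximation}.

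\emph{Construction.} First I would choose a locally finite cover $\{U_i\}$ of $X$ by relatively compact charts trivialising $A$, with $\pi^{-1}(U_i)\cong U_i\times\bbR^d$, fibre coordinate $y_i$, and $\rho=\rho_i(x,y)\,\di y$ for smooth $\rho_i>0$; a subordinate partition of unity $\{\chi_i\}$; and bumps $\psi_i\in C_\mathrm{c}^\infty(U_i)$ with $\psi_i\equiv1$ on $\Supp\chi_i$. With $\epsilon_n=n^d\epsilon(n\,\cdot)$ from Lemma~\ref{lem:DiracApproximation}, set
\[
  \tilde e_n(v) := \sum_i \chi_i(\pi(v))\,\frac{\epsilon_n(y_i(v))}{\rho_i(\pi(v), y_i(v))},
\]
extended by zero outside each $\pi^{-1}(U_i)$; this is a smooth, locally finite sum. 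Dividing by $\rho_i$ is what cancels the density, so that a fibrewise computation, using $\int_{\bbR^d}\epsilon_n=1$ and $\sum_i\chi_i=1$, gives $\int_{A_x}\tilde e_n\,\di\rho_x=1$ for every $x$. Since $\tilde e_n$ is generally not compactly supported, I fix a compact exhaustion $K_1\subset K_2\subset\cdots$ of $X$ and cut-offs $\xi_n\in C_\mathrm{c}^\infty(X)$ with $\xi_n\equiv1$ on $K_n$, and put $e_n:=(\xi_n\circ\pi)\,\tilde e_n\in C_\mathrm{c}^\infty(A)$.

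\emph{Reduction and estimate.} For $f\in C^\infty_{/X}(A)$ both $\pi_*(fe_n)$ and $0_A^*f$ vanish off $\pi(\Supp f)$, so $\Supp T_n(f)\subseteq\pi(\Supp f)$ for all $n$. Given a bounded $B\subset C^\infty_{/X}(A)$, all $f\in B$ satisfy $\pi(\Supp f)\subseteq K$ for one fixed compact $K$; picking $N$ with $K\subseteq K_N$, for $n\ge N$ the factor $\xi_n$ equals $1$ on $\Supp T_n(f)$ and $T_n(f)=\sum_i\chi_i\,\phi_{n,i}$, where $\phi_{n,i}(x)=\int_{\bbR^d}f_i(x,y)\,\epsilon_n(y)\,\di y-f_i(x,0)$ is the chart-$i$ mollification error. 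Differentiating in the base commutes with fibre integration and with evaluation at $y=0$, so $\partial_x^\gamma\phi_{n,i}$ has the same form with $f_i$ replaced by $\partial_x^\gamma f_i$; the bound \eqref{eq:DiracApproximation01} applied fibrewise yields
\[
  \bigl|n\,\partial_x^\gamma\phi_{n,i}(x)\bigr|\le C\sup_{y\in\Supp\epsilon}\bigl\|\nabla_y\,\partial_x^\gamma f_i(x,y)\bigr\|.
\]
Expanding $\partial_x^\alpha T_n(f)$ by the Leibniz rule and using local finiteness, the right-hand sides are bounded uniformly over $x\in K$, $f\in B$ and $n\ge N$, because $B$ is bounded in $C^\infty_{/X}(A)$ and the $\partial^\beta\chi_i$ are bounded. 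Thus every seminorm of $n\,T_n(f)$ is bounded uniformly for $f\in B$ and $n\ge N$; the finitely many $n<N$ contribute bounded sets since each $T_n$ is a composite of the bounded maps $\pi_*$ (Proposition~\ref{prop:TestFiberIntegr}), $r_{e_n}$, and the pullback $0_A^*$ along the closed embedding $0_A$.

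\emph{Conclusion.} The previous paragraph shows that $\{n\,T_n\}$ is equibounded in the functional bornology of Proposition~\ref{prop:UnifBoundBorn}. Its disked hull $\mathcal D$ is then a bounded disk in $\intHom\bigl(C^\infty_{/X}(A),C_\mathrm{c}^\infty(X)\bigr)$, and $T_n=\tfrac1n(n\,T_n)\in\tfrac1n\mathcal D$, so $\pi_*\circ r_{e_n}-0_A^*=T_n$ Mackey converges to $0$, as required. The main obstacle is the global bookkeeping of the construction: arranging \emph{simultaneously} that $e_n$ be compactly supported (the base cut-off $\xi_n$), that each fibre retain mass exactly $1$ despite the varying density (the division by $\rho_i$), and that \emph{all} base-derivatives of the error be controlled uniformly over a bounded family, while checking that the cut-off $\xi_n$ neither enlarges $\Supp T_n(f)$ beyond $\pi(\Supp f)$ nor, through the finitely many small $n$ where $\xi_n\not\equiv1$ on $\pi(\Supp f)$, spoils equiboundedness.
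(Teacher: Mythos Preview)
Your argument is correct; the bumps $\psi_i$ you introduce are never used, but this is harmless. The paper reaches the same conclusion by a somewhat different route in two places. First, to obtain compact support of $e_n$ the paper indexes the cover by $\bbN$ and \emph{truncates} the sum, setting $e_n=\sum_{i=1}^n e_{n,i}$, rather than multiplying the full sum by a base cut-off $\xi_n$; this avoids having to treat the finitely many small $n$ separately. Second, in place of your direct seminorm estimate the paper factors $\pi_*\circ r_{e_n}$ as a composition of bounded linear maps passing through $\bigoplus_i C^\infty_\mathrm{c}(U_i)\Cotimes C^\infty(\bbR^q)$, the fibrewise integration $\bigoplus_i\id\Cotimes\int\!\Empty\epsilon_n$, and the partial identity $I_n$ on $\bigoplus_i C^\infty_\mathrm{c}(U_i)$; Mackey convergence then follows by combining Lemma~\ref{lem:DiracApproximation}, Lemma~\ref{lem:convergenceFiniteDiagonals}, and Proposition~\ref{prop:convergenceComposition}. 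Your approach is more elementary and self-contained, carrying out the Leibniz-rule bookkeeping explicitly; the paper's is more structural, delegating all analysis to the three abstract convergence lemmas already established.
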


\begin{proof}
By assumption, the manifold $X$ is second countable, so that we have a countable, locally finite open cover $\{U_i~|~i \in \bbN\}$ of $X$ by open balls and a partition of unity $\{\chi_i: X \to [0,1]\}$ subordinate to the cover, $\Supp \chi_i \subset U_i$. Since $U_i$ is contractible, $A$ has a trivialization over $U_i$ by bundle coordinates $(x,y) = (x^1, \ldots, x^p, y^1, \ldots, y^q) : A|_{U_i} \to \bbR^p \times \bbR^q$, where $p$ is the dimension of $X$ and $q$ the rank of $A$. 

The fiber density $\rho$ is given in local coordinates by $\di\rho = \rho_i'(x,y)\,\di y$, where $\di y = |dy^1 \wedge \ldots \wedge dy^q|$ denotes the canonical top density of the fiber coordinates and where, by assumption on $\rho$, the Radon-Nikodym derivative $\rho_i': A|_{U_i} \to \bbR$ is smooth and strictly positive. 
    
Let $\epsilon_n: \bbR^q \to \bbR$ be a sequence of smooth compactly supported functions that approximate the Dirac delta distribution as constructed in Lemma~\ref{lem:DiracApproximation}. Let the functions $e_{n,i}: A|_{U_i} \to \bbR$ be defined in local coordinates by
\begin{equation*}
  e_{n,i}(x,y) 
  := \frac{\chi_i(x)\epsilon_n(y)}{\rho_i'(x,y)}
  \,.
\end{equation*}
Since $\chi_i$ is compactly supported in $U_i$ and $\epsilon_n$ compactly supported in $\bbR^q$, $e_{n,i}$ is compactly supported in $A|_{U_i}$. For any smooth function $f: A \to \bbR$, $f e_{n,i}$ lies in  $C_\mathrm{c}^\infty(A|_{U_i})$ and its fiber integration is given by
\begin{equation*}
  \bigl( \pi_*(f e_{n,i}) \bigr)(x)
  =
  \int_{\bbR^q} f(x,y) \chi_i(x) \epsilon_n(y) \, \di y
  \,.
\end{equation*}
Let $e_n := \sum_{i=1}^n e_{n,i}$. The sequence $e_n$ has compact support that simultaneously grows along the base direction and shrinks in the fiber direction to the zero section. The fiber integration $f\mapsto \pi_*(fe_n) = \sum_{i=1}^n\pi_* (fe_{n,i})$ can be written as a composition of bounded linear maps as follows.

In the first step, we observe that for $K \subset X$ compact, there is a finite subset $I_K \subset \bbN$ such that $K \cap U_i = \empty$ for all $i \notin I_K$. We have the morphism
\begin{equation*}
\begin{aligned}
  C_K^\infty(A)
  &\longrightarrow
  \prod_{i \in I_K} C_\mathrm{c}^\infty(A|_{U_i}) \cong
  \bigoplus_{i \in I_K} C_\mathrm{c}^\infty(A|_{U_i})
  \\
  f &\longmapsto (f\,\pi^*\!\chi_i)_{i \in I_K}
  \,,
\end{aligned}  
\end{equation*}
where we have used that finite products and coproducts in $\cBorn$ are the same. Taking the colimit over all compact $K \subset X$, we obtain the morphism
\begin{equation}
\label{eq:fiberdirac01}
  C^\infty_{/X}(A) 
  \xrightarrow{~(r_{\pi^*\!\chi_i})_{i \in \bbN} ~}
  \bigoplus_i C_\mathrm{c}^\infty(A|_{U_i})
\end{equation}
In the second step, we use that over each of the contractible $U_i$ we have a trivialization $A|_{U_i} \cong U_i \times \bbR^q$, which induces an isomorphism
\begin{equation}
\label{eq:fiberdirac02}
  \bigoplus_{i} C_{/U_i}^\infty(A|_{U_i})
  \cong~
  \bigoplus_i 
  \bigl( C_\mathrm{c}^\infty(U_i)\Cotimes C^\infty(\bbR^q)
  \bigr)
  \,,
\end{equation}
In the third step, we use that the operator of integration $\int \Empty \varepsilon_n: C_\mathrm{c}^\infty(\bbR^q) \to \bbR$ induces a morphism 
\begin{equation}
\label{eq:fiberdirac03}
  \bigoplus_i 
  \bigl(
  C_\mathrm{c}^\infty(U_i)
  \Cotimes C^\infty(\bbR^q)
  \bigr) 
  \xrightarrow{\,\bigoplus_i \id_{C_\mathrm{c}^\infty(U_i)} \Cotimes \int\! \Empty \varepsilon_n\,}
  \bigoplus_i 
  \bigl( C_\mathrm{c}^\infty(U_i) \Cotimes \bbR \bigr)
  \cong
  \bigoplus_i C_\mathrm{c}^\infty(U_i)
  \,.
\end{equation}
In the fourth step, we implement the truncation of the coproduct to $i \leq n$, which is given by the partial identities \begin{equation}
\label{eq:fiberdirac04}
  \bigoplus_i C_\mathrm{c}^\infty(U_i)
  \xrightarrow{~I_n~}
  \bigoplus_i C_\mathrm{c}^\infty(U_i) 
  \longrightarrow
  C_\mathrm{c}^\infty(X)
\end{equation}
of Lemma~\ref{lem:convergenceFiniteDiagonals} that map $\oplus_{i=1}^\infty g_i \mapsto \oplus_{i=1}^n g_i$. In the fifth and last step, we add the components of the coproduct,
\begin{equation}
\label{eq:fiberdirac05}
  \bigoplus_i C_\mathrm{c}^\infty(U_i) 
  \xrightarrow{~\sum_i~}
  C_\mathrm{c}^\infty(X) 
  \,,    
\end{equation}
which takes values in compactly supported functions since every elements of coproduct has only a finite number of non-zero components. The composition of \eqref{eq:fiberdirac01}, \eqref{eq:fiberdirac02}, \eqref{eq:fiberdirac03}, \eqref{eq:fiberdirac04}, and~\eqref{eq:fiberdirac05} is the map
\begin{equation*}
\begin{tikzcd}[row sep=2.6ex, column sep=1.5em]
  C^\infty_{/X}(A) \ar[d]
  &  
  f \ar[d,mapsto]
  \\
  \bigoplus_i C_{/U_i}^\infty(A|_{U_i}) \ar[d, "\cong"]
  & 
  ( f \pi^*\chi_i )
  \ar[dd, mapsto]
  \\
  \bigoplus_i C_\mathrm{c}^\infty(U_i)\Cotimes C^\infty(\bbR^q) \ar[d,"\bigoplus_i \id\Cotimes \int \Empty \epsilon_n"] 
  &
  \\
  \bigoplus_i C_\mathrm{c}^\infty(U_i) \ar[d,"I_n"]
  &
  \bigl( \pi_* (f e_{n,i}) \bigr) \ar[d,mapsto]
  \\
  \bigoplus_i C_\mathrm{c}^\infty(U_i) \ar[d]
  &
  \bigl(\pi_* (f e_{n,1}), \ldots, \pi_* (f e_{n,n}) \bigr) \ar[d,mapsto]
  \\
  C_\mathrm{c}^\infty(X) 
  &
  \sum_{i=1}^n \pi_* (f e_{n,i})
\end{tikzcd}
\end{equation*}
which is $\pi_* \circ r_{e_n}$.

By Lemma~\ref{lem:DiracApproximation}, the fiber integrations Mackey converge to $\lim_n \id \Cotimes \int\Empty \epsilon_n = \id \Cotimes \delta_0$, which is the restriction to the zero section in every trivialization of the vector bundle. By Lemma~\ref{lem:convergenceFiniteDiagonals}, the partial identities $I_n$ Mackey converge to the identity. By Proposition~\ref{prop:convergenceComposition}, the composition of convergent sequences of morphisms of bornological spaces converges to the composition of their limits. We conclude that $\pi_* \circ r_{e_n}$ Mackey converges to $0_A^*$.
\end{proof}

\begin{Remark}
As explained in the proof of~Proposition~\ref{prop:StrongPointwiseApprUnit}, the Mackey convergence of a sequence of maps in $\intHom(V,W)$ implies the pointwise Mackey convergence when applied to every $v \in V$. In particular, Lemma~\ref{lem:fiberdirac} implies that for every $f \in C^\infty(Z)$ with $\pi(\Supp f)$ precompact, the sequence of fiber integrations of $fe_n$ Mackey converges as
\begin{equation*}
  \pi_*(f e_n) \xrightarrow{~n\to \infty~}
  f \circ 0_A
\end{equation*}
in $C_\mathrm{c}^\infty(X)$, that is, uniformly in all derivatives with uniformly compact support. 
\end{Remark}

\begin{Remark}
\label{rmk:approximateunitcompactsupport}
By applying Lemma~\ref{lem:fiberdirac} to the trivial vector bundle $X\times \{0\}\to X$, we conclude that $C_\mathrm{c}^\infty(X)$ has a strong approximate unit.
\end{Remark}

Let $\pi: A \to X$ be a vector bundle with a fiberwise density as in Lemma~\ref{lem:fiberdirac}. Let $\phi: Z \to X$ be a smooth map. Consider the pullback vector bundle:
\begin{equation}
\label{eq:ApprUnit01}
\begin{tikzcd}
\mathllap{\phi^*\! A :=~} Z \times_X A
\ar[r, "\pr_2"]
\ar[d, "\tilde{\pi} := \pr_1"']
&
A
\ar[d, "\pi"]
\\
Z
\ar[r, "\phi"']
&
X
\end{tikzcd}
\end{equation}
We will denote the zero section of $\phi^*\! A$ by
\begin{equation*}
  0_{\phi^*\! A} := (\id_Z, 0_A \circ \phi): 
  Z \longrightarrow Z \times_X A
  \,.
\end{equation*}
The $\tilde{\pi}$-fiber over $z \in Z$ is naturally diffeomorphic to the $\pi$-fiber over $\phi(z)$, which shows that the pullback bundle is naturally equipped with a smooth strictly positive fiberwise density. The fiber integration of the pullback will be denoted by $\tilde{\pi}_*$. Let $(e_n)_{n \in \bbN}$ be the sequence of functions of Lemma~\ref{lem:fiberdirac} such that $\lim_{n \to \infty} \pi_* \circ r_{e_n} = 0_A^*$. Let the pullback functions be denoted by $\tilde{e}_n := \pr_2^* e_n \in C^\infty(Z \times_X A)$.

\begin{Corollary}
\label{cor:fiberdiracPullback}
$\lim_{n\to \infty} \tilde{\pi}_*\circ r_{\tilde{e}_n} = 0_{\phi^*\! A}^*$.    
\end{Corollary}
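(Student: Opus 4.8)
The plan is to prove the Corollary by transporting the construction in the proof of Lemma~\ref{lem:fiberdirac} along $\phi$: I claim the pullback bundle $\phi^*\!A$ inherits, by base change, all the geometric data used there in a form that makes the pulled-back functions $\tilde{e}_n=\pr_2^*e_n$ play \emph{exactly} the role of the $e_n$, so that the argument re-runs line by line over $Z$ in place of $X$.

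First I would assemble the pulled-back data on $Z$. The locally finite cover $\{U_i\}$ of $X$ by open balls yields the locally finite open cover $\{V_i:=\phi^{-1}(U_i)\}$ of $Z$ (preimages of a locally finite cover under a continuous map are locally finite), and the subordinate partition of unity $\{\chi_i\}$ yields the subordinate partition of unity $\{\chi_i\circ\phi\}$, since $\sum_i\chi_i\circ\phi=1$ and $\Supp(\chi_i\circ\phi)\subset\phi^{-1}(\Supp\chi_i)\subset V_i$. The trivialization of $A$ over $U_i$ with fiber coordinate $y\in\bbR^q$ pulls back to a trivialization $\phi^*\!A|_{V_i}\cong V_i\times\bbR^q$ with the \emph{same} fiber coordinate (so we need not re-derive trivializations from contractibility of $V_i$), and, as already observed just before the statement, the fiberwise density $\rho$ pulls back to a smooth, strictly positive fiberwise density $\tilde\rho$ whose Radon--Nikodym derivative in these coordinates is $\tilde\rho'_i(z,y)=\rho'_i(\phi(z),y)$.

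Second, setting $\tilde{e}_{n,i}:=\pr_2^*e_{n,i}$, linearity of $\pr_2^*$ together with $\Supp\tilde{e}_{n,i}\subset\phi^*\!A|_{V_i}$ preserves the truncation structure, $\tilde{e}_n=\sum_{i=1}^n\tilde{e}_{n,i}$, and in the local coordinates above $\tilde{e}_{n,i}(z,y)=(\chi_i\circ\phi)(z)\,\epsilon_n(y)/\tilde\rho'_i(z,y)$. Hence the key local computation of Lemma~\ref{lem:fiberdirac} goes through unchanged: for $f\in C^\infty_{/Z}(\phi^*\!A)$,
\[
  \bigl(\tilde\pi_*(f\,\tilde{e}_{n,i})\bigr)(z)
  = (\chi_i\circ\phi)(z)\,\int_{\bbR^q}f(z,y)\,\epsilon_n(y)\,\di y .
\]
This exhibits $\tilde\pi_*\circ r_{\tilde{e}_n}$ as the very same five-step composition of bounded linear maps (localize by the partition of unity, trivialize, tensor with $\int\Empty\epsilon_n$, truncate by the partial identities $I_n$, and sum), now over $Z$. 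Mackey convergence then follows exactly as before: Lemma~\ref{lem:DiracApproximation} gives $\id\Cotimes\int\Empty\epsilon_n\to\id\Cotimes\delta_0$ (restriction to the zero section in each trivialization), Lemma~\ref{lem:convergenceFiniteDiagonals} gives $I_n\to\id$, and Proposition~\ref{prop:convergenceComposition} gives that the composition Mackey converges, with limit $0_{\phi^*\!A}^*$.

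The main point to check --- and the reason the argument must recopy the local construction rather than invoke an abstract identity --- is that one \emph{cannot} reduce to Lemma~\ref{lem:fiberdirac} through the base-change formula $\tilde\pi_*\circ\pr_2^*=\phi^*\circ\pi_*$. A general $f\in C^\infty_{/Z}(\phi^*\!A)$ is a function on the total space of the pullback and is not of the form $\pr_2^*g$, and moreover $\pr_2^*$ fails to preserve the $C^\infty_{/\,\cdot}$ support condition once $\phi$ is not proper (no properness is assumed here). Thus the content of the Corollary is precisely that the \emph{specific} sequence $\tilde{e}_n=\pr_2^*e_n$ works, which is why the proof must re-run the local estimate. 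This is the only mild obstacle; everything else is a faithful transcription of the proof of Lemma~\ref{lem:fiberdirac}.
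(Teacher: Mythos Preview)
Your proof is correct and takes essentially the same approach as the paper: pull back the cover, partition of unity, trivializations, and fiber density along $\phi$, observe that $\tilde{e}_n$ is built from the pulled-back data exactly as $e_n$ was from the original, and re-run the five-step decomposition and convergence argument of Lemma~\ref{lem:fiberdirac}. The paper's own proof is a two-line sketch saying precisely this; you have simply filled in the details (and added a useful remark on why a direct base-change identity does not suffice).
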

\begin{proof}
If ${U_i}$ is a cover of $X$ with subordinate partition of unity $\{\chi_i: U_i \to \bbR\}$, then $\tilde{U}_i := \phi^{-1}(U_i)$ is a cover of $Z$ with a subordinate partition of unity $\tilde{\chi}_i := \phi^* \chi_i$. Now we repeat the arguments of Lemma~\ref{lem:fiberdirac}.
\end{proof}

\begin{proof}[Proof of Theorem~\ref{thm:ApproxUnitExists}]
As we have explained in Section~\ref{sec:ConvCompletion}, the convolution product on $A(G) \Cotimes A(G)$ is given by the pullback along the embedding $\bar{\imath}: \bar{G}_2 = G_1 \times_{G_0}^{s,s} G_1 \hookrightarrow G_1 \times G_1$ followed by the fiber integration along the multiplication $\bar{m} = \pr_1: \bar{G}_2 \to G_1$. 

The groupoid unit $1: G_0 \to G_1$ embeds $G_0$ as a closed submanifold in $G_1$. Since $s \circ 1 = \id_{G_0}$, all points on $1(G_0)$ are regular points of $s$. Therefore, we can find a tubular neighborhood $V \supset 1(G_0)$. By the usual construction of tubular neighborhoods, $V$ is $s$-vertically precompact, that is for every compact $K \subset G_1$, $s^{-1}(K) \cap V$ is precompact. By applying the groupoid inverse to $V$, we obtain the open neighborhood $V^{-1} \supset 1(G_0)$. Since the groupoid inverse intertwines the source and target maps, $V^{-1}$ is $t$-vertically precompact, that is, $t^{-1}(K) \cap V$ is precompact. The intersection $U := V \cap V^{-1}$ is both, $s$-vertically and $t$-vertically precompact. In other words, both $s|_U: U \to G_0$ and $t|_U:U \to G_0$ are proper. $U$ is still a tubular neighborhood of the identity bisection, so we have an isomorphism of smooth fiber bundles
\begin{equation*}
\begin{tikzcd}[column sep=0.5em, row sep=3ex]
U 
\ar[rr, "j", "\cong"']
\ar[dr, "s|_U"']
&
&
A
\ar[dl, "\pi"]
\\
& G_0 &
\end{tikzcd}
\end{equation*}
where $\pi: A \to G_0$ is a vector bundle. We can choose $A$ to be the Lie algebroid, but this will not be relevant for the proof.

Let $\lambda$ be the smooth strictly positive fiberwise density on the fibers of $s: G_1 \to G_0$ that defines a right Haar system on $G_1$. Its restriction $\lambda|_U$ to $U$ pulls back to a smooth strictly positive fiberwise density $\rho := j^* \lambda|_U$ on $A$. By Lemma~\ref{lem:fiberdirac}, we obtain functions $e'_n \in C_\mathrm{c}^\infty(A)$, such that $\lim_n \pi_* \circ r_{e'_n} = 0_A^*$. The pullbacks
\begin{equation*}
  e_n := j^* e'_n 
\end{equation*}
can be viewed as a smooth functions on $G_1$ with compact support contained in $U$. We will now show that $(e_n)_{n \in \bbN}$ is an approximate right unit of the convolution product.

Let $a \in C_\mathrm{c}^\infty(G_1)$. The convolution product with $e_n$ is given by
\begin{equation}
\label{eq:ApproxUnit02}
  a * e_n = \bar{m}_*(\bar{a}\bar{e}_n)
  \,,
\end{equation}
where the functions $\bar{a},\bar{e}_n \in C^\infty(\bar{G}_2)$ are given by
\begin{equation*}
  \bar{a}(g,h) = a(gh^{-1})
  \,,\qquad
  \bar{e}_n(g,h) = e_n(h)
  \,.
\end{equation*}
Since the support of $e_n$ is contained in $U$, the support of $\bar{e}_n$ is contained in
\begin{equation*}
  s^* U := G_1 \times_{G_0}^{s,s|_U} U
  \hookrightarrow
  \bar{G}_2
  \,.
\end{equation*}
It follows that the pointwise product $\bar{a}\bar{e}_n$ depends only on the restriction
\begin{equation*}
  \bar{f} := \bar{a}|_{s^* U}
\end{equation*}
of $\bar{a}$ to $(id_{G_1} \times i): G_1 \times_{G_0}^{s,s} U \to G_1 \times_{G_0}^{s,s} G_1$ the same domain, that is,
\begin{equation}
\label{eq:ApproxUnit03}
  \bar{a}\bar{e}_n = \bar{f} \bar{e}_n
  \,.
\end{equation}
Using the isomorphism
\begin{equation*}
  \bar{j} := \id \times_{G_0} j:
  G_1 \times_{G_0}^{s,s} U
  \xrightarrow{~\cong~}
  G_1 \times_{G_0}^{s,\pi} A
  \,,
\end{equation*}
we can identify the functions $\bar{f}$ and $\bar{e}_n$ with the functions 
\begin{equation*}
  f := \bar{f} \circ \bar{j}^{-1} 
  \,,\qquad
  \tilde{e}_n := \bar{e}_n \circ \bar{j}^{-1}
\end{equation*}
on $G_1 \times_{G_0} A$. The fiber integration of $\bar{f} \bar{e}_n$ can now be expressed as
\begin{equation}
\label{eq:ApproxUnit04}
  \bar{m}_*(\bar{f}\bar{e}_n)
  = \tilde{\pi}_* (f \tilde{e}_n)
  \,,
\end{equation}
where $\tilde{\pi} = \pr_1: G_1 \times_{G_0}^{s,\pi} A \to G_1$ is the projection of the pullback bundle. Putting Equations~\eqref{eq:ApproxUnit02}, \eqref{eq:ApproxUnit03}, and \eqref{eq:ApproxUnit04} together, we see that 
\begin{equation}
\label{eq:ApproxUnit05}
  a * e_n
  = \tilde{\pi}_* (f \tilde{e}_n)
  \,.
\end{equation}

Let $K := \Supp a$, which is compact by assumption. Since $\bar{f}(g,h) = f(gh^{-1})$, the support of $\bar{f}$ satisfies
\begin{equation*}
\begin{split}
  \bar{m} ( \Supp\bar{f})
  &\subset m(K \times_{G_0}^{s,t} U)
  \\
  &\subset
  m\bigl( K \times_{G_0}^{s,t} 
  \bigl(t^{-1} \bigl( s(K) \bigr)\cap U \bigr) \bigr)
  \,.   
\end{split}
\end{equation*}
Since $U$ is $t$-vertically compact, the right side is precompact, which implies that $\bar{m}(\Supp \bar{f}) = \tilde{\pi}(\Supp f)$ is precompact. We conclude that the map $a \mapsto a * e_n$ is given by the following composition of maps:
\begin{equation}
\label{eq:a_Star_e_nMap}
\begin{tikzcd}[column sep=large, row sep=2.7ex]
C_\mathrm{c}^\infty(G_1) \ar[d]
& 
a \ar[d,mapsto]
\\
C_{/G_1}^\infty(\bar{G}_2) \ar[d]
& 
\bar{a} \ar[d,mapsto]
\\
C_{/{G_1}}^\infty(s^*U) \ar[d,"\cong"',"(\bar{j}^{-1})^*" ]
& 
\bar{f} \mathrlap{{}:=\bar{a}|_{s^*U}} \ar[d,mapsto]
\\
C_{/{G_1}}^\infty(s^*A) \ar[d,"\tilde{\pi}_*\circ r_{\tilde{e}_n}"]
& f \mathrlap{{}:= \bar{f} \circ \bar{j}^{-1}} \ar[d]
\\
C_\mathrm{c}^\infty(G_1) 
& 
\tilde{\pi}_*(f\tilde{e}_n) \mathrlap{{} \stackrel{\eqref{eq:ApproxUnit05}}{=} a * e_n}
\end{tikzcd}
\end{equation}
By construction, $\tilde{e}_n = \bar{e}_n \circ \bar{j}^{-1} = e'_n \circ \pr_2$, so we can apply Corollary~\ref{cor:fiberdiracPullback} to the pullback of $A \to G_0$ by $s: G_1 \to G_0$. This shows that $\lim_{n\to \infty} \tilde{\pi}_* \circ r_{\tilde{e}_n} = 0_{s^* A}^*$ in $\intHom(C_{/G_1}^\infty(s^* A), C_\mathrm{c}^\infty(G_1))$. It follows from Proposition~\ref{prop:convergenceComposition} that the sequence of morphisms $a \mapsto a * e_n$ given by~\eqref{eq:a_Star_e_nMap} Mackey converges in $\intHom(C_\mathrm{c}^\infty(G_1), C_\mathrm{c}^\infty(G_1))$ to the morphism
\begin{equation*}
\begin{split}
  a &\longmapsto f\circ 0_{s^*\!A}
  \\
  &= \bar{f}\circ (\id_{G_1} \times_{G_0} j^{-1})\circ 0_{s^*\!A}
  \\
  &= \bar{f}\circ (\id_{G_1}, j^{-1}\circ 0_A\circ s)
  \\
  &= \bar{f}\circ (\id_{G_1}, 1\circ s)
  \\
  &= \bar{a}\circ (\id_{G_1}, 1\circ s)
  \\
  &= a
  \, ,
\end{split}
\end{equation*}
where in the last step we have used that $\bar{a}(g, 1_{s(g)}) = a(g1_{s(g)}^{-1}) = a(g)$. We conclude that $(e_n)_{n \in \bbN}$ is a strong approximate right unit in the sense of Definition~\ref{def:ApproxUnit}. 

The sequence $(\tilde{e}_n)_{n \in \bbN}$ in $A(G)$ defined by $\tilde{e}_n(g) := e_n(g^{-1})$ is a strong approximate left unit.
\end{proof}

\subsection{Proof of smoothness of convolution modules}

\begin{Proposition}
\label{prop:ConvBimodSmooth}
Let $P$ be a $G$-$H$ groupoid bibundle, not necessarily right principal. Then the completed bornological convolution bimodule $M(P)$ is smooth.  
\end{Proposition}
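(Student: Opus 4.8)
The plan is to reduce the statement to the assertion that the two module actions are strong epimorphisms and then feed this into the approximate-unit machinery already in place. Since $M(P)$ is a bimodule, it suffices to establish smoothness separately as a left $A(G)$-module and as a right $A(H)$-module. By Theorem~\ref{thm:ApproxUnitExists} both $A(G)$ and $A(H)$ carry strong approximate left and right units, so Proposition~\ref{prop:ApproxUnitSmoothMod} (and its evident left-handed analogue) applies and reduces smoothness to showing that the completed actions
\[
  \lambda: A(G) \Cotimes M(P) \longrightarrow M(P),
  \qquad
  \rho: M(P) \Cotimes A(H) \longrightarrow M(P)
\]
are strong epimorphisms in $\cBorn$. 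Note that principality of $P$ is never invoked in this reduction, which is precisely why the hypothesis can be dropped.

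First I would treat the right action. Recall from the sequence~\eqref{eq:ConvModSeq} that, up to the monoidal isomorphism $C_\mathrm{c}^\infty(P) \Cotimes C_\mathrm{c}^\infty(H_1) \cong C_\mathrm{c}^\infty(P \times H_1)$ of Proposition~\ref{prop:TestTensProd}, the map $\rho$ factors as the pullback $j^*$ along the inclusion $j: P \times_{H_0} H_1 \hookrightarrow P \times H_1$ followed by fibre integration $\beta_*$ along the action map $\beta: P \times_{H_0} H_1 \to P$. The fibre product $P \times_{H_0} H_1 = (r \times t)^{-1}(\Delta_{H_0})$ is a closed embedded submanifold of $P \times H_1$, because $t: H_1 \to H_0$ is a submersion; hence $j^*$ is a split epimorphism by Proposition~\ref{prop:TestSubmfgPullback}. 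By (the right-handed analogue of) Lemma~\ref{lem:ActionSubmersion} the action map $\beta$ is a surjective submersion, and the right Haar system equips its fibres $\beta^{-1}(p) \cong s^{-1}(r(p))$ with strictly positive smooth densities, so $\beta_*$ is a regular epimorphism by Proposition~\ref{prop:TestFiberIntegr}. Since split and regular epimorphisms are strong, and strong epimorphisms are closed under composition (Appendix~\ref{sec:monosandEpis}), the composite $\rho$ is a strong epimorphism, and Proposition~\ref{prop:ApproxUnitSmoothMod} yields smoothness of $M(P)$ as a right $A(H)$-module. The left action $\lambda$ is handled identically: it factors as the pullback along the closed embedding $G_1 \times_{G_0} P \hookrightarrow G_1 \times P$ followed by fibre integration along the left action map $G_1 \times_{G_0} P \to P$, which is a surjective submersion by Lemma~\ref{lem:ActionSubmersion}, so the same argument makes $\lambda$ a strong epimorphism and $M(P)$ smooth as a left $A(G)$-module.

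The one point I would be most careful about, and the only real obstacle, is bookkeeping of the ambient category: Propositions~\ref{prop:TestSubmfgPullback} and~\ref{prop:TestFiberIntegr} are phrased in $\Born$, so I must ensure the conclusions persist in $\cBorn$. For $j^*$ this is immediate, since a bounded section between the (complete) test-function spaces is automatically a morphism of $\cBorn$, so $j^*$ is split in $\cBorn$. For $\beta_*$ I would use that the spaces $C_\mathrm{c}^\infty(\Empty)$ are complete and that the colimit over compact exhaustions defining them agrees in $\Born$ and in $\cBorn$, so the coequalizer exhibiting $\beta_*$ as a regular epimorphism is computed in $\cBorn$ as well. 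Granting this, both factors, and hence the composite, are strong epimorphisms of $\cBorn$, and the argument closes as above; the whole proof is then a short application of the factorization~\eqref{eq:ConvModSeq}, the two geometric epimorphism lemmas, and Proposition~\ref{prop:ApproxUnitSmoothMod}.
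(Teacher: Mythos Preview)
Your proof is correct and follows essentially the same route as the paper: factor the action through the pullback along the closed embedding $P\times_{H_0}H_1\hookrightarrow P\times H_1$ (split epi via Proposition~\ref{prop:TestSubmfgPullback}, with Lemma~\ref{lem:PullbackSubfmld} supplying the closed-embedding input) and the fibre integration along the action map (regular epi via Lemma~\ref{lem:ActionSubmersion} and Proposition~\ref{prop:TestFiberIntegr}), then invoke Theorem~\ref{thm:ApproxUnitExists} and Proposition~\ref{prop:ApproxUnitSmoothMod}. Your extra paragraph checking that the epimorphism statements transfer from $\Born$ to $\cBorn$ is a worthwhile piece of hygiene that the paper leaves implicit.
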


\begin{proof}
The completed convolution action is given in~\eqref{eq:ConvModSeq} by the pullback $j^*$ by the inclusion $j:P \times_{H_0} H_1 \to P \times H_1$ followed by the fiber integration $\beta_*$ of the $H$-action. It follows from Lemma~\ref{lem:PullbackSubfmld} and Proposition~\ref{prop:TestSubmfgPullback}, that $j^*$ is a split epimorphism. In Lemma~\ref{lem:ActionSubmersion}, we have shown that $\beta$ is a surjective submersion. It follows from Proposition~\ref{prop:TestFiberIntegr} that $\beta_*$ is a regular epimorphism. A fortiori, $j^*$ and $\beta_*$ are strong epimorphisms, so their composition is a strong epimorphism. Since $A(G)$ has a strong approximate right unit by Theorem~\ref{thm:ApproxUnitExists}, it follows from Proposition~\ref{prop:ApproxUnitSmoothMod} that $M(P)$ is smooth as a right $A(H)$-module. The proof that $M(P)$ is smooth as a left $A(G)$-module is analogous.
\end{proof}

\begin{Remark}
\sloppypar
Proposition~\ref{prop:ConvBimodSmooth} implies that the completed convolution action $A(G) \Cotimes M(P) \to M(P)$ is surjective. By \cite[Theorem~12]{Francis2022}, any $m\in M(P)$ may be written as a \emph{finite} sum $m = a_1\cdot m_1 + \dots + a_n\cdot m_n$, where $a_i\in A(G)$ and $m_i\in M(P)$. This proves that, already before completion, the convolution action on the \emph{algebraic} tensor product $A(G)\otimes M(P) \to M(P)$ is surjective. 
\end{Remark}

\begin{Corollary}
The complete bornological convolution algebra $A(G)$ of a Lie groupoid $G$ is self-induced.
\end{Corollary}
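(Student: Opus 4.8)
The plan is to recognize $A(G)$, viewed as a module over itself, as the convolution bimodule of a specific groupoid bibundle, and then to invoke Proposition~\ref{prop:ConvBimodSmooth}.

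First I would take $H = G$ and let $P$ be the \emph{unit bibundle} of $G$, namely the span $G_0 \xleftarrow{t} G_1 \xrightarrow{s} G_0$ with the left and right $G$-actions given by groupoid multiplication. This is a $G$-$G$ bibundle, so Proposition~\ref{prop:ConvBimodSmooth} applies to it, and its convolution bimodule is $M(P) = C_\mathrm{c}^\infty(G_1) = A(G)$ as a bornological vector space.

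Next I would identify the right $A(G)$-action on $M(P)$ with the multiplication of $A(G)$. Substituting $r = s$ and $p \cdot h^{-1} = gh^{-1}$ into the right convolution action~\eqref{eq:ConvActRight} gives $(m \cdot b)(g) = \int_{s^{-1}(s(g))} m(gh^{-1})\,b(h)\,\di\lambda_{s(g)}(h)$, which is exactly the convolution product~\eqref{eq:ConvProd}. Thus $M(P)$, as a right $A(G)$-module, is precisely $A(G)$ acting on itself by right multiplication.

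Finally, Proposition~\ref{prop:ConvBimodSmooth} asserts that $M(P)$ is smooth, hence in particular smooth as a right $A(G)$-module: the canonical map $A(G) \Cotimes_{A(G)} A(G) \to A(G)$ induced by the multiplication is an isomorphism. Since for an algebra this right-module map coincides with the left-module map of Diagram~\eqref{diag:AotimesAMtoM}, this is exactly the statement that $A(G)$ is self-induced. The only point requiring attention is the formula comparison of the second step, which is immediate; there is no real obstacle, as the corollary is a direct specialization of the smoothness of convolution bimodules to the unit bibundle.
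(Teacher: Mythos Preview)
Your proposal is correct and takes exactly the same approach as the paper: the paper's proof is the single sentence ``Apply Proposition~\ref{prop:ConvBimodSmooth} to the $G$-$G$ bibundle $P = G_1$,'' and you have simply spelled out the identification of $M(G_1)$ with $A(G)$ as a bimodule over itself.
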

\begin{proof}
Apply Proposition~\ref{prop:ConvBimodSmooth} to the $G$-$G$ bibundle $P = G_1$.
\end{proof}

\subsection{Proof of functoriality}
\label{sec:functoriality}

The completed functoriality constraint $\hat{\tau}$ is obtained by applying the separation and completion functors to the serially commutative Diagram~\eqref{eq:taudiagram} in $\Born$, which yields the corresponding serially commutative diagram in $\cBorn$:
\begin{equation}
\label{eq:tauhatdiagram}
\begin{tikzcd}[column sep=3em]
M(P) \Cotimes
A(H) \Cotimes
M(Q)
\ar[r, shift left=1.2, "{\hat{\rho} \Cotimes  \id}"]
\ar[r, shift left=-1.2, "{\id \Cotimes \hat{\lambda}}"']
\ar[d, "\cong"]
&
M(P) \Cotimes 
M(Q)
\ar[r, "\Pi"]
\ar[d, "\cong"]
&
M(P) \Cotimes_{A(H)} M(Q)
\ar[dd, "\hat{\tau}_{P,Q}"]
\\
C_\mathrm{c}^\infty(P \times H_1 \times Q)
\ar[d, "i^*"]
&
C_\mathrm{c}^\infty(P \times Q)
\ar[d, "j^*"]
&
\\
C_\mathrm{c}^\infty(
P \times_{H_0} H_1 \times_{H_0} Q
)
\ar[r, shift left=1.2, "{(\beta \times \id_Q)_*}"]
\ar[r, shift left=-1.2, "{(\id_P \times \alpha)_*}"']
&
C_\mathrm{c}^\infty(
P \times_{H_0} Q
)
\ar[r, "\pi_*"']
&
C_\mathrm{c}^\infty(
P \circ_H Q
)
\,.
\end{tikzcd}
\end{equation}
The functoriality constraint is given explicitly by
\begin{equation}
\label{eq:tauhatformula}
    \hat{\tau}_{P,Q}(f\Cotimes_{A} g)([p,q])
    = 
    \int_{\mathclap{s^{-1}(l(q))}} f(p\cdot h^{-1})\, g(h \cdot q) 
    \,\di \lambda_{l(q)}(h) \,,
\end{equation}
for all $f\Cotimes_{A} g \in M(P) \Cotimes_{A(H)} M(Q)$ and $[p,q] \in P \circ_H Q$. Since both the separation and the completion functor are left adjoints, they preserve the coequalizers of the rows. Lemma~\ref{lem:PullbackSubfmld} and Proposition~\ref{prop:TestSubmfgPullback} show that $j^*$ is a split epimorphism; Proposition~\ref{prop:TestFiberIntegr} shows that $\pi_*$ is a regular epimorphism. It follows that $\hat{\tau}_{P,Q}$ is a strong epimorphism. To prove that $\hat{\tau}_{P,Q}$ is an isomorphism, we have to show that it is a monomorphism in the category $\cBorn$. This is equivalent to the underlying linear map being injective. 

To prove injectivity, we can work in the underlying category of vector spaces. For ease of notation, we identify
\begin{align*}
  M(P) \Cotimes A(H) \Cotimes M(Q)
  &=
  C_\mathrm{c}^\infty(P \times H_1 \times Q)
  \\
  M(P) \Cotimes M(Q)
  &=
  C_\mathrm{c}^\infty(P \times Q)
  \,.
\end{align*}
By using this identification and replacing the coequalizers in Diagram~\eqref{eq:tauhatdiagram} with the cokernels of the difference maps
\begin{align*}
  \phi 
  &= \hat{\rho} \Cotimes  \id - \id \Cotimes \hat{\lambda}
  \\
  \psi
  &= (\beta \times \id_Q)_* - (\id_P \times \alpha)_*
  \,,
\end{align*}
we obtain the commutative diagram
\begin{equation}
\label{eq:tauhatcoker}
\begin{tikzcd}
C_\mathrm{c}^\infty(P \times H_1 \times Q)
\ar[r, "\phi"]
\ar[d, "i^*"]
&
C_\mathrm{c}^\infty(P \times Q)
\ar[d, "j^*"]
\ar[r, "\Pi = \coker \phi"]
&[1em]
M(P) \Cotimes_{A(H)} M(Q)
\ar[d, "\hat{\tau}_{P,Q}"]
\\
C_\mathrm{c}^\infty(P \times_{H_0} H_1 \times_{H_0} Q)
\ar[r, "\psi"']
&
C_\mathrm{c}^\infty(P \times_{H_0} Q)
\ar[r, "\pi_* = \coker\psi"']
&
C_\mathrm{c}^\infty(P \circ_H Q)
\end{tikzcd}
\end{equation}
where each row is right exact.

\begin{Lemma}
\label{lem:exactnessKernel}
$\ker j^* \subset \mathrm{im}\, \phi$ in Diagram~\eqref{eq:tauhatcoker}.
\end{Lemma}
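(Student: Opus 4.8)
The inclusion $\ker j^*\subseteq\img\phi$ carries the full analytic weight of functoriality, so the plan is to construct, for every $F\in\ker j^*$, an honest preimage $\tilde F\in C_\mathrm{c}^\infty(P\times H_1\times Q)$ with $\phi(\tilde F)=F$. A diagram chase cannot shorten this: since $j^*$ is a split epimorphism and the bottom row of~\eqref{eq:tauhatcoker} is exact, one gets $\ker j^*+\img\phi=\ker(\pi_*\circ j^*)$, and chasing only reduces the claim to itself, so genuine geometric input is needed.

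\textbf{Step 1 (geometric description of the kernel).} Because $r_P$ and $l_Q$ are submersions, the map $(r_P,l_Q)\colon P\times Q\to H_0\times H_0$ is a submersion and $N:=P\times_{H_0}Q$ is precisely the preimage $(r_P,l_Q)^{-1}(\Delta_{H_0})$ of the diagonal, a closed embedded submanifold whose vanishing ideal is $\ker j^*$. A relative Hadamard lemma along a tubular neighborhood of $N$, whose conormal directions are pulled back from the conormal of $\Delta_{H_0}$, expresses any $F\in\ker j^*$ as a locally finite sum $F=\sum_k G_k\cdot(r_P^*c_k-l_Q^*c_k)$ with $G_k\in C_\mathrm{c}^\infty(P\times Q)$ and $c_k\in C_\mathrm{c}^\infty(H_0)$, the functions $r_P^*c_k-l_Q^*c_k$ being pullbacks of the generators $\pr_1^*c_k-\pr_2^*c_k$ of the vanishing ideal of $\Delta_{H_0}$. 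Mackey convergence of this sum in the complete bornology is controlled by the coproduct estimates of Lemma~\ref{lem:InfSeqConverge} and Lemma~\ref{lem:convergenceFiniteDiagonals}, so it suffices to realise a single term $G\cdot(r_P^*c-l_Q^*c)$ in $\img\phi$.

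\textbf{Step 2 (exact preimage of a generator).} Writing $\phi=(\hat\rho\Cotimes\id)-(\id\Cotimes\hat\lambda)$ out through~\eqref{eq:ConvActLeft} and~\eqref{eq:ConvActRight} gives, for $\tilde F\in C_\mathrm{c}^\infty(P\times H_1\times Q)$,
\begin{equation*}
  \phi(\tilde F)(p,q)
  =\int_{s^{-1}(r_P(p))}\!\tilde F(p\cdot h^{-1},h,q)\,\di\lambda_{r_P(p)}(h)
  -\int_{s^{-1}(l_Q(q))}\!\tilde F(p,h^{-1},h\cdot q)\,\di\lambda_{l_Q(q)}(h).
\end{equation*}
The first integral sees $\tilde F$ only on the closed submanifold $S_1=\{r_P(p_1)=t(h)\}$ and the second only on $S_2=\{s(h)=l_Q(q)\}$, with $S_1\cap S_2=P\times_{H_0}H_1\times_{H_0}Q$. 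Choosing a fibrewise unit $w\in C_\mathrm{c}^\infty(H_1)$ concentrated near the identity bisection $1(H_0)$, with $s_*w=1$ on the relevant compact set, and putting $\tilde F|_{S_1}(p_1,h,q)=G(p_1\cdot h,q)\,w(h)\,c(s(h))$ makes the first integral equal to $G\cdot r_P^*c$ \emph{exactly}, because the substitution $p_1=p\cdot h^{-1}$ yields $G((p h^{-1})\cdot h,q)=G(p,q)$ and undoes the shift. The symmetric choice $\tilde F|_{S_2}(p,h,q'')=G(p,h\cdot q'')\,w'(h)\,c(t(h))$ makes the second integral equal to $G\cdot l_Q^*c$. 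Since $i^*$ is a split epimorphism (Proposition~\ref{prop:TestSubmfgPullback} together with Lemma~\ref{lem:PullbackSubfmld}), once compatible data on $S_1$ and $S_2$ have been fixed they extend to a global $\tilde F\in C_\mathrm{c}^\infty(P\times H_1\times Q)$, whence $\phi(\tilde F)=G\cdot(r_P^*c-l_Q^*c)$.

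\textbf{Main obstacle.} The two prescriptions for $\tilde F$ clash on the overlap $S_1\cap S_2$, where the first asks for $G(p_1\cdot h,q)\,w(h)\,c(s(h))$ and the second for $G(p_1,h\cdot q)\,w'(h)\,c(t(h))$; these agree along the identity bisection precisely because $r_P(p)=l_Q(q)$ there, which is exactly where the target $G\cdot(r_P^*c-l_Q^*c)$ vanishes. Reconciling them into one smooth, compactly supported function is the crux. The plan is to prescribe arbitrary (say vanishing) data on $S_1\cap S_2$ and then solve the two fibre-integral constraints independently on $S_1\setminus S_2$ and $S_2\setminus S_1$: for non-étale $H$ the source fibres are positive-dimensional and meet the overlap in a set of lower fibre dimension, leaving room to adjust each integral, while for étale $H$ the single formula $\tilde F(p,h,q)=G(p,q)\,c(s(h))\,w(h)$ already works, as only $h\in 1(H_0)$ contributes. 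A uniform treatment writes the discrepancy on $S_1\cap S_2$ as a fibrewise Hadamard remainder absorbed by a correction term supported near the units, and the global assembly over an atlas of $H_0$ proceeds by the partition-of-unity bookkeeping already used in the proof of Lemma~\ref{lem:fiberdirac}.
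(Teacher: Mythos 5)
Your overall strategy---produce an explicit preimage $\tilde F$ with $\phi(\tilde F)=F$---is the right one and is also what the paper does, and your computation that data of the form $G(p_1\cdot h,q)\,w(h)\,c(s(h))$ on $S_1$ fibre-integrates exactly to $G\cdot r_P^*c$ is essentially the paper's Equation~\eqref{eq:exactness01}. But the argument is not complete: the reconciliation on $S_1\cap S_2$, which you correctly identify as the crux, is left as a ``plan'', and the plan as stated fails. The fibre of the first integral over $(p,q)$ is $\{(p\cdot h^{-1},h,q)\mid h\in s^{-1}(r_P(p))\}\subset S_1$, and such a point lies in $S_2$ iff $s(h)=l_Q(q)$, i.e.\ iff $r_P(p)=l_Q(q)$; so each integration fibre is either \emph{entirely contained in} or \emph{entirely disjoint from} $S_1\cap S_2$, and there is no ``lower fibre dimension'' to exploit in the non-\'etale case. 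Consequently, prescribing vanishing data on $S_1\cap S_2$ forces \emph{both} fibre integrals to vanish at every $(p,q)\in P\times_{H_0}Q$, which is incompatible with achieving $(\hat\rho\Cotimes\id)(\tilde F)=G\cdot r_P^*c$ there, since only the difference $G\cdot(r_P^*c-l_Q^*c)$ vanishes on the fibre product; and any smooth interpolation between zero data on the overlap and your two formulas off it destroys the exactness of the integrals near $P\times_{H_0}Q$, leaving an uncontrolled error that you do not estimate.

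The paper closes this gap with an asymmetric, telescoping construction that never asks a single function to control both fibre integrals at once. It first builds $\tilde f_1$ (your $S_1$-prescription, with a cutoff $e_1$ on $P$ whose orbitwise integral is $1$) so that $(\hat\rho\Cotimes\id)(\tilde f_1)=f$ \emph{exactly}, and simply names the uncontrolled second integral $f':=(\id\Cotimes\hat\lambda)(\tilde f_1)$, noting that $f'$ again vanishes on $P\times_{H_0}Q$ because $\tilde f_1$ vanishes on the triple fibre product. It then builds a correction $\tilde f_2$ with $(\id\Cotimes\hat\lambda)(\tilde f_2)=f'$ exactly and $(\hat\rho\Cotimes\id)(\tilde f_2)=0$; the latter is arranged by extending $f_2$, which already vanishes on $S_1\cap S_2$, to a function vanishing on all of $S_1$, using that $S_1$ and $S_2$ intersect transversally in $P\times H_1\times Q$. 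Then $\phi(\tilde f_1-\tilde f_2)=f-f'-0+f'=f$. This is the missing idea; once you have it, your Step~1 decomposition of $\ker j^*$ into generators $G\cdot(r_P^*c-l_Q^*c)$ via a relative Hadamard lemma becomes an unnecessary detour, since the two-term construction applies verbatim to an arbitrary $f\in\ker j^*$.
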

\begin{proof}
The relevant maps are given explicitly by the formulas
\begin{align*}
  (\pi_* \circ j^*)(f)([p,q])
  &=
  \int_{\mathclap{s^{-1}(l(p))}} f(p\cdot h^{-1}, h\cdot q) 
    \,\di \lambda_{l(p)}(h) 
  \\
  (\hat{\rho} \Cotimes \id)(\tilde{f})(p,q) 
  &=
  \int_{\mathclap{s^{-1}(r(p))}} \tilde{f}(p\cdot h^{-1}, h, q) 
    \,\di \lambda_{r(p)}(h)
  \\
  (\id \Cotimes \hat{\lambda})(\tilde{f})(p,q)
  &= 
  \int_{\mathclap{s^{-1}(l(q))}} \tilde{f}(p, h^{-1}, h \cdot q) 
    \,\di \lambda_{l(q)}(h)
  \,,
\end{align*}
for all $f \in C_\mathrm{c}^\infty(P \times Q)$, $\tilde{f} \in C_\mathrm{c}^\infty(P \times H_1 \times Q)$, $[p,q] \in P \circ_H Q$, and $(p,q) \in P \times Q$.

Let $f \in \ker j^* \subset C_\mathrm{c}^\infty(P \times Q)$. In the first step, we will construct an auxiliary function on $P$. For every $g \in C_\mathrm{c}^\infty(P)$ we define the function $l_* g: P \to \bbR$ by
\begin{equation*}
  (l_* g)(p) 
  := 
  \int_{\mathclap{s^{-1}(r(p))}} g(p \cdot h^{-1}) \,\di \lambda_{r(p)}(h)
  \,.
\end{equation*}
It follows from the right invariance of the Haar system that $(l_* g)(p \cdot g) = (l_* g)(p)$ for all composable $p$ and $g$. In other words, $l_* g$ descends to a smooth function on the orbit space $P/G \cong H_0$, which justifies the notation. Since $l$ is a surjective submersion, we can find a function $e_1 \in C_\mathrm{c}^\infty(P)$, such that
\begin{equation*}
  l_* e_1 \bigr|_{\pr_{\!P}(\Supp f)} = 1
  \,.
\end{equation*}
(We can first choose such a function locally on a tubular neighborhood of a local section of $l$, then add the local functions with a partition of unity on $\pr_P(\Supp f) \subset H_0$.)

On $P \times_{H_0} H_1 \times Q$ we have the smooth function
\begin{equation*}
  f_1(p,h,q) := e_1(p)\,f(p \cdot h, q) 
  \,.
\end{equation*}
It is the pullback of the compactly supported function $P \times_{H_0} P \times Q \to \bbR$, $(p,p',q) \mapsto e_1(p)\, f(p,q)$ along the map
\begin{equation*}
  P \times_{H_0} H_1 \times Q 
  \xrightarrow{(\pr_1, \beta) \times \id_Q}
  P \times_{H_0} P \times Q
  \,,
\end{equation*}
which is an isomorphism since the right $H$-action $\beta$ is principal. It follows that $f_1$ is compactly supported. By assumption, $f$ is in the kernel of $j^*$, so it vanishes on $P \times_{H_0} Q$. This implies that $f_1$ vanishes on $P \times_{H_0} H_1\times_{H_0} Q$.

Since $P \times_{H_0} H_1 \times Q \hookrightarrow P \times H_1 \times Q$ is a closed embedding, we can extend $f_1$ to a function $\tilde{f}_1 \in C_\mathrm{c}^\infty(P \times H_1 \times Q)$. This function satisfies
\begin{equation}
\label{eq:exactness01}
\begin{split}
  (\hat{\rho} \Cotimes \id)(\tilde{f})(p,q) 
  &=
  \int_{\mathclap{s^{-1}(r(p))}} 
  e_1(p\cdot h^{-1})\, f\bigl((p \cdot h) \cdot h^{-1}, q \bigr) 
    \,\di \lambda_{r(p)}(h)
  \\
  &=
  \int_{\mathclap{s^{-1}(r(p))}} 
  e_1(p\cdot h^{-1})\, f(p, q) \,\di \lambda_{r(p)}(h)
  \\
  &= (l_*e_1)(p)\, f(p,q)
  \\
  &= f(p,q)
  \,.
\end{split}    
\end{equation}
Let us denote
\begin{equation}
\label{eq:exactness02}
  f' := \bigl( \id \Cotimes \hat{\lambda} \bigr)\tilde{f}
  \,,
\end{equation}
which is a compactly supported smooth function on $P \times Q$. Since $\tilde{f}_1$ vanishes on $P \times_{H_0} H_1\times_{H_0} Q$, $f'$ vanishes on $P \times_{H_0} Q$.

We will now construct a second auxiliary function on $H_1$. For every $g \in C_\mathrm{c}^\infty(P)$ we define the smooth function $s_* g: H_0 \to \bbR$ by
\begin{equation*}
  (s_* g)(x) 
  := 
  \int_{\mathclap{s^{-1}(x)}} g(h) \,\di \lambda_x(h)
  \,.
\end{equation*}
Since $\Supp f'$ is compact, $(l \circ \pr_Q)(\Supp f') \subset H_0$ is compact. Since $s$ is a surjective submersion, we can find a function $e_2 \in C_\mathrm{c}^\infty(H_1)$, such that
\begin{equation*}
  s_* e_2 \bigr|_{(l \circ \pr_Q)(\Supp f')} = 1
  \,.
\end{equation*}

On $P \times H_1 \times_{H_0} Q$ we have the smooth function
\begin{equation*}
  f_2(p,h,q) := f'(p, h \cdot q) \, e_2(h) 
  \,.
\end{equation*}
It is the pullback of the compactly supported function $P \times H_1 \times_{H_0} Q \to \bbR$, $(p,h,q) \mapsto f'(p,q) \, e_2(h)$ along the map
\begin{equation*}
  P \times H_1\times_{H_0} Q 
  \xrightarrow{ \id_P \times (\pr_1, \alpha)}
  P \times H_1\times_{H_0} Q
  \,,
\end{equation*}
which is an isomorphism since $\alpha$ is a groupoid action. It follows that $f_2$ is compactly supported.

Since $f'$ vanishes on $P \times_{H_0} Q$, $f_2$ vanishes on $P \times_{H_0} H_1\times_{H_0} Q$. The submanifolds $P \times_{H_0} H_1\times Q$ and $P \times H_1\times_{H_0} Q$ of $P \times H_1\times Q$ intersect transversely in $P \times_{H_0} H_1\times_{H_0} Q$. Therefore, we can extend $f_2$ to a smooth function $\tilde{f}_2 \in C_\mathrm{c}^\infty(P \times H_1 \times Q)$ that vanishes on $P \times_{H_0} H_1\times Q$. 

Since $(\hat{\rho} \Cotimes \id)(\tilde{f}_2)$ depends only on the restriction $\tilde{f}_2|_{P\times_{H_0} H_1 \times Q}$, it follows that
\begin{equation}
\label{eq:exactness03}
  (\hat{\rho} \Cotimes \id)(\tilde{f}_2) (p,q) = 0
  \,.
\end{equation}
Moreover,
\begin{equation}
\label{eq:exactness04}
\begin{split}
  (\id \Cotimes \hat{\lambda})(\tilde{f}_2)(p,q)
  &= 
  \int_{\mathclap{s^{-1}(l(q))}} f_2(p, h^{-1}, h \cdot q) 
    \,\di \lambda_{l(q)}(h)
  \\
  &= 
  \int_{\mathclap{s^{-1}(l(q))}} 
    f'\bigl(p, h^{-1} \cdot (h \cdot q) \bigr) \, e_2(h) 
    \,\di \lambda_{l(q)}(h)
  \\
  &= 
  \int_{\mathclap{s^{-1}(l(q))}} 
    f'\bigl(p, q) \, (s_* e_2)\bigl(l(q)\bigr) 
    \,\di \lambda_{l(q)}(h)
  \\
  &= 
  f'(p,q)
  \,.
\end{split}
\end{equation}
Let $\tilde{f} = \tilde{f}_1 - \tilde{f}_2$. Using Equations~\eqref{eq:exactness01}, \eqref{eq:exactness02}, \eqref{eq:exactness03}, and \eqref{eq:exactness04}, we obtain
\begin{equation*}
\begin{split}
  \phi(\tilde{f})
  &= 
    (\hat{\rho} \Cotimes \id)(\tilde{f}_1) 
  - (\id \Cotimes \hat{\lambda})(\tilde{f}_1)
  - (\hat{\rho} \Cotimes \id)(\tilde{f}_2) 
  + (\id \Cotimes \hat{\lambda})(\tilde{f}_2)
  \\
  &=
  f - f' - 0 + f'
  \\
  &= f
  \,,
\end{split}
\end{equation*}
which shows that $f$ is in the image of $\phi$.
\end{proof}

Consider Diagram~\eqref{eq:tauhatcoker}. It follows from Lemma~\ref{lem:PullbackSubfmld} and Proposition~\ref{prop:TestSubmfgPullback} that $i^*$ is surjective. The four lemma states $\ker(\hat{\tau}_{P,Q}) = \Pi (\ker j^*)$. By Lemma~\ref{lem:exactnessKernel}, we have $\Pi(\ker j^*) \subset \Pi(\im \phi) = \{0\}$. This shows that $\hat{\tau}_{P,Q}$ is injective, so a monomorphism. We have already shown that $\hat{\tau}_{P,Q}$ is a strong epimorphism. We conclude that $\hat{\tau}_{P,Q}$ is an isomorphism. \qed

\subsection{Proof of coherence}
\label{sec:coherence}

Let us recall the data defining our $2$-functor.
The convolution functor assigns to a Lie groupoid $G$ the bornological convolution algebra $A(G)$.
The category of 1-morphisms $\GrpdMrt(G,H)$ consists of right principal $G$-$H$-bibundles and their biequivariant diffeomorphisms.
The assignments $P\mapsto M(P)$, $\phi\mapsto \phi_* = (\phi^{-1})^*$ defines a functor $\GrpdMrt(G,H)\to \AlgMrt(A(G),A(H))$, because the inverse and the pullback are functorial.

The functoriality constraint is the collection of morphisms $\hat{\tau}_{P,Q}$ of Diagram~\eqref{eq:tauhatdiagram} and Proposition~\ref{prop:CompletedAlgsMods}~(iv), where we have already proved its naturality in $(P,Q)$. In Section~\ref{sec:functoriality}, we have proved that it is an isomorphism.

The identity morphism in $\GrpdMrt(G,G)$ is $G$ viewed as $G$-$G$ bibundle. The identity morphism in $\AlgMrt(A(G),A(G))$ is $M(G)$, which is the algebra $A(G)$ viewed as $A(G)$-$A(G)$ bimodule. The unitality constraint $\eta_G: M(G) \to M(G)$ is simply the identity morphism.

We have to show that $\hat{\tau}$ is compatible with the associator of the composition of groupoid bibundles and the associator of composition of bimodules, such that Diagram~\eqref{diag:Coherence1} commutes. Using the subscript notation $A_H \equiv A(H)$, $M_P \equiv M(P)$, etc.\ to conserve space, the diagram has the form
\begin{equation}
\label{eq:tauCoherence1}
\begin{tikzcd}[column sep=3.4em]
   (M_P\Cotimes_{A_H} M_Q) \Cotimes_{A_K} M_R \ar[r,"\hat{\tau}_{P,Q} \Cotimes \id"]\ar[d,"\cong"']
   & 
   M_{P\circ_H Q} \Cotimes_{A(K)} M_R \ar[r,"\hat{\tau}_{P \circ_H Q, R}"] 
   & 
   M_{(P\circ_H Q)\circ_K R} \ar[d,"\cong"]
   \\
   M_P\Cotimes_{A_H} (M_Q \Cotimes_{A_K} M_R)\ar[r,"\id \Cotimes \hat{\tau}_{Q,R}"']
   &
   M_P \Cotimes_{A_H} M_{Q\circ_K R} \ar[r,"\hat{\tau}_{P, Q \circ_K R}"'] 
   & 
   M_{P\circ_H (Q \circ_K R)}
\end{tikzcd}
\end{equation}
where $P$ is a $G$-$H$ groupoid bibundle, $Q$ an $H$-$K$ bibundle, and $R$ a $K$-$L$ bibundle. 

The commutativity of this diagram can be proved by a direct computation using Equation~\eqref{eq:tauhatformula}. We will give a more conceptual argument using the defining Diagram~\eqref{eq:tauhatdiagram}. The right commutative square of~\eqref{eq:tauhatdiagram} shows that
\begin{equation}
\label{eq:tauCoherence2}
\begin{tikzcd}
M_P \Cotimes M_Q
\ar[dr, "\pi_* \circ j^* = T_{P,Q}"]
\ar[d, "\Pi"']
&
\\
M_P \Cotimes_{A_H} M_Q
\ar[r, "\hat{\tau}_{P,Q}"']
&
M_{P \circ_H Q}
\end{tikzcd}
\end{equation}
commutes. $\hat{\tau}_{P,Q}$ is uniquely determined by the $A_H$-tensorial map $\pi_* \circ j^*$. Consider the following commutative diagram:
\begin{equation}
\label{eq:tauCoherence3}
\begin{tikzcd}[column sep=4em, row sep=5ex]
M_P\Cotimes M_Q \Cotimes M_R 
\ar[r, "\id \Cotimes j_{Q,R}^*"]
\ar[d, "j_{P,Q}^* \Cotimes \id"']
&
M_P\Cotimes M_{Q\times_{K_0} R} 
\ar[r,"\id \Cotimes (\pi_{Q,R})_*"]
\ar[d, "j_{P,Q\times_{K_0} R}^*"] 
&
M_P\Cotimes M_{Q\circ_K R} 
\ar[d,"j_{P, Q\circ_K R}^*"] 
\\
M_{P\times_{H_0} Q}\Cotimes M_R 
\ar[r, "j_{P \times_{H_0} Q, R}^*"]
\ar[d,"(\pi_{P,Q})_*\Cotimes \id"']
& 
M_{P\times_{H_0}Q\times_{K_0}R} 
\ar[r, "(\id_P \times \pi_{Q,R})_*"]
\ar[d, "(\pi_{P,Q} \times \id_R)_*"]
& 
M_{P\times_{H_0} (Q\circ_K R)} 
\ar[d, "(\pi_{P, Q\circ_K R})_*"]
\\
M_{P\circ_H Q} \Cotimes M_R 
\ar[r, "j_{P \circ_H Q, R}^*"'] 
& 
M_{(P\circ_H Q)\times_{K_0} R} 
\ar[r, "{(\pi_{P \circ_H Q, R})_*}"']
& 
M_{P\circ_{H} Q \circ_K R}
\end{tikzcd}
\end{equation}
The outer square of~\eqref{eq:tauCoherence3} is the inner square in the following diagram: 
\begin{equation}
\label{eq:tauCoherence4}
\begin{tikzcd}[column sep=2ex, row sep=6ex]
&
M_P\Cotimes (M_Q \Cotimes M_R) 
\ar[dr, "\id \Cotimes T_{Q,R}"]
\ar[d, "\cong"] &&
\\
(M_P\Cotimes M_Q) \Cotimes M_R
\ar[ur, "\cong"]
\ar[dr, "T_{P,Q} \Cotimes \id"']
\ar[r, "\cong"]
&
M_P\Cotimes M_Q \Cotimes M_R 
\ar[r] \ar[d]
&  
M_P\Cotimes M_{Q\circ_K R} 
\ar[dr,"T_{P, Q\circ_K R}"]
\ar[d]
&
\\
&
M_{P\circ_H Q} \Cotimes M_R 
\ar[dr, "T_{P \circ_H Q, R}"'] 
\ar[r] 
&
M_{P\circ_{H} Q \circ_K R}
\ar[d, "\cong"] \ar[r, "\cong"]
&
M_{P\circ_{H} (Q \circ_K R)}
\\
&&
M_{(P\circ_{H} Q) \circ_K R}
\ar[ur, "\cong"']
&
\end{tikzcd}
\end{equation}
The outer parallelogram has the form of Diagram~\eqref{eq:tauCoherence1} with $\hat{\tau}$ replaced by $T$. In the same way as $\Pi$ projects $T_{P,Q}$ to $\hat{\tau}_{P,Q}$ as in Diagram~\eqref{eq:tauCoherence2}, $\Pi$ induces an epimorphism from the commutative Diagram~\eqref{eq:tauCoherence4} to the coherence Diagram~\eqref{eq:tauCoherence1}, which therefore commutes. This concludes the proof of $2$-functoriality. \qed

\subsection{Proof of projectivity of convolution modules}

We will now prove Proposition~\ref{prop:ConvModProjective}. Let $H$ be a Lie groupoid; let $X \stackrel{l}{\leftarrow }P \stackrel{r}{\rightarrow} H_0$ be submersions; let $\beta: P \times_{H_0} H_1 \to P$ be a right groupoid action that is proper and transitive on the $l$-fibers. We have seen in Equation~\eqref{eq:ConvModSeq}, that the completed convolution action is given as
\begin{equation*}
\begin{tikzcd}[column sep=2em]
M(P) \Cotimes A(H)=C_\mathrm{c}^\infty(P\times H_1) \ar[r,"i^*"]
& 
C_\mathrm{c}^\infty(P\times_{H_0} H_1) 
\ar[r,"\beta_*"]
&  
C_\mathrm{c}^\infty(P)=M(P) 
\end{tikzcd}
\end{equation*}
by a pullback followed by a pushforward. We will show that each of these maps admits a bounded right $A(H)$-linear section.

In the first step, we will construct an $A(H)$-linear section of $i^*$. By Corollary~\ref{cor:PullbackRetract}, we can find an open cover $\{U_i\}$ of $P$ together with maps $\zeta_i: U_i \times V_i \to U_i$, $V_i := r(U_i)$, satisfying $r\bigl(\zeta_i(p, y)\bigr) = y$ and $\zeta_i\bigl(p, r(p) \bigr) = p$. We can assume without loss of generality that the $U_i$ are precompact and, since we assume manifolds to be second countable, that the cover $\{U_i\}$ is locally finite. 
Let $\chi_i: U_i \to [0,1]$ be a subordinate partition of unity.

By Lemma~\ref{lem:PullbackSubfmld}, $P \times_{H_0} H_1 \subset P \times H_1$ is a closed embedded submanifold. Therefore, a subset of $P \times_{H_0} H_1$ is compact if and only if it is contained in the product $K_P \times K_{H_1}$ of compact subsets $K_P \subset P$ and $K_{H_1} \subset H_1$. Let $f \in C_\mathrm{c}^\infty(P \times_{H_0} H_1)$ with support contained in $K_P \times K_{H_1}$. Consider the smooth functions on $P \times H_1$ defined by
\begin{equation*}
  f^\zeta_i(p,h) 
  := \chi_i(p)\, f\bigl(\zeta_i(p,t(h)), h \bigr)
  \,.    
\end{equation*}
A necessary condition for $(p,h)$ to be in the support of $f^\zeta_i$ is that $\zeta_i(p, t(h)) \subset K_P$, since $\zeta_i$ takes values in $U_i$, the support can be non-empty only if $U_i \cap K_P$ is non-empty. Since the cover $\{U_i\}$ is locally finite and since $K_P$ is compact, the set of indices $I := \{i~|~ U_i \cap K_P \neq \emptyset\}$ is finite.

The support of $f^\zeta_i$ is contained in the compact set $\Supp \chi_i \times K_{H_1}$. Since $I$ is finite, we have a linear map
\begin{equation*}
\begin{aligned}
  \sigma_1:
  C_\mathrm{c}^\infty(P \times_{H_0} H_1)
  &\longrightarrow
  C_\mathrm{c}^\infty(P \times H_1)
  \\
  f &\longmapsto \sum_i f^\zeta_i
  \,.
\end{aligned}
\end{equation*}
Let $(p,h) \in P \times_{H_0} H_1$, that is, $r(p) = t(h)$. Then $\zeta_i(p, t(h)) = p$. It follows that
\begin{equation*}
  \sigma_1(f)(p,h)
  = \sum_i \chi_i(p)\, f(p, h)
  = f(p,h)
  \,,
\end{equation*}
which shows that $\sigma_1$ is a section of $i^*$. To show that $\sigma_1$ is bounded, we can use arguments analogous to the proof of Lemma~\ref{lem:fiberdirac}. Let $a \in A(H_1) = C_\mathrm{c}^\infty(H_1)$. Then
\begin{equation*}
\begin{split}
  \sigma_1(f \cdot a)(p,h)
  &= \sum_i \chi_i(p) (f \cdot a)\bigl(\zeta_i(p,t(h)), h)
  \\
  &= \sum_i \chi_i(p) \int_{\mathclap{s^{-1}(s(h))}}
  f\bigl( \zeta_i(p,t(hk^{-1})), hk^{-1}\bigr) \, a(k)
  \, \di\lambda_{s(k)}(k)
  \\
  &= \int_{\mathclap{s^{-1}(s(h))}}
  \sum\nolimits_i \chi_i(p)\,
  f\bigl( \zeta_i(p,t(hk^{-1})), hk^{-1}\bigr) \, a(k)
  \, \di\lambda_{s(k)}(k)
  \\
  &= \bigl(\sigma_1(f) \cdot a \bigr) (p,h) \,.
\end{split}    
\end{equation*}
We conclude that $\sigma_1$ is right $A(H)$-linear bounded section of $i^*$.

In the second step, we will construct an $A(H)$-linear section of $\beta_*$. Since $l: P \to G_0$ is a surjective submersion, we can find a smooth function $e: P \to \bbR$ that satisfies
\begin{equation}
\label{eq:ProjConv01}
  \int_{\mathclap{s^{-1}(r(p))}} e(p \cdot h^{-1}) \,\di \lambda_{r(p)}(h)
  = 
  1
\end{equation}
and is compactly supported in the $l$-direction, that is, for $K \subset G_0$ compact, $\Supp e \cap l^{-1}(K)$ is compact. (See the proof of Lemma~\ref{lem:exactnessKernel}.) 

Let $f \in C_\mathrm{c}^\infty(P)$ with support $\Supp f = K$. Consider the smooth function $\tilde{f}: P \times_{G_0}^{l,l} P \to \bbR$, defined by $\tilde{f}(p,p') := e(p) f(p')$. Its support satisfies
\begin{equation*}
  \Supp \tilde{f} \subset 
  \bigl( \Supp e \cap l^{-1}(l(K)) \bigr) \times K
  \,.
\end{equation*}
Since $K$ is compact, $l(K)$ is compact. Since $e$ is compact in the $l$-direction, the first summand is compact. It follows that $\Supp \tilde{f}$ is a closed subset of a compact set, so it is compact. The pull back of $\tilde{f}$ along the characteristic map~\eqref{eq:ActionCharMap} of the action will be denoted 
\begin{equation*}
\begin{aligned}
  f^e: P \times_{H_0} H_1 
  &\longrightarrow \bbR
  \\
  f^e(p,h) 
  &:= e(p)\, f(p \cdot h)
  \,.
\end{aligned}
\end{equation*}
By assumption on the action, \eqref{eq:ActionCharMap} is a proper map, so $f^e$ is compactly supported. We have
\begin{equation*}
\begin{split}
  (\beta_* f^e)(p)
  &= \int_{\mathclap{s^{-1}(r(p))}} 
    e(p \cdot h^{-1})\, f\bigl((p \cdot h^{-1}) \cdot h \bigr)
    \, \di \lambda_{r(p)}(h)
  \\
  &= \int_{\mathclap{s^{-1}(r(p))}} 
    e(p \cdot h^{-1})\, f(p) \, \di \lambda_{r(p)}(h)
  \\
  &= f(p)
  \,,
\end{split}  
\end{equation*}
where in the last step we have used~\eqref{eq:ProjConv01}. This shows that the linear map
\begin{equation*}
\begin{aligned}
  \sigma_2:
  C_\mathrm{c}^\infty(P)
  &\longrightarrow
  C_\mathrm{c}^\infty(P \times_{H_0} H_1)
  \\
  f &\longmapsto f^e
\end{aligned}
\end{equation*}
is a section of $\beta_*$. It follows from Proposition~\eqref{prop:TestFuncPullback} that $\sigma_2$ is bounded. Let $a \in A(H_1) = C_\mathrm{c}^\infty(H_1)$. Then
\begin{equation*}
\begin{split}
  \sigma_2(f \cdot a)(p,h)
  &= e(p)\, (f \cdot a)(p \cdot h)
  \\
  &= e(p) \int_{\mathclap{s^{-1}(r(p\cdot h))}}
  f\bigl((p \cdot h) \cdot k^{-1})\, a(k)\, \di\lambda_{r(p \cdot h)}(k)
  \\
  &= \int_{\mathclap{s^{-1}(s(h))}}
     e(p)\, f(p \cdot hk^{-1})\,
     a(k) \, \di\lambda_{s(h)}(k)
  \\
  &= \bigl(\sigma_2(f) \cdot a \bigr) (p,h) \,.
\end{split}    
\end{equation*}
It follows that $\sigma_2$ is an $A(H)$-linear bounded section of $\beta_*$. It follows from Proposition~\ref{prop:ModuleSectionProj} that the right $A(H)$-module $M(P)$ is projective, which concludes the proof of Proposition~\ref{prop:ConvModProjective}. \qed

\section{Examples and applications}
\label{sec:Applications}

\subsection{Basic examples}

The differentiable stack presented by the Lie groupoid $G_1 \rightrightarrows G_0$ is often denoted suggestively by $G_0 /\!/G_1$. Therefore, we also denote the convolution algebra by $A(G) = A(G_0/\!/G_1)$.

\begin{Example}[Terminal groupoid]
The convolution algebra of the terminal groupoid $1 = (* \rightrightarrows *)$ is $A(1) = \bbC$.
\end{Example}

\begin{Example}[Manifold]
Let $X$ be a smooth manifold viewed as Lie groupoid $X \rightrightarrows X$ with only identities as arrows. The convolution algebra is $C_\mathrm{c}^\infty(X)$ with the extension of pointwise multiplication (Example~\ref{ex:PointwiseMult}) to $C_\mathrm{c}^\infty(X) \Cotimes C_\mathrm{c}^\infty(X) \cong C_\mathrm{c}^\infty(X \times X)$. The algebra is unital if and only if $X$ is compact.
\end{Example}

\begin{Example}[Group as point stack]
\label{ex:GroupConv}
Let $G$ be a Lie group viewed as groupoid $G \rightrightarrows *$ with one object. Using the Haar measure on $G$, we obtain the convolution algebra $A(*/\!/G) = C_\mathrm{c}^\infty(G)$ with group convolution as product. The algebra is unital if and only if $G$ is discrete.
\end{Example}

\begin{Example}[Group bundle]
Example~\ref{ex:GroupConv} generalizes to group bundles, that is, a groupoid $G_1 \rightrightarrows G_0$ with equal source and target maps, $s=t$. Every fiber is a Lie group, the Haar system consists of a smooth family of Haar measures on the fibers, and the product is the fiberwise group convolution.
\end{Example}

\begin{Example}[Pair groupoid]
\label{ex:PairGrpd}
Let $X$ be a smooth manifold with a density $\mu$. The convolution algebra of the pair groupoid $X\times X \rightrightarrows X$ is $\calK(X) := C_\mathrm{c}^\infty(X\times X)$ with the convolution product of integral kernels,
\begin{equation*}
  (K_1 * K_2)(x,y) = \int_X K_1(x,z)\, K_2(z,y) \,\di\mu(z) 
  \,.
\end{equation*}
In other words, $\calK(X)$ is the algebra of linear integral operators on $C_\mathrm{c}^\infty(X)$. The bibundle $X \leftarrow X \rightarrow *$ with the left action of the pair groupoid on its base and right trivial action by the terminal groupoid $1$ is a Morita equivalence of Lie groupoids. Its convolution is the $\calK(X)$-$\bbC$ bimodule $C_\mathrm{c}^\infty(X)$ with the left action by integral operators, which is a Morita equivalence
\begin{equation*}
  \calK(X) \simeq \bbC 
  \,.   
\end{equation*}
\end{Example}

\begin{Example}[Product of groupoids]
\label{ex:ProdGrpd}
Let $G \times H$ be the product of Lie groupoids. Then 
\begin{equation*}
  A(G \times H) \cong A(G) \Cotimes A(H)
  \,,
\end{equation*}
where the right side denotes the tensor product of complete bornological algebras.
\end{Example}

\begin{Example}[Matrix algebra]
Let $X = \{1, \ldots, n\}$. Let $H$ be the product of the pair groupoid of $X$ and some Lie groupoid $G$. Then
\begin{equation*}
\begin{split}
  A(H) 
  &\cong \calK(X) \Cotimes A(G)
  \cong \Mat(n\times n, \bbC) \Cotimes A(G)
  \\
  &\cong \Mat\bigl( n \times n, A(G) \bigr)    
\end{split}
\end{equation*}
Since the pair groupoid is Morita equivalent to the terminal groupoid $1$, as explained in Example~\ref{ex:PairGrpd}, $H$ is Morita equivalent to  $G$. By applying the convolution functor, we recover the usual Morita equivalence
\begin{equation*}
  \Mat\bigl( n \times n, A(G) \bigr)
  \simeq A(G)
  \,.
\end{equation*}
\end{Example}

\begin{Example}[Group action]
\label{ex:GroupAction}
Let $G \times X \to X$, $(g,x) \mapsto g \cdot x$ the action of a Lie group on a manifold $X$; choose a Haar measure $\mu$ on $G$. The convolution algebra of the action groupoid $G\ltimes X \rightrightarrows X$ is $A(X/\!/G) \equiv A(G\ltimes X) = C_\mathrm{c}^\infty(G\times X)$ with the product
\begin{equation*}
  (a_1*a_2)(g,x)
  =
  \int_G a_1(gh^{-1}, h \cdot x)\, a_2(h,x) \,\di\mu(h) 
  \,.
\end{equation*}
\end{Example}

\begin{Example}[Principal bundle]
\label{ex:PrincBund}
When the $G$-action on $X$ is free and proper, the quotient $X/G$ exists in $\Mfld$ and $\pi: X \to X/G$ is a principal $G$-bundle. The space of smooth functions on $X/G$ can be identified with the space $C^\infty(X)^G$ of $G$-invariant smooth functions on $X$. It follows that
\begin{equation*}
  C_\mathrm{c}^\infty(X/G)
  \cong \{ f \in C^\infty(X)^G ~|~ \pi(\Supp f) \subset X/G \text{ compact} \}
  =: C_{\mathrm{bc}}^\infty(X)^G
  \,,
\end{equation*}
where ``bc'' stands for ``basically compact''. The bibundle $X \leftarrow X \rightarrow X/G$ with left $G \ltimes X$-action on its base and right trivial action of $X/G \rightrightarrows X/G$ is a Morita equivalence. Its convolution bimodule is a Morita equivalence $A(X /\!/ G) \simeq C_\mathrm{c}^\infty(X/G)$. 
\end{Example}

\begin{Example}[Gauge groupoid]
\label{ex:GaugeGrpd}
Let $P \to X$ be a right principal $G$-bundle. The gauge groupoid $\mathrm{Gauge}(P) = (P \times P)/G \rightrightarrows P/G = X$ is the quotient of the pair groupoid of $P$ by the diagonal action of $G$, which is free and proper. An arrow can be viewed as a $G$-equivariant diffeomorphism between two fibers of $P$. As we have seen in Example~\ref{ex:PrincBund}, the space of compactly supported smooth functions on the group quotient is 
\begin{equation*}
  A\bigl(\mathrm{Gauge}(P)\bigr) 
  \cong C_{\mathrm{bc}}^\infty(P \times P)^G
  \,.
\end{equation*}
Explicitly, an element of the algebra is a smooth function $a: P \times P \to \bbC$ such that $(\Supp a)/G$ is compact and $a(p \cdot g,p' \cdot g) = a(p, p')$. The product is given by the convolution product of the pair groupoid. The bibundle $X \leftarrow P \rightarrow *$ with the left action of the gauge groupoid on its base and the right $G$-action is a Morita equivalence. The convolution bimodule is a Morita equivalence
\begin{equation*}
  A\bigl(\mathrm{Gauge}(P)\bigr) 
  \simeq
  A(*/\!/G)
  \,,
\end{equation*}
where the right side is the group convolution algebra of Example~\ref{ex:GroupConv}.
\end{Example}

\begin{Example}[Transitive groupoid]
Every transitive groupoid is isomorphic to the gauge groupoid of the principal bundle $P = s^{-1}(x) \to G_0$ with gauge group the isotropy group $G(x,x) = t^{-1}(x) \cap s^{-1}(x)$. We conclude from Example~\ref{ex:GaugeGrpd}, that the convolution algebra is Morita equivalent to $A(G) \simeq A(*/\!/ G(x,x))$.
\end{Example}

\begin{Example}[Homotopy groupoid of a manifold]
\label{ex:pi1Grpd}
Let $X$ be a connected smooth manifold. The homotopy groupoid $\pi_1\mathrm{Paths}(X) \rightrightarrows X$ has smooth homotopy classes of paths as arrows with the endpoints as source and target. It is isomorphic to the gauge groupoid of the universal covering space $\tilde{X} \to X$ viewed as principal $\pi_1(X)$-bundle, where $\pi_1(X)$ is the fundamental group. The isomorphism depends on the choice of a basepoint in $X$. By Example~\ref{ex:GaugeGrpd}, the convolution algebra is the space
\begin{equation*}
  A\bigl( \pi_1\mathrm{Paths}(X)\bigr)
  \cong
  C_{\mathrm{bc}}^\infty(\tilde{X} \times \tilde{X})^{\pi_1(X)}
  \,,
\end{equation*}
with the convolution product of the pair groupoid. As in Example~\ref{ex:GaugeGrpd}, we have the Morita equivalence
\begin{equation*}
  A\bigl( \pi_1\mathrm{Paths}(X)\bigr)
  \cong
  A\bigl(\mathrm{Gauge}(\tilde{X})\bigr)
  \simeq
  A\bigl(*/\!/ \pi_1(X) \bigr)
  \,.
\end{equation*}
\end{Example}

\begin{Example}[Non-equivalent groupoids with isomorphic convolution algebras]
\label{ex:NonMorButIsomorphic}
Consider the following groupoids. The group $\bbZ_2 = 
\{1,-1\}$ as groupoid over a point and the discrete manifold $\{x_+,x_-\}$ as groupoid with two objects and no other arrows. The groupoids have different orbit spaces and different isotropy groups, so they are not Morita equivalent. As vector space $A(*/\!/\bbZ_2) = \bbC\delta_{1} \oplus \bbC\delta_{-1}$ with $\delta_{1}$ the unit and $\delta_{-1}\delta_{-1} = \delta_1$. The basis
\begin{equation*}
  a_+ := \tfrac{1}{2}(\delta_1 + \delta_{-1})
  \,,\qquad
  a_- := \tfrac{1}{2}(\delta_1 - \delta_{-1})
\end{equation*}
satisfies $a_+ a_+ = a_+$, $a_- a_- = a_-$, and $a_+ a_- = 0 = a_- a_+$. This shows that $A(*/\!/\bbZ_2)$ is isomorphic to $\bbC \times \bbC = A(\{x_+, x_-\})$.
\end{Example}

\subsection{Homomorphisms of Lie groupoids}

To every homomorphism of Lie groupoids $\phi: G \to H$, which consists of a pair of maps $\phi_0: G_0 \to H_0$ and $\phi_1: G_1 \to H_1$, we can associate a $G$-$H$ bibundle
\begin{equation*}
  P_\phi := G_0 \times_{H_0}^{\phi_0, t} H_1
  \,,  
\end{equation*}
with left bundle map $l(x,h) = x$, right bundle map $r(x,h) = s(h)$, left $G$-action $g \cdot (x,h) = (g\cdot x, \phi_1(g)h)$, and right $H$-action $(x,h) \cdot h' = (x, hh')$, whenever defined \cite{Blohmann2008}. $P_\phi$ is right principal.

The bibundles of two homomorphisms $\phi,\psi: G \to H$ are isomorphic if and only if they are naturally equivalent, that is, if there is a smooth map $\tau:G_0 \to G_1$ such that $\tau_{t(g)} \phi_1(g) = \psi_1(g) \tau_{s(g)}$ for all $g \in G_1$. 

\begin{Example}[Terminal homomorphism]
\label{ex:terminalGrpdHom}
The bibundle of the terminal homomorphism $G \to 1$ is $P_{G\to 1} = G_0$ with the left action of $G$ on its base. The convolution module is $C^\infty_\mathrm{c}(G_0)$ with the left $A(G)$-action given by
\begin{equation*}
  (a \cdot m)(x)
  = \int_{t^{-1}(x)} a(g^{-1})\, m(g \cdot x) \, \di \mu_{x}(g)
\end{equation*}
and the $\bbC$-multiplication of the vector space as right action by $A(1) \cong \bbC$. 
\end{Example}

\begin{Example}[Groupoid anchor]
The anchor map of a groupoid $(t,s): G \to G_0 \times G_0$, $g \mapsto (t(g), s(g))$ is a homomorphism to the pair groupoid of its base. The associated bimodule is $P_{(s,t)} = G_0$ with the identity as left and right bundle map, the left action of $G$ on its base and the right action by the pair groupoid on its base. The convolution module is $M(P_{(s,t)}) = C_\mathrm{c}^\infty(G_0)$ with the left $A(G)$-action of Example~\ref{ex:terminalGrpdHom} and the right $\calK(G_0)$-action of Example~\ref{ex:PairGrpd}.
\end{Example}

\begin{Example}[Diagonal action]
\label{ex:DiagonalHom}
The bibundle of the diagonal homomorphism $\Delta_G: G \to G \times G$ is given by
\begin{equation*}
  P_{\Delta_G} 
  \cong
  G_1 \times_{G_0}^{t,t} G_1
  \,,
\end{equation*}
with the left $G$-action $g \cdot (h_1, h_2) = (gh_1, gh_2)$ and the right $(G\times G)$-action $(h_1, h_2) \cdot (g_1, g_2) = (h_1 g_1, h_2 g_2)$. Let $P$ and $Q$ left $G$-bundles. Then
\begin{equation*}
  P_{\Delta_G} \circ_{G \times G} (Q \times R)
  \cong Q \times_{G_0}^{l,l} R
\end{equation*}
with the left diagonal $G$-action $g \cdot (q,r) = (g\cdot q, g\cdot r)$. Given two smooth left $A(G)$-modules $M$ and $M'$, the smooth left $A(G)$-module
\begin{equation*}
  M\bullet M' := 
  M(P_{\Delta_G}) \Cotimes_{A(G) \Cotimes A(G)} (M \Cotimes M')
\end{equation*}
equips the category of smooth left $A(G)$-modules with a weak symmetric monoidal structure. The unit of $\bullet$ is given by the module of the terminal homomorphism $G \to 1$ of Example~\ref{ex:terminalGrpdHom}.
\end{Example}

\begin{Example}[Points of a groupoid]
Let $x \in G_0$ be a point of the base of the groupoid $G$. The maps $x_0\colon * \to G_0$, $* \mapsto x$ and $x_1\colon * \to G_1$, $* \mapsto 1_x$ define a morphism of groupoids $x\colon 1 \to G$.
Its bibundle is $P_{x} = t^{-1}(x)$ with the right $G$-action given by groupoid multiplication. Its convolution bimodule is $M(P_x) = C_\mathrm{c}^\infty(t^{-1}(x))$ with the left $A(1) \cong \bbC$ action given by scalar multiplication and the right action given by restriction of groupoid convolution.

The composition $1 \xrightarrow{x} G \to 1$ is the identity. With Example~\ref{ex:terminalGrpdHom}, it follows from the functoriality of convolution that
\begin{equation*}
  C^\infty_\mathrm{c}(t^{-1}(x)) \Cotimes_{A(G)} C^\infty_\mathrm{c}(G_0)
  \cong \bbC
  \,.
\end{equation*}
This shows that the bimodule $M(P_x)$ is a weakly split monomorphism from the tensor unit $\bbC$ to $A(G)$. In this sense, points of $G_0$ give rise to $\bbC$-points in $A(G)$.
\end{Example}

\begin{Example}[Identity bisection]
The identity bisection $1\colon G_0 \hookrightarrow G_1$ is a homomorphism of groupoids. The associated bibundle is $P_1 = G_1$, with target and source map as left and right bundle maps. On the left, $G_0 \rightrightarrows G_0$ acts trivially. On the right, $G$ acts by groupoid multiplication. It follows that $A(G)$ is a module over the nonunital commutative algebra $C_\mathrm{c}^\infty(G_0)$.
\end{Example}

The bibundle $P_\phi$ of a homomorphism $\phi: G \to H$ is a Morita equivalence if and only if there is a homomorphism $\psi\colon H \to G$ such $\phi \circ \psi$ and $\psi \circ \phi$ are smoothly naturally equivalent to the identity. The following criterion is more useful. Let $G(x,x') := t^{-1}(x) \cap s^{-1}(x') \subset G_1$ denote the submanifold of arrows to $x$ from $x'$.

\begin{Proposition}
\label{prop:BibuHomMorEquiv}
$P_\phi$ is a Morita equivalence if and only if
\begin{itemize}

\item[(i)] $\phi_1$ restricts to a diffeomorphism $G(x, x') \to H(x,x')$ for all $x,x' \in G_0$;

\item[(ii)] $\phi_0(H_0)$ intersects every orbit of $H$;

\item[(iii)] $T\phi_0(T_x G_0) + Tt(T_h H_1) = T_{t(h)} H_0$ for all $(x,h) \in P_\phi$.

\end{itemize}
\end{Proposition}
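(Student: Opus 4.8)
The plan is to reduce the proposition to the standard description of Morita equivalences as biprincipal bibundles. By the characterization of weak isomorphisms in $\GrpdMrt$ (see \cite{Blohmann2008}), a right principal bibundle is a Morita equivalence precisely when it is in addition left principal, that is, biprincipal. Since the text already records that $P_\phi$ is right principal, it suffices to determine exactly when $P_\phi$ is left principal, i.e.\ when the left $G$-action satisfies the analogues of (P1)--(P3) in Definition~\ref{def:principal} relative to the right moment map $r(x,h) = s(h)$: namely (L1) $r$ is a surjective submersion; (L2) the left $G$-action is free; and (L3) the left $G$-action is transitive on the $r$-fibres. I would then match (L1)--(L3) to (i)--(iii) term by term, noting that both implications of the proposition fall out of the same dictionary (one direction extracting the conditions from left principality, the other rebuilding left principality from them).

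For the algebraic conditions I would compute the left action explicitly, $g\cdot(x,h) = (t(g),\phi_1(g)h)$, and read off its properties. A fixed point $g\cdot(x,h) = (x,h)$ forces $t(g)=s(g)=x$, so $g\in G(x,x)$, together with $\phi_1(g) = 1_{\phi_0(x)}$; hence (L2) is equivalent to $\phi_1$ being injective on each isotropy group, and a short argument using $g_1 g_2^{-1}$ promotes this to injectivity of $\phi_1$ on every $G(x,x')$. For (L3), two points of a common fibre $r^{-1}(y)$ are related by the action iff $\phi_1(g)$ equals the connecting arrow $h'h^{-1}\in H(\phi_0(x'),\phi_0(x))$, and since every arrow of $H(\phi_0(x'),\phi_0(x))$ arises in this way, (L3) is equivalent to surjectivity of $\phi_1\colon G(x',x)\to H(\phi_0(x'),\phi_0(x))$. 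Together (L2) and (L3) say that $\phi_1$ is a smooth bijection on all hom-manifolds, and I would upgrade this to the diffeomorphism statement (i) exactly as in the right principal case, invoking the fact that a bijective principal-action characteristic map \eqref{eq:ActionCharMap} is automatically a diffeomorphism \cite{Blohmann2008}.

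It remains to treat (L1), which splits into condition (ii) and condition (iii). The image of $r$ consists of all $y\in H_0$ lying in the same $H$-orbit as some $t(h) = \phi_0(x)$, so $r$ is surjective iff $\phi_0(G_0)$ meets every orbit of $H$, which is (ii). For submersivity, differentiating at $(x,h)$ gives $T_{(x,h)}P_\phi = \{(v,w) : T\phi_0(v) = Tt(w)\}$ and $Tr(v,w) = Ts(w)$, so $r$ is a submersion at $(x,h)$ iff $Ts$ maps $\{\,w\in T_hH_1 : Tt(w)\in \img T\phi_0\,\}$ onto $T_{s(h)}H_0$. The key technical step is an elementary linear-algebra lemma: for the surjective maps $Tt,Ts$ and the subspace $L = \img T\phi_0$, this holds iff $L + Tt(\ker Ts) = T_{t(h)}H_0$, i.e.\ iff the transversality condition (iii) is satisfied (here $\ker Ts = T_h(s^{-1}(s(h)))$ is the tangent to the source fibre). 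Assembling (L1)$\Leftrightarrow$(ii)\,\&\,(iii), (L2)$\Leftrightarrow$ injectivity and (L3)$\Leftrightarrow$ surjectivity, with smoothness yielding (i), proves both directions. I expect the main obstacle to be precisely this last linear-algebra reduction for submersivity together with the smooth upgrade of the set-theoretic bijection to the diffeomorphism in (i); the remaining equivalences are a direct unravelling of Definition~\ref{def:principal}.
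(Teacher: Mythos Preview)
Your approach is correct and coincides with the paper's: both reduce Morita equivalence to biprincipality and then interpret the three conditions as smooth full faithfulness, essential surjectivity, and transversality of $\phi_0(G_0)$ to the $H$-orbits. The paper gives only a one-line sketch, whereas you spell out the dictionary (L1)--(L3)$\Leftrightarrow$(i)--(iii) in detail; your linear-algebra reduction for the submersivity of $r$ is exactly the right computation. One small point worth flagging: the condition you derive for (L1), namely $T\phi_0(T_xG_0)+Tt(\ker T_hs)=T_{t(h)}H_0$, is the orbit-transversality condition the paper intends, but the paper's literal statement of (iii) has $Tt(T_hH_1)$ in place of $Tt(\ker T_hs)$, which is vacuous since $t$ is a submersion---so you have silently (and correctly) repaired a typo rather than matched the printed condition verbatim.
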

\begin{proof}[Sketch of proof]
Condition~(i) means that $\phi$ is smoothly full and faithful, Condition~(ii), that $\phi$ is essentially surjective, and Condition~(iii) that $\phi_0(G_0)$ intersects the orbits of $H$ transversely.
\end{proof}

\begin{Example}[Morita equivalent subgroupoids]
\label{ex:MorSubgrpd}
Let $H$ be a Lie groupoid. Let $\phi_0\colon S \hookrightarrow H_0$ be an embedded submanifold that intersects all orbits transversely. That is, $\phi_0$ satisfies Conditions~(i) and (ii) of Proposition~\ref{prop:BibuHomMorEquiv}. Such a submanifold is called a \textdef{complete transversal}. Let $G$ be the full subgroupoid of $H$ over $S$. That is, $G_0 = S$ and $G(s,s') = H(s,s')$ for all $s,s' \in S$. Then the bibundle of the inclusion $\phi\colon G \to H$ is a Morita equivalence $G \simeq H$. It follows that $A(H) \simeq A(G)$.
\end{Example}

\begin{Example}
Let $G$ be a Lie groupoid and $U \subset G_0$ an open subset that intersects every orbit of $G$. Then the full Lie subgroupoid over $U$ is Morita equivalent to $G$.
\end{Example}

\begin{Example}[\v{C}ech groupoid of a cover]
Let $\calU = \{U_i\}_{i\in I}$ be an open cover of the manifold $X$. Let $C(\calU)$ denote its \v{C}ech groupoid $\bigsqcup_{i,j}U_{ij} \rightrightarrows \bigsqcup U_i$, where $U_{ij}:=U_i\cap U_j$. The convolution algebra is $A(C(\calU)) = \bigoplus_{i,j} C_\mathrm{c}^\infty(U_{ij})$ with the product of matrix multiplication induced from pointwise multiplication $C_\mathrm{c}^\infty(U_{ij}) \Cotimes C_\mathrm{c}^\infty(U_{jk}) \to C_\mathrm{c}^\infty(U_{ijk}) \hookrightarrow C_\mathrm{c}^\infty(U_{ik})$. The orbit space of the \v{C}ech groupoid is the coequalizer $\bigsqcup_{i,j}U_{ij} \rightrightarrows \bigsqcup U_i \to X$. Since $C(\calU)$ has trivial isotropy at all points, the bibundle of the natural epimorphism of groupoids $C(\calU)\to (X \rightrightarrows X)$ is a Morita equivalence. By applying the convolution functor, we obtain the Morita equivalence
\begin{equation*}
   A\bigl(C(\calU)\bigr) \simeq C_\mathrm{c}^\infty(X)
   \,.
\end{equation*}
\end{Example}

\subsection{Crossed products}
\label{sec:CrossedProd}

Let $G$ be a Lie group, $B$ a complete bornological algebra, and $\alpha: G \to \Aut(B)$ a smooth group homomorphism. The triple $(G,B,\alpha)$ will be called a \textdef{bornological dynamical system}. Let $\mu$ be a right invariant Haar measure on $G$. Then we can equip the complete bornological vector space 
\begin{equation*}
  B \rtimes_\alpha G 
  := C^\infty_\mathrm{c}(G, B)
\end{equation*}
with the associative product
\begin{equation*}
  (b \cdot b')(g) := \int_{h \in G}  \alpha_{h^{-1}} b(gh^{-1}) \,b'(h)
  \,,
\end{equation*}
The algebra $B \rtimes_\alpha G$ is called the \textdef{crossed product} of the dynamical system.

Let $G \times X \to X$ be a smooth action of a Lie group on a manifold $X$. Let $B := C^\infty_\mathrm{c}(X)$. Then 
\begin{equation*}
\begin{aligned}
  \alpha: G &\longrightarrow \Aut(B)
  \\
  (\alpha_g b)(x) 
  &= b(g^{-1} \cdot x)
\end{aligned}
\end{equation*}
is a smooth group homomorphism. The convolution algebra of the action groupoid is the vector space $A(G \ltimes X) \cong C^\infty_\mathrm{c}(G\times X) \cong C^\infty_\mathrm{c}(G, B)$. It is straightforward to check that the convolution product is given by the product of the crossed module, $b * b' = b \cdot b'$.  
This shows that the convolution algebra of the action groupoid is isomorphic to the crossed product,
\begin{equation*}
  A(G \ltimes X) \cong C^\infty_\mathrm{c}(X) \rtimes_\alpha G
  \,.
\end{equation*}

The linear span of pure tensors $a \Cotimes b \in C_\mathrm{c}^\infty(G) \Cotimes C_\mathrm{c}^\infty(X) \cong C_\mathrm{c}^\infty(G\times X)$ is a dense subspace. Assume that $X$ is compact, so $C_\mathrm{c}^\infty(X)$ contains the constant function $1$. Then $(a \Cotimes 1) * (a' \Cotimes 1) = (a * a') \Cotimes 1$, where $a * a'$ is the product of group convolution. 

Assume further that $G$ is discrete, so $C_\mathrm{c}^\infty(G)$ is spanned by the functions $\delta_g$ supported at $g \in G$. The convolution product is given by $\delta_g * \delta_h = \delta_{gh}$. In particular, $\delta_e = 1$ is the unit. Then $(1 \Cotimes b)(1 \Cotimes b') = 1 \Cotimes bb'$, where $bb'$ is the pointwise product of functions in $C_\mathrm{c}^\infty(X)$. Moreover, $(a \Cotimes 1) * (1 \Cotimes b) = a \Cotimes b$ and
\begin{equation*}
  [(1 \Cotimes b) * (a \Cotimes 1)](x,g)
  = a(g) \Cotimes \alpha_{g^{-1}} b
  \,.
\end{equation*}
In particular, we have
\begin{equation*}
  (\delta_g \Cotimes 1) * (1 \Cotimes b) * (\delta_{g^{-1}} \Cotimes 1) = 1 \Cotimes \alpha_g b
  \,.
\end{equation*}

\subsection{Bornological noncommutative tori}

We recall that the leaves of the Kronecker foliation of the 2-torus $\bbT^2 \cong S^1 \times S^1$ of irrational slope $\theta$ are the orbits of the action $\bbR \times \bbT^2 \to \bbT^2$ defined by
\begin{equation*}
  t \cdot ([r], [s]) 
  := ([r + t], [s + \theta t])
  \,,
\end{equation*}
where $[r], [s] \in \bbR/\bbZ \cong S^1$. The submanifold $\phi_0\colon S^1 \cong S^1 \hookrightarrow \{[0]\} \times S^1 \subset \bbT^2$ is a complete transversal to the orbits. The full subgroupoid over $\{[0]\} \times S^1$ is the groupoid of the action $\bbZ \times S^1 \to S^1$, $k \cdot [r] := [r + k\theta]$. It follows from Example~\ref{ex:MorSubgrpd} that the bibundle of the inclusion $\phi: \bbZ \times S^1 \hookrightarrow \bbR \times \bbT^2$,
\begin{equation*}
\begin{tikzcd}
    \bbZ \ltimes S^1
        \ar[d, shift left=-1.2, near start]
        \ar[d, shift left=1.2, near start]
        \ar[r, phantom, "\circlearrowright"]
    &[-1em] 
    P_\phi
        \ar[ld]
        \ar[rd]
    &[-1em] 
    \bbR\ltimes \mathbb{T}^2
        \ar[d,  shift left=-1.2, near start]
        \ar[d,  shift left=1.2, near start] 
        \ar[l, phantom, "\circlearrowleft"]
    \\
S^1 & & \mathbb{T}^2 
\end{tikzcd}
\end{equation*}
is a Morita equivalence. It is given explicitly by
\begin{equation*}
  P_\phi
  = 
  S^1 \times_{\bbT}^{\phi_0,t} (\bbR \ltimes \bbT)
  \cong S^1 \times \bbR
\end{equation*}
with the bundle maps
\begin{align*}
  l([s],t) 
  &= [s]
  \\
  r([s],t) 
  &= ( [-t], [s - \theta t] )
  \\ 
\intertext{and the left and right actions}
  (k, [s])\cdot ([s],t)
  &= ([s + \theta k], k+t)
  \\  
  ([s],t)\cdot \bigl(t', ([-t - t'], [s - \theta t - \theta t'])\bigr)
  &=
  ([s],t+t')
  \,,
\end{align*}
for all $k \in \bbZ$, $t, t' \in \bbR$, and $[s] \in S^1$. 

The convolution algebra
\begin{equation*}
  \calA_\theta := A(\bbZ \ltimes S^1)
\end{equation*}
will be called a \textdef{bornological noncommutative torus} or noncommutative torus for short. When the slopes of two Kronecker foliations are related by
\begin{equation*}
  \theta' = \frac{a\theta + b}{c\theta+d}
  \quad\text{for some}\quad
  \begin{pmatrix}
     a & b\\
     c & d 
  \end{pmatrix} 
  \in \mathrm{GL}_2(\bbZ)
  \,,
\end{equation*}
then noncommutative tori are Morita equivalent, $\calA_\theta \simeq \calA_{\theta'}$. This can be shown by choosing two different complete transversals in $\bbT^2$. We posit that the converse statement is also true, as it is the case in the setting of $C^*$-algebras \cite[Theorem~4]{Rieffel:1981}.

$\bbZ$ is a discrete group that acts on the compact manifold $S^1$. As explained in Section~\ref{sec:CrossedProd}, the convolution algebra of the action groupoid is the crossed product
\begin{equation*}
  \calA_\theta \cong C_\mathrm{c}^\infty(S^1) \rtimes_\alpha \bbZ
  \,,
\end{equation*}
where the action $\alpha: \bbZ \to \Aut(B)$, $B := C_\mathrm{c}^\infty(S^1)$, is given by $(\alpha_k b)([r]) = [r - \theta k]$. 

The convolution algebra of $\bbZ$ is the $*$-algebra generated by the characteristic function $\delta_1$. The unit is $1 = \delta_0$. The $*$-subalgebra of $C_\mathrm{c}^\infty(S^1)$ generated by the map $S^1 \to \bbC$, $[r] \mapsto e^{2i\pi r}$ is a Fourier basis, so it is dense in $C_\mathrm{c}^\infty(S^1)$. The unit is the constant function $1$. It follows that the $*$-subalgebra of $\calA_\theta$ generated by
\begin{equation*}
  u := \delta_1 \Cotimes 1
  \,,\qquad
  v := 1 \Cotimes e^{2\pi i r}
  \,,
\end{equation*}
is dense in $\calA_\theta$. The generators satisfy the commutation relations
\begin{equation*}
  v * u = e^{2\pi i \theta} u * v 
  \,.
\end{equation*}

\subsection{Ideals}

There is a number of possible definitions of ideals in bornological algebras. In the category $\AlgMrt(\cBorn)$, the appropriate notion for our purposes is the following. 

\begin{Definition}
An \textdef{ideal} of a complete bornological algebra $A \in \AlgMrt(\cBorn)$ is a strong subobject of $A$ in the category $\Mod(A,A)$ of smooth $A$-$A$ bimodules. 
\end{Definition}

Let us spell out this definition in conventional terms. The strong subobject in $\cBorn$ represented by a strong monomorphism $i: I' \hookrightarrow A$ can be identified with its image $I = i(I') \subset A$ equipped with the subspace bornology. Since $I'$ is complete, $I$ is a closed subspace. Denote the image of $A \Cotimes I \hookrightarrow A \Cotimes A \to A$ by $AI$ and analogously by $IA$ for the right $A$-multiplication. If $I' \hookrightarrow A$ is a subobject in $\Mod(A,A)$, then $AI \subset I$ and $IA \subset I$. Since $I' \in \Mod(A,A)$, $I$ is smooth as an $A$-$A$ bimodule. We conclude that an ideal is a closed subspace of $A$ that satisfies $AI \subset I$, $IA \subset I$, and is smooth as an $A$-$A$ bimodule.

\begin{Proposition}
\label{prop:IdealsMoritaInv}
The lattice of ideals is Morita invariant.
\end{Proposition}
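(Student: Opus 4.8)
The plan is to exhibit the correspondence $I \mapsto \overline{I}$ between ideals as the restriction, to subobject lattices, of the equivalence of bimodule categories induced by a Morita equivalence. Suppose $A \simeq B$ in $\AlgMrt(\cBorn)$, witnessed by a smooth $A$-$B$ bimodule $M$ and a smooth $B$-$A$ bimodule $N$ together with $2$-isomorphisms $M \Cotimes_B N \cong A$ and $N \Cotimes_A M \cong B$. Conjugation by these bimodules defines a functor
\[
  F \colon \Mod(A,A) \longrightarrow \Mod(B,B), \qquad F(X) := N \Cotimes_A X \Cotimes_A M,
\]
which lands in smooth $B$-$B$ bimodules because it is nothing but the composition of the $1$-morphisms $M$, $X$, $N$ in $\AlgMrt(\cBorn)$, and which is functorial in $X$ by functoriality of the tensor product. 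Dually I set $G(Y) := M \Cotimes_B Y \Cotimes_B N$.

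The second step is to check that $F$ and $G$ are mutually quasi-inverse equivalences that carry the unit object to the unit object. Using associativity of the tensor product of bimodules together with the Morita isomorphisms,
\[
  G F(X) \cong (M \Cotimes_B N) \Cotimes_A X \Cotimes_A (M \Cotimes_B N) \cong A \Cotimes_A X \Cotimes_A A \cong X,
\]
where the final isomorphism is precisely the smoothness of $X$ as an $A$-$A$ bimodule; the symmetric computation gives $F G \cong \id$. In particular $F(A) = N \Cotimes_A A \Cotimes_A M \cong N \Cotimes_A M \cong B$, so $F$ sends the $A$-$A$ bimodule $A$ to the $B$-$B$ bimodule $B$.

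Finally I would invoke the general fact that an equivalence of categories preserves and reflects strong monomorphisms, since being a strong mono is a purely diagrammatic condition (a monomorphism orthogonal to every epimorphism) and is therefore stable under equivalences. Consequently $F$ restricts to an order isomorphism from the poset of strong subobjects of $A$ in $\Mod(A,A)$ onto the poset of strong subobjects of $F(A) \cong B$ in $\Mod(B,B)$; by the Definition preceding the proposition these posets are exactly the smooth ideals of $A$ and of $B$. An order isomorphism automatically respects all existing meets and joins (intersections and closed sums of ideals), so it is an isomorphism of lattices. The main obstacle is this last step: one must be certain that the abstract notion of strong subobject used to define ideals is the one transported by $F$, i.e.\ that conjugation by the equivalence bimodules is exact enough to send strong monomorphisms into $A$ to strong monomorphisms into $B$. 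This is guaranteed once $F$ is known to be an equivalence, but it is the crux on which the whole argument rests, and it is where smoothness (self-inducedness) is genuinely used, to upgrade the merely formal inverse $G$ to a two-sided inverse.
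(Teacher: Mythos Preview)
Your proposal is correct and follows essentially the same approach as the paper: both argue that the conjugation functor $X \mapsto N \Cotimes_A X \Cotimes_A M$ is an equivalence $\Mod(A,A) \to \Mod(B,B)$ carrying $A$ to $B$, and then invoke that equivalences of categories preserve strong monomorphisms (hence strong subobjects). Your version is simply more detailed, spelling out the quasi-inverse, the identification $F(A)\cong B$, and the lattice-preservation step that the paper leaves implicit.
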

\begin{proof}
A Morita equivalence bimodule $P\in \Mod(A,B)$ with inverse $Q\in \Mod(B,A)$ induces an equivalence of categories $\Mod(A,A) \to \Mod(B,B)$, $M\mapsto Q \Cotimes_A M \Cotimes_A P$. Strong monomorphisms are invariant under equivalences of categories, so the equivalence induces an isomorphism of the categories of strong subobjects.
\end{proof}

As usual, an algebra $A$ is called \textdef{simple} if $0$ and $A$ are its only ideals. It follows from Proposition~\ref{prop:IdealsMoritaInv} that the simplicity of algebras is Morita invariant.

\begin{Example}
Let $G$ be a Lie groupoid and $U \subset G_0$ an open union of $G$-orbits. Let $G(U,U)$ denote the full Lie groupoid over $U$. Then $G(U,U)_1 \subset G_1$ is open, so $A(G(U,U)) = C_\mathrm{c}^\infty(G(U,U)) \hookrightarrow C_\mathrm{c}^\infty(G_1) = A(G)$ is an ideal. If $U$ is not empty, the ideal is not zero. If $U \neq G_0$, the ideal is not $A$. Consider the action groupoid $\bbZ \ltimes S^1$, where $\bbZ$ acts by rotation by a rational angle. Then all orbits are closed, so $U = G_0 \setminus (\bbZ\cdot[s])$ is open. The corresponding ideal in $\calA_\theta$ is proper. This shows that the noncommutative torus $\calA_\theta$ for $\theta$ rational is not simple.
\end{Example}

\begin{Theorem}
\label{thm:irrationaltorussimple}
If $\theta$ is irrational, the bornological noncommutative torus $\calA_\theta$ is simple.
\end{Theorem}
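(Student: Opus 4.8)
The plan is to adapt the classical argument for the simplicity of the irrational rotation $C^*$-algebra, replacing norm-completeness with convergence inside a finite-dimensional subspace and the continuous functional calculus with elementary smooth operations. First I record the structure I will exploit. Since $S^1$ is compact and $\bbZ$ discrete, $\calA_\theta = C_\mathrm{c}^\infty(\bbZ\times S^1) = \bigoplus_{m\in\bbZ} C^\infty(S^1)\,u^m$ is a \emph{unital} complete bornological algebra with unit $1=\delta_0\Cotimes 1$, and every element is a \emph{finite} sum $a=\sum_m f_m u^m$ with $f_m\in C^\infty(S^1)$, where $u=\delta_1\Cotimes 1$, the commutative subalgebra $C^\infty(S^1)\subset\calA_\theta$ contains $v=1\Cotimes e^{2\pi i r}$, and $vuv^{-1}=e^{2\pi i\theta}u$. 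By Proposition~\ref{prop:UnitalSelfInduced} the smoothness condition in the definition of an ideal is automatic here, so an ideal $I$ is just a closed (equivalently, complete, by Lemma~\ref{lem:separatedcompletequotients}) $A$-$A$-sub-bimodule, i.e. a closed two-sided ideal in the usual sense. Let $E\colon\calA_\theta\to C^\infty(S^1)$ be the degree-zero projection $\sum_m f_m u^m\mapsto f_0$. The strategy is: for a nonzero ideal $I$, produce a nonzero element of $J:=I\cap C^\infty(S^1)$, observe that $J$ is a rotation-invariant ideal of $C^\infty(S^1)$, and conclude $1\in J$, whence $I=\calA_\theta$.

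The heart of the argument --- and the only place irrationality enters --- is the inclusion $E(I)\subseteq I$, proved by averaging. Since $v\in C^\infty(S^1)$ is central in that commutative subalgebra it commutes with every $f_m$, and $v^k u^m v^{-k}=e^{2\pi i\theta k m}u^m$, so for $a=\sum_m f_m u^m\in I$ we have $v^k a v^{-k}=\sum_m e^{2\pi i\theta k m} f_m u^m\in I$. Hence the Ces\`aro averages $a_N:=\frac{1}{2N+1}\sum_{k=-N}^N v^k a v^{-k}=\sum_m c_m^{(N)}\, f_m u^m$ lie in $I$, with scalars $c_m^{(N)}=\frac{1}{2N+1}\sum_{k=-N}^N e^{2\pi i\theta k m}$. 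Here $c_0^{(N)}=1$, while for $m\neq 0$ irrationality gives $e^{2\pi i\theta m}\neq 1$, and the Dirichlet-kernel estimate forces $c_m^{(N)}\to 0$. Crucially, since $a$ has finite $\bbZ$-support, all $a_N$ lie in the \emph{finite-dimensional} subspace $W:=\Span\{f_m u^m\}$, on which Mackey convergence coincides with ordinary convergence; thus $a_N\to E(a)=f_0$ in $W$, and since $a_N\in I\cap W$ and $I\cap W$ is a (closed) linear subspace of $W$, we obtain $E(a)=f_0\in I$. Applying this to $b=a\,u^{-m_0}\in I$ for an $m_0$ with $f_{m_0}\neq 0$, for which $E(b)=f_{m_0}$, produces a nonzero element $f_{m_0}\in J=I\cap C^\infty(S^1)$.

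It remains to run the minimality argument inside $C^\infty(S^1)$. The subspace $J$ is an ideal there, since $gf\in I\cap C^\infty(S^1)$ whenever $g\in C^\infty(S^1)$ and $f\in J$; and it is invariant under rotation by $\theta$, because $u^{\pm 1}fu^{\mp 1}=\alpha^{\pm1}(f)\in I\cap C^\infty(S^1)$ for $f\in J$. Thus the common zero set $Z(J)=\{x\in S^1:f(x)=0\ \forall f\in J\}$ is closed and rotation-invariant, so minimality of the irrational rotation forces $Z(J)\in\{\emptyset,S^1\}$. Since $J\neq 0$ we have $Z(J)\neq S^1$, hence $Z(J)=\emptyset$; covering the compact $S^1$ by finitely many sets on which chosen $f_1,\dots,f_n\in J$ do not vanish and setting $g=\sum_i\overline{f_i}\,f_i\in J$ gives a strictly positive, hence invertible, function, so that $1=g^{-1}g\in J\subseteq I$ and $I=\calA_\theta$. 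I expect no deep obstacle: the only genuinely bornological point is that the Ces\`aro averages converge in the Mackey sense and remain in $I$, which the finite $\bbZ$-support reduces to convergence in the finite-dimensional subspace $W$; the remaining steps are the classical minimality argument, with the $C^*$-functional calculus replaced by the elementary smooth operations $f\mapsto\overline{f}$ and $g\mapsto g^{-1}$.
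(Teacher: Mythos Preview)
Your proof is correct, and it takes a genuinely different route from the paper's in its second half. Both arguments begin with the same Ces\`aro averaging by conjugation with powers of $v$ (the paper's $\Phi_2$) to project an element of $I$ onto its degree-zero component in $C^\infty(S^1)$; your observation that the averages all lie in the finite-dimensional span $W=\Span\{f_m u^m\}$, where Mackey convergence is ordinary convergence and $I\cap W$ is automatically closed, handles the bornological subtlety cleanly. The divergence comes after landing in $J=I\cap C^\infty(S^1)$: the paper applies a \emph{second} averaging $\Phi_1$ (conjugation by powers of $u$) to the positive element $a^*\!*a$, invoking equidistribution of irrational rotation orbits to show that the $S^1$-averages converge in $C^\infty(S^1)$ to the integral, producing a positive scalar directly. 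You instead bypass $\Phi_1$ and the $*$-structure by running a minimality argument: $J$ is a rotation-invariant ideal of $C^\infty(S^1)$, its common zero set is closed and $\theta$-invariant hence empty, and a finite sum of squares yields an invertible element. Your route is more elementary on the bornological side---it avoids having to check that Weyl-equidistribution averages converge in the $C^\infty$ topology (i.e.\ in all derivatives), which the paper asserts but does not spell out---at the cost of a slightly longer endgame. The paper's route stays closer to the classical $C^*$-template and reaches a scalar in one stroke.
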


\begin{Corollary}
If $\theta$ is irrational, $A(\bbR\ltimes \mathbb{T}^2)$ is simple.
\end{Corollary}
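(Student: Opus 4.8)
The plan is to deduce this from Theorem~\ref{thm:irrationaltorussimple} by Morita invariance, so that no new analysis is required. The key observation, already assembled in the discussion of noncommutative tori above, is that the action groupoids $\bbZ \ltimes S^1$ and $\bbR \ltimes \bbT^2$ are Morita equivalent: the circle $S^1 \hookrightarrow \{[0]\} \times S^1 \subset \bbT^2$ is a complete transversal to the Kronecker foliation of slope $\theta$, and $\bbZ \ltimes S^1$ is precisely the full subgroupoid of $\bbR \ltimes \bbT^2$ over this transversal. By Example~\ref{ex:MorSubgrpd}, the inclusion bibundle $P_\phi$ is therefore a Morita equivalence in $\GrpdMrt$.

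First I would push this equivalence forward through the convolution functor. By the main Theorem~\ref{thm:ConvFunc}, the functor $\GrpdMrt \to \AlgMrt(\cBorn)$ sends Morita equivalent Lie groupoids to Morita equivalent complete bornological algebras; applied to $P_\phi$ this yields a Morita equivalence
\begin{equation*}
  \calA_\theta = A(\bbZ \ltimes S^1)
  \simeq
  A(\bbR \ltimes \bbT^2)
\end{equation*}
with the convolution bimodule $M(P_\phi)$ as the invertible equivalence bimodule in $\AlgMrt(\cBorn)$.

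Finally I would invoke the Morita invariance of the ideal lattice. By Proposition~\ref{prop:IdealsMoritaInv}, a Morita equivalence induces an isomorphism of the lattices of smooth ideals, so simplicity is a Morita invariant property. Since Theorem~\ref{thm:irrationaltorussimple} asserts that $\calA_\theta$ is simple for irrational $\theta$, its Morita partner $A(\bbR \ltimes \bbT^2)$ has precisely the two trivial smooth ideals $0$ and $A(\bbR\ltimes\bbT^2)$ as well, which is exactly the claim.

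There is essentially no genuine obstacle here: the statement is a formal corollary of the preceding theorem together with the Morita-invariance machinery of Section~\ref{sec:Applications}. The only point that requires care is bookkeeping---confirming that the groupoid-level Morita equivalence used to \emph{define} $\calA_\theta$ is the very same one relating $\bbZ \ltimes S^1$ to $\bbR \ltimes \bbT^2$, so that the hypothesis ``$\theta$ irrational'' is transported correctly and $M(P_\phi)$ is indeed invertible in $\AlgMrt(\cBorn)$ rather than merely a smooth bimodule.
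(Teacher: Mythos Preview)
Your proposal is correct and follows exactly the intended argument: the paper does not spell out a proof for this corollary, leaving it as an immediate consequence of the Morita equivalence $\bbZ\ltimes S^1 \simeq \bbR\ltimes\bbT^2$ established in the preceding discussion, Theorem~\ref{thm:ConvFunc}, Proposition~\ref{prop:IdealsMoritaInv}, and Theorem~\ref{thm:irrationaltorussimple}. Your write-up simply makes this implicit reasoning explicit.
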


\begin{proof}[Proof of Theorem~\ref{thm:irrationaltorussimple}]
The proof is an adaptation of the proof of Theorem~VI.1.4 in \cite{CStarExamples} to the bornological setting. Every $a \in \calA_\theta$ is of the form $a = \sum_k \delta_k \Cotimes a_k$ where the sum is finite and $a_k \in C_\mathrm{c}^\infty(S^1)$. Since $\calA_\theta$ is complete, we can construct the element
\begin{equation*}
\begin{split}
  \Phi_1(a)
  &:= \lim_{n \to \infty} 
     \frac{1}{2n+1}\sum_{j=-n}^n u^j * a * u^{-j}
  \\
  &= \lim_{n \to \infty}
     \sum_k \Bigl(\delta_k \Cotimes 
     \frac{1}{2n+1}\sum_{j=-n}^n a_k(t-j\theta) \Bigr)
  \\
  &= \sum_k \delta_k \Cotimes 
     \lim_{n \to \infty} \frac{1}{2n+1}\sum_{j=-n}^n a_k(t-j\theta)
  \\
  &= \sum_k \delta_k \Cotimes {\textstyle\int_{S^1}} a_k
\end{split}
\end{equation*}
where we have first expanded $a$ as a finite sum over $k$, then commuted the limit with the finite sum and with the bounded linear map $c \mapsto \delta_k \Cotimes c$. The last equality follows from the uniform convergence of Riemann sums and from the fact that as $n\to \infty$ each orbit of the irrational action uniformly approaches an equidistribution on $S^1$ . Similarly, we construct the element
\begin{equation*}
\begin{split}
  \Phi_2(a)
  &:= \lim_{n\to \infty} \frac{1}{2n+1} \sum_{j=-n}^n v^j* a * v^{-j} 
  \\
  &= \lim_{n \to \infty} \sum_k \Bigl(  \frac{1}{2n+1}\sum_{j=-n}^n e^{2\pi i j k\theta} \delta_k
      \Cotimes a_k  \Bigr)
  \\
  &= \sum_k \Bigl( \lim_{n \to \infty} \frac{1}{2n+1}\sum_{j=-n}^n e^{2\pi i j k\theta} \delta_k \Bigr)
      \Cotimes a_k 
  \\
  &= \sum_k \Bigl(\lim_{n\to \infty} \frac{1}{2n+1}
     \frac{\sin(\pi (2n+1) k\theta)}{\sin(\pi k\theta)} 
     \delta_k\Bigr) \Cotimes a_k
  \\
  &= \delta_0 \Cotimes a_0
  \,,
\end{split}    
\end{equation*}
where we have first expanded $a$, then commuted the limit with the finite sum over $k$ and with the bounded linear map $c \mapsto c \Cotimes a_k$, and used that $\theta$ is irrational to obtain the formula in terms of the sine functions.

Let $I \subset \calA_\theta$ be a non-zero ideal, so it contains a non-zero element $a \in I$. Consider the positive element $b = a^* * a \in I$, where $a^*$ is the $*$-structure~\eqref{eq:StartStruct} on $A_\theta$. Explicitly,
\begin{equation*}
  b =  \sum_n\bigl(  
       \delta_n \Cotimes e^{2\pi i n} \sum_k \overline{a_k}\, a_{k+n}
       \bigr)
  \,,
\end{equation*}
which has the zero component $b_0 = \sum_k |a_k|^2$. Consider
\begin{equation*}
  c 
  :=(\Phi_1 \circ \Phi_2)(b) 
  = \Phi_1(\delta_0 \Cotimes b_0)
  = \delta_0 \Cotimes {\textstyle\int_{S^1}} b_0
  \,.
\end{equation*}
Since $I$ is a closed subspace, $c \in I$. Since $a$ is non-zero, one of the summands $|a_k|$ of $b_0$ is a non-zero positive function, so that $\nu := \int_{S^1} b_0$ is a positive real number. We conclude that $c$ has an inverse $c^{-1} = \delta_0 \Cotimes \nu^{-1}$. We conclude that $I = A$.
\end{proof}

\subsection{Stacky Lie groups and Morita Hopf monoids}
\label{sec:HopfMonoid}

A \textdef{stacky Lie group} is a weak 2-group in $\GrpdMrt$ \cite{Blohmann2008,Schommer-Priess2011}. Explicitly, a stacky Lie group is given by a Lie groupoid $G$ together with the $(G \times G)$-$G$ bibundle of multiplication $\hat{\mu}$, the $1$-$G$ bibundle of the inclusion of the neutral element $\hat{e}$, the $G$-$G$ bibundle of the inverse $\hat{\imath}$, subject to the relations and coherence conditions of a weak group object. The axioms of the group structure also involve the $G$-$1$ bibundle $P_{G\to 1}$ of the terminal morphism from Example~\ref{ex:terminalGrpdHom} and the $G$-$(G\times G)$ bibundle $P_{\Delta_G}$ of the diagonal map of Example~\ref{ex:DiagonalHom}. For example, the left inverse relation is expressed by
\begin{equation*}
  P_{\Delta_G} \circ_{G\times G} 
  (\hat{\imath} \times G) \circ_{G \times G} \hat{\mu}
  \cong
  P_{G\to 1} \circ_{1} \hat{e}
  \,.
\end{equation*}
As is the case for group objects in any category with finite products, $G$ with multiplication $\hat{\mu}$, unit $\hat{e}$, coproduct $P_{\Delta_G}$, counit $P_{G\to 1}$, and antipode $\hat{\imath}$ is a (weak) \textdef{Hopf monoid} \cite{AdamekHerrlich:1985} in $\GrpdMrt$.

\begin{Proposition}
\label{prop:HopfMonoid}
Let $(G, \hat{\mu}, \hat{e}, \hat{i})$ be a stacky Lie group. Then $A(G)$ together with the coproduct $\boldsymbol{\Delta} := M(\hat{\mu})$, counit $\boldsymbol{\epsilon} = M(\hat{e})$, product $\boldsymbol{m} := M(P_{\Delta_G})$, unit $\boldsymbol{\eta} := M(P_{G\to 1})$, and antipode $\boldsymbol{S} := M(\hat{\imath})$ is a Hopf monoid in $\AlgMrt(\cBorn)$. 
\end{Proposition}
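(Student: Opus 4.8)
The plan is to deduce the statement formally from the fact that convolution is a \emph{symmetric monoidal} $2$-functor. By Theorem~A (Theorem~\ref{thm:ConvFunc}), the assignment $A(-)$ together with $M(-)$, the pushforwards, and the functoriality constraint $\hat\tau$ constitutes a symmetric monoidal $2$-functor $\GrpdMrt \to \AlgMrt(\cBorn)$; in particular it comes equipped with the monoidal coherence isomorphism $A(G\times H)\cong A(G)\Cotimes A(H)$ of Proposition~\ref{prop:CompletedAlgsMods}(v) and with $A(1)\cong\bbC$. A (weak) Hopf monoid is specified purely by structure $1$-morphisms --- multiplication, unit, comultiplication, counit, antipode --- subject to axioms that are coherent $2$-isomorphisms between composites built from these $1$-morphisms, the monoidal product, and the symmetry. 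Since any symmetric monoidal $2$-functor carries such data to data of the same type, it sends Hopf monoids to Hopf monoids, and applying this to the stacky Lie group $G$ --- which the discussion preceding the proposition has already exhibited as a (weak) Hopf monoid in $\GrpdMrt$ --- yields the claim.

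Concretely, I would proceed as follows. First I would record the images under $M$ of the five structure bibundles of $G$, namely $M(\hat\mu)$, $M(\hat e)$, $M(P_{\Delta_G})$, $M(P_{G\to1})$, and $M(\hat\imath)$, which are exactly the morphisms $\boldsymbol\Delta,\boldsymbol\epsilon,\boldsymbol m,\boldsymbol\eta,\boldsymbol S$ of the statement; the apparent interchange of the roles of multiplication and comultiplication is precisely the passage from the group-theoretic operations on $G$ to the Morita-categorical operations on $A(G)$ (cf.\ Example~\ref{ex:terminalGrpdHom} and Example~\ref{ex:DiagonalHom}). Next I would take each defining relation of the Hopf monoid $G$ --- associativity and coassociativity, the unit and counit laws, the bialgebra compatibility, and the two antipode identities, such as $P_{\Delta_G}\circ_{G\times G}(\hat\imath\times G)\circ_{G\times G}\hat\mu \cong P_{G\to1}\circ_1\hat e$ --- each of which is an isomorphism of composite right principal bibundles, and apply $M$. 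Because $\hat\tau$ is a natural isomorphism intertwining $M$ with composition (Section~\ref{sec:functoriality}) and $M$ is monoidal, each isomorphism of composites is transported to the corresponding isomorphism of composite bimodules, i.e.\ to the corresponding Hopf-monoid axiom for $A(G)$.

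The hard part will be the coherence bookkeeping in the weak, $2$-categorical setting: one must check that $\hat\tau$ is compatible with the associators of both composition and the monoidal product, and that the monoidal constraint $A(G\times H)\cong A(G)\Cotimes A(H)$ is compatible with the symmetry, so that the bialgebra compatibility axiom --- the only axiom involving the braiding --- transports correctly. The associator compatibility of $\hat\tau$ is exactly the coherence established in Section~\ref{sec:coherence}, and the symmetry compatibility is part of the data of the symmetric monoidal structure of Theorem~A; granting these, the verification reduces to pasting the relevant coherence $2$-cells, which is routine. I would therefore present the argument as an instance of the general principle that symmetric monoidal $2$-functors preserve Hopf monoids, invoking Theorem~\ref{thm:ConvFunc} and Section~\ref{sec:coherence}, rather than re-deriving each axiom by hand.
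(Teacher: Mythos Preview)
Your proposal is correct and takes essentially the same approach as the paper: invoke Theorem~\ref{thm:ConvFunc} to conclude that the (symmetric) monoidal convolution $2$-functor sends Hopf monoids to Hopf monoids. The paper's proof is a single sentence to this effect, while you spell out the coherence bookkeeping in more detail; this elaboration is welcome but not strictly necessary.
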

\begin{proof}
By Theorem~\ref{thm:ConvFunc}, the bornological convolution functor is monoidal, so it takes Hopf monoids to Hopf monoids.
\end{proof}

In Proposition~\ref{prop:HopfMonoid} we have chosen to view convolution as contravariant functor, since from the viewpoint of noncommutative geometry, $A(G)$ plays the role of the algebra of functions on the stack presented by $G$. But since the axioms of Hopf monoids are self dual, there is no difference between Hopf monoids in $\AlgMrt(\cBorn)$ and in $\AlgMrt(\cBorn)^\op$.

\begin{Example}
The action groupoid $\bbZ \ltimes S^1$ of irrational rotation is a stacky Lie group. The stacky group structure has been described in \cite{BlohmannTangWeinstein:2008} in some detail. It follows from Proposition~\ref{prop:HopfMonoid} that $\calA_\theta = A(\bbZ \ltimes S^1)$ is a Hopf monoid in $\AlgMrt(\cBorn)$. This endows the category of representations of $\calA_\theta$ with additional structure, which will be a topic for future research. 
\end{Example}

\subsection{Star structure}


We recall that a \textdef{$*$-structure} on a complex algebra is a map $*: A \to A$ that is
\begin{itemize}

\item[(i)] conjugate linear, $(a + b)^* = a^* + b^*$ and $(\lambda a)^* =  \bar{\lambda} a^*$,

\item[(ii)] an antihomomorphism, $(ab)^* = b^* a^*$, and

\item[(iii)] an involution, $(a^*)^* = a$.

\end{itemize}
for all $a, b \in A$ and $\lambda \in \bbC$. An algebra together with a $*$-structure is called a \textdef{$*$-algebra}. When $A$ is a bornological algebra, we additionally require the map $*$ to be bounded. The map $*:A(G) \to A(G)$ defined by
\begin{equation}
\label{eq:StartStruct}
  a^*(g) := \overline{ a(g^{-1}) }
\end{equation}
is a $*$-structure on the convolution algebra $A(G)$. This shows that the convolution functor factors through the 2-category $*$-$\AlgMrt(\cBorn)$ where all algebras are in addition equipped with a $*$-structure.


Let $M$ be an $A$-$B$ a bimodule of complete bornological $*$-algebras. We can define an $B$-$A$ module $M^\dagger$ as the complex conjugate vector space 
\begin{equation*}
  M^\dagger := \bar{M}
\end{equation*}
with left $B$-action $b\cdot m := m \cdot b^*$ and right $A$-action $m \cdot a := a^* \cdot m$. The 2-functor
\begin{equation*}
  *\textnormal{-} \AlgMrt(\cBorn) \longrightarrow *\textnormal{-}\AlgMrt(\cBorn)^{1\textnormal{-}\op}
\end{equation*}
given by $A \mapsto A$, $M \mapsto M^\dagger$, $f \mapsto f$ is a bottom dagger structure in the sense of \cite{FerrerEtAl:2024}.

\subsection{Dualizability}

For every groupoid $G$, let $G^\op$ denote the opposite groupoid. Since every left $G$-action can be viewed as a right $G^\op$-action, every $G$-$H$ groupoid bibundle $P$ can be viewed as a $H^\op$-$G^\op$ bibundle. We can also move a right action to the left and vice versa. In this way, we can view $P$ as a left $(G \times H^\op)$ bundle or a right $(G^\op \times H)$ bibundle. In general, neither bibundle will be right principal. An interesting example is the identity bibundle $G$. Let us denote the corresponding left $(G \times G^\op)$ bundle by $\Ev_G$, the right $(G^\op \times G)$ bibundle by $\Coev_G$. These bibundles satisfy the ``snake'' relations up to 2-isomorphism. However, $\Ev_G$ and $\Coev_G$ are not right principal, so they cannot be composed with arbitrary morphisms in $\GrpdMrt$. Since $\Ev_G$, and $\Coev_G$ do not lie in $\GrpdMrt$, $G^\op$ is not quite the dual of $G$ in the sense of \cite[Definition~2.3.5]{Lurie:2009}.

\begin{Example}[Coarse moduli space]
$\Coev_G$ composed with the $(G^\op \times G)$-$1$ bibundle $P_{G^\op \to 1} \times P_{G \to 1}$, where $P_{G \to 1}$ is the bibundle of the terminal homomorphism of Example~\ref{ex:terminalGrpdHom} is the coarse moduli space $G_0/G_1$.    
\end{Example}

On the algebraic side, we can view an $A$-$B$-bimodule as a left $A \Cotimes B^\op$-module and as a right $A^\op \Cotimes B$-module. The identity bimodule $A$ can be viewed as left $A \Cotimes A^\op$-module, which we denote by $\Ev_A$, and as right $A^\op \Cotimes A$-module, which we denote by $\Coev_A$. These modules satisfy the snake relations up to 2-isomorphism. This shows that every self-induced complete bornological algebra is dualizable in $\AlgMrt(\cBorn)$ in the sense of \cite[Definition~2.3.5]{Lurie:2009}. In the purely algebraic setting, an algebra $A$ is \emph{fully} dualizable if it is finitely generated and separable (Definition~\ref{def:Separable}). We have already seen in Corollary~\ref{cor:ConvMoritaProjective} and Corollary~\ref{cor:quasiunitality} that bornological convolution algebras are left and right separable. Which complete bornological algebras are fully dualizable is a question for future research.

\subsection{Hochschild cohomology}
\label{sec:NoncommDiffGeo}

As recalled in the introduction, a $C^*$-algebra that is nuclear or does not admit a bounded trace has vanishing Hochschild cohomology. This includes many $C^*$-convolution algebras of Lie groupoids. In light of the Hochschild-Kostant-Rosenberg theorem, this means that the noncommutative geometry described by the $C^*$-algebra does not admit vector fields. The situation for bornological convolution algebras is better:

\begin{Proposition}[Proposition~2.1 in \cite{Posthuma23}]
Multiplicative vector fields on the Lie groupoid act as derivations on its bornological convolution algebra.
\end{Proposition}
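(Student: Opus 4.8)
The plan is to exhibit the action of a multiplicative vector field as the infinitesimal generator of a one-parameter group of algebra automorphisms, and to read off the Leibniz rule by differentiating the homomorphism property. Recall that a multiplicative vector field $X$ on $G$ is, by definition, an infinitesimal automorphism: it is $s$- and $t$-related to a vector field $X_0$ on $G_0$ and its local flow $\phi_t$ consists of local Lie groupoid automorphisms covering the flow of $X_0$. Since every element of $A(G) = C_\mathrm{c}^\infty(G_1)$ has compact support, the flow exists for small $t$ on a neighborhood of any such support, so $\phi_t^*$ is defined on each element. A groupoid automorphism carries a right Haar system to a right Haar system; by Remark~\ref{rmk:HaarSystemChoice} any two right Haar systems differ by multiplication with a positive $\rho_t \in C^\infty(G_0)$, and that reference supplies the explicit isomorphism intertwining the two convolution products. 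Composing $\phi_t^*$ with this isomorphism yields a local one-parameter group $\Phi_t$ of automorphisms of the fixed convolution algebra $(A(G),*)$, and the derivation we seek will be its generator $D := \frac{d}{dt}\big|_{t=0}\Phi_t$.

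The computational heart, which also identifies $D$ with the Lie derivative $\mathcal{L}_X$, is best seen in the bar picture of Section~\ref{sec:ConvCompletion}, where $a*b = \bar{m}_*\,\bar{\imath}^{\,*}(a \Cotimes b)$ with $\bar{\imath}(g,h) = (gh^{-1},h)$ and $\bar{m} = \mathrm{pr}_1$. I would lift $X$ to the vector field $X_2 := (X,X)$ on $\bar{G}_2 = G_1 \times_{G_0}^{s,s} G_1$; this is tangent to the fibered product precisely because $X$ is $s$-related to $X_0$ on both factors. The multiplicativity of $X$ is exactly the statement $X(gh^{-1}) = X(g)\cdot\overline{X(h)}$ for the tangent multiplication, which makes $X_2$ related to $X \times X$ under $\bar{\imath}$, while $X_2$ is related to $X$ under $\bar{m} = \mathrm{pr}_1$. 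Using $(X\times X)(a\Cotimes b) = (Xa)\Cotimes b + a \Cotimes (Xb)$ together with the commutation of $X$ with the fiber integration $\bar{m}_*$, one obtains
\begin{equation*}
  X(a*b) = (Xa)*b + a*(Xb) + \bar{m}_*\bigl( \mathrm{div}^{\mathrm{fib}}(X_2)\cdot \bar{\imath}^{\,*}(a\Cotimes b)\bigr),
\end{equation*}
where $\mathrm{div}^{\mathrm{fib}}(X_2)$ is the divergence of the fiber component of $X_2$ against the Haar density.

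The main obstacle is this correction term. By right-invariance of the Haar system and multiplicativity of $X$, the fiberwise divergence is right-invariant, hence the pullback to $G_1$ of a function $\omega \in C^\infty(G_0)$; it is the infinitesimal form of the ratio $\rho_t$ above and reflects that a Haar system is a density rather than a measure. Incorporating it into the generator, $D = \mathcal{L}_X$ together with this zeroth-order base term is a genuine derivation; equivalently, passing to the source-fiber density formulation used in \cite{Posthuma23} removes the correction and $\mathcal{L}_X$ is manifestly a derivation (and if the Haar system is chosen $X$-invariant, $\omega = 0$). It then remains to check two points in the bornological framework. First, $D$ is bounded: as a first-order differential operator with smooth coefficients it satisfies $p_{K,k}(Df) \le C_{K,k}\,p_{K,k+1}(f)$ on each $C_K^\infty(G_1)$, so boundedness on $C_\mathrm{c}^\infty(G_1)$ follows from the colimit description~\eqref{eq:CompFuncMColim} as in the proof of Proposition~\ref{prop:TestFuncPullback}. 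Second, the Leibniz identity follows by differentiating $\Phi_t(a*b) = \Phi_t(a)*\Phi_t(b)$ at $t=0$: since convolution is a bounded bilinear map, Mackey differentiation passes through it by Lemma~\ref{lem:BilinearContinuous}, giving $D(a*b) = (Da)*b + a*(Db)$. The one genuinely delicate step is establishing that $t \mapsto \Phi_t$ is Mackey-differentiable in $\intHom(A(G),A(G))$, which I would handle exactly as in Lemma~\ref{lem:fiberdirac}, by controlling the $t$-dependence of the integral formula uniformly on each $C_K^\infty(G_1)$.
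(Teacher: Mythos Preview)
The paper does not prove this proposition; it is quoted verbatim as Proposition~2.1 of \cite{Posthuma23} and used only to motivate the discussion in Section~\ref{sec:NoncommDiffGeo}. There is therefore no proof in the present text to compare against.

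Your sketch is sound, but you are running two arguments in parallel and the logic would be cleaner if you separated them. The flow argument already does all the work: for fixed $a,b\in C_\mathrm{c}^\infty(G_1)$ the identity $\Phi_t(a*b)=\Phi_t(a)*\Phi_t(b)$ holds for $|t|$ small (depending on $\Supp a\cup\Supp b$), and differentiating at $t=0$ using Lemma~\ref{lem:BilinearContinuous} gives Leibniz for $D$. The bar-picture computation is then only an identification of $D$ as $\mathcal{L}_X$ plus the zeroth-order term $s^*\omega$, with $\omega=\tfrac{d}{dt}\big|_{t=0}\rho_t$, not an independent proof. Note also that your reference to Lemma~\ref{lem:fiberdirac} for the Mackey-differentiability of $t\mapsto\Phi_t$ is slightly off --- that lemma is about delta approximations --- though the underlying technique (uniform seminorm estimates on each $C_K^\infty$) is indeed what is needed.

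One point deserves emphasis. In the fixed-Haar-system formulation used in this paper, the derivation attached to $X$ is $D=\mathcal{L}_X+s^*\omega\cdot(\,\cdot\,)$, not the bare Lie derivative, unless the Haar density happens to be $X$-invariant; in the source-density formulation of \cite{Posthuma23} the correction is absorbed into the Lie derivative on densities. You flag this correctly, but since the proposition as stated does not specify how the vector field acts, you should be explicit that the ``action'' is via $D$ (equivalently, $\mathcal{L}_X$ on half-densities), lest a reader assume $\mathcal{L}_X$ on functions is meant.
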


The authors in \cite{Posthuma23} go on to define a cochain map between the deformation cohomology of Lie groupoids and the Hochschild cohomology of the convolution algebra. To define Hochschild or cyclic cohomology of topological algebras we want to impose continuity conditions on the cocycles. However, due to the pathologies of locally convex vector spaces (Remark~\ref{rmk:lcTVSpathology}), this only works for nice subcategories such as Fr\'{e}chet algebras with the projective tensor product (cf.~\cite{Connes1985}). As advocated in \cite{Meyer2007}, it may be more natural to study homological algebra in $\cBorn$, where we can define Hochschild cohomology in the usual way as the derived functor \cite{Aretz23}
\begin{equation*}
  \mathrm{HH}^*(A,M):= \RHom_A(A,M) \,.
\end{equation*}
Recall that $A$ is \textbf{H-unital} if its bar complex is acyclic. For H-unital algebras, it can be shown that the explicit models using the bar resolution compute the $\Ext$-groups of $M$. It was proved in \cite[Proposition~2]{CrainicMoerdijk:2001} that the convolution algebras of Lie groupoids are H-unital. This also follows from the left and right separability of the convolution algebras (Corollary~\ref{cor:quasiunitality}) together with \cite[Lemma~5.70]{Aretz23}.

\begin{Remark}
    H-unitality depends on the tensor product that is used.
    Recent work by Michael Francis \cite{Francis2025} shows that the Lie groupoid convolution algebras are H-unital also with respect to the algebraic tensor product.
\end{Remark}

In \cite{Neumaier2006} a general type of HKR theorem for proper étale Lie groupoids is obtained.
In \cite[Appendix~A]{OrbifoldCupProducts2011}, the authors develop a bornological Morita theory close to the present article and prove partial results on the Morita invariance of Hochschild cohomology in the proper étale case.
    

\appendix

\section{Background material}

\subsection{Monomorphisms and epimorphisms}
\label{sec:monosandEpis}

Recall that a morphism $f:X\to Y$ in a category $\calC$ is a \textdef{monomorphism} if $fg = fh$ implies $g = h$ for any two morphisms $g,h:Z \to X$. Dually, $f$ is an \textdef{epimorphism} if $gf = hf$ implies $g = h$. There are stronger notions of epimorphisms that we will need:

\begin{Definition}
\label{def:Epis}
A morphism $f:X \to Y$ is called
\begin{itemize}

\item[(i)] a \textdef{split epimorphism} if there is a morphism $g: Y \to X$ such that $fg = \id_Y$;

\item[(ii)] 
a \textdef{regular epimorphism} if there is a coequalizer diagram
\begin{equation*}
\begin{tikzcd}
  Z \ar[r,shift left=1.2]\ar[r,shift right =1.2]& X \ar[r,"f"] & Y 
  \,;
\end{tikzcd}
\end{equation*}

\item[(iii)]
a \textdef{strong epimorphism} if it has the right lifting property with respect to all monomorphisms $i: A \to B$, that is, there is a diagonal lift in any diagram of the form:
\begin{equation*}
   \begin{tikzcd}
      A \ar[r]\ar[d,"i"] & X\ar[d,"f"] \\
      B \ar[r]\ar[ru,dotted,"\exists"] & Y
   \end{tikzcd}
\end{equation*}

\end{itemize}
\end{Definition}

We have the following implications:
\begin{equation*}
  \text{split epi} \implies 
  \text{regular epi} \implies 
  \text{strong epi} \implies 
  \text{epi}
  \,.
\end{equation*}

Let us gather a few facts that we will use:
\begin{itemize}

\item[(i)] Strong epimorphisms are closed under composition.

\item[(ii)] If a morphism is both a monomorphism and a strong epimorphism, then it is an isomorphism.

\item[(iii)] Left adjoint functors preserve epimorphisms and regular epimorphisms.

\item[(iv)] If a functor $U: \calC \to \calD$ is faithful and has a left adjoint, then a morphism $f$ in $\calC$ is a monomorphism if and only $U(f)$ is a monomorphism in $\calD$.

\end{itemize}

\begin{Example}
The forgetful functor out of any category of bornological vector spaces is faithful and has a left adjoint. It follows that the monomorphisms are the injective bounded linear maps.
\end{Example}

\subsection{2-categories and 2-functors}
\label{sec:bicategories}

A \textdef{(weak) $2$-category} or \textdef{bicategory} $\calC$ consists of the following data:
\begin{enumerate}
    \item A set of objects $\mathrm{Obj}(\calC)$. We write $c\in \calC$ for $c\in \mathrm{Obj}(\calC)$.
    \item For any two $c_1,c_2\in \calC$ there is a category of $1$-morphisms $\calC(c_1,c_2)$.
        The objects of this category are called $1$-morphisms, denoted $f:c_1\to c_2$, and the morphisms are called $2$-morphisms, denoted $\alpha:f_1 \Rightarrow f_2$.
        \[
        \begin{tikzcd}[column sep=large]
            c_1 \ar[r,"f_1",""'{name=A}, bend left=30]\ar[r,"f_2"',""{name = B},bend right = 30]\ar[r,from=A,to=B, "\alpha",Rightarrow]& c_2
        \end{tikzcd}
        \]
        There are identity $1$-morphisms $\id_{c}\in \calC(c,c)$.
    \item Horizontal composition functors $\circ:\calC(c_2,c_3)\times \calC(c_1,c_2)\to \calC(c_1,c_3)$ and an associator natural isomorphism $\Ass:(f\circ g)\circ h \Rightarrow f\circ (g\circ h)$.
    There are left and right unitor natural isomorphism: $\Unitl: \id_{c_2}\circ f\Rightarrow f$ and $\Unitr:f\circ \id_{c_1}\Rightarrow f$.
    \item These data are subject to certain coherences. We refer to \cite[Chapter 2.1]{Johnson2021} for details.
\end{enumerate}
A \textdef{(weak) $2$-functor} or \textdef{bifunctor} $F:\calC\to \calD$ between bicategories $\calC,\calD$ consists of the following data:
\begin{enumerate}
    
    \item a map $F:\mathrm{Obj}(\calC)\to \mathrm{Obj}(\calD)$;
    
    \item a functor $F_{c_1,c_2}:\calC(c_1,c_2)\to \calD(F(c_1),F(c_2))$;
    
    \item \label{def:coherencetau}
    a natural isomorphism $\tau_{c_1,c_2,c_3}:F_{c_2,c_3}(f)\circ F_{c_1,c_2}(g) \to F_{c_1,c_3}(f\circ g)$, called the \textdef{functoriality constraint}, such that the diagram
    \begin{equation*}
        \begin{tikzcd}
            \calC(c_2,c_3)\times \calC(c_1,c_2) \ar[r,"\circ"]\ar[d,"F_{c_2,c_3}\times F_{c_1,c_2}"']& \calC(c_1,c_3) \ar[d,"F_{c_1,c_3}"]
            \\
            \calD\bigl( F(c_2),F(c_3)\bigr) \times \calD\bigl(F(c_1),F(c_2)\bigr)
            \ar[r,"\circ"']
            \ar[ru,shorten >=5ex,shorten <=5ex, Rightarrow, "\tau_{c_1,c_2,c_3}"']
            & 
            \calD\bigl( F(c_1),F(c_3) \bigr)
        \end{tikzcd}
    \end{equation*}
    commutes;
    
    \item \label{def:coherenceeta}
    an isomorphism $\eta_c:F(\id_c) \to \id_{F(c)}$, called \textdef{unitality constraint}, for all $c\in \mathrm{Obj}(\calC)$.

\end{enumerate}
These data are subject to the following coherence conditions:
\begin{enumerate}
    \item Compatibility with associativity. The following diagram commutes:
    \begin{equation}
    \label{diag:Coherence1}
        \begin{tikzcd}
            \bigl( F(f)\circ F(g) \bigr) \circ F(h) \ar[r,"\tau \circ \id"]\ar[d,"\Ass_\calC"']
            & 
            F(f\circ g) \circ F(h) \ar[r,"\tau"] & F\bigl((f\circ g)\circ h \bigr) \ar[d,"F(\Ass_\calD)"]\\
            F(f)\circ \bigl(F(g)\circ F(h) \bigr)
            \ar[r, "\id \circ \tau"']
            &
            F(f)\circ F(g\circ h) \ar[r,"\tau"'] 
            &
            F\bigl(f\circ (g\circ h) \bigr)
        \end{tikzcd}
    \end{equation}
    \item Compatibility with units. The diagram
    \begin{equation*}
        \begin{tikzcd}
            F(f)\circ F(\id_c) \ar[r,"\id\circ \eta"]\ar[d,"\tau"']& F(f)\circ \id_{F(c)} \ar[d,"\Unitr"]\\
            F(f\circ \id) \ar[r,"F(\Unitr)"']& F(f) 
        \end{tikzcd}
    \end{equation*}
    commutes, as well as the analogous diagram for the left unit constraint.
\end{enumerate}
We refer to \cite[Chapter 4.1]{Johnson2021}. In their terminology the above data defines a pseudofunctor.

\subsection{Pullback of a submersion}
\label{sec:Pullbacks}

\begin{Lemma}
\label{lem:PullbackSubfmld}
Let $f: X \to Y$ be a submersion and $g: Z \to Y$ a smooth map of manifolds. Then the pullback $X \times_Y Z$ is a closed embedded submanifold of $X \times Z$.    
\end{Lemma}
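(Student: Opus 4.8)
The plan is to realize the pullback as the preimage of the diagonal under a map transverse to it. Write $\Delta_Y \subset Y \times Y$ for the diagonal, which is a closed embedded submanifold, and let $f \times g : X \times Z \to Y \times Y$ be the map $(x,z) \mapsto \bigl( f(x), g(z) \bigr)$. Then
\[
  X \times_Y Z = (f \times g)^{-1}(\Delta_Y).
\]
Since $f \times g$ is continuous and $\Delta_Y$ is closed, the pullback is automatically a closed subset of $X \times Z$; it remains only to show that it is an embedded submanifold.

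First I would check that $f \times g$ is transverse to $\Delta_Y$. Fix a point $(x,z) \in X \times_Y Z$ and set $y := f(x) = g(z)$. The tangent space to the diagonal is $T_{(y,y)}\Delta_Y = \{ (v,v) \mid v \in T_y Y \}$, while the image of the differential is $\mathrm{d}(f \times g)_{(x,z)}\bigl( T_x X \times T_z Z \bigr) = \mathrm{d}f_x(T_x X) \times \mathrm{d}g_z(T_z Z)$. Given an arbitrary $(a,b) \in T_y Y \times T_y Y$, put $v := b$; since $f$ is a submersion, $\mathrm{d}f_x$ is surjective, so there is $\xi \in T_x X$ with $\mathrm{d}f_x(\xi) = a - b$. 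Then
\[
  (a,b) = (v,v) + \bigl( \mathrm{d}f_x(\xi), 0 \bigr),
\]
which writes $(a,b)$ as the sum of a vector tangent to $\Delta_Y$ and a vector in the image of $\mathrm{d}(f \times g)_{(x,z)}$. Hence transversality holds at every point of the pullback. (Note that no hypothesis on $g$ is used.)

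Then I would invoke the transversal preimage theorem: the preimage of an embedded submanifold under a transverse smooth map is again an embedded submanifold, of codimension equal to that of the target submanifold, namely $\dim Y$. This immediately gives that $X \times_Y Z$ is a closed embedded submanifold of $X \times Z$, concluding the proof.

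The argument is entirely standard, so I do not expect a genuine obstacle; the only step needing attention is the transversality verification, where the surjectivity of $\mathrm{d}f_x$ supplied by the submersion hypothesis is exactly what makes the computation work. As an alternative avoiding transversality, one could work in local submersion charts, in which $f$ becomes a projection $\mathbb{R}^p \times \mathbb{R}^q \to \mathbb{R}^q$; there the equation $f(x) = g(z)$ defines a graph over $\mathbb{R}^p \times Z$ and hence an embedded submanifold locally, and these local pictures patch together to the global statement.
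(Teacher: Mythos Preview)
Your proof is correct. Both your argument and the paper's realize $X \times_Y Z$ as $(f \times g)^{-1}(\Delta_Y)$ and treat closedness the same way. Where they diverge is in the smooth part: you verify transversality of $f \times g$ to $\Delta_Y$ directly and then invoke the transversal preimage theorem, while the paper instead works in local submersion charts $U \cong N \times V$ with $f = \pr_V$, and observes that $U \times_V W \cong N \times W$ sits inside $N \times V \times W$ as the graph of a smooth map---which is exactly the alternative you sketch in your final paragraph. Your transversality route is slightly cleaner and more conceptual; the paper's explicit chart description has the minor advantage that the local product form $U \times_V W \cong N \times W$ is reused immediately afterwards to produce the retraction maps $\zeta$ of Corollary~\ref{cor:PullbackRetract}.
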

\begin{proof}
As topological space, $X \times_Y^{f,g} Z$ is the pullback of $(f,g): Z \to Y \times Y$ by the diagonal $\Delta = (\id,\id): Y \to Y \times Y$. $\Delta$ is a split monomorphism, so a fortiori a strong monomorphism. Strong monomorphisms are stable under pullbacks, so the inclusion $i: X \times_Y Z \hookrightarrow X \times Z$ is a strong monomorphism, that is, an embedding. Since $f(x) = g(z)$ is a closed condition on $X \times Z$, the subspace $X \times_Y Z = \{(x,z) \in X \times Z~|~ f(x) = g(z)\}$ is closed.

Let $(x_0, z_0) \in X \times_Y Z \subset X \times Z$. By the local form theorem of submersions, there is an open neighborhood $V \subset Y$ of $f(x_0) = g(z_0)$ and an open neighborhood $U$ of $x_0$ that is diffeomorphic to a product manifold $U \cong N \times V$, such that $f$ is given on this neighborhood by the projection to $V$. Let $W := g^{-1}(V)$, which is an open neighborhood of $z_0$. Then
\begin{equation*}
\begin{tikzcd}
U \times_V W 
\ar[r, "i", hook]
\ar[d, "\cong"']
&
U \times W
\ar[dd, "\cong"]
\\
N \times V \times_V W
\ar[d, "\cong"']
&
\\
N \times W 
\ar[r, "j"', hook]
&
N \times V \times W
\end{tikzcd}
\end{equation*}
where $j(n,w) = (n, g(w), w )$. Since $N \times W$ is the graph of a smooth map, it is a submanifold of $N \times V \times W$. We conclude that $U \times_V W \subset U \times W$ is a closed embedded submanifold.
\end{proof}

\begin{Corollary}
\label{cor:PullbackRetract}
Let $f: X \to Y$ be a submersion and $g: Z \to Y$ a smooth map of manifolds. Then every $x_0 \in X$ has an open neighborhood $U \subset X$ with a smooth map 
\begin{equation*}
  \zeta: U \times f(U) 
  \longrightarrow 
  U
  \,,
\end{equation*}
such that $f\bigl(\zeta(x, y)\bigr) = y$ and $\zeta\bigl( x, f(x) \bigr) = x$ for all $x \in U$, $y \in f(U)$.
\end{Corollary}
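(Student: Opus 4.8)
The plan is to read the map $\zeta$ directly off the local normal form of the submersion $f$, which is exactly the tool already invoked in the proof of Lemma~\ref{lem:PullbackSubfmld}. The smooth map $g$ and its source $Z$ play no role in the conclusion and may simply be ignored. First I would apply the local form theorem for submersions at the point $x_0$: there is an open neighborhood $V \subset Y$ of $f(x_0)$ and an open neighborhood $U \ni x_0$ together with a diffeomorphism $\Phi: U \to N \times V$ under which $f$ becomes the projection, that is, $f|_U = \pr_V \circ \Phi$. Since $\Phi$ is a diffeomorphism and $N$ is nonempty (it contains $\pr_N(\Phi(x_0))$), we have $f(U) = \pr_V(N \times V) = V$, which is in particular open, so that the stated domain $U \times f(U)$ equals $U \times V$.

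Next I would define the retraction by replacing only the $V$-coordinate and leaving the $N$-coordinate of the first argument untouched. Concretely, set
\begin{equation*}
  \zeta(x, y) := \Phi^{-1}\bigl( \pr_N(\Phi(x)), y \bigr)
\end{equation*}
for $(x,y) \in U \times f(U) = U \times V$. This is well defined and takes values in $U$ because $(\pr_N(\Phi(x)), y) \in N \times V$ lies in the domain of $\Phi^{-1}$, and it is smooth as a composite of the smooth maps $\Phi$, $\pr_N$, the assembly with $y$, and $\Phi^{-1}$.

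Finally I would verify the two identities. Applying $f = \pr_V \circ \Phi$ gives $f(\zeta(x,y)) = \pr_V\bigl(\pr_N(\Phi(x)), y\bigr) = y$, which is the first condition. For the second, I would note that $\pr_V(\Phi(x)) = f(x)$, so that substituting $y = f(x)$ yields $\zeta(x, f(x)) = \Phi^{-1}\bigl(\pr_N(\Phi(x)), f(x)\bigr) = \Phi^{-1}(\Phi(x)) = x$. I do not expect any genuine obstacle here: the entire content of the statement is the local triviality of a submersion, and the only point requiring a moment's care is the identification $f(U) = V$, needed so that the map $\zeta$ is defined on precisely the asserted domain.
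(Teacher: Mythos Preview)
Your proof is correct and matches the paper's implicit reasoning: the corollary is stated without a separate proof precisely because the local normal form $U \cong N \times V$ with $f = \pr_V$, already set up in the proof of Lemma~\ref{lem:PullbackSubfmld}, immediately yields the map $\zeta$ you write down. Your observation that $g$ and $Z$ are irrelevant to the statement is also apt; they appear in the hypotheses only for uniformity with the surrounding material and the subsequent Remark~\ref{rmk:PullbackRetract2}.
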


\begin{Remark}
\label{rmk:PullbackRetract2}
The properties of the map $\zeta$ of Corollary~\ref{cor:PullbackRetract} imply that the induced map
\begin{equation*}
\begin{aligned}
  U \times g^{-1}\bigl(f(U)\bigr) 
  &\longrightarrow 
  U \times_{f(U)} g^{-1}\bigl(f(U)\bigr)
  \\
  (x,z) 
  &\longmapsto
  \bigl( \zeta(x,g(z)), z \bigr)
\end{aligned}
\end{equation*}
is a local section of the inclusion $X \times_Y Z \to X \times Z$.    
\end{Remark}

\subsection{Proofs about bornology}
\label{sec:AppendixBornology}

We give proofs for three elementary statements about bornological vector spaces, because we could not find proofs in the literature.

\subsubsection{Proof of Proposition~\ref{prop:TensSepIsSep}}
\label{sec:TensSepIsSep}

By Proposition~\ref{prop:SepDivZero} $V \Botimes W$ is separated if and only if $\{0\}$ is its only bounded vector subspace. First, we prove the statement for the case that both $V$ and $W$ are finite dimensional. 

On a finite dimensional vector space, there is a unique separated convex vector space bornology, which is the norm bornology. Since on a finite dimensional vector space all norms are equivalent, this bornology does not depend on the chosen norm.
Also, for finite dimensional vector spaces, all (bi)linear maps are bounded and continuous.
A subset of a finite dimensional vector space is norm bounded if and only if it is contained in compact subset. If $A \subset V$ and $B \subset W$ are compact subsets, then $A \times B$ is compact, so that its image $i(A \times B)$ under the continuous map $i$ from Definition~\ref{def:TensorBorn} is compact, which further implies that the convex hull $\Conv(i(A \times B))$ is compact. It follows from Proposition~\ref{prop:TensorBorn} that if a subsets of $V \Botimes W$ is bounded, then it is contained in a compact subset. The only subspace contained in a compact subset is $\{0\}$, which shows that $V \Botimes W$ is separated.

Let now $V'$ and $W'$ separated convex bornological vector spaces, possibly infinite dimensional. Assume that $V' \Botimes W'$ is not separated. Then there is a bounded one dimensional subspace $U \subset V' \Botimes W'$. Let $u = \sum_{i =1}^k v_i \otimes w_i$ be a basis vector. Then $U$ is contained in the subspace $W \Botimes V$, where $W = \Span\{v_1, \ldots, v_k\} \subset W'$ and $V = \Span\{w_1, \ldots, w_k\} \subset V'$. Any bornological subspace of a separated convex bornological vector space is separated, which implies that $V$ and $W$ are separated. As we have already seen $V \Botimes W$ is separated, so that $U \subset V \Botimes W$ cannot be bounded, which is a contradiction.
\qed

\subsubsection{Proof of Proposition~\ref{prop:MappinBornInnerHom}}
\label{sec:MappinBornInnerHom}

The adjunction~\eqref{eq:TensorInnHomAdj} of the underlying vector spaces maps a morphism $f: U \Botimes V \to W$ in $\Born$ to the map $\tilde{f}: U \to \intBorn(V,W)$ defined by
\begin{equation*}
  \tilde{f}(u):
  V 
  \xrightarrow{~\cong~}
  \{u\} \times V
  \longhookrightarrow
  U \times V
  \xrightarrow{~i~}
  U \Botimes V
  \xrightarrow{~f~}
  W
  \,.
\end{equation*}
since all arrows represent bounded maps, $\tilde{f}(u)$ is bounded. Let $A \subset U$ and $F = \tilde{f}(A) \subset \Born(V,W)$. For every bounded $B \subset V$ the subset $F(B) \subset W$ defined in Proposition~\ref{prop:UnifBoundBorn} is the image of the map
\begin{equation*}
  A \times B
  \longhookrightarrow
  U \times V
  \xrightarrow{~i~}
  U \Botimes V
  \xrightarrow{~f~}
  W
  \,,
\end{equation*}
which is the bounded subset $f(A \otimes B)$.

Conversely, to every morphism $\tilde{f}: U \to \intBorn(V,W)$ we can associate the bilinear map
\begin{align*}
  f: U \times V
  &\longrightarrow W
  \\
  (u,v) &\longmapsto \tilde{f}(u)(v)
  \,.
\end{align*}
Since $\tilde{f}$ is bounded, it sends a bounded subset $A \subset U$ to a bounded set $F := \tilde{f}(A)$. By definition of the bornology on $\intBorn(V,W)$, this means that for every bounded $B \subset V$,
\begin{equation*}
  F(B) 
  = \tilde{f}(A)(B)
  = f(A \times B)
\end{equation*}
is bounded. This shows that $f$ is bounded. It follows that the induced morphism $f: U \Botimes V \to W$ given by the universal property of the tensor product is bounded.

We conclude that the natural bijections of the adjunction between the tensor product and the inner homs of vector spaces restricts to convex bornological vector spaces.
\qed

\subsubsection{Proof of Proposition~\ref{prop:InnHomComp}}
\label{sec:InnHomComp}

Since $V$ is separated, there is a collection $\{D_i\}_{i\in I}$ of bounded norming disks in $V$ such that every bounded set is contained in one of the disks. Such a collection is called a basis of (bounded completant) disks. Since $V'$ is complete, there is a basis $\{V'_j\}_{j \in J}$ of bounded completant disks in $V'$. For every map $\sigma: I \to J$ let 
\begin{equation*}
  F_\sigma := \{f \in \Born(V,W) ~|~ f(D_i) \subset D'_{\sigma(i)} 
  \text{~for all~} i \in I \}
  \subset \Born(V,V')
  \,.
\end{equation*}
We want to show that $\{F_\sigma\}_{\sigma \in \Set(I,J)}$ is a basis of bounded completant disks in $\intBorn(V,V')$.

We have $F_\sigma(D_i) \subset D'_{\sigma(i)}$ for all $D_i$, which shows that $F_\sigma$ is bounded in $\intBorn(V,V')$. Since $f$ is $\bbK$-linear, we have
\begin{equation*}
  (\bar{B}_1(\bbK) \cdot f)(D_i)
  = \bar{B}_1(\bbK) \cdot f(D_i)
  \,,
\end{equation*}
which shows that $\bar{B}_1(\bbK) \cdot F_\sigma \subset F_\sigma$. It follows from the linearity of $f$ and the convexity of $D'_i$, that $F_\sigma$ is convex. We conclude that $F_\sigma$ is a bounded disk.

Let $E \subset \Born(V,W)$ and $A \subset V$ be bounded. Then there is an $i \in I$ such that $A \subset D_i$ and a $j \in J$ such that $E(D_i) \subset D_j$. Assuming the axiom of choice, this defines a map $\sigma: I \to J$ such that $E \subset F_\sigma$. This shows that $\{ F_\sigma \}$ is a basis of bounded disks in $\intBorn(V,V')$. It remains to show that all $F_\sigma$ are completant.

Let us denote by $V_i := \Span D_i$ and $V'_j := \Span D'_j$ the normed subspaces. Let $f \in F_\sigma$. Then $f(D_i) \subset D_{\sigma(i)}$, so that $f(V_i) \subset V'_{\sigma(i)}$. Let us denote by
\begin{equation*}
  f|_{V_i}: V_i \longrightarrow V'_{\sigma(j)}  
\end{equation*}
the restriction of $f$. By Remark~\ref{rmk:NormingDiskClosed}, we can assume without loss of generality that $D_i$ is closed in the norm topology of $V_i := \Span D_i$. Then $D_i \subset V_i$ is the unit ball. If $f \in F_\sigma$, then $f(D_i) \subset D_{\sigma(i)}$. It follows that $f|_{V_i}$ is bounded in the operator norm by
\begin{equation*}
  \| f|_{V_i} \| \leq 1
  \,.
\end{equation*}
The semi-norm on $U_\sigma := \Span F_\sigma$ defined by the disk $F_\sigma$ is given by
\begin{equation*}
  \| f\| = \sup_{i \in I} \| f|_{V_i} \|
  \leq 1
  \,.
\end{equation*}
In particular, $\| f|_{V_i} \| \leq \| f \|$. Therefore, $\|f \| = 0$ implies $\| f|_{V_i} \| = 0$, so that $f|_{V_i} = 0$. Since the $V_i$ cover $V$, it follows that $f = 0$, which shows that the semi-norm on $U_\sigma$ is a norm.

To show completeness, assume that $n \mapsto f_n \in \Span F_\sigma$ is a Cauchy sequence. In particular, $f_n$ is norm bounded,
\begin{equation*}
  \| f_n \| \leq c
  \,,
\end{equation*}
for some real number $c < \infty$. Since $\| f_n|_{V_i} \| \leq \| f_n \|$, the sequence $n \mapsto f_n|_{V_i}$ is Cauchy and bounded by $c$ with respect to the operator norm. By assumption, $V'_{\sigma(i)}$ is complete, so that the operator norm on $\Born(V_i, V'_{\sigma(i)})$ is complete. We conclude that $f_n|_{V_i}$ converges to a linear map $g_i: V_i \to V'_{\sigma(i)}$ bounded by $c$. 

Let $v \in V_i \subset V_j$. The value of $g_i(v)$ is the limit of $n \mapsto f_n(v)$ in the bornivorous topology, which is independent of whether we choose the norm on $V_i$ or on $V_j$. It follows that $g_i(v) = g_j(v)$, so that we have a well-defined linear map $g: V \to V'$ such that $g_i = g|_{V_i}$. Moreover, $g$ is bounded by $c$ in $\Span F_\sigma$. We conclude that $\Span F_\sigma$ is norm complete.
\qed

\bibliographystyle{alpha}
\bibliography{Bornology}


\end{document}